
\documentclass[12pt]{article}

\usepackage{times}

\usepackage[utf8]{inputenc}
\usepackage{commath}
\usepackage{amsthm}
\usepackage{amsmath}
\usepackage{amssymb}
\usepackage{cite}
\usepackage{setspace}

\usepackage{bigints}
\usepackage{comment}
\usepackage{mathtools}
\usepackage{mathrsfs}
\usepackage{fancyhdr}

\setlength{\parskip}{0em}
\allowdisplaybreaks[4]

\numberwithin{equation}{section}
\usepackage{etoolbox}
\patchcmd{\thebibliography}
  {\settowidth}
  {\setlength{\itemsep}{0pt plus -10pt}\settowidth}
  {}{}
\apptocmd{\thebibliography}
  {
	\footnotesize  
  }
  {}{}

\makeatletter
\newtheorem*{rep@theorem}{\rep@title}
\newcommand{\newreptheorem}[2]{%
\newenvironment{rep#1}[1]{%
 \def\rep@title{#2 \ref{##1}}%
 \begin{rep@theorem}}%
 {\end{rep@theorem}}}
\makeatother

\newreptheorem{theorem}{Theorem}

\newtheorem{thm}{Theorem}[section]
\newtheorem{prop}[thm]{Proposition}
\newtheorem*{prop*}{Proposition}
\newtheorem{defn}[thm]{Definition}
\newtheorem{rem}[thm]{Remark}
\newtheorem*{thm*}{Theorem}
\newtheorem{cor}[thm]{Corollary}
\newtheorem*{cor*}{Corollary}
\newtheorem{lem}[thm]{Lemma}

\newtheorem{notat}[thm]{Notation}

\title{On the full asymptotics of analytic torsion} 

\author
{Siarhei Finski}

\date{}

\usepackage[%
    left=1in,%
    right=1in,%
    top=1.1in,%
    bottom=0.8in,%
    paperheight=11in,%
    paperwidth=8.5in%
]{geometry}

\newcommand{\imun} {\sqrt{-1}}

\newcommand{\comp}{\mathbb{C}}
\newcommand{\real}{\mathbb{R}}
\newcommand{\rat}{\mathbb{Q}}
\newcommand{\nat}{\mathbb{N}}
\newcommand{\integ}{\mathbb{Z}}

\newcommand{\tinyint}{\begingroup\textstyle\int\endgroup}

\newcommand{\enmr}[1]{{\rm{End}}(#1)}

\newcommand{\dfor}[2][]{\Omega^{#1}(#2)}
\newcommand{\cinf}[1]{\mathscr{C}^{\infty} ( #1 )}

\newcommand{\dbar}{ \overline{\partial} }
\newcommand{\laplcomp}{\Box}

\newcommand{\rk}[1]{{\rm{rk}} ( #1 )}
\newcommand{\tr}[1]{{\rm{Tr}} \big[ #1 \big]}
\newcommand{\str}[1]{{\rm{Tr}_s} \big[ #1 \big]}
\newcommand{\matcirc}[1]{\mathring{#1}}


\renewcommand{\Re}{\operatorname{Re}}
\renewcommand{\Im}{\operatorname{Im}}
\newcommand{\scal}[2]{\langle #1, #2 \rangle}
\newcommand{\mell}[1]{{\rm{M}} \big[ #1 \big]}
\newcommand{\spec}{{\rm{Spec}}}

\newcommand{\vast}{\bBigg@{4}}
\newcommand{\Vast}{\bBigg@{5}}

\newcommand{\mm}{\mathcal{M}}
\newcommand{\ee}{\mathcal{E}}
\newcommand{\lin}{\mathcal{L}}
\newcommand{\nn}{\mathcal{N}}

\newcommand{\td}{{\rm{Td}}}
\newcommand{\ch}{{\rm{ch}}}

\pagestyle{fancy}
\lhead{}
\chead{On the full asymptotics of analytic torsion}
\rhead{\thepage}
\cfoot{}


\newcommand{\Addresses}{{
  \bigskip
  \footnotesize
  \noindent \textsc{Siarhei Finski, UFR de Mathématiques, IMJ-PRG, Université Paris Diderot-Paris 7, Case 7012, 8 Place Aurélie Nemours 75205 Paris Cedex 13, France.}\par\nopagebreak
  \noindent  \textit{E-mail }: \texttt{siarhei.finski@imj-prg.fr}.
}}


\newenvironment{sciabstract}{}


\begin{document} 
\clearpage\maketitle
\thispagestyle{empty}

\begin{sciabstract}
  \textbf{Abstract.} The purpose of this article is to study the asymptotic expansion of Ray-Singer analytic torsion associated with powers $p$ of a given positive line bundle over a compact $n$-dimensional complex manifold, as $p \to \infty$. Here we prove that the asymptotic expansion contains only the terms of the form $p^{n-i} \log p, p^{n-i}$ for $i \in \nat$. For the first two leading terms it was proved by Bismut-Vasserot. We calculate the coefficients of the terms $p^{n - 1} \log p, p^{n - 1}$ in the Kähler case and thus answer the question posed in the recent work of Klevtsov-Ma-Marinescu-Wiegmann about quantum Hall effect. Our second result concerns the general asymptotic expansion of the analytic torsion for a compact complex orbifold.
\end{sciabstract}

\tableofcontents

\section{Introduction}
	The holomorphic analytic torsion was introduced by Ray-Singer in \cite{Ray73}. It is a number $T(g^{TM},h^E)$ defined for a holomorphic Hermitian vector bundle $(E, h^E)$ over a compact Hermitian manifold $(M, g^{TM}, \Theta)$ as the regularized determinant of the Kodaira Laplacian $\laplcomp^E = \dbar{}^{E} \, \dbar{}^{E*} + \dbar{}^{E*} \dbar{}^{E}$, acting on the vector space of sections of the vector bundle $\Lambda^{\bullet} (T^{*(0,1)}M) \otimes E$. 
	\begin{sloppypar}
	Let $L$ be a positive Hermitian line bundle over $M$, $\dim_{\comp} M = n$. In \cite{BVas}, Bismut-Vasserot obtained the asymptotics of $\log T(g^{TM}, h^{L^p \otimes E})$ as $p \to + \infty$, (here $L^p := L^{\otimes p}$), and they gave an explicit formula for the coefficients of the leading terms $p^n \log p, p^n$ of the expansion. This asymptotic expansion played an important role in a result of arithmetic ampleness  (see Gillet-Soulé \cite{GilSoul92}, \cite[Chapter VIII]{Soule92}). In this article we obtain a general formula for it in the orbifold's setting. The general strategy of the proof is the same as in the article \cite{BVas}: we study this asymptotic expansion by studying the heat kernel of the rescaled Kodaira Laplacian $\laplcomp^{L^p \otimes E} / p$. We use functional analysis approach inspired by Bismut-Lebeau\cite{BisLeb91} and realized in Ma-Marinescu \cite[\S 5.5]{MaHol}. Certainly, one expects that the probability approach of \cite{BVas} could also be applied.
	In Theorem \ref{dzeta_asympt} we also give an explicit formula for the coefficients of the subsequent terms $p^{n-1} \log p, p^{n-1}$.
	\end{sloppypar}
	\begin{sloppypar}
		Now let's describe our results more precisely. Let $(M, g^{TM}, \Theta)$ be a compact Hermitian manifold of complex dimension $n$. Let $(E, h^E)$ be a holomorphic Hermitian vector bundle over $M$ with first Chern class $ c_1(E)$ and rank $\rk{E}$.
	Let $(L, h^L)$ be a Hermitian positive line bundle over $M$.
	Let's denote by $\omega$ the 2-form defined by
	\begin{equation}\label{defn_omega}
		\omega := c_1(L,h^L) := \frac{\imun}{2 \pi}  R^L,	
	\end{equation}
	where $R^L$ is the curvature of the Chern connection on $(L, h^L)$. We define $\matcirc{R^L} \in \enmr{T^{(1,0)}M}$ by
	\begin{equation}\label{defn_R_L_0}
		g^{TM}(\matcirc{R^L}U, \overline{V}) = R^L(U, \overline{V}), \quad U,V \in T^{(1,0)}M.
	\end{equation}
	We denote by $T (g^{TM}, h^{L^p \otimes E})$ the analytic torsion of $L^p \otimes E$ associated with $g^{TM}, h^L, h^E$ (see Definition \ref{defn-RS_anal_torsion}). 
	From now on, “a local coefficient" means that it can be expressed as an integral of a density defined locally over $M$.
	Our first result (cf. Theorem \ref{thm-gen_asympt_zet_gen}) is
	\end{sloppypar}
	\begin{thm}\label{thm-gen_asympt_zet}
		There are local coefficients $\alpha_i, \beta_i \in \real, i \in \nat$ such that for any $k \in \nat$, as $p \to + \infty$
		\begin{equation}\label{eqn_gen_asympt_zet111}
			-2 \log T (g^{TM}, h^{L^p \otimes E}) = \textstyle \sum_{i = 0}^{k} p^{n-i}( \alpha_i \log p + \beta_i ) + o(p^{n-k}),
		\end{equation}
		Moreover, the coefficients $\alpha_i$ do not depend on $g^{TM}, h^L, h^E$.
	\end{thm}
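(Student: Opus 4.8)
The plan is to derive the expansion from the Mellin-transform formula for analytic torsion together with a precise expansion of the heat trace of the rescaled Kodaira Laplacian $\laplcomp^{L^p \otimes E}/p$. Recall that
\[
-2 \log T(g^{TM}, h^{L^p \otimes E}) = \zeta'(0), \quad \zeta(s) = -\frac{1}{\Gamma(s)} \int_0^{\infty} u^{s-1} \str{ N_V \exp(-u \laplcomp^{L^p \otimes E}) } \, du,
\]
where $N_V$ is the number operator on $\Lambda^{\bullet}(T^{*(0,1)}M)$, and the supertrace is taken over positive eigenvalues. First I would substitute $u = t/p$ and split the integral at $t = 1$ (say), so that the small-$t$ and large-$t$ contributions can be analyzed separately. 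The key analytic input is a uniform-in-$p$ asymptotic expansion of $\str{ N_V \exp(-t \laplcomp^{L^p \otimes E}/p) }$: for $t$ in a fixed compact subset of $(0,\infty)$ one has, via the local index / heat-kernel techniques of Bismut-Lebeau and Ma-Marinescu referenced in the introduction,
\[
\str{ N_V \exp(-t \laplcomp^{L^p \otimes E}/p) } = \sum_{i=0}^{k} p^{n-i} b_i(t) + o(p^{n-k}),
\]
where the $b_i(t)$ are integrals over $M$ of local quantities built from the model operator (the harmonic oscillator attached to $\matcirc{R^L}$) and its perturbations; the leading $b_0(t)$ is the Bismut-Vasserot term. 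The rescaling $\laplcomp^{L^p}/p$ is precisely what converts powers of $p$ into the finitely many local heat coefficients of the model operator, which is why only integral powers $p^{n-i}$ appear. For the large-$t$ regime I would use the spectral gap: the positive spectrum of $\laplcomp^{L^p \otimes E}/p$ is bounded below by a positive constant independent of $p$ (Bismut-Vasserot, or Ma-Marinescu), so the tail $\int_1^{\infty} (t/p)^{s-1} \str{\cdots}\, du$ contributes a function that is holomorphic near $s=0$ with exponentially small (in $p$) value and derivative, hence contributes $o(p^{n-k})$ for every $k$ and can be discarded.

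Next I would feed this expansion into the Mellin integral. Writing $\zeta(s) = -\frac{1}{\Gamma(s)}\big( p^{-s} \int_0^{1} t^{s-1} \str{N_V e^{-t\laplcomp/p}}\, dt + (\text{large-}t) \big)$ and inserting the expansion of the integrand, each term $p^{n-i} b_i(t)$ produces, after analytic continuation of $\int_0^1 t^{s-1} b_i(t)\, dt$ in $s$, a meromorphic function of $s$ with at worst a simple pole at $s=0$ coming from the small-$t$ behavior $b_i(t) \sim \sum_j c_{ij} t^{j - n}$ (finite range of $j$, again because the model operator has a genuine finite heat expansion). The factor $p^{-s} = 1 - s\log p + O(s^2)$ is the mechanism that generates the $\log p$ terms: differentiating $\zeta(s)$ at $s=0$, a simple pole of the Mellin integral at $s=0$ multiplied by $p^{-s}$ yields a contribution proportional to $\log p$, whereas the regular part yields a $p^{n-i}$ term with no log. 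Since $1/\Gamma(s) = s + \gamma s^2 + \cdots$ vanishes at $s=0$, only the pole part survives into $\zeta(0)$ and the combination (pole)$\times\log p$ into $\zeta'(0)$. Collecting, $\zeta'(0) = \sum_{i=0}^{k} p^{n-i}(\alpha_i \log p + \beta_i) + o(p^{n-k})$, with $\alpha_i$ the residue at $s=0$ of the Mellin transform of $b_i$ and $\beta_i$ built from its finite part, both integrals of local densities — hence local coefficients. Because $b_i(t)$ is determined entirely by the universal model operator (harmonic oscillator with frequencies the eigenvalues of $\matcirc{R^L}$) and not by the reference metrics beyond that, the residue $\alpha_i$ is independent of $g^{TM}, h^L, h^E$; this is the same metric-independence phenomenon as for the leading term in Bismut-Vasserot.

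The main obstacle I expect is establishing the uniform-in-$p$ heat-kernel expansion $\str{N_V e^{-t\laplcomp^{L^p\otimes E}/p}} = \sum p^{n-i} b_i(t) + o(p^{n-k})$ to all orders, with remainder estimates uniform for $t$ in compacts of $(0,\infty)$ and with the $b_i(t)$ identified as integrals of explicit local densities. This requires a careful rescaling of coordinates near each point (Bismut-Lebeau's "$\psi_p$" rescaling), Taylor expansion of the rescaled operator in powers of $p^{-1/2}$, and Duhamel / resolvent estimates to control the error — the technical heart being Sobolev and $\mathscr{C}^m$ bounds on the rescaled heat operators that are uniform in $p$. A secondary point requiring care is the interchange of the $\int_0^1 dt$ and the asymptotic expansion near $t = 0$: one must know the expansion of the integrand is valid (with good remainder) simultaneously in the regime $t \to 0$ and $p \to \infty$, which in practice means proving the combined expansion in the variable $t$ with bounds like $C t^{-n-1}$ on the $o(p^{n-k})$ remainder so that $\int_0^1 t^{s-1}(\cdots)\,dt$ converges for $\Re s$ large and continues. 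Once these uniform estimates are in hand, the passage from heat trace to $\zeta'(0)$ is the routine Mellin-transform bookkeeping sketched above.
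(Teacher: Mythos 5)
Your overall architecture (Mellin transform, rescaled heat trace, the factor $p^{-s}$ generating the $\log p$ terms, and the need for a uniform combined small-time/large-$p$ expansion) matches the paper's, but there is a genuine gap sitting exactly where the paper's main new work lies. You claim that, thanks to the spectral gap, the large-time part $\int_1^\infty$ of the Mellin integral is ``exponentially small in $p$'' and ``contributes $o(p^{n-k})$ for every $k$ and can be discarded.'' This is false. The spectral gap gives $\spec(\laplcomp_p/p)\subset\{0\}\cup[c,+\infty)$ with $c$ independent of $p$, so for $u\geq 1$ the bound it yields is $|\str{N\exp(-u\laplcomp_p/p)}|\leq C p^{n}e^{-cu}$: exponentially small in $u$, but still of full size $p^{n}$ in $p$. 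After integration the tail therefore contributes at order $p^{n}$ --- already in Bismut--Vasserot the leading constant $\beta_0$ receives a contribution from $\int_1^\infty$ --- and it contributes to every $\beta_i$. What is actually required is a full expansion of $p^{-n}\str{N\exp(-u\laplcomp_p/p)}$ in powers of $p^{-1}$ with remainder $O(p^{-k}e^{-cu})$ uniformly for all $u\geq u_0$; this is Proposition \ref{prop-streng_infty}, and the paper explicitly notes that the spectral-gap argument only achieves it for $k=0$. For higher $k$ one needs uniform long-time estimates on the localized rescaled heat kernels in positive degree (Theorem \ref{thm-exp_proj_expansion}), combined with a case split $u>\sqrt{p}$ versus $u\leq\sqrt{p}$. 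Your proposal is missing this ingredient, and without it the coefficients $\beta_i$ for $i\geq 1$ are not controlled.

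A second, smaller gap concerns the ``Moreover'' clause. The coefficients $a_{i,u}$ (your $b_i(t)$) do depend on the metrics --- already $\beta_0$ involves $\det(\matcirc{R^L}/2\pi)$ --- so ``determined entirely by the universal model operator'' cannot be the reason the residues $\alpha_i$ are metric-independent, and invoking the analogy with the leading term is not a proof. The paper derives this from the anomaly formula of Bismut--Gillet--Soul\'e: the variation of $-2\log T$ under a deformation of $g^{TM},h^L,h^E$ is a local quantity whose large-$p$ expansion contains only pure powers of $p$ and no $\log p$, hence the $\log p$ coefficients cannot change. You would need to supply an argument of this kind.
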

	\begin{rem}
		Moreover, in the case if $M$ is the fiber of a proper holomorphic submersion, we prove in Section \ref{subsec_proof_aux} that $ \alpha_i$, $\beta_i$ are smooth over the base of the family, and derivatives  over the base commute with the asymptotics (\ref{eqn_gen_asympt_zet111}).
	\end{rem}
	 We note that in \cite[Theorem 8]{BVas}, Bismut-Vasserot proved Theorem \ref{thm-gen_asympt_zet} for $k=0$. They computed
	\begin{equation}\label{form_B_Vas}
		\begin{aligned}
			 \alpha_0 = \frac{n \, \rk{E}}{2} \int_M \frac{\omega^n}{n!}, \quad
			 \beta_0 = \frac{\rk{E}}{2} \int_M \log \Big( \det \frac{\matcirc{R^L}}{2 \pi} \Big) \frac{\omega^n}{n!}.
		\end{aligned}
	\end{equation}
	\begin{thm}\label{dzeta_asympt}
		If $\Theta = \omega$, we have
		\begin{align}
			& \alpha_1 = \frac{(3n + 1) \rk E}{12} \int_M c_1(TM) \frac{\omega^{n-1}}{(n-1)!} + 
			\frac{n}{2} \int_M c_1(E) \frac{\omega^{n-1}}{(n-1)!}, \label{eqn_alpha_1} \\
			& \beta_1 =  \frac{\rk E}{24} ( 24 \zeta'(-1) + 2 \log(2 \pi) + 7 ) \int_M c_1(TM) \frac{\omega^{n-1}}{(n-1)!} + 
			\frac{1}{2} \int_M c_1(E) \frac{\omega^{n-1}}{(n-1)!}.   \label{eqn_beta_1} 
		\end{align}
	\end{thm}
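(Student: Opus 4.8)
The idea is to extract $\alpha_1,\beta_1$ from the same localization machinery that underlies Theorem \ref{thm-gen_asympt_zet}, pushing the expansion one order beyond \cite{BVas} and evaluating it in the Kähler gauge. First, express $-2\log T(g^{TM},h^{L^p\otimes E})$ through the derivative at $s=0$ of the zeta function $\zeta_p(s)=-\Gamma(s)^{-1}\int_0^{\infty}u^{s-1}\str{N\exp(-u\laplcomp^{L^p\otimes E})}\,du$, where $N$ is the number operator on $\Lambda^{\bullet}(T^{*(0,1)}M)$; for $p$ large no kernel correction is needed, since $N$ annihilates the degree-zero part and $H^q(M,L^p\otimes E)=0$ for $q\geq 1$. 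Because $L$ is positive, $\laplcomp^{L^p\otimes E}$ has a spectral gap of size $\sim p$, so the part of the Mellin integral over $u\geq 1$ contributes only $O(e^{-cp})$, and after the substitution $u\mapsto u/p$ the whole question reduces to the large-$p$ behaviour, at \emph{fixed} time $u$, of the $N$-weighted heat supertrace of the rescaled operator $\laplcomp^{L^p\otimes E}/p$. This is precisely the object controlled by the Bismut--Lebeau / Ma--Marinescu technique (\cite{BisLeb91}, \cite{MaHol}) used to prove Theorem \ref{thm-gen_asympt_zet}.

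Next, localize at an arbitrary $x_0\in M$, pass to normal coordinates for $g^{TM}$ together with a convenient holomorphic frame of $L$ and $E$, and rescale the coordinates by $\sqrt p$, setting $t=1/\sqrt p$. The rescaled Kodaira Laplacian has a Taylor expansion $\mathcal L_t=\mathcal L_0+t\mathcal L_1+t^2\mathcal L_2+\cdots$, where $\mathcal L_0$ is the Bargmann--Fock model operator on sections over $\comp^n$ of $\Lambda^{\bullet}(T^{*(0,1)}\comp^n)\otimes\comp^{\rk{E}}$; its $N$-weighted heat supertrace, Mellin-transformed and differentiated at $0$, already reproduces the densities of $\alpha_0,\beta_0$ in \eqref{form_B_Vas} (and, in the normalization $\Theta=\omega$, gives $\mathring R^{L}=2\pi\,\mathrm{Id}$, so $\beta_0=0$). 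Since $p^{n-1}=p^{n}t^{2}$, the pair $(\alpha_1,\beta_1)$ is governed by the $t^2$-coefficient of the expansion of $\str{N\exp(-u\mathcal L_t)}$, integrated against $\omega^n/n!$; this coefficient combines a first-order perturbation by $\mathcal L_2$ with a second-order perturbation by $\mathcal L_1$. In the Kähler case $\Theta=\omega$ one works in Kähler normal coordinates, and then, exactly as in the Bergman-kernel analysis of \cite{MaHol}, the order-$t$ term and the $\mathcal L_1$-contribution to the order-$t^2$ term vanish (consistently with the absence of $p^{n-1/2}$-terms in \eqref{eqn_gen_asympt_zet111}), so only $\mathcal L_2$ matters.

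It then remains to evaluate the $\mathcal L_2$-contribution. In the Kähler gauge $\mathcal L_2$ is a second-order operator on the model whose coefficients are universal expressions in $R^{TM}$ (through the scalar and Ricci curvatures), in $R^E$, and in the first derivatives of $R^L$; the $R^L$-derivative terms drop out of the global coefficient by parity against the model Gaussian, while the $R^{TM}$-part contracts against $\omega^{n-1}$ to $c_1(TM)$ and the $R^E$-part to $c_1(E)$. Each surviving term becomes a trace, over the Bargmann--Fock space, of a product of resolvents $(\lambda-\mathcal L_0)^{-1}$ with the pieces of $\mathcal L_2$ and an insertion of $N$, which one computes in the oscillator eigenbasis; summing the resulting series over the oscillator levels and carrying out the zeta-regularized time integral produces the stated constants — the regularization of sums of the type $\sum_k k\log k$ yields the $\zeta'(-1)$, the Gaussian normalization together with the $2\pi$ in \eqref{defn_omega} yields the $\log(2\pi)$, and the elementary part of the model integrals yields the rational coefficients $\tfrac{3n+1}{12},\tfrac n2$ in \eqref{eqn_alpha_1} and $\tfrac{7}{24},\tfrac12$ in \eqref{eqn_beta_1}.

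The main obstacle is this last step: there is no conceptual shortcut around the explicit order-$t^2$ computation, and the bookkeeping is heavy — one must write $\mathcal L_2$ in the Kähler gauge, organize its many curvature terms, reduce each to a model supertrace of resolvent products with an $N$-insertion, and then split the zeta-regularized integral into its $\log p$ and constant parts. A second, structural difficulty is to check that all the a priori metric-dependent intermediate contributions — those not expressible through $c_1(TM)$, $c_1(E)$ and $\omega$ — cancel, in accordance with the metric-independence of $\alpha_1$ asserted in Theorem \ref{thm-gen_asympt_zet}; this cancellation, together with the vanishing of the $\mathcal L_1$-term, is what makes the formulas \eqref{eqn_alpha_1}--\eqref{eqn_beta_1} as clean as stated. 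As a partial cross-check, since the answer is forced to be a universal combination of $\int_M c_1(TM)\,\omega^{n-1}/(n-1)!$ and $\int_M c_1(E)\,\omega^{n-1}/(n-1)!$, one may recompute the constants on simple examples such as powers of $\mathcal O(1)$ over $\mathbb{P}^1$ or products of elliptic curves, where the torsion is accessible by classical means.
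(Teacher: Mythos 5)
Your strategy is exactly the paper's: localize via the Ma--Marinescu rescaling with $t=1/\sqrt p$, observe that in the K\"ahler gauge the first-order perturbation $\mathcal{O}_1$ vanishes so that the $p^{n-1}$-coefficient of the heat supertrace is produced by a single Duhamel insertion of $\mathcal{O}_2$ against the Bargmann--Fock (Mehler) kernel, integrate the resulting density over $M$, and read off $\alpha_1$ as the constant term in $u$ and $\beta_1$ as $-\mathrm{M}[\,\cdot\,]'(0)$, the Mellin transform converting the geometric series in $e^{-2\pi u}$ into values and derivatives of the Riemann zeta function (whence $\zeta'(-1)$ and $\log(2\pi)$). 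All of the structural claims you make along the way are correct and are justified in the paper: the reduction to $a_{1,u}$ is Theorem \ref{thm-gen_asympt_zet_gen} together with (\ref{thm-a_i-b_i_formula}), the vanishing of $\mathcal{O}_1$ and the explicit form of $\mathcal{O}_2$ are Theorem \ref{thm-O_2_form}, and the parity argument discarding odd monomials against the Gaussian is the ``$\sim$'' convention of Lemma \ref{lem_o_2_exp}.

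The gap is that you stop precisely where the proof begins. The theorem has no content beyond the explicit constants $\tfrac{3n+1}{12}$, $\tfrac n2$, $\tfrac{1}{24}(24\zeta'(-1)+2\log(2\pi)+7)$, $\tfrac12$, and your proposal asserts rather than derives them: you never write down $\mathcal{O}_2$ (whose positive-degree part, involving $R^{\det}_{i\bar j}\,\overline{w}^j\wedge i_{\overline w_i}$ and $R^E_{i\bar j}\,\overline{w}^j\wedge i_{\overline w_i}$, is essential because the supertrace against $N$ kills the degree-zero contribution), never carry out the Gaussian--Mehler integrals that produce the specific functions $g_2(u)+\tfrac n2 g_1(u)-2\pi\tilde g_3(u)$ and $n g_1(u)-2\pi\tilde g_2(u)$ of (\ref{eqn_A_expl}), and never perform the Mellin transforms (\ref{lem_g_mell_1})--(\ref{lem_g_mell_2}) that turn these into the stated rational and transcendental constants. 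Saying that ``the elementary part of the model integrals yields the rational coefficients'' is not a proof of which rational coefficients they yield; indeed the whole point of the theorem is to correct a conjectured value of the constant term in \cite{KlMa}, so the numerics cannot be waved through. To complete the argument you must supply the analogue of Theorem \ref{thm-O_2_form} and Lemmas \ref{lem_o_2_exp}--\ref{lem_A_expl}, or an equivalent oscillator-eigenbasis computation carried to the end.
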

	\begin{rem}\label{rem_family}
		In the special case when $M$ is a Riemann surface Theorem \ref{dzeta_asympt} gives a precise version of some results concerning quantum Hall effect  in physics, see \cite[p. 839]{KlMa}, \cite[\S 5]{KlPhys}.
	\end{rem}
	In Section \ref{rel_prev_work} we verify this result for the case $M = \comp \mathbb{P}^1, L = \mathcal{O}(1)$ by calculating the coefficients of the asymptotic expansion of $T(g^{FS}, h^{\mathcal{O}(p)})$ as $p \to + \infty$, for the Fubini-Study metric $g^{FS}$. In Section \ref{rel_prev_work} we also discuss the informal relation of our result with the arithmetic Riemann-Roch theorem of Gillet-Soulé \cite{GilSoul92}. We also make a connection with \cite{KlMa}, where Klevtsov-Ma-Marinescu-Wiegmann conjectured \cite[p.839]{KlMa} the coefficient of the term $\log p$ and the constant term for Riemann surfaces. As it turns out, their conjecture is true for $\log p$, but not for the constant term, see Section \ref{rel_prev_work}.
	\par Our last result (cf. Theorem \ref{thm-gen_asympt_orbi_gen} for a precise statement) is a generalization of Theorem \ref{thm-gen_asympt_zet} to the orbifold's case. Let $(\mm, g^{T \mm}, \Theta)$ be a compact effective Hermitian orbifold with strata $\Sigma \mm$ (see Definition \ref{defn_strata}). We denote by $\Sigma \mm^{[j]}$ for $j \in J$ the connected components of $\Sigma \mm$, by $n_j$ it's dimension.
	Let $(\ee, h^{\ee})$ be a proper holomorphic Hermitian orbifold vector bundle (see Definition \ref{defn_orbi_v_b}) on $\mm$ and let $(\lin,h^{\lin})$ be a proper Hermitian positive orbifold line bundle on $\mm$.
	\begin{thm}\label{thm-gen_asympt_orbi}
		There are local coefficients $\widetilde{\alpha_i}, \widetilde{ \beta_i }\in \real$ and $m_j \in \nat, \gamma_{j,i}, \kappa_{j,i} \in \real, j \in J, i \in \nat$ such that we have the following asymptotic expansion for any $k \in \nat$, as $p \to + \infty$
		\begin{multline}\label{eqn_orbi_asympt}
		- 2 \log T (g^{T \mm},h^{\lin^{p} \otimes \ee}) = 
		 \sum_{i = 0}^{k} p^{n-i} \big( \widetilde{\alpha_i} \log p + \widetilde{\beta_i} \big)\\ 
		+  \sum_{i=0}^{k + n_j - n}  \sum_{j \in J} \frac{ p^{n_j - i}}{m_j} e^{\imun \theta_j p} \big( \gamma_{j,i} \log p + \kappa_{j,i}  \big) +  o(p^{n-k}). 
		\end{multline}
		The values $\theta_j, \gamma_{j,i}, \kappa_{j,i}, m_j$ depend only on the local geometry around the singular set of $\mm$, and
		\begin{equation}\label{coroll}
			 \widetilde{\alpha}_0 = \frac{n \, \rk{\ee}}{2} \int_\mm \frac{\omega^n}{n!}, \quad
			 \widetilde{\beta}_0 = \frac{\rk{\ee}}{2} \int_\mm \log \Big( \det \frac{\matcirc{R^{\lin}}}{2 \pi} \Big) \frac{\omega^n}{n!},
		\end{equation}
		where $\tilde{\omega}$ and $\matcirc{R^{\lin}}$ are the orbifold analogues of (\ref{defn_omega}) and (\ref{defn_R_L_0}). There are $c_j \neq 0$ such that
		\begin{equation}\label{eqn_gamma_form}
		\textstyle \gamma_{j, 0} 
		 =
  			\begin{cases} 
      			\hfill c_{j} \int_{\Sigma \mm^{[j]}} \frac{\tilde{\omega}^{n-1}}{(n-1)!},  & \text{ if ${\rm{codim}} \, \Sigma \mm^{[j]} = 1$, } \\
      			\hfill 0,  & \text{ otherwise.} \\
 			\end{cases}
		\end{equation}
		Similarly to the manifold's case, the constants $\widetilde{\alpha}_i$, $\gamma_{j,i}$ do not depend on $g^{T \mm}$ ,$h^{\lin^{p}}$, $h^{\ee}$.
		When $\Theta = \omega$, $\widetilde{\alpha}_1, \widetilde{\beta}_1$ are given by (\ref{eqn_alpha_1}) and (\ref{eqn_beta_1}) after replacing $M$ by $\mm$. 
	\end{thm}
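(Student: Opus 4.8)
The plan is to deduce Theorem \ref{thm-gen_asympt_orbi} as a globalization of the local heat-kernel analysis already used for the manifold case, with the extra contribution coming from the orbifold singular strata. The starting point is the Mellin-transform description of the analytic torsion: $-2\log T(g^{T\mm},h^{\lin^p\otimes\ee})$ is the derivative at $s=0$ of a zeta function built from $\str{N_V \exp(-u\laplcomp^{\lin^p\otimes\ee})}$, so everything reduces to understanding the small-$u$ and large-$p$ behaviour of this supertrace. On an orbifold the heat kernel splits, via the covering groupoid description, into a ``trivial'' part indexed by the identity element of each local group and a sum of contributions indexed by the non-trivial group elements, which are localized on the strata $\Sigma\mm^{[j]}$; this is the orbifold heat kernel expansion of Donnelly/Ma. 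First I would invoke the manifold result (Theorem \ref{thm-gen_asympt_zet} and the explicit computations giving $\alpha_i,\beta_i$) applied formally to the smooth part: by the locality asserted there, the contribution of the identity sector produces exactly the terms $\sum_i p^{n-i}(\widetilde\alpha_i\log p+\widetilde\beta_i)$, with $\widetilde\alpha_0,\widetilde\beta_0$ as in (\ref{coroll}) and $\widetilde\alpha_1,\widetilde\beta_1$ as in (\ref{eqn_alpha_1})--(\ref{eqn_beta_1}) with $M$ replaced by $\mm$, since those coefficients are integrals of locally defined Chern--Weil densities that make sense verbatim on the orbifold away from a measure-zero set.

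The second step, which is the genuinely new content, is to analyze the contribution of the non-trivial sectors. Near a point of a stratum $\Sigma\mm^{[j]}$ the orbifold looks like $\comp^{n-n_j}/G\times(\text{smooth }n_j\text{-dimensional piece})$ with $G$ a finite group acting on $\comp^{n-n_j}$; the element $g\in G\setminus\{1\}$ acts on $\lin^p$ near the fixed locus by a character, which after taking the $p$-th power produces the oscillatory factor $e^{\imun\theta_j p}$ with $\theta_j$ the corresponding weight (times $p$), and $m_j$ is the order of the relevant subgroup. I would rescale the Kodaira Laplacian as $\laplcomp^{\lin^p\otimes\ee}/p$ exactly as in the smooth case, and apply the Bismut--Lebeau / Ma--Marinescu localization (now relative to the fixed-point set of $g$) to get a full asymptotic expansion of the $g$-twisted supertrace; integrating over $\Sigma\mm^{[j]}$ and running the Mellin transform gives the terms $\sum_i \frac{p^{n_j-i}}{m_j}e^{\imun\theta_j p}(\gamma_{j,i}\log p+\kappa_{j,i})$, with the range of $i$ dictated by $n_j$ versus $n$. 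The leading coefficient $\gamma_{j,0}$ is computed from the model operator on $\comp^{n-n_j}/G$; as in the smooth case the top-order term of the zeta derivative is a dimension-type count, which vanishes unless the transverse dimension $n-n_j$ is $1$, i.e. unless $\mathrm{codim}\,\Sigma\mm^{[j]}=1$, giving (\ref{eqn_gamma_form}) with $c_j$ the (nonzero) model constant; the $g$-independence (really $g^{T\mm},h^{\lin},h^{\ee}$-independence) of $\widetilde\alpha_i$ and $\gamma_{j,i}$ follows because the $\log p$-coefficients at each order come from the constant term in the local heat expansion, which the general theory identifies with a topological/index density insensitive to the metric data.

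The main obstacle I expect is controlling the interchange of the three limiting operations — the Mellin transform/analytic continuation in $s$, the small-time heat expansion, and the large-$p$ expansion — uniformly on the orbifold, in particular getting the uniform off-diagonal heat-kernel estimates on each twisted sector and checking that the error really is $o(p^{n-k})$ after summing over all strata and all non-trivial group elements. This is where one must be careful: the local model computation near $\Sigma\mm^{[j]}$ must be matched to a genuine global estimate (a Dirac-type finite-propagation-speed argument plus the spectral gap for the rescaled Laplacian), and the presence of the parameter $p$ inside oscillatory characters means one cannot simply quote the $p$-independent orbifold heat expansion; one needs the $p$-dependent rescaled version. Once the twisted-sector expansion is in place with explicit leading term, assembling (\ref{eqn_orbi_asympt}) and reading off (\ref{coroll}), (\ref{eqn_gamma_form}) and the stated $\widetilde\alpha_1,\widetilde\beta_1$ from the already-established smooth formulas is routine bookkeeping.
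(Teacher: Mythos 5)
Your overall strategy is the one the paper follows: split the supertrace of the rescaled heat kernel into an identity sector (treated exactly as in the manifold case) plus twisted sectors localized on the strata via finite propagation speed, extract the oscillatory factors $e^{\imun\theta_j p}/m_j$ from the action of $g_j$ on $\lin^p$, control the twisted sectors by off-diagonal estimates and a second rescaling in the normal directions, and then run the same Mellin-transform bookkeeping. This is precisely the paper's decomposition $A(p,u)+B(p,u)$ in Lemma \ref{lem_M_sing_kernel} together with Lemmas \ref{lem_aux_0}--\ref{lem_aux_4}, and your identification of the main difficulty (uniform small-$u$ and large-$u$ estimates on the twisted sectors so that the three limits commute) is exactly where the paper's technical work sits; the computation of $\gamma_{j,0}$ via the transverse model (Mehler kernel on the normal fibre, producing a factor vanishing at $u=0$ unless the codimension is $1$) also matches (\ref{form_c_j_0}).

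The one step that does not work as you state it is the metric-independence of $\widetilde{\alpha}_i$ and $\gamma_{j,i}$. You argue that the $\log p$-coefficients come from ``the constant term in the local heat expansion, which the general theory identifies with a topological/index density insensitive to the metric data.'' The constant term $a_i^{[0]}(x)$ (and its twisted analogues) is \emph{not} a metric-independent density in general — only its integral against the full supertrace without the number operator $N$ is topological; with the weight $N$ inserted, the local density genuinely depends on the metrics. The paper instead proves independence through the anomaly formula: the variation of $-2\log T$ under a change of $g^{T\mm},h^{\lin},h^{\ee}$ is expressed by \cite[(1.117)]{BGS3} (and Ma's orbifold version) in terms of quantities whose large-$p$ expansions contain only powers $p^i$ and $p^i e^{\imun\theta_j p}$, never $\log p$; hence the $\log p$-coefficients cannot move. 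Without this (or an equivalent) argument, that part of the statement is unproved in your plan.
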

	\begin{cor}
		The set $\{ T (g^{T \mm},h^{\lin^{p} \otimes \ee}) : p \in \nat \}$ detects the singularities of codimension 1.
	\end{cor}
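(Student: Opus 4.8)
The plan is to read off from the asymptotic expansion (\ref{eqn_orbi_asympt}) of Theorem~\ref{thm-gen_asympt_orbi} a scalar sequence, built purely out of $t_p:=T(g^{T\mm},h^{\lin^p\otimes\ee})$, whose convergence is equivalent to $\mm$ having no codimension-one singular stratum. Put $n=\dim_\comp\mm$. As $\Sigma\mm$ has complex codimension $\ge1$, we have $n_j\le n-1$ for all $j\in J$, so with $k=1$ in (\ref{eqn_orbi_asympt}) the orbifold double sum collapses to the $j$ with $n_j=n-1$ and $i=0$, giving
\[
-2\log t_p=\widetilde\alpha_0\,p^{n}\log p+\widetilde\beta_0\,p^{n}+\bigl(\widetilde\alpha_1+\psi(p)\bigr)p^{n-1}\log p+O\!\left(p^{n-1}\right),
\]
where $\psi(p):=\sum_{j\in J:\,n_j=n-1}\frac{\gamma_{j,0}}{m_j}\,e^{\imun\theta_j p}$. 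Since $\widetilde\alpha_0=\tfrac{n\,\rk\ee}{2}\int_\mm\omega^n/n!>0$, the integer $n$, then $\widetilde\alpha_0$ and $\widetilde\beta_0$, are recovered from $(t_p)$ by iterated limits; and since $-2\log t_p$ is eventually strictly increasing (first difference $\sim\widetilde\alpha_0 n\,p^{n-1}\log p$), the sequence itself, hence all the above, is recovered from the unordered set $\{t_p\}$ up to an index shift irrelevant to convergence. Thus $f(p):=\bigl(-2\log t_p-\widetilde\alpha_0p^n\log p-\widetilde\beta_0p^n\bigr)/(p^{n-1}\log p)$ is a function of $\{t_p:p\in\nat\}$, and $f(p)=\widetilde\alpha_1+\psi(p)+o(1)$.

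If $\mm$ carries no codimension-one stratum, the index set of $\psi$ is empty, so $\psi\equiv0$ and $f(p)\to\widetilde\alpha_1$ converges. Suppose instead some component $\Sigma\mm^{[j_0]}$ has complex codimension $1$. Then $\psi$ is a non-constant bounded $p$-periodic sequence, which forces $f(p)=\widetilde\alpha_1+\psi(p)+o(1)$ to diverge. Indeed: $\gamma_{j_0,0}\ne0$, because by (\ref{eqn_gamma_form}) it equals $c_{j_0}\ne0$ times $\int_{\Sigma\mm^{[j_0]}}\tilde\omega^{n-1}/(n-1)!>0$, the integral being positive since $\tilde\omega=c_1(\lin,h^\lin)$ restricts to a positive $(1,1)$-form on the $(n-1)$-dimensional complex suborbifold $\Sigma\mm^{[j_0]}$; the frequency $\theta_{j_0}$ satisfies $\theta_{j_0}\in2\pi\rat$, $\theta_{j_0}\notin2\pi\integ$, being an argument of the action of a non-trivial isotropy element of $\Sigma\mm^{[j_0]}$ on $\lin$ — non-trivial because $\lin$ is a \emph{proper} orbifold line bundle, so the local isotropy acts non-trivially on $\lin$; and the $\theta_{j_0}$-Fourier component of $\psi$, namely $\sum_{j:\theta_j=\theta_{j_0}}\gamma_{j,0}/m_j$, does not vanish — using that the universal constants $c_j$ have a common sign and the integrals $\int_{\Sigma\mm^{[j]}}\tilde\omega^{n-1}/(n-1)!$ are all positive. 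As $e^{\imun\theta_{j_0}}$ is a non-trivial root of unity, $\psi$ — a finite sum of exponentials of $p$ with rational frequencies — is periodic and has a non-constant mode, hence is non-constant, hence divergent.

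Combining the two cases, $(f(p))_p$ converges if and only if $\mm$ has no codimension-one singular stratum; since $f$ depends only on $\{t_p:p\in\nat\}$, this is the asserted detection. The main obstacle, everything else being bookkeeping with (\ref{eqn_orbi_asympt}) and Theorem~\ref{thm-gen_asympt_orbi}, is the non-cancellation in $\psi$: merely knowing $\gamma_{j_0,0}\ne0$ does not exclude that several codimension-one components sharing a common frequency cancel, so one must appeal to the structural form of $\gamma_{j,0}$ — the universal constant $c_j$ of a fixed sign, and positivity of $\int_{\Sigma\mm^{[j]}}\tilde\omega^{n-1}/(n-1)!$ — rather than just its non-vanishing.
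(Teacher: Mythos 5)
The paper states this corollary without proof, as an immediate reading of (\ref{eqn_gamma_form}); your attempt to turn ``detects'' into a precise criterion (convergence of a normalized sequence built from $\{t_p\}$) is a reasonable and more ambitious reading, and most of the bookkeeping — collapsing the double sum for $k=1$, the sign argument showing that the $\gamma_{j,0}$ for codimension-one strata are all nonzero of a common sign and hence cannot cancel within a fixed Fourier mode — is sound and consistent with (\ref{eqn_gamma_form}), (\ref{form_c_j_final}) and (\ref{form_c_j_0}).

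There is, however, a genuine gap at the one step your whole case distinction hinges on: the claim that $\theta_{j_0}\in 2\pi\rat\setminus 2\pi\integ$ ``because $\lin$ is a proper orbifold line bundle.'' Properness (Definition \ref{defn_orbi_v_b}) only says $K_U^{\lin}=\ker\big(G_U^{\lin}\to{\rm Diffeo}(\tilde U)\big)=\{1\}$, i.e.\ no group element acts trivially on the base chart while acting on the fibers; it imposes nothing on the character by which the isotropy $G_x$ acts on the fiber $\lin_x$ over a fixed point. A non-trivial $g_j$ stabilizing a codimension-one stratum can perfectly well act as the identity on $\lin_x$, so that $\theta_{j_0}\in 2\pi\integ$: e.g.\ $\mm=(\comp\mathbb{P}^1\times C)/(\integ/2)$ with the involution $[z_0:z_1]\mapsto[z_0:-z_1]$ on the first factor and $\lin=\mathcal{O}(2)\boxtimes \lin_C$ with the induced equivariant structure — the fixed divisor is codimension one and the action on the fiber of $\mathcal{O}(2)$ at the fixed points is trivial. (The paper's own Remark~1 after Theorem \ref{thm-gen_asympt_orbi}, which passes to powers $p=qk$ precisely to force $q\theta_j\in 2\pi\nat$, presupposes that $\theta_j$ is in general not an integer multiple of $2\pi$, but it does not forbid the degenerate case above.) In that situation your $\psi$ is a nonzero \emph{constant}, $f(p)$ converges to $\widetilde\alpha_1+\sum\gamma_{j,0}/m_j$, and your criterion reports ``no codimension-one stratum.'' To salvage the statement in this case one must detect the singularity not through oscillation but through the \emph{value} of the $p^{n-1}\log p$ coefficient: since every $\gamma_{j,0}$ is strictly negative (by (\ref{form_c_j_final}) and positivity of $\int_{\Sigma\mm^{[j]}}\tilde\omega^{n-1}/(n-1)!$), the coefficient is strictly smaller than the $\widetilde\alpha_1$ predicted by (\ref{eqn_alpha_1}) for the underlying geometry — but that comparison uses data beyond the bare set $\{t_p\}$, so you should either weaken your formulation of ``detects'' accordingly or supply an argument ruling out $\theta_{j_0}\in 2\pi\integ$, which properness alone does not.
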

	\begin{rem}
		1. Since for the stabilizers $G_x$ of $x \in \mm$, there are only finitely many possible values of $\{ |G_x|, x \in M \}$, one can take $q \in \nat$, such that $G_x$ acts as identity on $\lin^q_{x}$ for any $x \in M$; thus, $q \theta_j \in 2 \pi \nat$ (see (\ref{defn_theta_j})) and the asymptotic expansion for $p = q k, k \in \nat$ has only terms $p^{n-i} \log p, p^{n-i}$ for $i \in \nat$ in (\ref{eqn_orbi_asympt}).
		\par 2. We see that Theorem \ref{thm-gen_asympt_orbi} is a generalization of Theorem \ref{thm-gen_asympt_zet}, but to facilitate, we present firstly a proof of Theorem \ref{thm-gen_asympt_zet} and then explain the necessary modifications to get Theorem \ref{thm-gen_asympt_orbi}.
		\par 3. The coefficients $\widetilde{\alpha_i}, \widetilde{ \beta_i }$ are the orbifold's versions of $\alpha_i, \beta_i$. Rigorously, this means that each $\alpha_i, \beta_i$ is an integral of a local quantity and $\widetilde{\alpha_i}, \widetilde{ \beta_i }$ are just the integrals of the same quantities defined in an orbifold chart. Thus, by Theorem \ref{thm-gen_asympt_orbi} we see that if the singularities of $\mm$ appear in codimension at least 2, the coefficients of $p^{n-1} \log p, p^{n-1}$ of the expansion of $\log T (g^{T \mm},h^{\lin^{p} \otimes \ee})$ are given by the same formulas as in Theorem \ref{dzeta_asympt}. In general, we may express the coefficient $\kappa_{j,0}$ with the help of Mellin transform (see Theorem \ref{thm-gen_asympt_orbi_gen}, (\ref{form_c_j_0})), but we don't pursue the simplification of this formula.
	\end{rem}
	When an orbifold $\mm$ is obtained as a quotient of a transversal locally free $CR$ $S^1$-action on a smooth CR manifold, Theorem \ref{thm-gen_asympt_orbi} gives a refinement of the main result of Hsiao-Huang \cite{Hsiao16}, see Section \ref{subsect_proof_orbi} for detailed explanation.
	\par Now we describe some history of related problems and propose some directions in which our results might be useful. In the article \cite{BVasSym}, Bismut-Vasserot generalized \cite{BVas} by computing the asymptotic expansion of $\log T(g^{TM}, h^{E \otimes {\rm Sym}^p \zeta })$, as $p \to + \infty$, where $(\zeta, g^{\zeta})$ is a Hermitian Griffiths-positive vector bundle and $(E, h^E)$ is a holomorphic Hermitian vector bundle. Recently, Puchol \cite{Puchol2016} obtained a generalization of this result to the family case. Let's describe his result more precisely. 
	\par Let $\pi : X \to B$ be a proper holomorphic Kähler fibration with a compact fiber $M$ in the sense of \cite[Definition 1.4]{BGS2}, i.e. there exists a closed (1,1)-form $\omega_{\text{fam}}$ such that its restriction on the fibers of $\pi$ gives a Kähler form. Let $(E, h^E)$ be a holomorphic Hermitian vector bundle over $X$. We suppose that the direct image sheaf $R^{\bullet} \pi_* E$ is locally free, i.e. the Dolbeaut cohomology of $E$ along the fibers is a holomorphic bundle.
	In \cite{BisKoh}, Bismut-Köhler introduced the torsion form $\mathcal{T}(\omega_{\text{fam}},h^{E})$, which is a smooth differential form on $B$, satisfying
	\begin{align}
		&  \mathcal{T}(\omega_{\text{fam}},h^{E})^{[0]} = - 2 \log T(g^{TM},h^{E}), \text{ where $[0]$ denotes 0-degree component},\\ \nonumber
		&  \frac{\dbar \partial}{2 \pi \imun} \mathcal{T}(\omega_{\text{fam}},h^{E}) = \sum_i (-1)^i \ch (H^i(M,E|_{M}), h^{H^i(M,E|_{M})}) - \int_M \td (TM, h^{TM}) \ch(E, h^E),
	\end{align}
	where $h^{H^{\bullet}(M,E|_{M})}$ is $L^2$-metric, and $\ch (\cdot, \cdot)$, $\td (\cdot, \cdot)$ are the corresponding Chern and Todd forms. In particular, we see that the second identity gives a refinement of Grothendieck-Riemann-Roch theorem on the level of differential forms. In \cite{Freix12} Freixas-Burgos-Liţcanu gave an axiomatic definition of those torsion forms and later used this result in \cite{Freix14} to generalize the arithmetic Grothendieck-Riemann-Roch theorem. See \cite{MaillovGreen} and \cite{GilSoul92}, \cite{GilSoRo08} for another interesting applications of torsion forms in Arakelov geometry.
	\par Puchol in \cite{Puchol2016} obtained the first term of the asymptotic expansion of $\mathcal{T}(\omega_{\text{fam}},F_p)$ when $F_p$ is the direct image of the sheaf associated to the increasing powers $p$ of a line bundle, which is positive along fibers. The main result of Bismut-Vasserot in \cite{BVasSym} follows from considering the direct image of the canonical line bundle on the projective fibration associated to the vector bundle $F_p$ on a ``family'' of manifolds over a point. In \cite[\S 3]{BismutBC13}, Bismut generalized the definition of torsion forms to the case of a holomorphic fibration (which is not necessarily Kähler). It is natural to expect that one can combine our result with \cite[\S 3]{BismutBC13} and \cite{Puchol2016} to get a general asymptotic expansion of the torsion forms for a holomorphic fibration. However in this paper we only work with the analytic torsion under the assumptions of Bismut-Vasserot in \cite{BVas}. We hope, in this way we can present clearly the ideas and avoid to introduce the sophisticated techniques as Toeplitz operators (cf. \cite[\S 7]{MaHol}), Bismut superconnection \cite{Bis86}, etc. We hope to come back to the general case very soon.
	\par A similar question in realms of the real analytic torsion was considered in \cite{MulAsympt}, \cite{BisMaZha17}. See also \cite{Bruning12}, \cite{Brav08} for related topics. For the analytic torsion on orbiolds, see \cite{MaOrbif2005}, \cite{Freixas16}. See \cite{Yoshi2004}, \cite{YoshiK3} for the application of the analytic torsion to the moduli space of K3 surfaces and \cite{MaillotBCOV} for the application in Calabi-Yau theefolds. There are many applications of the analytic torsion in Arakelov geometry, see \cite{KohRoss01} and later works of these authors, where they proved Lefschetz fixed point formula in Arakelov geometry. The results on the equivariant analytic torsion play an important role in their proof.
	\par This article is organized as follows. 
		In Section 2 we recall some properties of the Mellin transform and the definition of the holomorphic analytic torsion. We give a proof of Theorem \ref{thm-gen_asympt_zet}, relying on some technical tools, which we prove later in Section 3. In Section 3 we also explain some facts about diagonal and off-diagonal expansion of the heat kernel of the operator $\laplcomp^{L^p \otimes E} / p$.
		In Section 4 we prove Theorem \ref{dzeta_asympt}, we compare it with \cite{KlMa} and we give a relation to the arithmetic Riemann-Roch theorem.
		In Section 5 we recall the basics of the orbifolds, we prove Theorem \ref{thm-gen_asympt_orbi} and we describe a connection between Theorem \ref{thm-gen_asympt_orbi} and \cite{Hsiao16}. 
	\par {\bf{Notation.}} In this article denote by $\nat^*$ the set $\nat \setminus \{0\}$, by $T^{(1,0)}M$ the holomorphic tangent bundle of $M$ (see \S \ref{sect_2}) and by $T^{(0,1)}M := \overline{T^{(1,0)}M}$ the antiholomorphic tangent bundle,
	\begin{align*}
		&T^{*(0,1)}M = (T^{(0,1)}M)^*,  \qquad 
		&&\dfor[(0,j)]{M, E} = \mathscr{C}^{\infty} \big( M, \Lambda^{j} (T^{*(0,1)}M) \otimes E \big), \\
		&\dfor[(0,\bullet)]{M, E} = \oplus \dfor[(0,j)]{M, E},   \qquad 
		&&\dfor[(0,>0)]{M, E} = \oplus_{j > 0} \dfor[(0,j)]{M, E} . 
	\end{align*}
		Let $N$ be the number operator on the $\integ$-graded vector space $\dfor[(0, \bullet)]{M, E}$, i.e.
		\begin{equation}\label{defn_number_oper}
			N \cdot \alpha = j \alpha, \qquad \qquad \alpha \in \dfor[(0, j)]{M, E}.
		\end{equation}
		This induces a $\integ_2$-grading $\epsilon = (-1)^N$ on $\dfor[(0,\bullet)]{M, E}$. In general, let $A$ be an operator which acts on $\integ_2$-graded vector space $(V, \epsilon)$, its supertrace is defined as $\str{A} = \tr{\epsilon A}$. Sometimes, to make things more precise, we denote its trace/supertrace by ${\rm{Tr}}^{V} [A], {\rm{Tr}_s}^{V}[A]$.
	\par {\bf{Acknowledgements.}} This work is part of our PhD. thesis, which was done at Université Paris Diderot. We would like to express our deep gratitude to our PhD advisor Xiaonan Ma for his overall guidance, constant support and important remarks on the preliminary version of this article. 

\section{Asymptotics of heat kernels, Theorem \ref{thm-gen_asympt_zet}}\label{sect_2}
	This is an introductory section. In Section 2.1 we recall the definition of the holomorphic analytic torsion. In Section 2.2 we recall some machinery for studying it and we give a proof of Theorem \ref{thm-gen_asympt_zet}. Compared to \cite{BVas} and \cite[\S 5.4]{MaHol}, the major contribution of this section is Proposition \ref{prop-streng_infty}. 
	\subsection{Holomorphic analytic torsion}
		Before explaining our geometric situation, let's recall the Mellin transform:
		\begin{defn}[The Mellin transform]
		Let $f \in \cinf{]0, +\infty[}$ satisfies the following assumptions
		\par 1. There exists $ m \in \nat$ such that for any $ k \in \nat$, there is an asymptotic expansion as $t \to +0$
			\begin{equation}\label{eqn_mell_1}
				\textstyle f(t) = \sum_{i = -m}^{k} f_i t^i + o(t^k),
			\end{equation}
		\par 2. There are $\lambda, C > 0$ such that for $t \gg 1$
			\begin{equation}\label{eqn_mell_2}
				| f(t) | \leq C e^{-t \lambda}.
			\end{equation}		
		The Mellin transform of $f$ is the function $\mell{f}$, defined on the complex half-plane $\Re z > m$ by 
		\begin{equation}\label{defn_mell_form}
			\textstyle \mell{f}(z) :=  \frac{1}{\Gamma(z)} \int_0^{+ \infty} f(t) t^{z-1} \,dt.
		\end{equation}
		\end{defn}
		\noindent It is well-known that $\mell{f}$ extends holomorphically around $0$, and we have (cf. \cite[Lemma 9.35]{BGV})
			\begin{equation}\label{eq_mell_transform}
				\begin{aligned}
				& \textstyle \mell{f}(0) = f_0, \\
				& \textstyle \mell{f}'(0) = \int_0^{1} \big( f(t) -  \sum_{i = -m}^{0} f_i t^i \big) \frac{\,dt}{t} + 
				\int_1^{+ \infty} f(t) \frac{\,dt}{t} +  \sum_{i = -m}^{-1} \frac{1}{i}f_i - \Gamma'(1) f_0.
				\end{aligned}
			\end{equation}
		\begin{notat}\label{not_coefficients}
			Let's suppose that a function $f : ]0, +\infty[ \to \real$ satisfies (\ref{eqn_mell_1}). We denote $f_i$ by $f^{[i]}$.
		\end{notat}	
		\par 	Now let's recall the main object of this article: the analytic torsion.
		Let $(M, J)$ be a complex manifold with complex structure $J$. Let $g^{TM}$ be a Riemannian metric on $TM$ compatible with $J$, and let $\Theta = g^{TM}( J \cdot, \cdot) $ be the associated $(1,1)$-form. We call $(M, g^{TM}, \Theta)$ a Hermitian manifold. 
		\par Let $(M, g^{TM}, \Theta)$ be a compact Hermitian manifold of complex dimension $n$. The Riemann volume form $d v_M$ is given by
		\begin{equation}\label{defn_vol_form}
			d v_{M} := \tfrac{1}{n!}\Theta^n.
		\end{equation}
		Let's denote by $r^M$ the scalar curvature of $g^{TM}$ and by $\scal{\cdot}{\cdot}$ the $\comp$-linear extension of $g^{TM}$ to $TM \otimes_{\real} \comp$. We denote by $T^{(1,0)}M$ the $i$-eigenspace of $J \in \enmr{TM \otimes_{\real} \comp}$ and by $T^{(0,1)}M$ the $-i$-eigenspace. Then $g^{TM}$ induces a Hermitian metric $h^{T^{(1,0)}M}$ on $T^{(1,0)}M$ by the isomorphism $X \mapsto (X - i J X)/\sqrt{2}, X \in TM$. Let's denote by $R^{\det}$ the curvature of the Chern (Hermitian holomorphic) connection over $(\det T^{(1,0)}M, h^{\det})$, where $h^{\det}$ is the Hermitian metric on $\det T^{(1,0)}M$ induced by $h^{TM}$. In other words,
		\begin{equation}\label{defn_R_det}
			R^{T^{(1,0)}M} = (\nabla^{T^{(1,0)}M})^2, \quad	R^{\det} = \tr{R^{T^{(1,0)}M}},
		\end{equation}
		where $\nabla^{T^{(1,0)}M}$ is the Chern connection on $(T^{(1,0)}M, h^{T^{(1,0)}M})$.
		\par Now, let $E$ be a holomorphic vector bundle on $M$ with a Hermitian metric $h^E$. We call $(E, h^E)$ a holomorphic Hermitian vector bundle. We denote by $\nabla^E$ its Chern connection and by $R^E = (\nabla^E)^2$ its curvature.
	\par Let's denote by $\scal{\cdot}{\cdot}_{L^2}$ the $L^2$-scalar product on $\dfor[(0,\bullet)]{M,E}$, defined by
	\begin{equation}
		\scal{\alpha}{\alpha'}_{L^2} := \tinyint_M \scal{\alpha}{\alpha'}_h(x) \, d v_M(x), \quad \text{for any} \quad \alpha, \alpha' \in \dfor[(0,\bullet)]{M,E}, 
	\end{equation}
	where $\scal{\cdot}{\cdot}_h$ is the pointwise Hermitian product on $\Lambda (T^{*(0,1)}M) \otimes E$, induced by $h^{T^{(1,0)}M}$ and $h^E$.
	\par Let $\dbar{}^{E}$ be the Dolbeaut operator acting on the Dolbeaut complex $\dfor[(0,\bullet)]{M,E}$. We denote by $\dbar{}^{E*}$ the formal adjoint of $\dbar{}^{E}$ with respect to $\scal{\cdot}{\cdot}_{L^2}$. The Kodaira Laplacian is given by
	\begin{equation}
		\laplcomp^E := \dbar{}^{E} \, \dbar{}^{E*} + \dbar{}^{E*} \dbar{}^{E}.
	\end{equation}
	The operator $\laplcomp^E$ preserves the $\integ$-grading on $\dfor[(0,\bullet)]{M,E}$. 
	We also define
	\begin{equation}\label{defn_Dir_op}
		D^E := \sqrt{2}(  \dbar{}^{E} + \dbar{}^{E*} ), \quad \text{ then } \quad ( D^{E} )^2 = 2 \laplcomp^E.
	\end{equation}
		By Hodge theory, the operator $\laplcomp^E$ has finite dimensional kernel. We denote by $P$ the orthogonal projection onto this kernel and by $P^{\perp}= {\rm Id} - P$ the orthogonal projection onto its orthogonal complement.
		By the standard facts on heat kernels (see \cite[Theorem 2.30, Proposition 2.37]{BGV}), we can define the zeta-function:
		for $z \in \comp, \Re z > n$ we set
		\begin{equation}\label{defn_zeta_fun}
			\textstyle \zeta_E(z) :=  - \mell{\str{N \exp(-u \laplcomp^E)P^{\perp}}}.			
		\end{equation}
		\begin{defn}\label{defn-RS_anal_torsion}
			The analytic torsion of Ray-Singer of $(E, h^E)$ is defined as
			\begin{equation}
			\textstyle T(g^{TM}, h^{E}) := \exp  \left( - \tfrac{1}{2} \zeta_E'(0) \right).
			\end{equation}
		\end{defn}
		\begin{rem}
			Let $\det (\laplcomp^E|_{\Omega^i})$ be the regularized determinant of $\, \laplcomp^E|_{\dfor[(0,i)]{M}}$, then 
			\begin{equation}
				\textstyle T(g^{TM}, h^{E}) = \prod_i \det \big(\laplcomp^E|_{\Omega^i}\big)^{-(-1)^i i/2}.
			\end{equation}
		\end{rem}
	\subsection{Asymptotics of the analytic torsion on manifolds}\label{subsec_idea}
		In this section we present a proof of Theorem \ref{thm-gen_asympt_zet}. We follow closely the strategy of the proof of the main theorem in \cite{BVas} and we defer the proof of some technical details to Section \ref{subsec_proof_aux}.
		\par Let $(M, g^{TM}, \Theta)$ be a compact Hermitian manifold and let $(E, h^E)$, $(L, h^L)$ be holomorphic Hermitian vector bundles over $M$. We suppose that $(L, h^L)$ is a positive line bundle, i.e. 
		\begin{equation}\label{eqn_posit}
			R^L(U, \overline{U}) > 0, \quad \text{for any} \quad U \in T^{(1,0)}M. 
		\end{equation}
	We denote by $\laplcomp_p$ the Laplacian associated to $L^p \otimes E$ and by $\zeta_p, p \in \nat$ the zeta-function $\zeta_{L^p \otimes E}$.	 For $x, y \in M $, we denote by $\exp ( - u \laplcomp_p / p )(x,y)$ the smooth kernel with respect to the volume form $dv_M$ of the heat operator $\exp ( - u \laplcomp_p / p )$.
	\begin{sloppypar}
	\begin{thm}[{\cite[Theorem 4]{BVas}, \cite[Theorem 1.2]{MaBerg06}}]\label{thm-gen_asympt_exp}
		There are smooth sections $a_{i,u}(x)$, $i \in \nat$ of $\oplus_{l \geq 0}\enmr{\Lambda^{l}(T^{*(0, 1)}M)  \otimes E}$ over $M$ such that for every $u > 0$, we have
		\begin{equation}
			\textstyle \exp ( - u \laplcomp_p/p )(x,x) = \sum_{i = 0}^{k} a_{i,u}(x) p^{n-i} + O(p^{n-k-1}), \quad \text{as} \quad p \to +\infty,		
		\end{equation}
		and the estimate is uniform in $x \in M$ and $u$, as $u$ varies in a compact subspace of $]0, + \infty[$.
	\end{thm}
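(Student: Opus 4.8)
The plan is to reduce Theorem \ref{thm-gen_asympt_exp} to the near-diagonal asymptotics of the heat kernel of the rescaled Kodaira Laplacian $\laplcomp_p / p$, following the functional-analytic localization technique of Bismut--Lebeau as presented in Ma--Marinescu. First I would observe that by finite propagation speed for the wave operator associated with $D^E$ (see \eqref{defn_Dir_op}), the value $\exp(-u\laplcomp_p/p)(x,x)$ depends, modulo an error term $O(p^{-\infty})$ uniform in $x$ and locally uniform in $u$, only on the geometry of $(M, g^{TM}, \Theta)$, $(L, h^L)$ and $(E, h^E)$ in an arbitrarily small neighborhood of $x$. This lets me work in a coordinate ball around a fixed point $x_0 \in M$, trivialize $L$ and $E$, and pull everything back to $\real^{2n} \cong \comp^n$.

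Next I would perform the standard rescaling $Z \mapsto Z/\sqrt{p}$ on this coordinate chart, conjugating $\laplcomp_p/p$ by the dilation and by the fibrewise rescaling on $\Lambda^\bullet(T^{*(0,1)}M)$. The resulting family of operators $\mathcal{L}_p$ on $\real^{2n}$ admits, as $p \to +\infty$, a Taylor expansion in powers of $p^{-1/2}$, $\mathcal{L}_p = \mathcal{L}_0 + \sum_{i\ge 1} p^{-i/2}\mathcal{O}_i + O(p^{-\infty})$ in the appropriate sense on weighted Sobolev spaces, where $\mathcal{L}_0$ is the model harmonic-oscillator-type operator built from $\matcirc{R^L}$ (cf. \eqref{defn_R_L_0}). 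The heat kernel of $\mathcal{L}_0$ is explicitly computable (a Mehler-type formula), and its smooth kernel evaluated at the origin gives $a_{0,u}(x_0)$. Then, using the resolvent identity and contour integration $\exp(-u\mathcal{L}_p) = \frac{1}{2\pi\imun}\int_\Gamma e^{-u\lambda}(\lambda - \mathcal{L}_p)^{-1}\,d\lambda$ together with the expansion of $(\lambda - \mathcal{L}_p)^{-1}$ in powers of $p^{-1/2}$, I would extract the coefficients $a_{i,u}(x_0)$ as kernels of explicit operators obtained from $\mathcal{L}_0$ and finitely many $\mathcal{O}_j$; the half-integer powers $p^{n - i/2}$ with $i$ odd vanish by a parity argument in the $Z$-variable, leaving only integer powers $p^{n-i}$. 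Smoothness of $a_{i,u}$ in $x_0$ and the required uniformity follow because all constructions are smooth in the base point and the resolvent estimates are uniform; the local-uniformity in $u$ on compacts of $]0,+\infty[$ comes from the corresponding uniformity of the contour integral bounds.

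The main obstacle I anticipate is establishing the uniform Sobolev-norm estimates on $(\lambda - \mathcal{L}_p)^{-1}$ and on its derivatives in $p^{-1/2}$ with constants independent of $p$ and of the base point, together with the control of the remainder after truncating the expansion at order $k$ — this is precisely where the spectral gap of $\laplcomp_p$ on $\dfor[(0,>0)]{M,E}$ (a consequence of positivity \eqref{eqn_posit}) and the careful choice of weighted norms enter. Once these estimates are in place, passing from the near-diagonal kernel expansion back to the pointwise on-diagonal statement, and then checking that the error is genuinely $O(p^{n-k-1})$ uniformly, is routine. I would remark that this is exactly the content of \cite[Theorem 1.2]{MaBerg06} and \cite[\S 4]{BVas}, so in the write-up I would either cite these directly or sketch the rescaling and refer to \cite[\S 5.4]{MaHol} for the detailed resolvent analysis, emphasizing only the parity argument that kills the odd powers.
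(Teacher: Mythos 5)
Your proposal is correct and takes essentially the same route as the paper, which quotes this theorem from Bismut--Vasserot and Dai--Liu--Ma and then recalls precisely this machinery in Section 3: localization via finite propagation speed, the rescaling $t=1/\sqrt{p}$ producing $L_{2,x}^{t}$ with harmonic-oscillator model $L_{2,x}^{0}$ and its Mehler kernel, the resolvent expansion in $t$, and the parity argument killing the half-integer powers, yielding $a_{k,u}(x)=\tfrac{1}{(2k)!}\tfrac{\partial^{2k}}{\partial t^{2k}}\exp(-uL_{2,x}^{t}/2)(0,0)|_{t=0}$. The only slight imprecision is your attribution of the uniform resolvent estimates to the spectral gap: for this fixed-$u$ diagonal expansion an elliptic (G\aa{}rding-type) inequality for $L_{2,x}^{t}$ suffices, and the spectral gap is needed only later for the large-time estimates.
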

	\end{sloppypar}
	For the proof of the following proposition see Section \ref{subsec_proof_aux}.
	\begin{prop}\label{exp_a_i_u}
		There are smooth sections $a_i^{[j]}(x)$ of  $\oplus_{l \geq 0} \enmr{\Lambda^{l} (T^{*(0,1)}M) \otimes E}$ such that 
		\begin{equation}\label{exp_a_i_u_zero}
			\textstyle a_{i,u}(x) = \sum_{j = -n}^{k} a_i^{[j]}(x) u^j + o(u^k),
		\end{equation}
		as $u \to 0$, for any $k \in \nat$. Moreover, there are $c_i, d_i > 0$ such that for any $u \gg 1, x \in M$
		\begin{equation}\label{exp_a_i_u_infty}
			\big| a_{i,u}^{[>0]}(x) \big| \leq c_i \exp(- d_i u ),
		\end{equation}
		where $[>0]$ means the projection onto positive degree terms.
	\end{prop}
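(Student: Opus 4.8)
\textbf{Proof plan for Proposition \ref{exp_a_i_u}.}

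The plan is to obtain both the small-$u$ expansion \eqref{exp_a_i_u_zero} and the large-$u$ decay \eqref{exp_a_i_u_infty} from a localized rescaling analysis of the heat kernel of $\laplcomp_p/p$, in the spirit of \cite{BVas} and \cite[\S 5.4]{MaHol}. First I would recall that, after Ma-Marinescu's rescaling (choosing normal coordinates around a fixed $x_0$ and conjugating $\laplcomp_p/p$ by the dilation $Z \mapsto Z/\sqrt{p}$ together with a trivialization of $L^p\otimes E$), the operator converges as $p\to\infty$ to a model harmonic-oscillator-type operator $\mathscr{L}_0$ on $\comp^n$ built from $\matcirc{R^L}$; moreover the full off-diagonal expansion of $\exp(-u\laplcomp_p/p)$ in powers of $p^{-1/2}$ has coefficients $\mathscr{F}_{r,u}(Z,Z')$ which are smooth in $u\in]0,+\infty[$ and are of the form (polynomial in $Z,Z'$) $\times$ (kernel of an operator obtained from $\exp(-u\mathscr{L}_0)$ by finitely many applications of the resolvent and the perturbation terms). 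Restricting to the diagonal $Z=Z'=0$ and collecting even powers of $p^{-1/2}$ yields $a_{i,u}(x_0)=\mathscr{F}_{2i,u}(0,0)$. Thus the statement reduces to analyzing the small-$u$ and large-$u$ behavior of these model kernels.

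For \eqref{exp_a_i_u_zero}: each $\mathscr{F}_{2i,u}(0,0)$ is, by the above, a finite linear combination of quantities of the shape $\int \mathscr{G}_1(u_1)\cdots\mathscr{G}_\ell(u_\ell)$ over a simplex $\{u_1+\cdots+u_\ell=u\}$, where each $\mathscr{G}$ is the heat kernel $e^{-u_j\mathscr{L}_0}$ (or a derivative/polynomial-weighted version thereof) on $\comp^n$. Since $\mathscr{L}_0$ is an explicit quadratic operator (a direct sum of harmonic oscillators with frequencies the eigenvalues $a_i(x_0)$ of $\matcirc{R^L}/2\pi$ twisted by Clifford/creation-annihilation terms), its heat kernel on the diagonal is given by the Mehler formula and has an expansion in powers of $u$ starting at $u^{-n}$ (the $u^{-n}$ coming from the Euclidean/Gaussian factor); integrating the product of such Mehler kernels over the simplex preserves the property of having an asymptotic expansion in integer powers of $u$ down to $u^{-n}$. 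This gives the claimed $a_i^{[j]}(x_0)$ for $j\ge -n$, and one checks uniformity in $x_0$ from the smooth dependence of $\matcirc{R^L}(x_0)$ and the uniform remainder estimates in Theorem \ref{thm-gen_asympt_exp} and its off-diagonal refinement. Alternatively, and perhaps more cleanly, one invokes the classical local index / heat kernel expansion: $\exp(-u\laplcomp_p/p)(x,x)$ has for each fixed $p$ a small-$u$ asymptotic expansion $\sum_{j\ge -n} b_{j,p}(x)u^j$ with $b_{j,p}(x)$ a universal polynomial in the jets of the metric and curvatures (in particular polynomial in $p$ of degree $\le n+j$), and then one shows that the coefficient of $p^{n-i}$ in $b_{j,p}(x)$ is precisely $a_i^{[j]}(x)$ by matching the two expansions (this uses that the double limit $p\to\infty$, $u\to 0$ can be taken in either order, which is exactly the content of the uniform estimates). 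I would present the argument via this second route since it makes the polynomiality in $p$ and hence the vanishing for $j<-n$ transparent.

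For \eqref{exp_a_i_u_infty}: the key point is the spectral gap. Bismut-Vasserot's estimate (see \cite[Theorem 1.1]{BVas}, or \cite[Theorem 1.6.1, 5.4.14]{MaHol}) gives that $\laplcomp_p/p$ restricted to the orthogonal complement of $\ker\laplcomp_p$ in positive degrees has a spectral gap bounded below by a constant $\mu_0>0$ independent of $p$; more precisely the spectrum of $\laplcomp_p/p$ on $\dfor[(0,>0)]{M,E}$ lies in $[\mu_0,+\infty[$ for $p$ large, while on $\dfor[(0,0)]{M,E}$ the small eigenvalues cause no problem because the number operator $N$ kills the degree-zero part — and this is why it is the projection $[>0]$ onto positive degree that appears. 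Hence $\|\exp(-u\laplcomp_p/p)|_{\dfor[(0,>0)]{M,E}}\|\le e^{-u\mu_0/2}\cdot\|\exp(-\mu_0\laplcomp_p/(2p))\|$ and, feeding this through the uniform on-diagonal heat kernel estimates of Theorem \ref{thm-gen_asympt_exp} together with a standard argument bounding the pointwise kernel by the operator norm on a slightly earlier time (semigroup property: $e^{-u\laplcomp_p/p}(x,x)=\|e^{-u\laplcomp_p/(2p)}(x,\cdot)\|^2_{L^2}\le C e^{-u\mu_0/2}$), we get that the positive-degree part of $\exp(-u\laplcomp_p/p)(x,x)$ is $O(p^n e^{-d u})$ uniformly in $x$ for some $d>0$ and all $u\ge 1$. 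Since $a_{i,u}^{[>0]}(x)$ is extracted as the coefficient of $p^{n-i}$ in this quantity via the uniform expansion of Theorem \ref{thm-gen_asympt_exp} (valid on compact $u$-intervals; one covers $[1,\infty[$ by such intervals and tracks the constants), the exponential bound $|a_{i,u}^{[>0]}(x)|\le c_i e^{-d_i u}$ follows, with $d_i$ any constant $<d$ to absorb the finitely many correction terms.

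The main obstacle is making the large-$u$ estimate \eqref{exp_a_i_u_infty} \emph{uniform in $u$ on the unbounded interval $[1,+\infty[$}, since Theorem \ref{thm-gen_asympt_exp} is stated only with uniformity on compact $u$-subsets; one must therefore combine the $p$-uniform spectral gap with a slightly more careful version of the heat-kernel expansion in which the remainder is controlled by $e^{-du}$ times a power of $p$, rather than merely $O(p^{n-k-1})$ with $u$-dependent constants. This is precisely the kind of refinement carried out in \cite[\S 5.4]{MaHol} for the leading term, and the work here is to propagate it through the higher-order coefficients $a_{i,u}$ — this I would defer to Section \ref{subsec_proof_aux} as promised, using the localization of the problem and the resolvent expansion to reduce everything to norm estimates of model operators with explicit Gaussian decay.
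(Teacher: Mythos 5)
Your handling of the small-time expansion (\ref{exp_a_i_u_zero}) is essentially sound: your ``first route'' is the paper's argument, which realizes $a_{i,u}(x)$ as the $2i$-th $t$-derivative at $t=0$ of the rescaled kernel $\exp(-uL_{2,x}^{t}/2)(0,0)$ (formula (\ref{eqn_a_i_u_as_derivative})) and invokes a small-time heat expansion whose coefficients $B_{t,r}(x)$ are $\mathscr{C}^{m}$ jointly in $(x,t)$ up to $t=0$ with an $(x,t)$-uniform remainder (Theorem \ref{eqn-L^t-general_est}, taken from \cite[(5.5.91)]{MaHol}), so that $a_i^{[j]}=\tfrac{1}{(2i)!}\partial_t^{2i}B_{t,j}|_{t=0}$. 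Your preferred ``second route'' (matching the fixed-$p$ local heat coefficients, which are polynomial in $p$, against the large-$p$ expansion) is fine in spirit, but note that the matching step is not a consequence of Theorem \ref{thm-gen_asympt_exp}, whose remainder is uniform only on compact $u$-subsets of $]0,+\infty[$; what justifies it is precisely the joint uniform expansion above, so the two routes are not genuinely different.

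The genuine gap is in (\ref{exp_a_i_u_infty}) for $i\geq 1$. Write $f_p(u):=p^{-n}\str{N\exp(-u\laplcomp_p/p)(x,x)}$ (or its positive-degree part). Your main argument — the global spectral gap of $\laplcomp_p/p$ on $\dfor[(0,>0)]{M,E}$ gives $|f_p(u)|\leq Ce^{-du}$ uniformly in $p$ and $u\geq 1$, and then one ``extracts the coefficient of $p^{-i}$'' — only yields the bound for $i=0$, since $a_{0,u}^{[>0]}=\lim_p f_p(u)$. For $i=1$ one would need $a_{1,u}^{[>0]}=\lim_p p\big(f_p(u)-a_{0,u}^{[>0]}(x)\big)$, and the uniform exponential bounds on $f_p$ and $a_{0,u}^{[>0]}$ give only $2Cpe^{-du}$, which is useless; because the remainder in Theorem \ref{thm-gen_asympt_exp} is not uniform in $u$ on $[1,+\infty[$, no decay of the higher coefficients can be extracted this way. (The paper itself remarks that ``the original spectral gap approach works only for $k=0$.'') You correctly flag this as the main obstacle and propose to defer a refinement with remainder $O(p^{-k}e^{-du})$; but such a refinement is essentially Proposition \ref{prop-streng_infty}, whose proof in the paper \emph{uses} (\ref{exp_a_i_u_infty}), so you would have to take care not to argue circularly. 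The mechanism that actually closes the gap is different from what you outline: one uses the spectral gap (\ref{eqn_L_t_spec_gap}) of the \emph{localized} rescaled operator $L_{2,x}^{t}$, uniform in $t\in[0,t_0]$, together with the functional-calculus estimates of \cite[Corollary 4.2.6]{MaHol}, to obtain $\big|\exp(-uL_{2,x}^{t,>0}/2)(0,0)\big|_{\mathscr{C}^{m}(M\times[0,t_0])}\leq Ce^{-cu}$ for all $u\geq u_0$ (Theorem \ref{thm-exp_proj_expansion}); since $a_{i,u}^{[>0]}(x)$ is a $t$-derivative at $t=0$ of this family, the $\mathscr{C}^{m}$-in-$t$ exponential bound immediately gives (\ref{exp_a_i_u_infty}) for every $i$.
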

	The estimation (\ref{exp_a_i_u_infty}) was proved in \cite[Theorem 1.2]{MaBerg06}.
	Now we can restate Theorem \ref{thm-gen_asympt_zet} in a precise way
	\begin{thm}\label{thm-gen_asympt_zet_gen}
		There are local coefficients $\alpha_i, \beta_i \in \real, i \in \nat$ such that for any $k \in \nat$, as $p \to + \infty$
		\begin{equation}
			\textstyle \zeta_{p}'(0) = \sum_{i = 0}^{k} p^{n-i} \left( \alpha_i \log p + \beta_i \right) + o(p^{n-k}),
		\end{equation}
		as $p \to \infty$, where
		\begin{equation}
		\begin{aligned}\label{thm-a_i-b_i_formula}
			\alpha_i &= \textstyle \int_M \str{N a_i^{[0]}(x)} \, dv_M(x),  \quad
			\beta_i &=  \textstyle -{\rm{M}}_u \big[ \int_M \str{N a_{i,u}(x)} \, dv_M(x) \big]'(0).
		\end{aligned}
		\end{equation}
	\end{thm}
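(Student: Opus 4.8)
The statement we must prove is Theorem \ref{thm-gen_asympt_zet_gen}, which makes Theorem \ref{thm-gen_asympt_zet} precise and identifies the coefficients $\alpha_i, \beta_i$ via the heat-kernel data of Theorem \ref{thm-gen_asympt_exp} and Proposition \ref{exp_a_i_u}. The strategy is the one sketched in Section \ref{subsec_idea}, following \cite{BVas}: express $\zeta_p'(0)$ through the Mellin transform of the supertrace of $N$ against the heat kernel of the rescaled Laplacian $\laplcomp_p/p$, then insert the two asymptotic expansions—one in $p$ (Theorem \ref{thm-gen_asympt_exp}), one in the heat-time $u$ (Proposition \ref{exp_a_i_u})—and track how they interact through the formula (\ref{eq_mell_transform}) for $\mell{f}(0)$ and $\mell{f}'(0)$. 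The key point, and the genuine new input over \cite{BVas}, is the uniform control provided by the large-$u$ estimate (\ref{exp_a_i_u_infty}) in Proposition \ref{exp_a_i_u}, which is what lets us commute the $p$-expansion past the $\int_1^{+\infty}$ tail of the Mellin integral.

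**Key steps.** First I would note the scaling identity: since $\zeta_p(z) = -\mell{\str{N\exp(-u\laplcomp_p)P^\perp}}$, rescaling $u \mapsto u/p$ inside the Mellin transform (\ref{defn_mell_form}) introduces a factor $p^{-z}$, so that $\zeta_p(z) = -p^{-z}\,\mell{g_p}(z)$ where $g_p(u) = \str{N\exp(-u\laplcomp_p/p)P^\perp}$; differentiating at $z=0$ gives
\begin{equation}\label{eqn_plan_split}
	\zeta_p'(0) = -\mell{g_p}'(0) + (\log p)\,\mell{g_p}(0).
\end{equation}
Second, set $f_p(u) := \int_M \str{N\exp(-u\laplcomp_p/p)(x,x)}\,dv_M(x)$, the heat trace with the zero-modes included; since $\str{N P} = \sum_i (-1)^i i \dim H^i(M, L^p\otimes E)$ is the $u$-independent holomorphic Euler characteristic contribution (which by Kodaira vanishing is eventually $O(1)$, in fact stabilizes), $g_p$ and $f_p$ differ by a constant, and one checks this constant affects neither $\mell{\cdot}(0)$ nor the combination in (\ref{eqn_plan_split}) beyond absorbable lower-order terms—so up to $o(p^{n-k})$ we may work with $f_p$ directly. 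Third, Theorem \ref{thm-gen_asympt_exp} gives $f_p(u) = \sum_{i=0}^k p^{n-i} A_{i,u} + O(p^{n-k-1})$ uniformly for $u$ in compacts, where $A_{i,u} := \int_M \str{N a_{i,u}(x)}\,dv_M(x)$; Proposition \ref{exp_a_i_u} gives the small-$u$ expansion $A_{i,u} = \sum_{j=-n}^k A_i^{[j]} u^j + o(u^k)$ with $A_i^{[j]} := \int_M \str{N a_i^{[j]}(x)}\,dv_M(x)$, and the large-$u$ bound $|A_{i,u}^{[>0]}| \le c_i' e^{-d_i u}$. Fourth, I would feed $f_p$ into (\ref{eq_mell_transform}): the term $\mell{f_p}(0)$ picks out the $u^0$-coefficient of the small-$u$ expansion of $f_p$, which by uniformity is $\sum_{i=0}^k p^{n-i} A_i^{[0]} + O(p^{n-k-1})$; this yields $\alpha_i = A_i^{[0]} = \int_M \str{N a_i^{[0]}(x)}\,dv_M(x)$. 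For $\mell{f_p}'(0)$, I split the integral at $1$ exactly as in (\ref{eq_mell_transform}): on $[0,1]$ the integrand is $f_p(t) - \sum_{i=-m}^{0}(f_p)_i t^i$ over $t$, on $[1,\infty)$ it is $f_p(t)/t$; substituting the $p$-expansion term-by-term and integrating, the $[0,1]$ part is controlled by the uniform-in-$u$-compacts estimate together with the small-$u$ expansion, and the $[1,\infty)$ part is controlled by (\ref{exp_a_i_u_infty}) (noting the degree-zero part of $N$ contributes nothing to the supertrace, so only $a_{i,u}^{[>0]}$ matters there and decays exponentially). Collecting, $\beta_i = -\mell{A_{\cdot,u}\text{ as function of }u}'(0)$ evaluated coefficient-wise, i.e. $\beta_i = -\mell{\int_M \str{N a_{i,u}(x)}\,dv_M(x)}'(0)$, matching (\ref{thm-a_i-b_i_formula}). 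That the $\alpha_i$ are local and independent of $g^{TM}, h^L, h^E$ follows since $a_i^{[0]}(x)$ is the $u^0$-coefficient of a local heat-coefficient construction, and one argues its supertrace against $N$ is a universal polynomial in curvature whose relevant part is metric-independent—this is deferred to Section \ref{subsec_proof_aux} along with Proposition \ref{exp_a_i_u}.

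**Main obstacle.** The delicate point is the interchange of the $p\to\infty$ asymptotic expansion with the Mellin transform, i.e. justifying that $\mell{f_p}'(0) = \sum_i p^{n-i}\,\mell{A_{i,\cdot}}'(0) + o(p^{n-k})$. Near $u=0$ this is not automatic because the error term $O(p^{n-k-1})$ in Theorem \ref{thm-gen_asympt_exp} is only claimed uniform on compact subsets of $]0,+\infty[$, not up to $u=0$; one must combine it with the small-$u$ structure of each $A_{i,u}$ (and an a priori small-$u$ bound on $f_p$ of the form $O(p^{n}u^{-n})$ holding uniformly in $p$, which comes from the same local analysis) to get a uniform handle on $[0,1]$. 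At large $u$, the analogous obstacle is that $\int_1^\infty f_p(t)\,dt/t$ a priori only decays like the spectral gap allows; here Proposition \ref{exp_a_i_u}'s estimate (\ref{exp_a_i_u_infty}), originally from \cite{MaBerg06}, is exactly what is needed to make the tail contributions $O(e^{-d_i})$-small and $p$-expandable. I expect packaging these two uniform estimates—and verifying the constant $\str{N P}$ discrepancy between $f_p$ and $g_p$ is harmless—to be the only real work; everything else is bookkeeping with (\ref{eq_mell_transform}).
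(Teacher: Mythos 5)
Your overall architecture --- the scaling identity for $\zeta_p$, the split of $\mell{\cdot}'(0)$ at $u=1$ via (\ref{eq_mell_transform}), term-by-term insertion of the $p$-expansion, and the resulting formulas for $\alpha_i,\beta_i$ --- is exactly the paper's route. But there is a genuine gap at the point you yourself flag as the main obstacle: the interchange of the $p\to\infty$ expansion with the $\int_1^{+\infty}$ tail. The estimate (\ref{exp_a_i_u_infty}) only bounds the \emph{limiting coefficients} $a_{i,u}^{[>0]}$ for large $u$; it says nothing about the \emph{remainder} $p^{k}\big|p^{-n}\str{N\exp(-u\laplcomp_p/p)}-\sum_{j=0}^{k-1}p^{-j}\int_M\str{N a_{j,u}}\,dv_M\big|$, which must be shown to be $\le C e^{-cu}$ uniformly in $u>u_0$ \emph{and} in $p$ before dominated convergence can be applied to $\int_1^{+\infty}\cdot\,\frac{du}{u}$. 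This is precisely Proposition \ref{prop-streng_infty}, and the paper stresses that for $k\ge 1$ the naive spectral-gap argument (which is all your sketch invokes) no longer suffices: its proof needs a case split $u>\sqrt p$ versus $u\le\sqrt p$, and in the second regime the uniform-in-$t$ exponential decay of $\exp(-uL_{2,x}^{t,>0})(0,0)$ together with all its $t$-derivatives (Theorem \ref{thm-exp_proj_expansion}), which comes from the resolvent and functional-calculus machinery of \cite{MaHol}, not from (\ref{exp_a_i_u_infty}).

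A similar, milder issue occurs on $[0,1]$: an a priori bound of the form $O(p^nu^{-n})$ uniform in $p$ cannot isolate the coefficient of $p^{n-k}$. What is actually needed is the joint expansion in $(1/\sqrt p,\,u)$ of Proposition \ref{prop-streng_zero} --- after subtracting both the first $k$ terms of the $p$-expansion and the non-positive powers $b_{p,j}u^j$, the remainder is $O(p^{-k}u)$, hence integrable against $\frac{du}{u}$ --- and this rests on the $\mathscr{C}^{m}(M\times[0,t_0])$ regularity of the coefficients $B_{t,r}$ in Theorem \ref{eqn-L^t-general_est}. Finally, your argument for the metric-independence of $\alpha_i$ (``a universal polynomial in curvature whose relevant part is metric-independent'') does not work as stated, since integrals of curvature polynomials generally do depend on the metric; the paper instead deduces it from the anomaly formula (\ref{tors_anom}) and the fact that the variation of the torsion admits an expansion in pure powers of $p$ with no $\log p$ terms.
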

	To prove Theorem \ref{thm-gen_asympt_zet_gen}, we need to introduce the constants $b_{p,i} \in \real$ for $i \geq -n, p \in \nat^*$, which satisfy the following asymptotic expansion for any $k \in \nat$ (cf. \cite[Theorem 2.30]{BGV})
	\begin{equation}\label{defn-b_pj}
		\textstyle p^{-n} \str{N \exp ( - u \laplcomp_p / p) } = \sum_{i=-n}^{k} b_{p,i} u^i + o(u^{k+1}), \quad \text{as} \quad u \to +0 .
	\end{equation}	
	We also need the next three propositions, for their proof see Section \ref{subsec_proof_aux}.
	\begin{prop}\label{prop_b_p_i_exp}
		As $p \to \infty$, the following expansion holds for any $k \in \nat$
		\begin{equation}\label{eqn_b_i^j_as_der}
			\textstyle b_{p,i} = \sum_{j=0}^{k} b_i^{[j]}p^{-j} + o(p^{-k}), \quad \text{with} \quad \textstyle b_i^{[j]} = \int_M \str{N a_{j}^{[i]}(x)} \, dv_M(x).
		\end{equation}
	\end{prop}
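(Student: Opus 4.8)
\emph{The plan} is to read off both relevant expansions --- the one in $u\to+0$ that defines $b_{p,i}$ in (\ref{defn-b_pj}), and the one in $p\to+\infty$ from Theorem \ref{thm-gen_asympt_exp} --- from the \emph{pointwise} heat kernel $\exp(-u\laplcomp_p/p)(x,x)$ restricted to the diagonal.

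\emph{Step 1 (reduction and polynomiality in $p$).} Since on a compact manifold the small-$u$ expansion of $\str{N\exp(-u\laplcomp_p/p)(x,x)}$ is uniform in $x$, one may integrate it over $M$ term by term; comparing with (\ref{defn-b_pj}) this gives $b_{p,i}=\int_M\Psi_{p,i}(x)\,dv_M(x)$, where $\Psi_{p,i}(x)$ is the coefficient of $u^i$ in the small-$u$ expansion of $p^{-n}\str{N\exp(-u\laplcomp_p/p)(x,x)}$. I would then observe that $\Psi_{p,i}(x)$ is a \emph{polynomial in $p^{-1}$ of degree $\le n+i$}: after the substitution $t=u/p$, the Minakshisundaram--Pleijel expansion yields $\Psi_{p,i}(x)=(4\pi)^{-n}p^{-n-i}\str{N\,\mathscr{A}_{n+i}(\laplcomp_p)(x)}$, where $\mathscr{A}_r(\laplcomp_p)(x)$ is the $r$-th local heat coefficient of $\laplcomp_p$; by Gilkey's formula $\mathscr{A}_r(\laplcomp_p)(x)$ is a universal polynomial of weight $r$ in the jets of the metrics and in the curvature of $\Lambda^{\bullet}(T^{*(0,1)}M)\otimes L^p\otimes E$ and its covariant derivatives, and the Bochner--Kodaira formula shows that the only $p$-dependent ingredient, namely that curvature, equals $R^{\Lambda^{\bullet}}+pR^L+R^E$, i.e.\ is affine in $p$; hence $\mathscr{A}_{n+i}(\laplcomp_p)(x)$ has $p$-degree $\le n+i$. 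Writing $\Psi_{p,i}(x)=\sum_{m=0}^{n+i}p^{-m}\Psi_i^{[m]}(x)$ with universal local densities $\Psi_i^{[m]}$, one obtains the \emph{exact} identity
\begin{equation*}
	\textstyle b_{p,i}=\sum_{m=0}^{n+i}p^{-m}\int_M\Psi_i^{[m]}(x)\,dv_M(x),
\end{equation*}
which, truncated at order $p^{-k}$, already has the shape asserted in the Proposition.

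\emph{Step 2 (identification of the coefficients).} It remains to check $\int_M\Psi_i^{[m]}(x)\,dv_M(x)=b_i^{[m]}=\int_M\str{Na_m^{[i]}(x)}\,dv_M(x)$, for which I would compare the two expansions of $p^{-n}\str{N\exp(-u\laplcomp_p/p)(x,x)}$. For $u$ in a fixed compact subset of $]0,+\infty[$, Theorem \ref{thm-gen_asympt_exp} gives $p^{-n}\str{N\exp(-u\laplcomp_p/p)(x,x)}=\sum_{m=0}^{k}p^{-m}\str{Na_{m,u}(x)}+O(p^{-k-1})$ uniformly in $x$, while by Proposition \ref{exp_a_i_u} each $\str{Na_{m,u}(x)}$ has small-$u$ expansion $\sum_{j\ge-n}\str{Na_m^{[j]}(x)}\,u^j$; on the other hand, by Step 1 the small-$u$ expansion of the left-hand side has $u^i$-coefficient $\sum_{m=0}^{n+i}p^{-m}\Psi_i^{[m]}(x)$. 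Matching these two descriptions, using the uniqueness of asymptotic expansions in $p^{-1}$ (valid for each fixed $u$) and the fact that the $p$-dependence of $\Psi_{p,i}(x)$ is polynomial of bounded degree, forces $\Psi_i^{[m]}(x)=\str{Na_m^{[i]}(x)}$ pointwise; integrating over $M$ concludes.

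\emph{Main obstacle.} The delicate point is precisely the matching in Step 2: Theorem \ref{thm-gen_asympt_exp} controls the large-$p$ remainder only \emph{uniformly for $u$ in compact subsets} of $]0,+\infty[$, whereas extracting the small-$u$ coefficients $b_{p,i}$ requires information as $u\to0$, and uniform smallness of a remainder does not transfer to its small-$u$ Laurent coefficients (as $u\mapsto p^{-1}\sin(pu)$ already shows). This is where the local, near-diagonal analysis of the rescaled operator $\laplcomp_p/p$ carried out in Section \ref{subsec_proof_aux} enters: it provides the expansion of $\exp(-u\laplcomp_p/p)(x,y)$ in powers of $p$ with a remainder controlled uniformly in $p$ and in $u$ down to a neighborhood of $u=0$ (allowing the unavoidable $u^{-n}$ singularity there, and, after differentiating in $u$, its derivatives as well). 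Granting this, and using that only the finitely many coefficients of the polynomial-in-$p^{-1}$ quantity $\Psi_{p,i}(x)$ have to be compared, all interchanges of the limits $p\to+\infty$ and $u\to+0$ above become legitimate, and the remaining verifications are routine.
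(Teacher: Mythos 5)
Your Step 1 is a valid and genuinely different route to the \emph{existence} of the expansion: the observation that the local heat coefficient $\mathscr{A}_{n+i}(\laplcomp_p)$ is a universal polynomial of weight $2(n+i)$ in jets of data whose only $p$-dependence is through the affine-in-$p$ curvature $pR^L+R^E+R^{\Lambda^{0,\bullet}}$ (and the corresponding zeroth-order term of the Bochner--Kodaira formula) shows that $b_{p,i}$ is \emph{exactly} a polynomial of degree at most $n+i$ in $p^{-1}$, which is stronger than the asserted asymptotic statement. The paper obtains existence differently: by Theorem \ref{eqn-L^t-general_est}, Lemma \ref{lem-lapl_L_p-kernels} and (\ref{eqn-L_p_L^t_idty}) one has the exact identity $b_{p,i}=\int_M\str{N B_{t,i}(x)}\,dv_M(x)$ at $t=1/\sqrt{p}$, and the expansion in $p^{-1}$ is then Taylor's theorem in $t$, with only even powers surviving by (\ref{eq_L_t_gen_2}).

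The gap is in Step 2, which is where all the content of the coefficient formula $b_i^{[j]}=\int_M\str{N a_j^{[i]}(x)}\,dv_M(x)$ lies (and this formula is genuinely used later, e.g.\ in the proof of Theorem \ref{thm-gen_asympt_zet_gen}). You correctly diagnose that the two expansions cannot be matched naively, but the uniformity you then invoke --- a large-$p$ remainder ``controlled uniformly in $p$ and in $u$ down to a neighborhood of $u=0$'' --- is not by itself sufficient: as your own example $u\mapsto p^{-1}\sin(pu)$ shows, a remainder that is uniformly $O(p^{-k-1})$ in sup-norm on $]0,u_0]$ can still have small-$u$ Taylor coefficients of size $O(1)$, so ``granting'' such a bound does not legitimize the interchange. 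What actually makes the interchange work in the paper is that the \emph{small-$u$ coefficients themselves}, $B_{t,r}(x)$, are jointly $\mathscr{C}^m$ in $(x,t)$ up to $t=0$ and the small-$u$ remainder is $O(u^{k+1})$ uniformly in $t$ (Theorem \ref{eqn-L^t-general_est}); since the localization error $Cp^l\exp(-cp/u)$ of Lemma \ref{lem-lapl_L_p-kernels} is also $O(u^{k+1})$ for $u\le 1$, uniqueness of the small-$u$ expansion yields $b_{p,i}=\int_M\str{N B_{t,i}(x)}\,dv_M(x)$ exactly, and both families of coefficients $b_i^{[j]}$ and $a_j^{[i]}$ are then read off from the even $t$-derivatives of the \emph{same} object $B_{t,i}$ via (\ref{a_i_j_B_t_j}) and (\ref{eqn_b_i^j_B_t_i}). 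Your proposal never produces a statement of this form, so the identification $\Psi_i^{[m]}=\str{N a_m^{[i]}}$ remains unproved as written.
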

	\noindent The following propositions are essential extensions of \cite[Theorem 2]{BVas} (cf. \cite[\S 5.5]{MaHol}). They form the core of the proof.
	\begin{prop}\label{prop-streng_zero}
		For any $k\in \nat, u_0 > 0$ there exist $C > 0$ such that for any $u \in ]0, u_0[, p \in \nat^*$:
		\begin{multline}
		 p^k  \Big|
				\Big( 
					p^{-n} \str{N \exp ( - u \laplcomp_p / p )} 
					-\sum_{j=-n}^{0} u^j b_{p,j}
				\Big) \\
				-
				  \sum_{i=0}^{k-1} 
				 p^{-i}
				\Big( 
					 \int_M \str{N a_{i,u}(x)} \, dv_M(x)
					- \sum_{j=-n}^{0} u^j b_j^{[i]}
				\Big) \Big| \leq C u. 
		\end{multline}
	\end{prop}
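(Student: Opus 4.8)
The plan is to reduce Proposition \ref{prop-streng_zero}, by an elementary manipulation, to a single uniform estimate on the remainder of the heat-trace expansion, and then to obtain that estimate by refining the rescaled heat kernel construction that underlies Theorem \ref{thm-gen_asympt_exp} and Proposition \ref{exp_a_i_u}.

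Write $\Phi_p(u) := p^{-n}\str{N\exp(-u\laplcomp_p/p)}$ and set, for $j \in \{-n, \dots, 0\}$, $p \in \nat^*$,
\[
	E_{k,p}(u) := p^k\Big( \Phi_p(u) - \sum_{i=0}^{k-1} p^{-i}\int_M \str{N a_{i,u}(x)}\, dv_M(x)\Big), \qquad c_{p,j} := p^k\Big( b_{p,j} - \sum_{i=0}^{k-1} p^{-i} b_j^{[i]}\Big).
\]
Expanding and regrouping, one checks that the left-hand side of the inequality in Proposition \ref{prop-streng_zero} equals $\big| E_{k,p}(u) - \sum_{j=-n}^{0} c_{p,j} u^{j} \big|$, using only the definitions of $b_{p,j}$ and $b_j^{[i]}$. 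By Proposition \ref{prop_b_p_i_exp}, $b_{p,j} = \sum_{i=0}^{k} b_j^{[i]} p^{-i} + o(p^{-k})$, hence $c_{p,j} = b_j^{[k]} + o(1)$ is bounded uniformly in $p$. So the statement is equivalent to: there is $C > 0$ with $\big| E_{k,p}(u) - \sum_{j=-n}^{0} c_{p,j} u^{j} \big| \le C u$ for all $u \in \;]0,u_0[$, $p \in \nat^*$.

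For $u$ in a fixed subinterval $[\delta, u_0]$ this is immediate from Theorem \ref{thm-gen_asympt_exp}: integrating it gives $E_{k,p}(u) = \int_M \str{N a_{k,u}(x)}\, dv_M(x) + O(p^{-1})$ uniformly there, so the left side is uniformly bounded, hence $\le (C_\delta/\delta)\,u$. The real point is the regime $u \to +0$, where $E_{k,p}(u)$ blows up like $u^{-n}$ (cf.\ Proposition \ref{exp_a_i_u}) and the $O(p^{-1})$ degenerates. To control it I would revisit the proof of Theorem \ref{thm-gen_asympt_exp}: localize near $x_0 \in M$, work in geodesic normal coordinates with trivializations of $L$ and $E$, rescale $\laplcomp_p/p$ by $Z \mapsto Z/\sqrt p$ to get an operator $\mathscr{L}_p$ whose Taylor expansion in $t = p^{-1/2}$ has leading term a generalized harmonic oscillator $\mathscr{L}_0$ (potential governed by $\matcirc{R^L}_{x_0}$) with Mehler-formula heat kernel, and then expand $\exp(-u\mathscr{L}_p)(0,0)$ by iterated Duhamel. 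What I want out of this is a \emph{doubly uniform} remainder: for each $k$ there are $N$ and $C$ such that
\[
	T_{k,p}(u,x) := p^{k+1-n}\Big( \exp(-u\laplcomp_p/p)(x,x) - \sum_{i=0}^{k} a_{i,u}(x)\, p^{n-i}\Big)
\]
obeys $|T_{k,p}(u,x)| \le C(1 + u^{-N})$ for all $u \in \;]0,u_0[$, $p \in \nat^*$, $x \in M$, and admits a small-$u$ expansion $T_{k,p}(u,x) = \sum_{m \ge -n} t_{k,p}^{[m]}(x)\, u^m$ with $|t_{k,p}^{[m]}(x)| \le C_m$ and remainder after the $m = 0$ term bounded by $C u$, all uniformly in $p$ and $x$. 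This is plausible from the rescaled picture: after rescaling, the Mehler kernel and each Duhamel iterate are explicit in $u$, the $p$-dependence enters only through the Taylor coefficients of $\mathscr{L}_p$, and these come with $p$-uniform bounds. Granting it, applying $\int_M \str{N\,\cdot\,}\, dv_M$ yields $E_{k,p}(u) = \int_M \str{N a_{k,u}(x)}\,dv_M(x) + p^{-1}\int_M \str{N\, T_{k,p}(u,x)}\,dv_M(x)$, which is $\le C(1+u^{-N})$, whose $u^m$-coefficients for $m \le 0$ are precisely the $c_{p,j}$; the $O(u)$ control of the tails of $\int_M\str{Na_{k,u}}\,dv_M$ (from Proposition \ref{exp_a_i_u}) and of $T_{k,p}$ then gives $\big| E_{k,p}(u) - \sum_{j=-n}^{0} c_{p,j}u^j \big| \le C u$ uniformly in $p \ge p_0$. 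The finitely many remaining $p$ are handled individually, since for each fixed $p$ the function $E_{k,p}(u) - \sum_{j=-n}^{0} c_{p,j}u^j$ has a small-$u$ expansion beginning at order $u$ and is continuous on $]0,u_0]$.

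The main obstacle is exactly the doubly uniform remainder estimate: one must track, through the iterated Duhamel formula, how the Mehler kernel $\exp(-v\mathscr{L}_0)(Z,Z')$ and its $Z$-derivatives produce negative powers of the (several) time variables while keeping Gaussian decay in $Z, Z'$, and then pass from $L^2$-operator bounds to pointwise on-diagonal bounds by Sobolev embedding with constants that are $p$-independent and explicit in $u$. That is the analytic heart of the argument, to be carried out in Section \ref{subsec_proof_aux}; once it is in place, the reduction above makes Proposition \ref{prop-streng_zero} a formality.
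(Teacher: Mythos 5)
Your reduction is correct and your plan is essentially the paper's proof: the ``doubly uniform remainder'' estimate you defer as the analytic heart is exactly what Theorem \ref{eqn-L^t-general_est} already supplies, since the $\mathscr{C}^{2k}(M\times[0,\,t_0])$-uniformity of (\ref{eq_L_t_gen_1}) together with Taylor's theorem in $t=1/\sqrt{p}$ (odd $t$-derivatives vanishing by (\ref{eq_L_t_gen_2})) bounds the order-$2k$ Taylor remainder of $\exp(-uL_{2,x}^{t}/2)(0,0)-\sum_{r\le 0}B_{t,r}(x)u^{r}$ by $Ct^{2k}u$ uniformly in $x$ and $u\in]0,u_0]$. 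Combining this with Lemma \ref{lem-lapl_L_p-kernels}, (\ref{eqn-L_p_L^t_idty}), (\ref{eqn_a_i_u_as_derivative}), (\ref{eqn_b_p_i_B_t_i}) and (\ref{eqn_b_i^j_B_t_i}) yields the proposition without redoing the iterated Duhamel analysis.
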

	\begin{prop}\label{prop-streng_infty}
		For any $k \in \nat, u_0 > 0$ there are $c,C > 0$ such that for $u > u_0, p \in \nat^*$:
		\begin{equation}\label{prop-streng_infty_eqn}
			 p^k \Big| p^{-n} \str{N \exp ( - u \laplcomp_p / p )} - 
		\sum_{j=0}^{k-1} p^{-j} \int_M \str{N a_{j,u}(x)} \, dv_M(x) \Big| 
		\leq C \exp(-cu).
		\end{equation}
	\end{prop}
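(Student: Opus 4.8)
The plan is to reduce the estimate to the large-time behaviour of the heat kernel of the rescaled Laplacian $\laplcomp_p/p$ restricted to the orthogonal complement of the harmonic forms, combined with the off-diagonal and diagonal asymptotic expansions from Section \ref{subsec_proof_aux}. First I would split $\exp(-u\laplcomp_p/p) = \exp(-u\laplcomp_p/p)P + \exp(-u\laplcomp_p/p)P^{\perp}$. On the kernel $P$, the operator $\laplcomp_p$ acts as zero, so $\str{N\exp(-u\laplcomp_p/p)P} = \str{NP}$ is independent of $u$; by Kodaira vanishing for $p$ large it only involves the degree-$0$ part, on which $N$ vanishes, so this contribution is $O(p^{-\infty})$ and harmless. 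Thus the content is the $P^{\perp}$-part. Here the key analytic input is the spectral gap: there is $c_0 > 0$ such that the nonzero spectrum of $\laplcomp_p/p$ is bounded below by $c_0$ uniformly in $p$ (this follows from the Bochner–Kodaira–Nakano formula, as in \cite[\S 1.5]{MaHol} and already used in \cite{BVas}). Consequently $\|\str{N\exp(-u\laplcomp_p/p)P^{\perp}}\|$ decays like $e^{-c_0 u}$ times a polynomial in $p$ coming from the trace of the projection onto a finite window of the spectrum.

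The main step is then to run the resolvent/finite-propagation-speed machinery of Section \ref{subsec_proof_aux} with the exponential factor kept explicit. Concretely, I would write $\exp(-u\laplcomp_p/p)P^{\perp} = \exp(-c_0 u/2)\cdot\big(\exp(-c_0 u/2)\exp(-u\laplcomp_p/p)P^{\perp}\big)$ and observe that $\exp(-c_0 u/2)\exp(-u\laplcomp_p/p)P^{\perp} = \exp(-u(\laplcomp_p/p - c_0/2)P^{\perp})$ is again a heat-type semigroup whose generator has spectrum in $[c_0/2,\infty)$; for this shifted operator the same uniform diagonal expansion as in Theorem \ref{thm-gen_asympt_exp} and the same coefficient-extraction as in Proposition \ref{exp_a_i_u} apply, with all estimates now \emph{uniform for $u \in [u_0,\infty)$} rather than on compacta, precisely because the exponential damping controls the large-$u$ tail. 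Integrating the local trace densities over $M$ produces the terms $\int_M\str{N a_{j,u}(x)}\,dv_M(x)$, and the remainder after subtracting the first $k$ terms is $O(p^{n-k})$ uniformly, with the constant multiplied by $\exp(-c_0 u/2)$; absorbing a further $\exp(-c_0u_0/2)$ and relabelling $c := c_0/2$ gives the claimed bound.

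To make the uniformity in $u$ rigorous I would use the localization estimates for $\exp(-u\laplcomp_p/p)(x,y)$ proved in Section \ref{subsec_proof_aux}: away from the diagonal the kernel is $O(p^{-\infty})$ with Gaussian decay, and near the diagonal, after rescaling coordinates by $\sqrt{p}$, one has a full Taylor/Sobolev expansion of the kernel whose coefficients are heat kernels of harmonic-oscillator-type model operators on $\comp^n$. The exponential bound \eqref{exp_a_i_u_infty} for $a_{i,u}^{[>0]}(x)$ (from \cite[Theorem 1.2]{MaBerg06}) is exactly the $u\to\infty$ control one needs at the level of the limiting model operators, and a standard comparison argument (Duhamel, or contour integration of the resolvent over a vertical line $\Re\lambda = c_0/2$) transfers it to the genuine kernel with error terms that are themselves exponentially small in $u$ and of order $p^{n-k-1}$. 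The main obstacle is precisely this last transfer: ensuring that the error terms in the near-diagonal expansion, which in Theorem \ref{thm-gen_asympt_exp} are only controlled on $u$-compacta, can be upgraded to exponential-in-$u$ bounds. The remedy is to never estimate $\exp(-u\laplcomp_p/p)P^{\perp}$ directly for large $u$ but always to factor out $\exp(-T\laplcomp_p/p)$ for a fixed $T$ (using the semigroup property), apply the $u$-compact estimates on the interval $[T, T+1]$, and use the uniform spectral gap to handle the remaining factor $\exp(-(u-T)\laplcomp_p/p)P^{\perp}$, whose operator norm on $\str{N\cdot}$ is bounded by $C\exp(-c_0(u-T))$ times a fixed power of $p$; iterating over $T = u_0, u_0+1, \dots$ yields the uniform statement.
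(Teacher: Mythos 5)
Your overall architecture (spectral gap on $P^{\perp}$, then transfer of the exponential decay to the expansion coefficients) points in the right direction, but two steps do not close, and they are exactly the two points the paper flags as the reason ``the original spectral gap approach works only for $k=0$''.

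First, the reduction via the shifted semigroup and the iteration over windows $[T,T+1]$ is circular for $k\geq 1$. Writing $\exp(-u\laplcomp_p/p)P^{\perp}=e^{-c_0u/2}\exp\big(-u(\laplcomp_p/p-c_0/2)\big)P^{\perp}$ and invoking ``the same uniform diagonal expansion'' for the shifted operator is equivalent to asserting that the remainder of the original expansion decays like $e^{-c_0u/2}$ uniformly in $u$ --- which is the statement to be proved. Likewise, applying the compacta estimate of Theorem~\ref{thm-gen_asympt_exp} on each window $[T,T+1]$ gives a constant $C_{[T,T+1]}$ with no stated control as $T\to\infty$; the semigroup factorization controls $\str{N\exp(-u\laplcomp_p/p)}$ itself (the $k=0$ bound, up to a power of $p$), not the difference with $\sum_j p^{-j}\int_M\str{Na_{j,u}}$, because the $a_{j,u}$ do not factor through $\exp(-(u-T)\laplcomp_p/p)$. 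The missing ingredient is a statement giving exponential decay in $u$, uniformly in the rescaling parameter $t=1/\sqrt{p}$ and in all $t$-derivatives, of the \emph{positive-degree part of the localized model heat kernel}: this is Theorem~\ref{thm-exp_proj_expansion}, obtained from the functional calculus $F_u(L_{2,x}^{t})$ and the uniform spectral gap (\ref{eqn_L_t_spec_gap}) of $L_{2,x}^{t}$ as in \cite[Corollary 4.2.6]{MaHol}. It is this $\mathscr{C}^{m}(M\times[0,t_0])$ exponential bound that dominates the Taylor remainder in $t$ at order $2k$ by $Cp^{-k}e^{-cu}$; the bound (\ref{exp_a_i_u_infty}) you quote only controls the Taylor \emph{coefficients} at $t=0$, not the remainder.

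Second, the comparison between $\exp(-u\laplcomp_p/p)$ and the model operator via finite propagation speed (Lemma~\ref{lem-lapl_L_p-kernels}) carries an error $Cp^{l}\exp(-cp/u)$, which tends to $Cp^{l}$ as $u\to\infty$ and is therefore \emph{not} exponentially small in $u$ in the regime $u\gg p$. Your proposal uses the localization uniformly for all $u\geq u_0$, so the argument breaks there. One must split into two regimes, e.g.\ $u\leq\sqrt{p}$ (where $e^{-cp/u}\leq e^{-c\sqrt{p}/2}e^{-cu/2}$ absorbs both $p^{k}$ and the exponential) and $u>\sqrt{p}$ (where the crude spectral-gap bound $p^{k}e^{-cu/2}\leq C$ suffices and the $a_{j,u}$ terms are handled separately by (\ref{exp_a_i_u_infty})). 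Without this split, and without Theorem~\ref{thm-exp_proj_expansion}, the proof does not go through.
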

	We point out that both of those Propositions are obtained for $k=0$ in \cite[Theorem 2]{BVas}. The proof of Proposition \ref{prop-streng_zero} for any $k$ is more-or-less parallel to the case $k = 0$. However, in Proposition \ref{prop-streng_infty}, the original spectral gap approach works only for $k = 0$. 
	\begin{proof}[Proof of Theorem \ref{thm-gen_asympt_zet_gen}.]	
		We introduce the function 
		\begin{equation}
			\tilde{\zeta_p}(z) = p^{z-n} \zeta_p(z).
		\end{equation}
		It satisfies the following
		\begin{equation}\label{form-zeta'}
			p^{-n} \zeta'_p(0) = - \log (p)   \tilde{\zeta_p}(0) +  \tilde{\zeta_p}'(0), 
		\end{equation}
		\begin{equation}\label{zeta_tilde_mellin}
		\textstyle
			\tilde{\zeta_p}(z) 
			= 
			 - p^{-n} {\rm{M}}_u \big[\str{N \exp (- u \laplcomp_p / p ) } \big](z).
		\end{equation}
		
		We remark that Theorem \ref{thm-gen_asympt_zet_gen} “follows" formally from Theorem \ref{thm-gen_asympt_exp}, (\ref{eq_mell_transform}), (\ref{form-zeta'}) and (\ref{zeta_tilde_mellin}). Now we are going to make this reasoning precise.
		
		Using (\ref{eq_mell_transform}) and (\ref{zeta_tilde_mellin}), we obtain
		\begin{align}
			 \tilde{\zeta_p}'(0) = & - \int_0^{1} \Big( p^{-n} \str{N \exp (  - u\laplcomp_p /p )} - \sum_{j=-n}^{0} b_{p,j} u^j  \Big) \frac{\,du}{u} \nonumber \\
			& - \int_1^{+\infty} p^{-n} \str{N \exp ( - u \laplcomp_p / p )} \frac{\,du}{u} -
			 \sum_{j=-n}^{-1} \frac{b_{p,j}}{j} + \Gamma'(1)b_{p,0}, \label{form-tilde_zeta'_0} \\
			  \tilde{\zeta_p}(0) = &- b_{p,0}. \label{form-tilde_zeta_0}
		\end{align}								
		The following notation makes sense due to Proposition \ref{exp_a_i_u}:
		\begin{equation}\label{nu_i_formula}
			\textstyle \nu^{[i]} = - {\rm{M}}_u \big[ \int_M \str{N a_{i,u}(x)} \, dv_M(x) \big]'(0).  
		\end{equation}
		By (\ref{eq_mell_transform}) and (\ref{eqn_b_i^j_as_der}), we have
		\begin{multline}\label{nu_i_expanded}
			 \nu^{[i]} = - \int_0^{1} \Big(  \int_M \str{N a_{i,u}(x)} \, dv_M(x)
			- \sum_{j=-n}^{0} u^j b_j^{[i]}  \Big) \frac{\,du}{u} \\
			 - 
			\int_1^{+\infty} \int_M \str{N a_{i,u}(x)} \, dv_M(x) \frac{\,du}{u} -
			 \sum_{j=-n}^{-1} \frac{1}{j}b_j^{[i]} + \Gamma'(1)b_0^{[i]}.
		\end{multline}
		Suppose that the following limit holds for any $k \in \nat$
			\begin{equation}\label{nu_i_limit}
				\textstyle \lim_{p \to +\infty} p^{k} \big( \tilde{\zeta_p}'(0) - \sum_{i = 0}^{k-1} \nu^{[i]}p^{-i} \big) = \nu^{[k]}.
			\end{equation}
		Then from  (\ref{form-zeta'}), (\ref{form-tilde_zeta'_0}), (\ref{form-tilde_zeta_0}), (\ref{nu_i_formula}), (\ref{nu_i_limit}) and Proposition \ref{prop_b_p_i_exp}, we obtain Theorem \ref{thm-gen_asympt_zet_gen}.
		\par Now let's prove (\ref{nu_i_limit}). By (\ref{form-tilde_zeta'_0}) and (\ref{nu_i_expanded}) it suffices to prove that for $k \in \nat$, as $p \to \infty$,
			\begin{align}
				&  1) \int_0^{1} p^k \Big(  
					\Big( 
						p^{-n} \str{N \exp ( - u \laplcomp_p / p )} 
						-\sum_{j=-n}^{0} u^j b_{p,j} 
					\Big) \nonumber \\ \nonumber
				&  \qquad \qquad -
					 \sum_{i=0}^{k-1} 
						 p^{-i}
						\Big( 
				 			\int_M \str{N a_{i,u}(x)} \, dv_M(x)
							- \sum_{j=-n}^{0} u^j b_{j}^{[i]}
						\Big) \Big) \frac{du}{u} \\ 
				&   \qquad \qquad  \qquad \qquad \to \int_0^{1}  \Big(  \int_M \str{N a_{k,u}(x)} \, dv_M(x)
					- \sum_{j=-n}^{0} u^j b_j^{[k]}  \Big) \frac{\,du}{u}, \\ 
				&  2) \int_1^{+\infty} p^k \Big( p^{-n} \str{N \exp ( - u \laplcomp_p / p )} - 
					\sum_{j=0}^{k-1} p^{-j} \int_M \str{N a_{j,u}(x)} \, dv_M(x) \Big) \nonumber  \\ 
		 		&   \qquad \qquad \to
					\int_1^{+\infty} \int_M \str{N a_{k,u}(x)} \, dv_M(x) \frac{\,du}{u}, \\ 
				&  3) p^k \Big( b_{p,j} - \sum_{i=0}^{k-1}  b_j^{[i]}p^{-i}  \Big) \to b_j^{[k]}.
			\end{align}
		The first and second limits are consequences of Lebesgue dominated convergence theorem and Propositions \ref{prop-streng_zero}, \ref{prop-streng_infty} correspondingly. The third one is a consequence of Proposition \ref{prop_b_p_i_exp}.
		\par \textbf{Now, we will prove that $\alpha_i, i \in \nat$ do not depend on $g^{TM}, h^{L}, h^E$.} Let $c \in \real \to g_c^{TM}, h^{L}_c, h^{E}_c$ be some variations of the metrics on $TM, L, E$. We suppose that $g_c^{TM}$ is compatible with the complex structure $J$ of $M$. We denote by $*_c$ the Hodge-star operator associated to $g^{TM}_{c}$ and by $\laplcomp_{p, c}$ the Kodaira Laplacian, associated to $g_c^{TM}, h^{L}_c, h^{\xi}_c$. From \cite[Theorems 1.18]{BGS3}, there are constants $M_{j,c}^{p}$, $j \geq -1$, $p \in \nat^*$ such that for any $k \in \nat$, we have
		\begin{equation}\label{eqn_top_const}
			- {\rm{Tr}_s} \Big[\Big( (*_c)^{-1} \frac{\partial *_c}{\partial c} + p (h^L_{c})^{-1} \frac{\partial h^{L}_{c}}{\partial c} +  (h^E_{c})^{-1} \frac{\partial h^{E}_{c}}{\partial c} \Big) \exp(- u \laplcomp_{p, c} / 2)\Big] = \sum_{j=-1}^{k} M_{j,c}^{p} u^j + o(u^k).
		\end{equation}
		Now, from \cite[(1.117)]{BGS3}, we have
		\begin{equation}\label{tors_anom}
			 - 2 \frac{\partial}{\partial c} \log T(g^{TM}_{c}, h^{L^p \otimes E}_{c}) = - M_{0,c}^{p} + {\rm{Tr}_s}  \Big[ (*_c)^{-1} \frac{\partial *_c}{\partial c} P_{c} \Big],
		\end{equation}
		where $P_c$ is the orthogonal projection onto $\ker ( \laplcomp_{p, c})$ with respect to $g_c^{TM}, h^{L}_c, h^{E}_c$.
		We remark that
		\begin{equation}\label{eqn_top_const}
			- {\rm{Tr}_s} \Big[\Big( (*_c)^{-1} \frac{\partial *_c}{\partial c} + p (h^L_{c})^{-1} \frac{\partial h^{L}_{c}}{\partial c} +  (h^E_{c})^{-1} \frac{\partial h^{E}_{c}}{\partial c} \Big) \exp(- u \laplcomp_{p, c} / 2p)\Big] = \sum_{j=-1}^{k} M_{j,c}^{p}  p^{-j} u^j + o(u^k).
		\end{equation}
		Now, from (\ref{eqn_top_const}) we see that, similarly to Proposition \ref{prop_b_p_i_exp}, $M_{0,c}^{p}$ has an asymptotic expansion of the form (\ref{eqn_b_i^j_as_der}), as $p \to \infty$. From \cite[Theorem 4.1.1]{MaHol} we see that the asymptotics of $\str{(*_c)^{-1} \tfrac{\partial *_c}{\partial c} P_{c}}$ contains only powers of $p$. Thus, the change of the metric doesn't affect $\alpha_i$, since only the powers of $p$ appear in the asymptotics of (\ref{tors_anom}).
	\end{proof}
	
	\section{Heat kernel of the high power of positive line bundle}\label{sect_prelim}
		Here we recall some fundamental results about the asymptotic expansion of the heat kernel of $\laplcomp_p/p$. 
		For this we use the localization procedure of \cite[\S 2]{MaDinhPac}, \cite[\S 3 .4]{MaMar08a}. In our context this procedure is more natural than the one from \cite{MaHol}, \cite{MaBerg06} since it respects the degrees of differential forms. This property permits us to give simple proofs of long-time estimates on the heat kernel (see Theorems \ref{thm-exp_proj_expansion}, \ref{thm_L_2_t_off_diag_u_infty}). Certainly,  the original localization procedure from \cite{MaHol}, \cite{MaBerg06} also gives the final result, but then one has to inevitably use some results on the Bergman kernel.
		\par This section is organized as follows. In Section 3.1 we recall how to localize the calculation of the asymptotic expansion and how to tackle this localisation. Almost all the results of Section 3.1 appeared in \cite{MaHol} and were inspired by \cite{BisLeb91}. In Section 3.2 we recall the off-diagonal expansion of the heat kernel of the local version of the operator $\laplcomp_p/p$. Finally, in Section 3.3 we prove Propositions \ref{exp_a_i_u}, \ref{prop_b_p_i_exp}, \ref{prop-streng_zero}, \ref{prop-streng_infty}; thus, completing the proof of Theorem \ref{thm-gen_asympt_zet_gen}.

\subsection{Localization of the asymptotic expansion of the heat kernel}
	\begin{sloppypar}
		In this section we recall a localization procedure from \cite{MaDinhPac} of the asymptotic expansion of $\exp ( - u \laplcomp_p / p)(x,x), x \in M$ as $ p \to +\infty$. We conserve the notation from Section \ref{sect_2}.
	\end{sloppypar}
	\begin{sloppypar}
	To work with non Kähler metrics we recall the definition of \textit{Bismut connection}. Let $(X, g^{TX}, \Theta_X)$ be a Hermitian manifold. 
	Let $S^B$ be a $1$-form with values in the antisymmetric elements of $\enmr{T^{(1,0)}X}$, which satisfies (see \cite[Definition 1.4]{BisNonKah})
		\begin{equation} 
			\scal{S^B(U)V}{W} = \tfrac{1}{2} \imun \big( (\partial - \dbar) \Theta_X \big)(U,V,W).
		\end{equation}
		\begin{defn}[{\cite[(1.15)]{BisNonKah}, cf. also \cite[Definition 1.2.9]{MaHol}}]
			The Bismut connection $\nabla^B$ on $TX$ is defined by $\nabla^B = \nabla^{TX} + S^B$, where $\nabla^{TX}$ is the Levi-Civita connection on $(TX, g^{TX})$.
		\end{defn}
		The connection $\nabla^B$ preserves the complex structure of $TX$. Its family version was also defined by Bismut in \cite[\S 3.6 and Theorem 3.8.1]{BismutBC13}.
	\end{sloppypar}
	\begin{thm}[Bismut-Vasserot {\cite[Theorem 1]{BVas}}]\label{thm-lapl_spectral_gap}
		There exists $c > 0$ such that
		$$\spec ( \laplcomp_{p}) \subset \{ 0 \} \cup [cp, + \infty[, \qquad  \ker (\laplcomp_{p}) \subset \dfor[(0,0)]{M, L^p \otimes E}, \quad \text{for $p \gg 1$.}$$
	\end{thm}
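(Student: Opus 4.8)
Here is the plan. The spectral gap will be read off from a Lichnerowicz-type (Bochner–Kodaira–Nakano) identity for the square of the Dirac-type operator
\[
	D_p := \sqrt{2}\,\big(\dbar{}^{L^p \otimes E} + \dbar{}^{L^p \otimes E*}\big), \qquad D_p^2 = 2\laplcomp_p,
\]
which preserves the $\integ$-grading of $\dfor[(0,\bullet)]{M, L^p \otimes E}$. Using the Bismut connection $\nabla^B$ on $TM$ (recalled above precisely because its torsion is tied algebraically to the $p$-independent form $(\partial - \dbar)\Theta$) together with the induced connection on $\Lambda^\bullet(T^{*(0,1)}M) \otimes L^p \otimes E$, one writes pointwise on $M$
\[
	D_p^2 = \Delta_p + p\, \mathfrak{R}^{[1]} + \mathfrak{R}_p^{[0]},
\]
where $\Delta_p \geq 0$ is the associated Bochner Laplacian, $\mathfrak{R}^{[1]} \in \enmr{\Lambda^\bullet(T^{*(0,1)}M)}$ is a self-adjoint, grading-preserving endomorphism built only from $R^L$ (hence independent of $p$), and $\mathfrak{R}_p^{[0]}$ is a self-adjoint bundle endomorphism bounded uniformly in $p$, collecting $R^E$, the curvature and scalar curvature of $g^{TM}$, and the torsion terms of $\Theta$ (see \cite[\S 1.5]{MaHol}).

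The positive-degree part is then treated pointwise. Fixing $x \in M$ and diagonalising $\matcirc{R^L}_x$, positivity \eqref{eqn_posit} and compactness of $M$ give a uniform lower bound for its eigenvalues, and in the associated frame $\mathfrak{R}^{[1]}$ acts diagonally on the monomials $\bar w^J$, $J \subset \{1,\dots,n\}$; a direct computation (the content of Demailly's estimate) shows that its eigenvalue on $\bar w^J$ is bounded below by a constant times $|J|$, so there is $\delta > 0$ with $\mathfrak{R}^{[1]} \geq \delta \cdot \mathrm{Id}$ on $\bigoplus_{q \geq 1}\Lambda^q(T^{*(0,1)}M)$. Hence, with $C_0 := \sup_p \|\mathfrak{R}_p^{[0]}\|_{\infty} < \infty$, for every $s \in \dfor[(0,>0)]{M, L^p \otimes E}$,
\[
	\scal{D_p^2 s}{s}_{L^2} = \scal{\Delta_p s}{s}_{L^2} + p\,\scal{\mathfrak{R}^{[1]} s}{s}_{L^2} + \scal{\mathfrak{R}_p^{[0]} s}{s}_{L^2} \geq (\delta p - C_0)\,\| s \|_{L^2}^2 .
\]
Thus for $p \gg 1$ we get $\laplcomp_p \geq \tfrac{\delta}{4} p$ on $\dfor[(0,>0)]{M, L^p \otimes E}$; in particular $\laplcomp_p$ is invertible there, so $\ker \laplcomp_p \subset \dfor[(0,0)]{M, L^p \otimes E}$ and $\spec\big(\laplcomp_p|_{\dfor[(0,>0)]{M, L^p \otimes E}}\big) \subset [\tfrac{\delta}{4} p, +\infty[$.

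For the degree-zero part the pointwise argument fails, since there $\laplcomp_p = \dbar{}^{L^p \otimes E*}\dbar{}^{L^p \otimes E}$ and $\mathfrak{R}^{[1]}$ vanishes; instead one uses $\dbar{}^{L^p \otimes E}\laplcomp_p = \laplcomp_p \dbar{}^{L^p \otimes E}$. If $0 \neq s \in \dfor[(0,0)]{M, L^p \otimes E}$ satisfies $\laplcomp_p s = \lambda s$ with $\lambda > 0$, then $\dbar{}^{L^p \otimes E} s \neq 0$ (else $\laplcomp_p s = 0$), it lies in $\dfor[(0,1)]{M, L^p \otimes E}$, and $\laplcomp_p(\dbar{}^{L^p \otimes E} s) = \lambda\, \dbar{}^{L^p \otimes E} s$, whence $\lambda \geq \tfrac{\delta}{4} p$ by the previous paragraph. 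Since $\ker\big(\laplcomp_p|_{\dfor[(0,0)]{M, L^p \otimes E}}\big) = H^0(M, L^p \otimes E)$, this gives $\spec\big(\laplcomp_p|_{\dfor[(0,0)]{M, L^p \otimes E}}\big) \subset \{0\} \cup [\tfrac{\delta}{4} p, +\infty[$, and combining the two steps yields the statement with $c = \tfrac{\delta}{4}$.

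The eigenvalue bookkeeping above is routine; the genuine obstacle is establishing the Lichnerowicz identity in the non-Kähler setting with the stated $p$-uniformity — one must check that the torsion of $\Theta$, which enters through $\nabla^B$ and through the comparison of $\dbar{}^{L^p \otimes E*}$ with its formal adjoint, contributes only to the $O(1)$ term $\mathfrak{R}_p^{[0]}$ and never produces a term of order $p$. This is exactly the point of working with the Bismut connection rather than the Levi-Civita or Chern connection: its torsion is governed algebraically by the $p$-independent form $(\partial - \dbar)\Theta$.
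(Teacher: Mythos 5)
The paper does not actually prove this statement: it is quoted from Bismut--Vasserot \cite{BVas} (see also \cite[Theorems 1.5.7, 1.5.8]{MaHol}). Your degree-zero step (transferring the gap from degree one via $\dbar{}^{L^p\otimes E}\laplcomp_p=\laplcomp_p\dbar{}^{L^p\otimes E}$) is correct, but the positive-degree step contains a genuine gap: the pointwise positivity you claim for the order-$p$ curvature term is false. In the Lichnerowicz formula for $D_p^2$ with the Bismut connection, the order-$p$ zeroth-order term is $\tfrac{p}{2}\,{}^{c}(R^{L})$ (cf.\ (\ref{defn-L_p})), and if $a_1,\dots,a_n>0$ are the eigenvalues of $\matcirc{R^L}_x$, its eigenvalue on the monomial $\overline{w}^J$ is $p\bigl(2\sum_{i\in J}a_i-\sum_{i=1}^{n}a_i\bigr)$ --- this is exactly the zeroth-order part of the model operator in (\ref{defn-L_2^0}). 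This is \emph{not} bounded below by $\delta|J|$: already for $|J|=1$, $n\ge 3$ and all $a_i$ equal it is negative, and for general metrics it is negative whenever the $a_i$ with $i\in J$ are small relative to the rest. Discarding $\Delta_p$ as merely nonnegative then loses precisely what saves the estimate: the ground-state energy of the magnetic Bochner Laplacian is itself of order $p\sum_i a_i$ (the Landau-level structure visible in the paper's identity $L_{2,x}^{0}=\sum_j b_jb_j^{+}+4\pi N$), and it is this contribution that compensates the negative scalar $-p\sum_i a_i$. Proving that lower bound for $\Delta_p$ on the compact manifold is essentially as hard as the theorem itself, so the Lichnerowicz route with ``$\Delta_p\ge 0$'' cannot close.

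The actual argument of \cite{BVas} and \cite[\S 1.5]{MaHol} runs through the Bochner--Kodaira--Nakano identity instead: one identifies $\Lambda^{q}(T^{*(0,1)}M)\otimes L^p\otimes E$ with the bundle of $(n,q)$-forms with values in $L^p\otimes E\otimes\det T^{(1,0)}M$, writes $\laplcomp_p=\overline{\Box}_p+\bigl[\imun R^{L^p\otimes E\otimes\det T^{(1,0)}M},\Lambda\bigr]+(\text{torsion terms})$ with $\overline{\Box}_p=\partial\partial^{*}+\partial^{*}\partial\ge 0$, and uses that on $(n,q)$-forms the Nakano curvature operator has eigenvalue $\sum_{k\in K}a'_k\ge q\,(p\mu_0-C)$ --- it is only after this twist by $\det T^{(1,0)}M$ that positivity proportional to the degree genuinely holds. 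The non-K\"ahler torsion terms are first-order but are absorbed via $\varepsilon\|\dbar_p s\|^2+C_\varepsilon\|s\|^2$. So the nonnegative operator one throws away is the conjugate ($\partial$-) Laplacian, not the Bochner Laplacian, and the role of the Bismut connection in the paper is to organize the local model, not to make the Lichnerowicz zeroth-order term positive in positive degree.
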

	\par For $e = v^{(1,0)} + v^{(0,1)} \in T^{(1,0)}M \oplus T^{(0,1)}M  = TM \otimes_{\real} \comp$ we denote by $c(e)$ the operator on $\dfor[(0, \bullet)]{M}$, defined by
		\begin{equation}
			c(e) = \sqrt{2} ( \overline{v}^{(1,0), \ast} \wedge - i_{v^{(0,1)}} ),
		\end{equation}
	where $\wedge$ and $i$ are the exterior and interior product respectively. Let $e_1, \ldots, e_{2n}$ be an orthonormal frame of $(TM, g^{TM})$ and $e^1, \ldots, e^{2n}$ its dual frame.
	We define 
	\begin{equation}\label{defn_c_op}
		{}^{c}(e^{i_1} \wedge e^{i_2} \wedge  \cdots \wedge e^{i_j}) = c(e_{i_1}) c(e_{i_2}) \cdots c(e_{i_j}),
	\end{equation}
	for $0 < i_1 < \ldots < i_j \leq n$. We extend this operation $\comp$-linearly for any $B \in \Lambda^{\bullet}(T^*M \otimes_{\real} \comp) $.
	\par We take $x \in M$. 
	Let $\psi : M \supset U \to V \subset \comp^{n}$ be a holomorphic local chart such that $B(0, 4 \epsilon) \subset V$ and the restriction of $E$ over $U$ is trivial, where
		\begin{equation}\label{eqn_inj_rad}
			0 < \epsilon < r_{\rm{inj}} / 4, \text{ where $r_{\rm{inj}}$ is the injectivity radii of M }. 
		\end{equation}
		We denote in the sequel $X_0 := T_{x}M \simeq U$. We denote by $\rho : \real \to [0,1]$ a smooth positive function such that
		\begin{equation}\label{defn_rho_fun}
			\rho(u) =
  			\begin{cases} 
      			\hfill 0,  & \text{ for  $|u| > 4$,} \\
      			\hfill 1,  & \text{ for  $|u| < 2$.} \\
 			\end{cases}
		\end{equation}
		We define a Riemannian metric $g^{TX_0}(Z) = g^{TM}(\rho(|Z|/\epsilon)Z)$ over $X_0$. We choose a holomorphic frame of $E$ over $U$ and we introduce the Hermitian product $h^{E_0}$ on $E_0 = X_0 \times E_x$ over $X_0$ by $h^{E_0}(Z) = h^{E}(\rho(|Z|/\epsilon) Z)$, where $h^{E}$ is the matrix of the Hermitian product in the chosen holomorphic frame. Then $g^{TX_0}, h^{E_0}$ coincide with $g^{TM}$ and $h^{E}$ over $B(0,2\epsilon)$ and with trivial structures $g^{TM}_{x}, h^{E}_{x}$ away from $B(0, 4 \epsilon)$. We denote by $R^{E_0}$ the Chern curvature of $(E_0, h^{E_0})$ and by $\Theta_0$ the Hermitian form associated to $g^{TX_0}$. 
		
		\par Let $\sigma$ be a holomorphic frame of $L$ over $U$. It defines a trivialisation $\psi: L|_U \to U \times \comp$. We define a function $\phi(Z), Z \in X_0$ by $e^{-2 \phi(Z)} = |\sigma|_{h^{L}}^{2}(Z)$. Let's denote by $\phi^{[1]}$ and $\phi^{[2]}$ the first and second order Taylor expansions of $\phi$ at $x$, i.e.
		\begin{align}
			& \phi^{[1]}(Z) = \sum_{j=1}^{n} \Big( \frac{\partial \phi}{\partial z_j}(x) z_j +  \frac{\partial \phi}{\partial \overline{z}_j}(x)  \overline{z}_j  \Big), \\
			&  \phi^{[2]}(Z) = \Re \Big( \sum_{j, k=1}^{n} \Big( \frac{\partial^2 \phi}{\partial z_j \partial z_k}(x) z_j z_k + \frac{\partial^2 \phi}{\partial z_j \partial \overline{z}_j}(x) z_j \overline{z}_k  \Big) \Big),
		\end{align}
		where $(z_1, \ldots, z_n)$ are the complex coordinates of $Z$.
		We define a function $\phi_{\epsilon}(Z)$ over $X_0$ by
		\begin{equation}
			\phi_{\epsilon}(Z) = \rho ( |Z| / \epsilon ) \phi(Z) + (1 -  \rho ( |Z|/\epsilon ) ) \big( \phi(x) + \phi^{[1]}(Z) + \phi^{[2]}(Z) \big).
		\end{equation}
		Let $h_{\epsilon}^{L_0}$ be the metric on $L_0 := X_0 \times \comp$ defined by 
		$ |1|^2_{h_{\epsilon}^{L_0}} = e^{- 2 \phi_{\epsilon}(Z)} $.
		Let $\nabla^{L_0}$ be the Chern connection on $(L_0, h^{L_0}_{\epsilon})$ and let $R^{L_0}_{\epsilon}$ be the curvature of it. Then by \cite[(2.28)]{MaDinhPac} 
		\begin{equation}\label{curv_positive}
			\text{$R^{L_0}_{\epsilon}$ is positive for $\epsilon > 0$ small enough.}
		\end{equation}				
		From now on, we fix $\epsilon > 0$ which satisfies (\ref{eqn_inj_rad}) and (\ref{curv_positive}). 
		We trivialize $L_0$ by a unitary section $S_{x}$ of $(L_0, h_{\epsilon}^{L_0})$, which we write as
		\begin{equation}
			S_{x} = e^{\tau} 1 \quad \text{with} \quad  \tau(0) = \phi(x),
		\end{equation}
		where the function $\tau$ is given by 
		$ \tau(Z) = \phi(x) - 2 \int_0^{1} (i_Z \partial \phi_{\epsilon})_{tZ} dt $
		, so that $S_x$ satisfies $\nabla^{L_0}_{Z}S_x = 0$. By abuse of notation, we drop $\epsilon$ from the notation introduced before.		
		\par We define a positive function $k : X_0 \to \real$ by the identity
		\begin{equation}\label{defn_k_fun}
			\,dv_{X_0}(Z) = k(Z) \,dv_{T_{x}M}(Z),
		\end{equation}
		where $d v_{T_{x}M}, \,dv_{X_0}(Z)$ are the Riemann volume forms on $X_0$, induced by $g^{TM}_{x}$ and $g^{TX_0}$ respectively. 
		We see, in particular, that $k(0) = 1$.
		We denote by $h^{\det_0}$ the Hermitian metric, induced on $\det T^{(1,0)} X_0$ by $g^{TX_0}$.
		\begin{sloppypar}	
		 We define a smooth self-adjoint section $\Phi_{E_0}$ of $\oplus_{i \geq 0} \enmr{\Lambda^{i}(T^{*(0,1)}X_0) \otimes E_0}$ over $X_0$ by
		\begin{equation}\label{defn_phi_e}
			\Phi_{E_0} := \tfrac{1}{4}r^{X_{0}} + \tfrac{1}{2}{}^{c}(R^{E_0} + R^{\det_0}) + \tfrac{1}{2}\imun {}^{c}(\dbar \partial \Theta_0) - \tfrac{1}{8} | (\dbar - \partial) \Theta_0 |^2,
		\end{equation}	
		where $r^{X_{0}}$ is the scalar curvature of $g^{TX_0}$ and $R^{\det_0}$ is the Chern curvature of $(\det T^{(1,0)} X_0,  h^{\det_0})$.
		\end{sloppypar}
		\begin{sloppypar}
			We denote by $\nabla^{B, \Lambda^{0, \bullet}_{0}}$ the natural extension of the Bismut connection $\nabla^B$ of $(X_0, g^{TX_0})$ on $\Lambda^{\bullet}(T^{*(0,1)}X_0)$ (see \cite[(1.4.27)]{MaHol}). We set
		\begin{equation}\label{defn-L_p}
			L_{p,{x}} := \Delta^{B, \Lambda^{0, \bullet}_{0} \otimes L_{0}^{p} \otimes E_{0} } +
		\tfrac{1}{2}p{}^{c}(R^{L_0})
		+ \Phi_{E_0},
		\end{equation}
		where $\Delta^{B, \Lambda^{0,\bullet}_{0} \otimes L_{0}^{p} \otimes E_{0}}$ is the Bochner Laplacian on $\Lambda^{\bullet}(T^{*(0,1)}X_0) \otimes L_{0}^{p} \otimes E_{0}$ associated with
		\begin{equation}\label{eqn_conn_product}
			\nabla^{B, \Lambda^{0, \bullet}_{0} \otimes L_{0}^{p} \otimes E_0} := \nabla^{B, \Lambda^{0,\bullet}_{0}} \otimes 1 \otimes 1
		+ 1 \otimes \nabla^{L_{0}^{p}} \otimes 1 + 1 \otimes 1 \otimes \nabla^{E_0},
		\end{equation}
		and $g^{TX_0}$, $h^{L_0}$, $h^{E_0}$.
		By the trivialization as above, we have $\Lambda^{\bullet}(T^{*(0,1)}X_0) \otimes L_{0}^{p} \otimes E_{0} \simeq \Lambda^{\bullet}(T^{*(0,1)}_{x}M) \otimes L_{x}^{p} \otimes E_{x}$.
		The operator $L_{p,{x}}$ preserves $\integ$-grading on $\dfor[(0, \bullet)]{X_0, L_{x}^{p} \otimes E_{x}}$ and the following formula holds  (see \cite[Theorem 1.3]{BisNonKah})
		\begin{equation}\label{eqn_L_p_full_sqr}
			L_{p, x} = 2 \big(\dbar^{X_0}_{p} + \dbar^{X_0*}_{p}\big)^2,
		\end{equation}
		where $\dbar^{X_0}_p$ is the Dolbeaut operator acting on $\dfor[(0, \bullet)]{X_0, L_0^{p} \otimes E_0}$, and $\dbar^{X_0*}_{p}$ is its adjoint with respect to the $L^2$-norm induced by $g^{TX_0}, h^{L_0}$ and $h^{E_0}$. Then $L_{p, x}$ is self-adjoint with respect to this norm.
		\end{sloppypar}
		\par All the constructions made here could be performed uniformly in a neighbourhood of $x \in M$. For the rest of this article we denote by $\mathscr{C}^{m}(M)$ the $\mathscr{C}^{m}$-norm with respect to the parameter $x$.
		\par By (\ref{eqn_L_p_full_sqr}) and the positivity of $R^{L_0}$, we have
		 \begin{thm}[{ \cite[Theorem 1]{BVas} cf. also the proof of \cite[Theorem 1.5.7, 1.5.8]{MaHol}}]\label{thm_spec_gap_L_p}
		 There is $\mu > 0$ such that
		 	$$\spec (L_{p,{x}}) \subset \{ 0 \} \cup [\mu p, + \infty[, \qquad \ker (L_{p,{x_0}}) \subset \dfor[(0, 0)]{X_0, L_{0}^{p} \otimes E_{0}}, \quad \text{for} \quad p \gg 1.$$
		 \end{thm}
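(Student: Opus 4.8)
The plan is to reduce the statement to the corresponding estimate on $M$, namely Theorem~\ref{thm-lapl_spectral_gap}, by exploiting the identity (\ref{eqn_L_p_full_sqr}), which represents $L_{p,x}$ as twice the square of a Dolbeault-type Dirac operator, together with the local positivity (\ref{curv_positive}) of the rescaled curvature $R^{L_0}_{\epsilon}$. First I would observe that, by construction, all the geometric data $(g^{TX_0}, h^{L_0}, h^{E_0})$ on $X_0$ agree with the original data on $M$ inside the ball $B(0, 2\epsilon)$ and are trivial (flat) outside $B(0, 4\epsilon)$; in particular $(L_0, h^{L_0}_\epsilon)$ is a genuine positive line bundle on $X_0 \simeq \comp^n$ in the sense of (\ref{eqn_posit}), so that the hypotheses of the Bismut--Vasserot spectral gap argument are met verbatim for the operator $L_{p,x} = 2(\dbar^{X_0}_p + \dbar^{X_0*}_p)^2$.

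The key step is the Lichnerowicz-type formula underlying (\ref{defn-L_p}): expanding $(\dbar^{X_0}_p + \dbar^{X_0*}_p)^2$ via the Bochner Laplacian one gets a zeroth-order term which, on the positive-degree part of $\Lambda^{\bullet}(T^{*(0,1)}X_0)$, is bounded below by $c'p$ for some $c' > 0$ once $p$ is large, because the term $\tfrac12 p\, {}^{c}(R^{L_0})$ dominates the bounded curvature term $\Phi_{E_0}$ (here one uses that ${}^{c}(R^{L_0})$ acts on $\dfor[(0,j)]{X_0,\cdot}$ with eigenvalues comparable to the sum of $j$ of the positive eigenvalues of $\matcirc{R^{L_0}}$, which are uniformly bounded below by (\ref{curv_positive})). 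On the degree-zero part $\dfor[(0,0)]{X_0, L_0^p \otimes E_0}$ the curvature endomorphism $\frac12 p\,{}^c(R^{L_0}) + \Phi_{E_0}$ need not be positive, so there one argues instead directly with $\dbar^{X_0}_p$ alone: any element of $\ker(L_{p,x})$ is holomorphic, and a Kodaira--Nakano/Bochner vanishing on the model space rules out nonzero harmonic forms in positive degree, giving the inclusion $\ker(L_{p,x}) \subset \dfor[(0,0)]{X_0, L_0^p \otimes E_0}$ for $p \gg 1$. Combining these two facts yields $\spec(L_{p,x}) \subset \{0\} \cup [\mu p, +\infty[$ with $\mu := c'$. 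Finally I would note that, because every construction in this subsection is performed uniformly for $x$ ranging over $M$ (compact), the constant $\mu$ can be chosen independent of $x$.

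The main obstacle I anticipate is not conceptual but one of bookkeeping: making precise that the lower bound on the positive-degree part of the curvature endomorphism ${}^c(R^{L_0})$ is uniform in $x$ and robust under the cutoff interpolation defining $h^{L_0}_\epsilon$ — i.e.\ that shrinking $\epsilon$ so that (\ref{curv_positive}) holds does not degrade the gap constant. This is exactly the content for which the excerpt cites \cite[Theorem~1.5.7, 1.5.8]{MaHol} and the proof of \cite[Theorem~1]{BVas}, so in the write-up I would invoke those references for the uniform estimate rather than reprove it, and simply record how (\ref{eqn_L_p_full_sqr}) and (\ref{curv_positive}) feed into their argument.
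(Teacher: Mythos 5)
Your overall strategy is the same as the paper's: the paper offers no independent proof of Theorem \ref{thm_spec_gap_L_p}, deducing it in one line from (\ref{eqn_L_p_full_sqr}) and the positivity (\ref{curv_positive}) of $R^{L_0}_{\epsilon}$ and citing \cite[Theorem 1]{BVas} and the proofs of \cite[Theorems 1.5.7, 1.5.8]{MaHol} for the actual estimate, exactly as you propose. However, the mechanism you sketch for why those estimates apply contains a step that would fail as written. You claim that the zeroth-order term $\tfrac12 p\,{}^{c}(R^{L_0})+\Phi_{E_0}$ of (\ref{defn-L_p}) is bounded below by $c'p$ on positive-degree forms because ${}^{c}(R^{L_0})$ acts with eigenvalues ``comparable to the sum of $j$ of the positive eigenvalues of $\matcirc{R^{L_0}}$''. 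This drops the trace term: comparing (\ref{defn-L_p}) with its rescaled limit (\ref{defn-L_2^0}), one sees that $\tfrac12\,{}^{c}(R^{L_0})$ acts on $\dfor[(0,j)]{X_0,\cdot}$ as $2\sum_{l,m}R^{L_0}(w_l,\overline{w}_m)\,\overline{w}^m\wedge i_{\overline{w}_l}-\sum_l R^{L_0}(w_l,\overline{w}_l)$, i.e.\ with eigenvalues equal to twice a sum of $j$ eigenvalues of $\matcirc{R^{L_0}}$ \emph{minus the full trace}; this is negative whenever $2j<n$ (already for $n=3$, $j=1$). The order-$p$ positivity on positive degrees is therefore not a pointwise property of the zeroth-order term: it is recovered only after absorbing the negative trace term into the Bochner Laplacian, whose own lower bound of order $p\sum_l R^{L_0}(w_l,\overline{w}_l)$ (the magnetic/harmonic-oscillator estimate, visible in the model identity $L^0_{2,x}=\sum_j b_jb_j^{+}+4\pi N$ of Theorem \ref{thm-O_2_form}) cancels it exactly, leaving the manifestly nonnegative piece $2\sum R^{L_0}(w_l,\overline{w}_m)\overline{w}^m\wedge i_{\overline{w}_l}\ge 2\mu_0 j$. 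Equivalently one invokes the Nakano inequality. This cancellation is precisely the content of the references you defer to, so the deferral is legitimate, but your stated justification is not.

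A second, smaller omission: proving (a) a lower bound $\mu p$ for $L_{p,x}$ on $\dfor[(0,>0)]{X_0,L_0^p\otimes E_0}$ and (b) $\ker(L_{p,x})\subset\dfor[(0,0)]{X_0,L_0^p\otimes E_0}$ does not yet yield $\spec(L_{p,x})\subset\{0\}\cup[\mu p,+\infty[$, because the nonzero spectrum in degree zero is not controlled by either statement. The missing (standard) step is the commutation argument: if $L_{p,x}s=\lambda s$ with $s\in\dfor[(0,0)]{X_0,L_0^p\otimes E_0}$ and $\lambda\neq 0$, then $s=2\lambda^{-1}\dbar^{X_0*}_p\dbar^{X_0}_p s$ forces $\dbar^{X_0}_p s\neq 0$, and $\dbar^{X_0}_p s$ is an eigenform of $L_{p,x}$ in degree one with the same eigenvalue, whence $\lambda\ge\mu p$ by (a). With these two corrections your argument matches the one the paper implicitly relies on.
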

		 \begin{sloppypar}
		 	For $(Z,Z') \in X_0 \times X_0$ we denote by $\exp ( -u L_{p,{x}})(Z,Z')$ the smooth kernel of the heat operator $\exp ( -u L_{p,{x}} )$ with respect to the volume form $\,dv_{X_0}$. We have
		 \end{sloppypar}
		 \begin{lem}[{{\cite[Lemma 1.6.5, (5.5.73)]{MaHol}}}]\label{lem-lapl_L_p-kernels}
			There are constants $C,c > 0, l \in \nat$ such that uniformly on $p \in \nat^*, u \in ]0,+\infty[, x \in M$ and $Z, Z' \in T_xM; |Z|, |Z'| < \epsilon$, we have
			\begin{equation}
				\Big|  \exp ( - u \laplcomp_p / p )(\exp_x(Z),\exp_x(Z')) - \exp ( - u L_{p,x} / (2p) )(Z, Z') \Big| 
		 	\leq C p^l \exp ( - c p / u ).
			\end{equation}
		\end{lem}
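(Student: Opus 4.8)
The plan is to follow the finite propagation speed method of Ma--Marinescu \cite[\S 1.6, \S 4.1]{MaHol}. Set $D_p := \sqrt{2}(\dbar^E_p + \dbar^{E*}_p)$ on $M$ and $D_{p,x} := \sqrt{2}(\dbar^{X_0}_p + \dbar^{X_0*}_p)$ on $X_0$; by \eqref{defn_Dir_op} and \eqref{eqn_L_p_full_sqr} these are formally self-adjoint with $D_p^2 = 2 \laplcomp_p$ and $D_{p,x}^2 = L_{p,x}$, so that $\laplcomp_p/p = D_p^2/(2p)$ and $L_{p,x}/(2p) = D_{p,x}^2/(2p)$. Both are Dirac-type operators whose principal symbols are given by Clifford multiplication, hence have finite propagation speed bounded by some $C_0 > 0$ (independent of $p$ and $x$); and, crucially, since $g^{TX_0}, h^{L_0}, h^{E_0}$ coincide with $g^{TM}, h^L, h^E$ on $B(0,2\epsilon)$ under $\exp_x$, the operators $D_p$ and $D_{p,x}$ agree on sections supported in $B(0,2\epsilon)$.

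First I would fix an even $\chi \in \cinf{\real}$ with $\chi \equiv 1$ near $0$ and $\operatorname{supp} \chi \subset [-\epsilon/(2C_0), \epsilon/(2C_0)]$, and use the identity $e^{-tD^2} = (4\pi t)^{-1/2}\int_{\real} e^{-s^2/(4t)}\cos(sD)\,ds$ with $t = u/(2p)$ to split $e^{-u \laplcomp_p/p} = F_{1,u,p}(D_p) + F_{2,u,p}(D_p)$, where $F_{1,u,p}(a) := (2\pi u/p)^{-1/2}\int \chi(s) e^{-ps^2/(2u)}\cos(sa)\,ds$ and $F_{2,u,p} := e^{-u a^2/(2p)} - F_{1,u,p}$, and likewise for $L_{p,x}$. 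By finite propagation speed, $F_{1,u,p}(D_p)$ has Schwartz kernel supported within distance $\epsilon/2$ of the diagonal, so for $|Z|,|Z'| < \epsilon$ its value at $(\exp_x(Z), \exp_x(Z'))$ is computed from $D_p$ over $B(x,2\epsilon)$ alone, where $D_p = D_{p,x}$; hence $F_{1,u,p}(D_p)(\exp_x(Z),\exp_x(Z')) = F_{1,u,p}(D_{p,x})(Z,Z')$, and the quantity in the lemma reduces to $F_{2,u,p}(D_p)(\exp_x(Z),\exp_x(Z')) - F_{2,u,p}(D_{p,x})(Z,Z')$.

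It then remains to bound each of these two kernels by $C p^l e^{-cp/u}$. Integrating by parts in $s$ against $\cos(sa)$ (the weight $(1-\chi(s))e^{-ps^2/(2u)}$ being supported away from $s=0$) gives, for every $m,j$, estimates $|(1+a^2)^m F^{(j)}_{2,u,p}(a)| \le C_{m,j}(1+p/u)^{N}e^{-c'p/u}$; combined with the spectral gaps $\spec(D_p^2) \subset \{0\} \cup [2cp, +\infty[$ and $\spec(D_{p,x}^2) \subset \{0\} \cup [\mu p, +\infty[$ of Theorems \ref{thm-lapl_spectral_gap} and \ref{thm_spec_gap_L_p}, which confine the relevant spectrum to $\{0\}$ and to $[\sqrt{cp},+\infty[$, these improve for $a$ in that range to $|(1+a^2)^m F^{(j)}_{2,u,p}(a)| \le C p^N e^{-cp/u}$, uniformly for $u \in \,]0,+\infty[$. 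Passing from such function estimates to pointwise kernel bounds is standard elliptic theory: one controls $(1+L_{p,x})^{q} F_{2,u,p}(D_{p,x})(1+L_{p,x})^{q'}$ in operator norm for $q, q'$ large relative to $n$ and applies Sobolev embedding on $\Lambda^{\bullet}(T^{*(0,1)}X_0)\otimes L_0^p\otimes E_0$, the $p$-dependence of the Sobolev constants producing the power $p^l$; the $0$-eigenspace contribution is treated by comparing the Bergman projections onto $\ker \laplcomp_p$ and $\ker L_{p,x}$, whose kernels differ by $O(p^{-\infty})$ on $B(0,\epsilon)$ by the localization of the Bergman kernel \cite{MaHol}. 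Uniformity in $x \in M$ is automatic since every constant above ($\epsilon$, $C_0$, the spectral gaps, the Sobolev constants) can be chosen uniform over the compact manifold $M$. The hard part is this last step -- obtaining pointwise kernel estimates with the correct uniform-in-$p$ power $p^l$ and covering the whole range $u \in \,]0,+\infty[$, in particular handling the $0$-eigenspace for large $u$ -- but it is carried out in detail in \cite[Lemma 1.6.5, (5.5.73)]{MaHol}, which we follow.
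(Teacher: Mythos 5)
The paper does not prove this lemma itself --- it quotes it directly from \cite[Lemma 1.6.5, (5.5.73)]{MaHol}, and Remark \ref{lem_lapl_l_p_off_diag} records that the proof rests on finite propagation speed for the wave equation and essential self-adjointness of $\laplcomp_p$. Your proposal is a correct reconstruction of exactly that argument (the $F_{1,u,p}/F_{2,u,p}$ splitting with a compactly supported even cutoff, identification of the local parts via finite propagation speed and the agreement of the geometric data on $B(0,2\epsilon)$, and the $Cp^l e^{-cp/u}$ bound on the remainder via integration by parts plus Sobolev embedding), so it takes essentially the same route as the cited proof.
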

		\begin{rem}\label{lem_lapl_l_p_off_diag}
			Since the proof of Lemma \ref{lem-lapl_L_p-kernels} relies on finite propagation speed of solutions of the hyperbolic equations \cite[Theorem D.2.1]{MaHol}  and on the fact that $\laplcomp_p$ is essentially self-adjoint operator, and those facts hold for orbifolds (see \cite[p.230]{MaOrbif2005}),  Lemma \ref{lem-lapl_L_p-kernels} itself holds for orbifolds.
		\end{rem}

\paragraph{More properties of the asymptotics of heat kernel.}
	Now we recall a procedure of replacing a discrete parameter $p \in \nat$ in the construction of $L_{p,{x}}$ to a continuous $t \in [0,1]$. This permits us to interpret the asymptotic expansion in $p$ as an instance of a Taylor expansion in $t$.
	\begin{sloppypar}
	We recall that we fixed a unitary section $S_x$ of $L_{x}$, so that we can say that $L_{p,{x}}, p \in \nat^*$ act on $\dfor[(0,\bullet)]{X_0, E_{0}}$, which is independent of $p$. For $s \in \dfor[(0,\bullet)]{X_0, E_{0}}, Z \in X_0, t = 1/\sqrt{p}$, we set
		 \begin{equation}\label{defn-L_2^t}
		 	\begin{aligned}
		 		& (S_t s)(Z) := s(Z/t), \\
		 		& \nabla_t := S_t^{-1} t k^{1/2} \nabla^{B, \Lambda^{0,\bullet}_{0} \otimes L_0^{p} \otimes E_0} k^{-1/2} S_t, \\
		 		& L_{2, x}^{t} := S_t^{-1} t^2 k^{1/2} L_{p,{x}} k^{-1/2} S_t.
		 	\end{aligned}
		 \end{equation}
		 By \cite[(1.6.31)]{MaHol} the definition of $\nabla_t, L_{2, x}^{t}$ extends for $t \in ]0,1]$. Moreover, the operator $L_{2, x}^{t} $ is self-adjoint with respect to (\ref{sob_norm_defn}).
	\end{sloppypar}
	\begin{sloppypar}
		\par In what follows, we will repeatedly use the results from \cite{MaHol}. Their localization procedure is different from ours, but their arguments apply directly when one chooses the Sobolev norms as
		\begin{align}\label{sob_norm_defn}
			&  \norm{s}_{t, 0}^{2} := \int_{X_0} \norm{s(Z)}_{t, h}^{2} dv_{T_xM}(Z),\\
			&  \norm{s}_{t, m}^{2} :=  \sum_{k = 0}^{m} \sum_{i_1, \ldots, l_k = 1}^{2n} \norm{\nabla_{t, e_{i_1}} \ldots \nabla_{t, e_{i_k}}s }_{t,0}^{2},
		\end{align}
		where $s \in \dfor[(0, \bullet)]{X_0, E_0}$, $\norm{\cdot}_{t, h}(Z)$ is the pointwise norm induced by $g^{TX_0}(tZ), h^{E_0}(tZ)$,
		and $e_1, \ldots, e_{2n}$ are as in (\ref{defn_c_op}).
	\end{sloppypar}
		 \par Let  $w_1, \ldots, w_n$ be an orthonormal frame of $(T^{(1,0)}_{x}M, h^{T^{(1,0)}M}_{x})$ and let $w^1, \ldots, w^n$ be its dual frame.
		 We denote by $\nabla_{0, \cdot}$ the connection on the vector bundle $\Lambda^{\bullet} (T^{*(0,1)} X_0) \otimes E_{0}$ and by $L_{2, x}^{0}$ the operator on $\dfor[(0, \bullet)]{X_0, E_{0}}$, defined by the formulas
		 \begin{equation}\label{defn-L_2^0}
		 	\begin{aligned}
		 		& \nabla_{0, \cdot} &&:= \nabla_{\cdot} + \tfrac{1}{2} R^L_{{x}}(Z,\cdot), \\
		 		& L_{2,{x}}^{0} &&:= \textstyle - \sum_i \nabla_{0,e_i}^{2} + 2 \sum_{i,j}  R^L_{{x}}(w_i, \overline{w}_j) \overline{w}^j \wedge i_{\overline{w}_i} -  \sum_i R^L_{{x}}(w_i, \overline{w}_i).
		 	\end{aligned}
		 \end{equation}
		 \begin{lem}[{\cite[Lemma 1.6.6]{MaHol}}]
		 	The family of operators $\nabla_t,  L_{2,x}^{t}$ is smooth in $t$ and 
		 	\begin{equation}
		 		\nabla_{t} \to \nabla_0,
		 		\qquad
		 		L_{2, x}^{t} \to L_{2, x}^{0}, \quad \text{as} \quad t \to 0.
		 	\end{equation}
		 \end{lem}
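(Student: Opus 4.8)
The plan is to prove this by direct computation: write $\nabla_t$ and $L^t_{2,x}$ out in the fixed holomorphic chart and trivializations of the previous subsection, Taylor-expand all geometric data at $Z=0$, substitute $t=1/\sqrt p$, and check that every coefficient that appears is a polynomial in $t$ whose coefficients are smooth functions of $Z$ (of at most polynomial growth, thanks to the cutoff $\rho$), plus a smooth remainder — so that each of the two families extends smoothly to $t\in[0,1]$ — and then read off the value at $t=0$.

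First I would fix the gauges. Since $S_x$ was chosen with $\nabla^{L_0}_Z S_x=0$, the connection $1$-form $\Gamma^{L_0}$ of $\nabla^{L_0}$ in the frame $S_x$ vanishes in the radial direction, which forces $\Gamma^{L_0}(Z)=\tfrac12 R^L_x(Z,\cdot)+O(|Z|^2)$; choosing synchronous frames for $E_0$ and for $\Lambda^\bullet(T^{*(0,1)}X_0)$ makes the connection $1$-forms of $\nabla^{E_0}$ and $\nabla^{B,\Lambda^{0,\bullet}_0}$ vanish to first order at $0$; and $k(0)=1$, $dk(0)=0$ give $k^{\pm1/2}(Z)=1+O(|Z|^2)$. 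Next I would carry out the conjugations in (\ref{defn-L_2^t}), using $S_t^{-1}\partial_{e_i}S_t=t^{-1}\partial_{e_i}$ and $S_t^{-1}f(Z)S_t=f(tZ)$ for a constant $e_i\in T_xM$ and multiplication operators $f$. In $\nabla_{t,e_i}$ the derivative term survives with coefficient $t\cdot t^{-1}=1$; the line-bundle piece becomes $tp\,\Gamma^{L_0}_{e_i}(tZ)$, which by $pt^2=1$ equals $\tfrac12 R^L_x(Z,e_i)$ plus a term that is $O(t)$ and smooth in $(t,Z)$; the $E_0$-, $\Lambda$- and $k^{1/2}$-contributions are $O(t^2|Z|)$. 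Hence $\nabla_t$ is smooth up to $t=0$ and $\nabla_0=\nabla_{0,\cdot}$ of (\ref{defn-L_2^0}). Then I would expand $L_{p,x}$ of (\ref{defn-L_p}) as its Bochner Laplacian $-\sum_i\nabla_{e_i}^2+\sum_i\nabla_{\nabla^{TX_0}_{e_i}e_i}$ for the connection (\ref{eqn_conn_product}), plus $\tfrac12 p\,{}^{c}(R^{L_0})+\Phi_{E_0}$: after conjugation and $t=1/\sqrt p$, the Laplacian term becomes $-\sum_i\nabla_{t,e_i}^2\to-\sum_i\nabla_{0,e_i}^2$; the divergence term is $O(t)$, since $\nabla^{TX_0}_{e_i}e_i=O(|Z|)$; the Clifford term is $t^2\cdot\tfrac12 p\,{}^{c}(R^{L_0})(tZ)\to\tfrac12{}^{c}(R^L_x)$, which a standard Clifford computation rewrites as $2\sum_{i,j}R^L_x(w_i,\overline w_j)\,\overline w^j\wedge i_{\overline w_i}-\sum_i R^L_x(w_i,\overline w_i)$; and $\Phi_{E_0}$ contributes $t^2(k^{1/2}\Phi_{E_0}k^{-1/2})(tZ)\to0$. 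Summing these gives $L^0_{2,x}$ of (\ref{defn-L_2^0}), and self-adjointness with respect to (\ref{sob_norm_defn}) is preserved along the family because $tk^{1/2}S_t$ differs from an isometry of the relevant $L^2$-spaces only by the positive scalar $t^2$.

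The one genuinely nonformal point — the main obstacle — is to see that no negative power of $t$ survives, i.e. that the parabolic rescaling $Z\mapsto Z/t$ is exactly matched against the semiclassical blow-up $p=t^{-2}$ of the line-bundle connection. This rests on the radial gauge (which kills the $O(1)$ Taylor coefficient of $\Gamma^{L_0}$, the coefficient that would otherwise produce a $t^{-1}$) together with the positivity (\ref{curv_positive}), which guarantees that $R^{L_0}$ is a genuine nondegenerate $2$-form so that $L^0_{2,x}$ is of the expected harmonic-oscillator type. Everything else — the Clifford bookkeeping, the $k^{\pm1/2}$-conjugation and the Christoffel/divergence term — is routine Taylor expansion; alternatively, since the statement is verbatim \cite[Lemma 1.6.6]{MaHol}, one may simply invoke it after checking that the Sobolev norms used there agree with (\ref{sob_norm_defn}).
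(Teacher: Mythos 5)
Your proposal is correct and matches the paper's treatment: the paper gives no independent argument but simply invokes \cite[Lemma 1.6.6]{MaHol}, whose proof is exactly the rescaling/Taylor-expansion computation you carry out (radial gauge killing the $O(1)$ term of $\Gamma^{L_0}$, the cancellation $pt^2=1$, and the $O(t)$ estimates on the $E_0$-, $k^{1/2}$- and divergence contributions), and your closing remark about checking compatibility of the Sobolev norms (\ref{sob_norm_defn}) is precisely the caveat the paper itself records when transferring results from \cite{MaHol} to its localization. The only quibble is that the positivity (\ref{curv_positive}) is not actually needed for this smoothness/convergence statement — it enters later for the spectral gap (\ref{eqn_L_t_spec_gap}) — but this does not affect the argument.
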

		 We define the operators $\mathcal{O}_1, \mathcal{O}_2, \ldots$ by the following expansion, as $t \to 0$,
		 \begin{equation}\label{defn_O_i}
		 L_{2, x}^{t} = L_{2, x}^{0} + t \mathcal{O}_1 + t^2 \mathcal{O}_2  + \cdots + t^k \mathcal{O}_k  + o(t^k), \quad k \in \nat.
		 \end{equation}
		 We also denote by $\exp(u L_{2, x}^{t})(Z,Z')$ the smooth kernel of the heat operator $\exp(- u L_{2, x}^{t})$ with respect to $\, d v_{T_{x}M}$. Then for $t = 1 / \sqrt{p}$, we have (cf. \cite[(1.6.66)]{MaHol})
		 \begin{equation}\label{eqn-L_p_L^t_idty}
		 	\exp ( - u L_{p,{x}} / p )(Z, Z') = p^n \exp(-u L_{2,{x}}^{t}) (Z/t, Z'/t)k^{-1/2}(Z)k^{-1/2}(Z').
		 \end{equation}
		 Now we recall some properties of the operator $L_{2, x}^{t}$. The reason why we are interested in it is Lemma \ref{lem-lapl_L_p-kernels} and (\ref{eqn-L_p_L^t_idty}).  By \cite[(4.2.31), (4.2.40)]{MaHol}, we have
		 \begin{prop}\label{prop_cinf_prol}
		 	The function $t \in ]0,1] \to \exp(-u  L_{2,x}^t)(0,0)$ extends smoothly to $[0,1]$ by taking the value $\exp(-u  L_{2,x}^0)(0,0)$ at $t = 0$. All its derivatives are uniformly bounded on $x \in M$ and $u$, varying in a compact subset of $]0, + \infty[$.
		 	Moreover,
		 	\begin{equation}\label{eqn-der_vanishes}
		 		 \frac{\partial^{2i+1}}{\partial t^{2i+1}} \exp ( -u L_{2, x}^{t} ) (0,0)|_{t=0} = 0
		 	\end{equation}
		 \end{prop}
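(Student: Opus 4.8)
The plan is to derive both the smooth prolongation in $t$ and the vanishing of odd-order derivatives at $t=0$ from the structure of the family $L_{2,x}^t$ together with standard resolvent/heat-kernel estimates, following the line of \cite[\S 4.2]{MaHol}. First I would fix a contour $\Gamma$ in $\comp$ encircling a neighbourhood of $[0,+\infty)$ and write the heat operator via the Cauchy integral $\exp(-u L_{2,x}^t) = \frac{1}{2\pi\imun}\int_\Gamma e^{-u\lambda}(\lambda - L_{2,x}^t)^{-1}\,d\lambda$. The essential analytic input is uniform control, in $t\in[0,1]$ and $x\in M$, of the resolvent $(\lambda - L_{2,x}^t)^{-1}$ between the weighted Sobolev spaces attached to the norms (\ref{sob_norm_defn}); this is exactly the content of the estimates behind \cite[(4.2.31), (4.2.40)]{MaHol}, which rely in turn on the ellipticity of $L_{2,x}^0$ (a harmonic-oscillator type operator, positive by the positivity of $R^L$) and on the smoothness in $t$ of the family asserted in \cite[Lemma 1.6.6]{MaHol}. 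From the expansion (\ref{defn_O_i}), $L_{2,x}^t = L_{2,x}^0 + t\mathcal{O}_1 + \cdots$, one differentiates the resolvent identity formally: each $\partial_t$ produces a factor of the form $(\lambda-L_{2,x}^t)^{-1}(\partial_t L_{2,x}^t)(\lambda-L_{2,x}^t)^{-1}$, and the Sobolev estimates show these compositions map into the trace-class-with-kernel regime with bounds uniform in $(t,x)$ and locally uniform in $u$. Evaluating the kernel on the diagonal at $(0,0)$ then gives that $t\mapsto \exp(-uL_{2,x}^t)(0,0)$ is $\mathscr{C}^\infty$ on $[0,1]$ with the stated uniform bounds, and the value at $t=0$ is $\exp(-uL_{2,x}^0)(0,0)$ by continuity.

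For the vanishing (\ref{eqn-der_vanishes}) of the odd-order derivatives at $t=0$, I would exploit a parity symmetry of the rescaled operator. The rescaling $S_t s(Z) = s(Z/t)$ together with the explicit form of $\nabla_t$ and $L_{2,x}^t$ shows that, after conjugation, the coefficients of $L_{2,x}^t$ are polynomial (to each finite order in the Taylor expansion) in $t$ and $Z$ with a definite behaviour under $Z\mapsto -Z$: the operator $L_{2,x}^{t}$ transforms into an operator of the same shape with $t$ replaced by $-t$ when one conjugates by the reflection $(Rs)(Z) = s(-Z)$. Concretely, $R L_{2,x}^{t} R^{-1} = L_{2,x}^{-t}$ as formal power series in $t$ (this is the standard observation underlying \cite[(4.2.40)]{MaHol}, coming from the fact that $\nabla_{0,\cdot}$ has the reflection-odd part $\tfrac12 R^L_x(Z,\cdot)$ while the leading term is reflection-even). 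Since the heat kernel on the diagonal at $0$ is invariant under $R$, we get $\exp(-uL_{2,x}^{t})(0,0) = \exp(-uL_{2,x}^{-t})(0,0)$, i.e. the function is even in $t$, which forces all odd derivatives at $t=0$ to vanish.

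The main obstacle is making the resolvent differentiation rigorous: one must justify that the formal Neumann-type differentiation of $(\lambda-L_{2,x}^t)^{-1}$ converges in the appropriate operator topology and that the resulting kernels extend continuously (with all derivatives) down to $t=0$, where $L_{2,x}^t\to L_{2,x}^0$ only in the sense of \cite[Lemma 1.6.6]{MaHol}. This requires the uniform (in $t$, including $t=0$) Sobolev-space resolvent bounds of \cite[\S 4.2]{MaHol}, together with an argument that the $\mathcal{O}_i$ in (\ref{defn_O_i}) are differential operators with coefficients of controlled polynomial growth in $Z$, so that the weighted norms absorb them. Granting those inputs — all of which are available verbatim from \cite{MaHol} once one checks that their hypotheses hold for our localization (the positivity (\ref{curv_positive}) and the spectral gap of Theorem \ref{thm_spec_gap_L_p} being the key points) — the rest is bookkeeping, and the parity argument for (\ref{eqn-der_vanishes}) is then immediate.
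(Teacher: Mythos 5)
Your proposal is correct and reconstructs precisely the argument that the paper imports by citing \cite[(4.2.31), (4.2.40)]{MaHol}: the contour-integral/resolvent machinery on the weighted Sobolev spaces (\ref{sob_norm_defn}), uniform in $t\in[0,1]$ and $x\in M$, for the smooth extension to $t=0$, and a parity argument for (\ref{eqn-der_vanishes}). Your reflection identity $R\,L_{2,x}^{t}\,R^{-1} = L_{2,x}^{-t}$ is an equivalent packaging of the fact used elsewhere in the paper (proof of Lemma \ref{lem_aux_0}) that $\mathcal{O}_r$ has parity $(-1)^r$ under $Z \mapsto -Z$ while $\exp(-uL_{2,x}^{0})(Z,Z')$ is even, which via Duhamel's formula kills the odd-order diagonal derivatives at $t=0$.
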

		 By Lemma \ref{lem-lapl_L_p-kernels}, Proposition \ref{prop_cinf_prol} and (\ref{eqn-L_p_L^t_idty}), we see that in Theorem \ref{thm-gen_asympt_exp} we have
		 \begin{equation}\label{eqn_a_i_u_as_derivative}
			 a_{k,u}(x) = \frac{1}{(2k)!} \frac{\partial^{2k}}{\partial t^{2k}} \exp (- u L^t_{2,x} / 2 )(0,0) |_{t=0}. 
		\end{equation}
		
		 \begin{thm}\label{eqn-L^t-general_est}
		 	For $t \in [0,1]$, there are sections $B_{t,r} \in \oplus_{j \geq 0} \cinf{M, \enmr{\Lambda^{j} (T^{*(0,1)}M) \otimes E}}$, $r \in \integ, r \geq -n$, such that for any $ k, m \in \nat, u_0 > 0$ there is $C > 0$ such that for any $ u \in ]0,u_0]$
			\begin{equation}\label{eq_L_t_gen_1}
		 		 \Big| \exp ( - u L_{2,x}^{t} / 2 )(0,0) - \sum_{r = -n}^{k}B_{t,r}(x) u^r\Big|_{\mathscr{C}^{m}(M \times [0, \, t_0])}
		 		\leq C u^{k+1},
		 	\end{equation}
			where the second coordinate of $M \times [0, \, t_0]$ represents $t$. Moreover,
		 	\begin{equation}\label{eq_L_t_gen_2}
		 		 \frac{\partial^{2i+1}}{\partial t^{2i+1}} B_{t,r}(x)|_{t=0}  = 0.
		 	\end{equation}
		 \end{thm}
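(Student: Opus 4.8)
The plan is to obtain (\ref{eq_L_t_gen_1}) as a \emph{parametrized}, uniform version of the classical small-time heat kernel expansion of a generalized Laplacian (cf. \cite[Theorem 2.30]{BGV}), and then to deduce the parity (\ref{eq_L_t_gen_2}) from (\ref{eqn-der_vanishes}) together with uniqueness of asymptotic coefficients. For each fixed $(x,t)$, $\frac{1}{2} L_{2,x}^{t}$ is a formally self-adjoint generalized Laplacian on $X_0 \simeq T_x M$ acting on $\Lambda^{\bullet}(T^{*(0,1)}X_0) \otimes E_0$, and by \cite[Lemma 1.6.6]{MaHol}, \cite[(1.6.31)]{MaHol} and (\ref{defn_O_i}) all of its coefficients (the rescaled metric, the connection $\nabla_t$, and the zeroth order term built from $R^L_x$, $R^{E_0}$ and $r^{X_0}$) are smooth in $(x,t) \in M \times [0,1]$, the endpoint $t=0$ included, where $L_{2,x}^{t}$ degenerates not to a singular operator but to the harmonic-oscillator type model $L_{2,x}^{0}$ of (\ref{defn-L_2^0}).

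First I would invoke the locality of the diagonal heat expansion: by finite propagation speed (the localization already used for Lemma \ref{lem-lapl_L_p-kernels}), modulo $O(u^{\infty})$ uniformly in $(x,t)$ the value $\exp(-u L_{2,x}^{t}/2)(0,0)$ depends only on $L_{2,x}^{t}$ over a fixed ball $B(0,\epsilon)$; hence one may modify the operator outside $B(0,\epsilon)$ --- keeping it non-negative, uniformly elliptic, with coefficients of bounded geometry and smooth in $(x,t) \in M \times [0,1]$ --- without affecting (\ref{eq_L_t_gen_1}), which disposes of the polynomial growth in $Z$ of the coefficients. Then I would run Levi's formal parametrix construction near $Z=Z'=0$: one seeks $H_N^{t}(u,Z,Z') = (4\pi u)^{-n} e^{-|Z-Z'|^2/(4u)} \sum_{j=0}^{N} u^j \Phi_j^{t}(Z,Z')$ solving $(\partial_u + L_{2,x}^{t}/2) H_N^{t} = O(u^{N-n})$; the transport equations determine $\Phi_j^{t}$ recursively, their jets at $0$ are universal polynomial expressions in finitely many Taylor coefficients at $0$ of the coefficients of $L_{2,x}^{t}$, and collecting powers of $u$ defines the candidate $B_{t,r}(x)$. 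By the smoothness above, $B_{t,r} \in \oplus_{j \geq 0} \cinf{M, \enmr{\Lambda^{j}(T^{*(0,1)}M) \otimes E}}$ and depends smoothly on $t \in [0,1]$ (for $t=0$ one may alternatively read them off from Mehler's formula for $\exp(-u L_{2,x}^{0}/2)(0,0)$).

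Next I would estimate the remainder by Duhamel's principle: the diagonal difference $\exp(-u L_{2,x}^{t}/2)(0,0) - (\chi H_N^{t})(u)(0,0)$, with $\chi$ a fixed cut-off near $0$, is the Duhamel integral of $\exp(-(u-s) L_{2,x}^{t}/2)$ against the source $(\partial_s + L_{2,x}^{t}/2)(\chi H_N^{t})(s) = O(s^{N-n})$ (carrying the Gaussian weight $e^{-|\cdot|^2/(4s)}$), plus a term supported on $\mathrm{supp}(\nabla \chi)$ that is $O(e^{-c/u})$ by finite propagation speed. Together with the uniform boundedness on $L^2$ (and on the Sobolev spaces of (\ref{sob_norm_defn})) of the semigroup $\exp(-v L_{2,x}^{t}/2)$ for $v$ in compacts of $]0,+\infty[$ and $(x,t) \in M \times [0,1]$ --- standard for the modified uniformly elliptic operator, or provided directly by the estimates of \cite[\S 1.6, \S 4.2]{MaHol} that already underlie Lemma \ref{lem-lapl_L_p-kernels} and Proposition \ref{prop_cinf_prol} --- and Sobolev embedding to reach the value at $(0,0)$, this gives $| \exp(-u L_{2,x}^{t}/2)(0,0) - \sum_{r=-n}^{k} B_{t,r}(x) u^r | \leq C u^{k+1}$ once $N \gg k$. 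To promote this to the $\mathscr{C}^{m}(M \times [0,t_0])$-norm I would differentiate the heat equation and the Duhamel identity in $(x,t)$: $\partial_{(x,t)}^{\beta}$ of the kernel solves an inhomogeneous heat equation whose source involves $\partial_{(x,t)}$-derivatives of the coefficients of $L_{2,x}^{t}$ (smooth on $M \times [0,1]$) contracted with lower-order derivatives of the kernel; iterating finitely often with the same uniform semigroup bounds yields (\ref{eq_L_t_gen_1}) with the same power $u^{k+1}$.

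Finally, for (\ref{eq_L_t_gen_2}): since $t$ is one of the $\mathscr{C}^{m}$-variables, (\ref{eq_L_t_gen_1}) gives, for each fixed $a$, $\frac{\partial^{a}}{\partial t^{a}} \exp(-u L_{2,x}^{t}/2)(0,0) = \sum_{r=-n}^{k} \frac{\partial^{a}}{\partial t^{a}} B_{t,r}(x)\, u^r + O(u^{k+1})$ uniformly in $t \in [0,t_0]$; by (\ref{eqn-der_vanishes}) the left side vanishes at $t=0$ when $a=2i+1$, and since this holds for all $k$, uniqueness of the coefficients in an expansion $\sum_r c_r u^r + O(u^{k+1})$ forces $\frac{\partial^{2i+1}}{\partial t^{2i+1}} B_{t,r}(x)|_{t=0} = 0$. (Equivalently, $Z \mapsto -Z$ intertwines $L_{2,x}^{t}$ with $L_{2,x}^{-t}$, so the diagonal kernel at $0$ is even in $t$ and the same uniqueness applies.) I expect the main obstacle to be precisely the remainder bound in the strong form demanded --- uniform simultaneously as $u \to 0$ and in the $\mathscr{C}^{m}$-norm down to $t=0$ --- since the Duhamel iteration and all parameter differentiations must be pushed through semigroup estimates that do not degenerate as $t \to 0$; this is exactly where the uniformity in $t$ (including $t=0$) of \cite[\S 1.6]{MaHol} and the smoothness of $t \mapsto L_{2,x}^{t}$ at $t=0$ are indispensable.
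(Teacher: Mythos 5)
Your proposal is correct, and for the parity statement (\ref{eq_L_t_gen_2}) it is exactly the paper's argument: the paper likewise deduces it from (\ref{eqn-der_vanishes}) combined with (\ref{eq_L_t_gen_1}) and uniqueness of the coefficients in a small-$u$ expansion. The real divergence is in (\ref{eq_L_t_gen_1}): the paper does not prove it but simply cites \cite[(5.5.91)]{MaHol}, whereas you reconstruct it from scratch via finite-propagation-speed localization, a Levi parametrix with parameters $(x,t)$, Duhamel's principle for the remainder, and differentiation of the Duhamel identity in $(x,t)$ for the $\mathscr{C}^m$-bound. This is a legitimate and arguably more transparent route than the resolvent/contour-integral machinery in weighted Sobolev spaces that underlies the cited equation of \cite{MaHol}; it buys a self-contained picture of where the coefficients $B_{t,r}$ come from (universal expressions in the jets of the coefficients of $L^{t}_{2,x}$ at $0$, hence smooth in $(x,t)$). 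What it does not buy is full independence from \cite{MaHol}: the two delicate inputs — uniform semigroup/off-diagonal bounds for $L^{t}_{2,x}$, whose coefficients grow polynomially in $Z$ (harmonic-oscillator degeneration at $t=0$), uniformly down to $t=0$ — cannot come from naive uniform ellipticity after your cut-off modification alone (one must still control the unmodified kernel away from $0$ to justify that the modification only changes the diagonal value by $O(e^{-c/u})$), and you rightly import them from the weighted norms (\ref{sob_norm_defn}) and \cite[\S 1.6, \S 4.2]{MaHol}, i.e.\ from the same estimates that give Lemma \ref{lem-lapl_L_p-kernels}, Proposition \ref{prop_cinf_prol} and Theorem \ref{thm_L_2_t_off_diag}. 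With that dependence acknowledged, the argument is sound and correctly isolates the only genuinely delicate point, namely the uniformity in $t$ including $t=0$.
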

		 \begin{proof}
		 	The proof of (\ref{eq_L_t_gen_1}) is done in \cite[(5.5.91)]{MaHol}. By  (\ref{eqn-der_vanishes}) and (\ref{eq_L_t_gen_1}), we get (\ref{eq_L_t_gen_2}).
		 \end{proof}
		 By Theorem \ref{thm_spec_gap_L_p} and (\ref{defn-L_2^t}) there are $t_0, \mu > 0$ such that for $t \in [0, t_0]$, we have
		 \begin{equation}\label{eqn_L_t_spec_gap}
		 	\spec(L^{t}_{2, x}) \subset \{ 0 \} \cup  [\mu, + \infty[, \qquad \ker (L^t_{2,x}) \subset \dfor[(0,0)]{X_0, E_0}.
		 \end{equation}
		 We fix $t_0$, which satisfies (\ref{eqn_L_t_spec_gap}). From now on, we only work with $t < t_0$.
		 \begin{sloppypar}
		 Recall that $L_{2,x}^{t}$ preserves $\integ$-grading on $\dfor[(0, \bullet)]{X_0, E_0}$.
		 We denote by $L_{2,x}^{t, >0}$ the restriction of the operator $L_{2,x}^{t}$ on the positive degree.
		 From (\ref{eqn_L_t_spec_gap}), 
		 \begin{equation}\label{eqn_pos_deg_fu}
		 	\exp(- u L_{2,x}^{t, >0})  =  F_u(L_{2,x}^{t})|_{\dfor[(0, >0)]{X_0, E_0}},
		 \end{equation}
		 where $F_u(L_{2,x}^{t})|_{\dfor[(0, >0)]{X_0, E_0}}$ is the restriction on positive degree terms of the operator $F_u(L_{2,x}^{t})$ defined in \cite[(4.2.21), (4.2.22)]{MaHol} .
		 We denote by $\exp(- u L_{2,x}^{t, >0})(Z, Z'); Z, Z' \in X_0$ the smooth kernel of the heat operator $\exp(- u L_{2,x}^{t, >0})$ with respect to the volume form $\, d v_{T_{x}M}$ on $X_0$. Then $\exp(- u L_{2,x}^{t, >0})(Z,Z')$ is the restriction of $\exp(- u L_{2,x})(Z,Z')$ on positive degree.
		 \end{sloppypar}
		 \begin{thm}\label{thm-exp_proj_expansion}
		 	For any $u_0 > 0, m \in \nat$, there are constants $c, C > 0$, such that for $u > u_0$
		 		\begin{equation}\label{thm_proj_est_1}
		 			 \big|\exp ( - u L_{2,x}^{t,>0} / 2 )(0,0)\big|_{\mathscr{C}^{m}(M \times [0, \, t_0])} \leq C\exp(-cu),
		 		\end{equation}
				where second coordinate of $M \times [0, \, t_0]$ represents $t$. Moreover, we have
				\begin{equation}\label{eqn_NF_r}
					a_{k,u}^{[>0]}(x) = \frac{1}{(2k)!}\frac{\partial^{2k}}{\partial t^{2k}} \exp ( - u L_{2,x}^{t,>0} / 2 )(0,0)|_{t=0}, \qquad \frac{\partial^{2k+1}}{\partial t^{2k+1}} \exp ( - u L_{2,x}^{t,>0} / 2 )(0,0)|_{t=0} = 0,
				\end{equation}
				where $[>0]$ means the projection onto positive degree terms.
		 \end{thm}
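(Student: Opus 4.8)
The plan is to treat this as the large-time analogue of Theorem \ref{eqn-L^t-general_est}, following the functional-calculus scheme of \cite[\S 4.2]{MaHol}, with the one new feature that the auxiliary parameter $t \in [0,t_0]$ (and the base point $x \in M$) must be carried along uniformly. The crucial input is the \emph{uniform} spectral gap (\ref{eqn_L_t_spec_gap}): on the positive-degree block $\spec(L_{2,x}^{t,>0}) \subset [\mu,+\infty[$ for all $x \in M$ and all $t \in [0,t_0]$. From this I would first get the operator-norm decay $\|\exp(-u L_{2,x}^{t,>0}/2)\|_{L^2} \leq \exp(-\mu u/2)$ and then upgrade it to $\mathscr{C}^m$-decay of the kernel at $(0,0)$. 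Once (\ref{thm_proj_est_1}) is established, the identities (\ref{eqn_NF_r}) follow formally: since $L_{2,x}^{t}$ preserves the $\integ$-grading, $\exp(-u L_{2,x}^{t,>0}/2)(0,0)$ is exactly the positive-degree component of $\exp(-u L_{2,x}^{t}/2)(0,0)$, so taking the $[>0]$-part of (\ref{eqn_a_i_u_as_derivative}) gives the first identity and the $[>0]$-part of (\ref{eqn-der_vanishes}) gives the second --- the latter being legitimate because Proposition \ref{prop_cinf_prol} applies to each graded block separately.

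For the estimate (\ref{thm_proj_est_1}) itself, I would use (\ref{eqn_pos_deg_fu}) to write $\exp(-u L_{2,x}^{t,>0}/2) = F_u(L_{2,x}^{t})|_{\dfor[(0,>0)]{X_0,E_0}}$, where $F_u \in \cinf{\real}$ is chosen with $F_u(\lambda) = \exp(-u\lambda/2)$ for $\lambda \geq \mu/2$ and $F_u(\lambda) = 0$ for $\lambda \leq 0$; this agrees with $\exp(-u\lambda/2)$ on $\spec(L_{2,x}^{t,>0})$, and for all $k,j \in \nat$ one has $\sup_{\lambda \in \real}|\lambda^k F_u^{(j)}(\lambda)| \leq C_{k,j}\exp(-cu)$ for $u \geq u_0$ and some $c > 0$. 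Representing $F_u(L_{2,x}^{t})$ by a Cauchy integral of the resolvent $(\lambda - L_{2,x}^{t})^{-1}$ and invoking the elliptic and Sobolev estimates for $L_{2,x}^{t}$ in the weighted norms (\ref{sob_norm_defn}) --- uniform in $x \in M$ and $t \in [0,t_0]$, as in \cite[\S 1.6, \S 4.2]{MaHol} --- together with Sobolev embedding, I would obtain $|\exp(-u L_{2,x}^{t,>0}/2)(0,0)| \leq C\exp(-cu)$ for $u > u_0$.

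To get the full $\mathscr{C}^m(M \times [0,t_0])$ bound I would differentiate the resolvent. By (\ref{defn_O_i}) the map $t \mapsto L_{2,x}^{t}$ is smooth, and each $\partial_t^{j} L_{2,x}^{t}$ --- as well as every $\partial_x$-derivative, the construction of $L_{p,x}$ being uniform in $x$ --- is a differential operator of order $\leq 2$ whose coefficients are smooth in $(x,Z,t)$ with the polynomial growth in $Z$ that makes them relatively bounded with respect to $L_{2,x}^{t}$ in the norms (\ref{sob_norm_defn}). Iterating $\partial_t(\lambda - L_{2,x}^{t})^{-1} = (\lambda - L_{2,x}^{t})^{-1}(\partial_t L_{2,x}^{t})(\lambda - L_{2,x}^{t})^{-1}$ and its $\partial_x$-analogue, each mixed derivative of $F_u(L_{2,x}^{t})$ becomes a finite sum of Cauchy integrals whose integrands are bounded by a fixed power of $1 + |\lambda|$ times $|F_u(\lambda)|$, so after integration the same $\exp(-cu)$ bound persists; this is (\ref{thm_proj_est_1}), and together with the reduction in the first paragraph it finishes the proof.

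The hard part will be making the functional-calculus estimates of \cite[\S 4.2]{MaHol} uniform in the extra parameter $t \in [0,t_0]$ and compatible with $\partial_t$, i.e. controlling the $t$-differentiated resolvents in the $t$-dependent weighted Sobolev norms (\ref{sob_norm_defn}); this is a parametrized refinement of the arguments of \cite[\S 1.6, \S 4.2]{MaHol}, and everything else is bookkeeping parallel to the small-$u$ case recorded in Theorem \ref{eqn-L^t-general_est}.
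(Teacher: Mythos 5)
Your proposal follows essentially the same route as the paper: the uniform spectral gap (\ref{eqn_L_t_spec_gap}) plus the identity (\ref{eqn_pos_deg_fu}) reduce (\ref{thm_proj_est_1}) to the parametrized functional-calculus/resolvent estimates of \cite[\S 4.2]{MaHol} (the paper simply cites \cite[Corollary 4.2.6, (4.2.31)]{MaHol}, where the $\mathscr{C}^m$-uniformity in $(x,t)$ that you flag as the hard part is already established, rather than re-deriving it), and (\ref{eqn_NF_r}) is then obtained exactly as you describe, by taking the positive-degree part of (\ref{eqn_a_i_u_as_derivative}) and (\ref{eqn-der_vanishes}). The argument is correct.
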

		 \begin{proof}
		 	Estimation (\ref{thm_proj_est_1}) is a consequence of {\cite[Corollary 4.2.6, (4.2.31)]{MaHol}} and (\ref{eqn_pos_deg_fu}). 
		 	The identities (\ref{eqn_NF_r}) follow directly from (\ref{eqn-der_vanishes}), (\ref{eqn_a_i_u_as_derivative}), (\ref{thm_proj_est_1}) and the discussion before Theorem \ref{thm-exp_proj_expansion}.
		 \end{proof}
		 
\subsection{Off-diagonal estimations of the heat kernel and related quantities}\label{paragr_off_diag}
	In this section we explain some results concerning off-diagonal expansion of the heat kernel of $L_{2,x}^{t}$. We don't claim originality on those results, as some of them already appeared in \cite[ \S 4.2,  \S 5.5]{MaHol} and some were implicit. This section is only used in the proof of Theorem \ref{thm-gen_asympt_orbi} in Section \ref{sect_orbi}. We fix $t_0$ as in (\ref{eqn_L_t_spec_gap}). We have the following off-diagonal version of Theorem \ref{thm-exp_proj_expansion}.
	\begin{thm}\label{thm_L_2_t_off_diag_u_infty}
		For any $m\in \nat, u_0 > 0 $ there are $ c, C, C' > 0$ such that for any $x \in M, u \geq u_0, Z,Z' \in T_xM$, we have the following inequality
		\begin{equation}\label{thm_L_2_t_off_diag_u_infty_eqn}
			\big| \exp(- u L_{2,x}^{t,>0})(Z, Z') \big|_{\mathscr{C}^{m}(M \times [0,\, t_0])} \leq C (1 + |Z| + |Z'|)^{C'} \exp ( -cu - c |Z - Z'|^2 / u ),
		\end{equation}
		where the second coordinate of $M \times [0,\, t_0]$ represents $t$.
	\end{thm}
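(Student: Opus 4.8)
The plan is to upgrade the on-diagonal long-time estimate of Theorem \ref{thm-exp_proj_expansion} to its off-diagonal counterpart by combining the Sobolev-norm machinery of \cite[\S 4.2, \S 5.5]{MaHol} with the Gaussian-decay estimates that are already available for the full heat kernel $\exp(-uL_{2,x}^t)(Z,Z')$. First I would recall that on the positive-degree part the spectral gap (\ref{eqn_L_t_spec_gap}) gives, via (\ref{eqn_pos_deg_fu}), the representation $\exp(-uL_{2,x}^{t,>0})=F_u(L_{2,x}^t)|_{\dfor[(0,>0)]{X_0,E_0}}$, where $F_u$ is the function from \cite[(4.2.21),(4.2.22)]{MaHol}; the point of using $F_u$ rather than $e^{-u\lambda}$ directly is that, when restricted to the spectral interval $[\mu,+\infty[$, one has $|F_u(\lambda)|\le C e^{-cu}$ for some $c,C>0$ depending only on $\mu$ and $u_0$, uniformly for $u\ge u_0$, together with analogous bounds on its derivatives in $u$. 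This is exactly the mechanism already used in the proof of (\ref{thm_proj_est_1}), so the additional $e^{-cu}$ factor comes for free once we localize in $Z,Z'$.

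Next I would handle the spatial ($Z,Z'$) dependence. For finite time, i.e.\ $u\in]0,u_0]$, the off-diagonal Gaussian bound $|\exp(-uL_{2,x}^t)(Z,Z')|_{\mathscr C^m}\le C(1+|Z|+|Z'|)^{C'}\exp(-c|Z-Z'|^2/u)$ together with the analogous bounds on all $u$- and $t$-derivatives is standard and is contained in \cite[Theorem 4.2.5, (5.5.91)]{MaHol}; the positive-degree restriction only removes the kernel, which sits in degree $0$, so these estimates descend unchanged to $\exp(-uL_{2,x}^{t,>0})(Z,Z')$. To pass to $u\ge u_0$ I would use the semigroup property: write $\exp(-uL_{2,x}^{t,>0})=\exp(-(u-u_0)L_{2,x}^{t,>0})\circ\exp(-u_0 L_{2,x}^{t,>0})$, estimate the first factor in operator norm on the appropriate Sobolev spaces by $Ce^{-c(u-u_0)}$ using the spectral gap (again through $F_{u-u_0}$), and estimate the pointwise kernel of the second factor by the already-established finite-time bound at $u=u_0$. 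Combining, and absorbing the $e^{cu_0}$ into the constant, yields (\ref{thm_L_2_t_off_diag_u_infty_eqn}) for $u\ge u_0$ with the decay rate $e^{-cu-c|Z-Z'|^2/u}$; one should split $|Z-Z'|^2/u$ into $\tfrac12|Z-Z'|^2/u_0$ (finite-time part) plus a harmless remainder, or simply note that for $u\ge u_0$ the exponent $-c|Z-Z'|^2/u$ is controlled by $-c'|Z-Z'|^2/u$ after adjusting constants, so no loss occurs.

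The $\mathscr C^m$-control in the parameters $(x,t)\in M\times[0,t_0]$ is obtained exactly as in \cite[\S 4.2]{MaHol}: one differentiates the resolvent/contour-integral representation of $F_u(L_{2,x}^t)$ in $x$ and $t$, uses that the coefficients of $L_{2,x}^t$ together with all their $(x,t)$-derivatives are uniformly bounded (the operators $\mathcal O_i$ of (\ref{defn_O_i}) and Theorem \ref{eqn-L^t-general_est}), and commutes the derivatives through at the cost of lower-order operators with the same Gaussian off-diagonal behaviour; elliptic estimates then convert the resulting Sobolev-norm bounds into pointwise $\mathscr C^m$ bounds on the kernel, incurring only the polynomial factor $(1+|Z|+|Z'|)^{C'}$. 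The main obstacle, and the place where one must be careful rather than merely cite, is making the semigroup-decomposition argument compatible with the $t$-dependent, $x$-dependent, non-self-adjoint-looking but in fact self-adjoint (with respect to (\ref{sob_norm_defn})) family $L_{2,x}^{t,>0}$ uniformly: one needs the operator norm of $\exp(-(u-u_0)L_{2,x}^{t,>0})$ acting between the weighted Sobolev spaces to be bounded by $Ce^{-c(u-u_0)}$ with constants independent of $x\in M$ and $t\in[0,t_0]$, which follows from (\ref{eqn_L_t_spec_gap}) being uniform in $(x,t)$ but requires threading that uniformity through the functional calculus, precisely as in the proof of \cite[Corollary 4.2.6]{MaHol}.
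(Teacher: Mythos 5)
Your overall strategy (spectral gap for the $e^{-cu}$ decay, Gaussian off-diagonal bounds for the spatial decay) identifies the right ingredients, but the step you use to combine them for $u\ge u_0$ --- the semigroup factorization $\exp(-uL_{2,x}^{t,>0})=\exp(-(u-u_0)L_{2,x}^{t,>0})\circ\exp(-u_0L_{2,x}^{t,>0})$ with the first factor controlled in operator norm --- does not deliver the conclusion. Controlling $\exp(-(u-u_0)L_{2,x}^{t,>0})$ only in (Sobolev) operator norm destroys all spatial localization: writing the composed kernel as $\int K_{u-u_0}(Z,W)K_{u_0}(W,Z')\,dW$, the Gaussian decay of $K_{u_0}$ is in $|W-Z'|$, and after applying the operator-norm (or $L^2\to L^\infty$) bound to the long-time factor the best you can extract is $Ce^{-c(u-u_0)}(1+|Z|+|Z'|)^{C'}$ with no dependence on $|Z-Z'|$ at all. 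The factor $\exp(-c|Z-Z'|^2/u)$ in (\ref{thm_L_2_t_off_diag_u_infty_eqn}) therefore never appears; your closing remark about ``splitting $|Z-Z'|^2/u$'' presupposes a Gaussian in $|Z-Z'|$ that the argument has not yet produced. To recover it by composition you would need a pointwise Gaussian kernel bound on the long-time factor itself, which is precisely what is not available together with an $e^{-cu}$ prefactor.

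The paper's proof avoids composition entirely and instead interpolates pointwise between two kernel bounds that both hold for all $u\ge u_0$: (i) the bound of \cite[Theorem 4.2.5]{MaHol} for the full kernel, $C(1+|Z|+|Z'|)^{C'}\exp(cu-c|Z-Z'|^2/u)$, which has the Gaussian spatial decay but \emph{grows} in $u$; and (ii) the bound coming from \cite[Corollary 4.2.6, (4.2.31)]{MaHol} and (\ref{eqn_pos_deg_fu}) for the positive-degree kernel, $C(1+|Z|+|Z'|)^{C'}\exp(-cu-c|Z-Z'|)$, which decays in $u$ but has only linear exponential spatial decay. Since $\exp(-uL_{2,x}^{t,>0})(Z,Z')$ satisfies both, one writes $|K|=|K|^{\theta}\,|K|^{1-\theta}$ with $\theta>0$ small enough that $\theta(+cu)$ is absorbed by $(1-\theta)(-cu)$, while retaining the factor $\exp(-\theta c|Z-Z'|^2/u)$ from the first bound. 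You already have both inputs in hand; replacing your semigroup step by this multiplicative interpolation closes the argument.
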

	\begin{proof}
		By \cite[Theorem 4.2.5]{MaHol}, we get for some $ c, C, C' > 0$
		\begin{equation}\label{thm_L_2_t_off_diag_u_infty_eqn1}
			\big| \exp(- u L_{2,x}^{t})(Z, Z')  \big|_{\mathscr{C}^{m}(M \times [0, \, t_0])} \leq C (1 + |Z| + |Z'|)^{C'} \exp ( cu - c |Z - Z'|^2 / u ).
		\end{equation}
		By \cite[Corollary 4.2.6, (4.2.31)]{MaHol} and (\ref{eqn_pos_deg_fu}), we get 
		\begin{equation}\label{thm_L_2_t_off_diag_u_infty_eqn2}
			\big| \exp(- u L_{2,x}^{t,>0})(Z, Z')  \big|_{\mathscr{C}^{m}(M \times [0, \, t_0])} \leq C (1 + |Z| + |Z'|)^{C'} \exp ( -cu - c |Z - Z'| ),
		\end{equation}
		for some $ c, C, C' > 0$.
		We multiply (\ref{thm_L_2_t_off_diag_u_infty_eqn1}) and (\ref{thm_L_2_t_off_diag_u_infty_eqn2}) with suitable powers to get (\ref{thm_L_2_t_off_diag_u_infty_eqn}).
	\end{proof}
	
	\begin{thm}\label{thm_L_2_t_off_diag}
		For any  $ m \in \nat, u_0, c_0 > 0 $ there are  $c, C, C' > 0$ such that for $x \in M, u \in ]0, u_0]$ and $Z,Z' \in T_xM, |Z - Z'| \geq c_0$ we have the following inequality
		\begin{equation}
			\big| \exp(- u L_{2,x}^{t})(Z, Z') \big|_{\mathscr{C}^{m}(M \times [0,\, t_0])} \leq C (1 + |Z| + |Z'|)^{C'} \exp ( - c |Z - Z'|^2 / u ),
		\end{equation}
		where the second coordinate of $M \times [0,\, t_0]$ represents $t$.
	\end{thm}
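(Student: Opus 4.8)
The plan is to first establish a small-time heat-kernel bound of the shape $C\, u^{-N}(1+|Z|+|Z'|)^{C'}\exp(-c|Z-Z'|^2/u)$, valid for all $u\in\,]0,u_0]$ with an (a priori unfavourable) negative power $u^{-N}$ allowed, and then to remove the factor $u^{-N}$ by an elementary absorption argument that uses the hypothesis $|Z-Z'|\geq c_0>0$. The first step is proved exactly as the estimate (\ref{thm_L_2_t_off_diag_u_infty_eqn1}), i.e. by the method of \cite[\S 4.2]{MaHol}; the second step is completely elementary.

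For the first step I would use finite propagation speed of the wave operator $\cos(s\sqrt{L_{2,x}^{t}})$ (\cite[Theorem D.2.1]{MaHol}): since the metrics $g^{TX_0}(tZ)$ are uniformly equivalent to $g^{TM}_x$ for $x\in M$, $t\in[0,t_0]$, the principal symbol of $L_{2,x}^{t}$ is uniformly comparable to the Euclidean one, so the Schwartz kernel of $\cos(s\sqrt{L_{2,x}^{t}})$ vanishes at $(Z,Z')$ whenever $|s|<c_2|Z-Z'|$ for a fixed $c_2>0$. Writing the heat operator as the Gaussian average $\exp(-uL_{2,x}^{t})=\frac{1}{\sqrt{4\pi u}}\int_{\real}e^{-s^2/(4u)}\cos(s\sqrt{L_{2,x}^{t}})\,ds$, the range $|s|<c_2|Z-Z'|$ contributes nothing, while on $|s|\geq c_2|Z-Z'|$ one has $e^{-s^2/(4u)}\leq e^{-c_2^2|Z-Z'|^2/(8u)}e^{-s^2/(8u)}$. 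Converting the resulting $L^2$-bounds into pointwise $\mathscr{C}^m$-bounds by Sobolev embedding with respect to the norms (\ref{sob_norm_defn}) — which forces applying powers of $(1+L_{2,x}^{t})$ and invoking the corresponding elliptic estimates of \cite[\S 4.2]{MaHol}, whence the weight $(1+|Z|+|Z'|)^{C'}$ (from the polynomial growth of the coefficients of $L_{2,x}^{t}$) and at worst a power $u^{-N}$ — and handling the $t$-derivatives by differentiating the integral and using that the $\partial_t^j L_{2,x}^{t}$ are differential operators with coefficients bounded uniformly in $x$ and $t\in[0,t_0]$ (cf. (\ref{defn_O_i})), one obtains the claimed global bound, uniformly in $x\in M$ and $t\in[0,t_0]$.

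Finally, for $|Z-Z'|\geq c_0$ and $u\in\,]0,u_0]$ one writes
$u^{-N}\exp(-c|Z-Z'|^2/u)=\bigl(u^{-N}\exp(-c|Z-Z'|^2/(2u))\bigr)\exp(-c|Z-Z'|^2/(2u))
\leq\bigl(\sup_{0<v\leq u_0}v^{-N}e^{-cc_0^2/(2v)}\bigr)\exp(-c|Z-Z'|^2/(2u))$,
and the supremum is finite because $v^{-N}e^{-cc_0^2/(2v)}\to 0$ as $v\to +0$; replacing $c$ by $c/2$ and $C$ by $C\cdot\sup(\cdots)$ gives the statement. I expect the only genuine difficulty to lie in the first step — producing the pointwise $\mathscr{C}^m$-estimate with explicit Gaussian decay and polynomial weight uniformly in $x\in M$ and $t\in[0,t_0]$, including all $t$-derivatives — but this is precisely what is carried out in \cite[\S 4.2]{MaHol}, which applies verbatim to the localized operator $L_{2,x}^{t}$ equipped with the Sobolev norms (\ref{sob_norm_defn}), just as in the proof of Theorem \ref{thm_L_2_t_off_diag_u_infty}; so no new idea is required.
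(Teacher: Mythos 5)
Your proposal is correct and takes essentially the same route as the paper: the paper's proof also reduces to the finite-propagation-speed decomposition of \cite[Theorem 4.2.5]{MaHol}, the only modification being a strengthened scalar estimate on the auxiliary function $K_{u,h}$ of \cite[(4.2.11)]{MaHol}, namely $|a|^m|K_{u,h}^{(m')}(a)|\leq C\exp(-c'h^2/u)$ for $h\geq c_0$ and $u\in\,]0,u_0]$. That modified inequality is exactly your elementary absorption of the singular $u^{-N}$ factor into half of the Gaussian using $|Z-Z'|\geq c_0$, just carried out at the level of the auxiliary function rather than at the end of the argument.
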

	\begin{proof}
		The proof of this theorem proceeds exactly as in \cite[Theorem 4.2.5]{MaHol} with only one modification. One has to change the condition for \cite[(4.2.12)]{MaHol} by the following one: for any $m, m' \in \nat, c, c_0 > 0$ there are $c', C, C' > 0$ such that for $ u \in ]0, u_0], h \geq c_0, a \in \comp, |\Im a| \leq c' $ the following inequality holds (cf. \cite[(4.2.12)]{MaHol})
		\begin{equation}
			|a|^m |K_{u,h}^{(m')}(a)| \leq C \exp ( - c' h^2 / u),
		\end{equation}				
		where $K_{u,h}$ is defined in \cite[(4.2.11)]{MaHol}.
		 We leave the details to the reader.
	\end{proof}
	We denote $v := \sqrt{u}$. Let $\Delta^{T_xM}$ be the Bochner Laplacian on $(T_xM, g^{TM}_x)$. We set
	\begin{equation}
		L_{3,x}^{t} := \rho(|Z|/\epsilon) L_{2,x}^{t} + (1 - \rho(|Z|/\epsilon)) \Delta^{T_xM},
	\end{equation}
	with $\rho$ as in (\ref{defn_rho_fun}) and $\epsilon > 0$ satisfying (\ref{eqn_inj_rad}), (\ref{curv_positive}).
	We denote
	\begin{equation}\label{defn_L_4_t}
		L_{4,x}^{t,v} := S_v^{-1} u L_{3,x}^{t}S_v, \quad \text{with} \quad v = \sqrt{u},
	\end{equation}
	and $S_v$ is as in (\ref{defn-L_2^t}).
	We introduce the Sobolev norms
	\begin{align}\label{sob_norm_defn_tv}
		&  \norm{s}_{t, v, 0}^{2} := \int_{X_0} \norm{s(Z)}_{h}^{2} dv_{T_xM}(Z),\\
		&  \norm{s}_{t, v, m}^{2} :=  \sum_{k = 0}^{m} \sum_{i_1, \ldots, l_k = 1}^{2n} \norm{\nabla_{e_{i_1}} \ldots \nabla_{e_{i_k}}s }_{t,0}^{2},
	\end{align}
	where $s \in \cinf{X_0, \Lambda (T_Z^{*(0,1)}X_0) \otimes E_0}$, $\norm{\cdot}_{h}$ is the pointwise norm induced by $g^{TM}_{x}, h^{E}_{x}$, $\nabla$ is a usual derivative
	and $e_1, \ldots, e_{2n}$ are as in (\ref{defn_c_op}). We denote by $\textbf{H}_{t,v}^{m}, m \in \nat$ the Sobolev spaces induced by those norms. 
	\par 
	Then, similarly to \cite[Theorem 11.26]{BisLeb91}, \cite[Theorem 1.6.7]{MaHol}, there are $c_1, c_2 > 0$ such that for $t \in ]0,1], v \in ]0,1]$, we have the following estimations
	\begin{equation}
		\Re \scal{L_{4,x}^{t,v} s}{s}_{t,v,0} \geq c_1 \norm{s}_{t,v,1}^2 - c_2 \norm{s}_{t,v,0}^2,
	\end{equation}
	for $s \in \cinf{X_0, \Lambda (T_Z^{*(0,1)}X_0) \otimes E_0}$ of compact support.
	\par Then, similarly to \cite[Theorem 1.6.8]{MaHol}, for any $\lambda \in \comp$ as in \cite[Figure 1.1]{MaHol}, the inverse operator $(\lambda - L_{4,x}^{t,v})^{-1}$ is bounded as an operator operator on $\textbf{H}_{t,v}^{0}$. Then one can define the heat operator $\exp(-w L_{4,x}^{t,v}), w > 0$ by the integration over a contour of $(\lambda - L_{4,x}^{t,v})^{-1}$ as it was done in \cite[(1.6.48)]{MaHol}.
	\par Similarly, we define the heat operator $\exp(-w L_{3,x}^{t}), w > 0$. Even though the operators $L_{3,x}^{t}, L_{4,x}^{t,v}$ are not self-adjoint, by \cite[(1.6.31)]{MaHol}, their adjoints are of the same form as the operators themselves. Thus, all the arguments on the estimation of the kernels of $\exp(-w L_{3,x}^{t}), \exp(-w L_{4,x}^{t,v})$ can be repeated line in line from \cite{MaHol}.
	\par Now, similarly to (\ref{eqn-L_p_L^t_idty}), we have 
	\begin{equation}\label{eqn_L_3_L_4}
		\exp(- u L_{3,x}^{t})(Z, Z') = u^{-n} \exp(- L_{4,x}^{t,v}) (Z/v, Z'/v),
	\end{equation}
	where we denote by $\exp(- L_{3,x}^{t})(Z, Z'), \exp(- L_{4,x}^{t,v})(Z, Z')$ the smooth kernels of the heat operators $\exp(- L_{3,x}^{t}), \exp(- L_{4,x}^{t})$ corresponding to the volume form $dv_{T_xM}$.
	We have the following analogue of Lemma \ref{lem-lapl_L_p-kernels}, which follows from \cite[(5.5.81)]{MaHol} and (\ref{eqn_L_3_L_4})
	\begin{prop}\label{prop_l_2_l_4}
		There exists $u_0 > 0$ such that for any $m \in \nat$, there are $c, C > 0$ such that for any $u > u_0, Z,Z' \in T_xM,$ 
		$ |Z|, |Z'| < \epsilon$, we have
		\begin{equation}
			\big| \exp(-u L_{2, x}^{t})(Z, Z') - u^{-n} \exp(- L_{4,x}^{t,v}) (Z / v, Z' / v ) \big|_{\mathscr{C}^{m}(M \times [0, \, t_0])} \leq C \exp ( - c / u ),
		\end{equation}
		where the second coordinate of $M \times [0,\, t_0]$ represents $t$.
	\end{prop}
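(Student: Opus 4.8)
The plan is to reduce the estimate to a comparison between the true model operator $L_{2,x}^{t}$ and the modified operator $L_{3,x}^{t}$, and then invoke the rescaling identity (\ref{eqn_L_3_L_4}). Indeed, by (\ref{defn_L_4_t}) and (\ref{eqn_L_3_L_4}) one has $u^{-n}\exp(-L_{4,x}^{t,v})(Z/v,Z'/v)=\exp(-uL_{3,x}^{t})(Z,Z')$ for $v=\sqrt u$, so it is enough to show: there is $u_0>0$ so that for every $m\in\nat$ there are $c,C>0$ with
\[
\big|\exp(-uL_{2,x}^{t})(Z,Z')-\exp(-uL_{3,x}^{t})(Z,Z')\big|_{\mathscr{C}^{m}(M\times[0,\,t_0])}\leq C\exp(-c/u)
\]
for all $u>u_0$, $x\in M$ and $Z,Z'\in T_xM$ with $|Z|,|Z'|<\epsilon$. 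The point is that $\rho(|Z|/\epsilon)=1$ for $|Z|<2\epsilon$ (see (\ref{defn_rho_fun})), so $L_{2,x}^{t}$ and $L_{3,x}^{t}$ coincide, as differential operators, over the ball $B(0,2\epsilon)\subset T_xM$; moreover both are generalized Laplacians whose principal symbols are uniformly comparable to the flat one, uniformly in $x\in M$ and $t\in[0,t_0]$.

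Next I would run the standard finite propagation speed argument. Writing $e^{-uL}=\pi^{-1/2}\int_{\real}e^{-s^2}\cos\!\big(2s\sqrt{uL}\big)\,ds$ (and its analogue for the non-self-adjoint $L_{3,x}^{t}$ via the contour-integral functional calculus explained before the statement, which applies because the formal adjoint of $L_{3,x}^{t}$ is of the same type, cf. \cite[(1.6.31)]{MaHol}, \cite{BisLeb91}), and using that $\cos(\tau\sqrt{L})$ propagates at a speed $c_1>0$ that is bounded uniformly in $x\in M$ and $t\in[0,t_0]$, one gets that for $|s|\leq \epsilon/(2c_1\sqrt u)$ the kernels $\cos(2s\sqrt{uL_{2,x}^{t}})(Z,Z')$ and $\cos(2s\sqrt{uL_{3,x}^{t}})(Z,Z')$ agree for $|Z|,|Z'|<\epsilon$, since the wave started at $Z'$ stays inside $B(0,2\epsilon)$. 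Therefore the difference of the two heat kernels is the integral of $e^{-s^2}$ against the difference of the two wave kernels over the region $|s|\geq \epsilon/(2c_1\sqrt u)$. The resolvent and Sobolev estimates of \cite[\S 1.6, \S 4.2]{MaHol} bound those wave kernels, together with their $x$- and $t$-derivatives up to order $m$, by $C(1+|s|)^{N}$ in the $\mathscr{C}^{m}(M\times[0,\,t_0])$-norm for suitable $N$; integrating $e^{-s^2}(1+|s|)^{N}$ over $|s|\geq \epsilon/(2c_1\sqrt u)$ yields exactly a bound of the form $C\exp(-c/u)$ with $c\sim\epsilon^2/c_1^2$. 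In the write-up this amounts to citing \cite[(5.5.81)]{MaHol} in the present normalization and combining it with (\ref{eqn_L_3_L_4}).

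I expect the main difficulty to be organizational rather than conceptual: one must carry the finite-propagation-speed plus functional-calculus estimates uniformly in the parameters $x\in M$ and $t\in[0,t_0]$, and in the $\mathscr{C}^{m}$-norm in these parameters, for the non-self-adjoint operators $L_{3,x}^{t}$ and $L_{4,x}^{t,v}$; the relevant observation --- already recorded in the paragraph preceding the statement --- is that the adjoints of these operators have the same structure, so the construction of $\exp(-wL_{3,x}^{t})$ and of its kernel estimates repeats verbatim from \cite{MaHol}. Since \cite[(5.5.81)]{MaHol} provides the analogue of the displayed comparison, the remaining work is really just to match conventions and to plug in the identity (\ref{eqn_L_3_L_4}).
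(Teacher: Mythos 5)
Your proposal is correct and follows exactly the route the paper takes: the paper's proof is precisely the combination of the rescaling identity (\ref{eqn_L_3_L_4}) with the localization estimate \cite[(5.5.81)]{MaHol}, whose content is the finite-propagation-speed comparison you spell out. The extra detail you supply (agreement of $L_{2,x}^{t}$ and $L_{3,x}^{t}$ on $B(0,2\epsilon)$, the Gaussian-tail estimate, and the remark on the non-self-adjoint functional calculus) is a faithful unpacking of that citation rather than a different argument.
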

	\begin{prop}\label{prop_l_4_off_diag}
		For any $v_0 > 0, m \in \nat$ there are $c, C, C' > 0$ such that for $Z, Z' \in T_xM$, we have
		\begin{equation}
			\big| \exp(- L_{4,x}^{t,v})(Z, Z') \big|_{\mathscr{C}^{m}(M \times [0, \, t_0] \times [0, \, v_0])} \leq C (1 + |Z| + |Z'|)^{C'} \exp ( -c |Z -Z'|^2 ),
		\end{equation}
		where the second and third coordinates of $M \times  [0, \, t_0]  \times [0,v_0]$ represents $t$ and $v$ respectively.
	\end{prop}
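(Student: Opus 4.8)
The plan is to mimic the off-diagonal estimates already established for the closely related operators, adapting the contour-integral representation of the heat kernel. First I would observe that, by the discussion preceding the statement, the operators $L_{4,x}^{t,v}$ satisfy a G\r{a}rding-type inequality uniformly in $t \in ]0,1], v \in ]0,1]$, namely $\Re \scal{L_{4,x}^{t,v} s}{s}_{t,v,0} \geq c_1 \norm{s}_{t,v,1}^2 - c_2 \norm{s}_{t,v,0}^2$, and that their adjoints have the same structural form. This guarantees that for $\lambda$ in the appropriate region of the complex plane the resolvent $(\lambda - L_{4,x}^{t,v})^{-1}$ is bounded on $\textbf{H}_{t,v}^{0}$ with bounds uniform in $(t,v)$, so that $\exp(-L_{4,x}^{t,v})$ is defined by a contour integral as in \cite[(1.6.48)]{MaHol}. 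The point of passing to $L_{4,x}^{t,v}$ (where the $u$-dependence has been rescaled away via $S_v$) is precisely that the relevant estimates become uniform not only in $t$ but also in $v$, which is what the norm $\mathscr{C}^{m}(M \times [0,t_0] \times [0,v_0])$ in the statement demands.

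The key steps, in order, would be: (1) establish uniform (in $x, t, v$) resolvent estimates $\norm{(\lambda - L_{4,x}^{t,v})^{-k}}$ on the Sobolev scale $\textbf{H}_{t,v}^{m}$, together with their derivatives in $t$ and $v$, by differentiating the resolvent identity and using the G\r{a}rding inequality together with the smooth dependence of the coefficients of $L_{3,x}^{t}$ on $t$ (and the trivial dependence on $v$ after conjugation by $S_v$); (2) convert these into pointwise estimates on the kernel $\exp(-L_{4,x}^{t,v})(Z,Z')$ and its $\mathscr{C}^{m}$-derivatives by Sobolev embedding, which produces a bound of the form $C(1+|Z|+|Z'|)^{C'}$ locally; (3) upgrade to the Gaussian decay $\exp(-c|Z-Z'|^2)$ in the off-diagonal direction by the standard finite-propagation-speed / contour-deformation trick: one multiplies the integrand by a factor $e^{\pm a(Z-Z')}$ with $|\Im a|$ small (exactly as in the modification made in the proof of Theorem \ref{thm_L_2_t_off_diag} above, compare \cite[(4.2.12)]{MaHol}), observes that the conjugated operator $e^{a(Z-Z')}L_{4,x}^{t,v}e^{-a(Z-Z')}$ is a bounded perturbation of $L_{4,x}^{t,v}$ because the rescaling has made all coefficients bounded, and optimizes over $a$. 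Since here $w = 1$ is fixed (we only integrate the heat operator at time one), no subtlety with the time variable arises, and the Gaussian factor comes out as $\exp(-c|Z-Z'|^2)$ rather than $\exp(-c|Z-Z'|^2/w)$.

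Steps (1) and (2) are essentially transcriptions of \cite[Theorems 4.2.5, 4.2.6]{MaHol} and \cite[\S 1.6]{MaHol} with the extra parameter $v$ carried along, and they go through because conjugation by $S_v$ simply rescales the coordinates and makes the principal symbol uniformly elliptic with uniformly bounded (in $v$) lower-order terms. The main obstacle I expect is bookkeeping in step (3): one must check that the bounds on the conjugated resolvent $(\lambda - e^{a(Z-Z')}L_{4,x}^{t,v}e^{-a(Z-Z')})^{-1}$, and on its $t$- and $v$-derivatives, remain uniform in $x$, $t$, $v$ and in $a$ ranging over a small complex strip — in other words, that the constants $c', C, C'$ can be chosen independently of all these parameters simultaneously. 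This is exactly the sort of verification that the remark after Theorem \ref{thm_L_2_t_off_diag} defers to the reader ("We leave the details to the reader"), and I would handle it the same way: point to the analogous computation in \cite[\S 4.2]{MaHol}, note that the only new feature is the harmless extra parameter $v$, and carry out the conjugation estimate in the one place where the argument genuinely differs.
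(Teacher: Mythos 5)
Your proposal is correct and follows essentially the same route as the paper: the paper's proof simply notes that for $v=1$ the statement is a special case of \cite[Theorem 4.2.5]{MaHol} and that in general one repeats that argument treating the parameter $t$ as having two components $(t,v)$, which is precisely the strategy you spell out (uniform resolvent estimates, Sobolev embedding, and the conjugation trick for the Gaussian factor). Your version merely makes explicit the steps the paper leaves implicit.
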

	\begin{proof}
		When we fix $v = 1$, this proposition is a special case of \cite[Theorem 4.2.5]{MaHol}. In general, since the operator $L_{4,x}^{t,v}$  depends smoothly on $(t, v)$, we may repeat the argument of the proof of \cite[Theorem 4.2.5]{MaHol} as if the parameter $t$ in that theorem had two components $(t,v)$.
	\end{proof}
	\subsection{Proof of Propositions \ref{exp_a_i_u}, \ref{prop_b_p_i_exp}, \ref{prop-streng_zero}, \ref{prop-streng_infty}}\label{subsec_proof_aux}
	Here we finally prove Propositions \ref{exp_a_i_u}, \ref{prop_b_p_i_exp}, \ref{prop-streng_zero}, \ref{prop-streng_infty}; thus, completing the proof of Theorem \ref{thm-gen_asympt_zet_gen}. Then we also explain Remark \ref{rem_family}.
	\begin{proof}[Proof of Proposition \ref{exp_a_i_u}]
		From Theorem \ref{eqn-L^t-general_est} and (\ref{eqn_a_i_u_as_derivative}), we get (\ref{exp_a_i_u_zero}) with
		\begin{equation}\label{a_i_j_B_t_j}
			 a_k^{[j]}(x) = \frac{1}{(2k)!} \frac{\partial^{2k}}{\partial t^{2k}} B_{t,j}(x)|_{t = 0}.
		\end{equation}
		Now (\ref{exp_a_i_u_infty}) follows from Theorem \ref{thm-exp_proj_expansion}.
	\end{proof}
	\begin{proof}[Proof of Proposition \ref{prop_b_p_i_exp}]
		Firstly, we make a connection between $b_{p,i}$ and $B_{t,i}, t = \tfrac{1}{\sqrt{p}}$, defined in Theorem \ref{eqn-L^t-general_est}.
		By Theorem \ref{eqn-L^t-general_est}, Lemma \ref{lem-lapl_L_p-kernels} and (\ref{eqn-L_p_L^t_idty})  we see that there is $l \in \nat$ such that for any $k \in \nat$, there exist $c, C,C' > 0$ such that for any $p \in \nat^*, u \in ]0,1]$, we have
		\begin{equation}
			\textstyle \big| p^{-n} \exp (  - u \laplcomp_p / p )(x,x) - \sum_{r = -n}^{k}B_{t,r}(x) u^r \big|
		 \leq C u^{k+1} + Cp^l \exp ( - cp/u ) \leq C' u^{k+1},		
		\end{equation}
		thus, by (\ref{defn-b_pj}), we have
		\begin{equation}\label{eqn_b_p_i_B_t_i}
			\textstyle b_{p,i} = \int_M \str{N B_{t,i}(x)} \, d v_M(x), \quad t = 1/\sqrt{p}.
		\end{equation}
		From (\ref{eq_L_t_gen_2}), we get  the estimation from (\ref{eqn_b_i^j_as_der}) with
		\begin{equation}\label{eqn_b_i^j_B_t_i}
			 b_i^{[j]} = \frac{1}{(2j)!} \frac{\partial^{2j}}{\partial t^{2j}} \Big( \int_M \str{N B_{t,i}(x)} \, d v_M(x) \Big)|_{t=0}.
		\end{equation}
		Finally, (\ref{eqn_b_i^j_as_der}) follows from (\ref{a_i_j_B_t_j}) and (\ref{eqn_b_i^j_B_t_i}).
	\end{proof}	
	\begin{proof}[Proof of Proposition \ref{prop-streng_zero}]
		By Theorem \ref{eqn-L^t-general_est} and (\ref{eqn-der_vanishes}) we see that for any $k \in \nat; u_0, t_0 > 0$ there exists $C > 0$ such that for any $u \in ]0,u_0], t \in  ]0,t_0]$, we have
			\begin{multline}\label{eqn-first_der_exp_zero}
		 			 \Big|
		 				\frac{1}{t^{2k}} 
		 				\Big[
		 					\exp ( - u L_{2,x}^{t} / 2 )(0,0) - 
		 					 \sum_{r = -n}^{0} u^r B_{t,r}(x) 
		 		 			\\
		 					- 
		 					\sum_{i = 0}^{k-1} \frac{t^{2i}}{(2i)!}
		 					\frac{\partial^{2 i}}{\partial t^{2 i}} 
		 					\Big(
		 						\exp ( - u L_{2, x}^{t} / 2 ) (0,0) - 
		 						\sum_{r = -n}^{0} u^r
		 						B_{t,r}(x)
		 					\Big)|_{t=0}
		 				\Big]
		 			\Big|
		 			\leq C u,
		 		\end{multline}
		 for any $x \in M$. We conclude by Lemma \ref{lem-lapl_L_p-kernels}, (\ref{eqn-L_p_L^t_idty}), (\ref{eqn_a_i_u_as_derivative}), (\ref{eqn_b_p_i_B_t_i}) and (\ref{eqn_b_i^j_B_t_i}).
	\end{proof}
	\begin{proof}[Proof of Proposition \ref{prop-streng_infty}]
		We distinguish $2$ cases:
		\par 1. $u > \sqrt{p}$. In this case we proceed similarly to \cite[Theorem 5.5.11]{MaHol}. Theorem \ref{thm-lapl_spectral_gap} implies the first inequality in the following series of estimations and the second one is true by Theorem \ref{thm-gen_asympt_exp}
				\begin{align} \label{man_lrg_time_1}
					 p^{-n + k} \tr{ \exp ( - u \laplcomp_p^{(>0)} / p )} 
					&  \leq p^{-n} \tr{ \exp ( - \laplcomp_p^{(>0)} / p )} p^k  \exp ( - \tfrac{u-1}{p} cp )   \\ \nonumber
					&   \leq C' p^k \exp ( - cu/2 ) \exp ( - cu/2 )  
					\leq  C'' \exp ( - cu/2 ) .
				\end{align}
				Now, by Proposition \ref{exp_a_i_u} we obtain the following estimate for some $c_i, d_i, d', d > 0$ and any $x \in M$
				\begin{align}\label{man_lrg_time_2}
					\textstyle p^{k-j} \big| \str{N a_{j,u}(x)} \big| 
				&\leq c_j p^{k-j} \exp(- d_j u)  \leq d' \exp ( - du/2 ).
				\end{align}
				Then (\ref{prop-streng_infty_eqn}) follows from (\ref{man_lrg_time_1}) and (\ref{man_lrg_time_2}).
		\par 2. $u \leq \sqrt{p}$. This case is subtler. Let $t = 1/\sqrt{p}$. By Theorem \ref{thm-exp_proj_expansion}, we have
			\begin{multline}\label{eqn_exp_aux_zero_1}
				\textstyle p^k \Big| \int_M \str{N \exp ( - u L_{2,x}^{t} / 2 )(0,0)} \, dv_M(x)
			 	\\ \textstyle - \sum_{i=0}^{k} p^{-i} \int_M \str{N a_{i,u}(x)} \, dv_M(x) \Big| \leq C\exp(-cu).
			\end{multline}
			We conclude by Lemma \ref{lem-lapl_L_p-kernels}, (\ref{eqn-L_p_L^t_idty}), (\ref{eqn_exp_aux_zero_1}) and inequality $e^{-c p/u} \leq e^{-c \sqrt{p}/2 } e^{-cu/2}$.	 
	\end{proof}
	\begin{proof}[Proof of Remark \ref{rem_family}.]
Here we prove that the calculation of the asymptotics of the analytic torsion in Theorem \ref{thm-gen_asympt_zet} commutes with derivatives over the base in a family of manifolds.
	\par More precisely, let $\pi : X \to B$ be a proper holomorphic submersion of complex manifolds. We note by $T \pi$ the relative tangent bundle. Let $L$, $E$ be respectively a holomorphic line and vector bundles over $X$. We endow $L$, $E$ with Hermitian metrics $h^L$, $h^E$, and suppose that the metric $h^L$ is positive along the fibers. 
	We endow the fibers $M_s := \pi^{-1}(s), s \in B$ with a Kähler metric $g^{TM}_{s}$, which is smooth in $s \in B$.
Let's denote by $T(g^{TM}_{s}, h^{L^p \otimes E}|_{M_s})$ for $p \in \nat$, the analytic torsion of $L^p \otimes E|_{M_s}$ associated with $g^{TM}_{s}, h^{L}|_{M_s}, h^{E}|_{M_s}$. Then by Theorem \ref{thm-gen_asympt_zet}, for any $s \in B$, there are local coefficients $\alpha_i(s), \beta_i(s) \in \real, i \in \nat$ such that for any $k \in \nat$, as $p \to + \infty$, we have
	\begin{equation}
		-2 \log T(g^{TM}_{s}, h^{L^p \otimes E}|_{M_s}) = \textstyle \sum_{i = 0}^{k} p^{n-i} \big( \alpha_i(s) \log p + \beta_i(s) \big) + o(p^{n-k}).
	\end{equation}
First of all, from \cite[Theorem 1.3]{BGS3}, for any compact $K \subset B$, there is $p_0$ such that for $p \geq p_0$, the function $\log T(g^{TM}_{s}, h^{L^p \otimes E}|_{M_s})$ is smooth over $K$, for $p \geq p_0$. We will explain that the functions $\alpha_i(s), \beta_i(s)$ are also   smooth in $s \in B$ and for any $l \in \nat$, we have
	\begin{equation} 
		\Big\lVert -2 \log T(g^{TM}_{s}, h^{L^p \otimes E}|_{M_s}) - \sum_{i = 0}^{k} p^{n-i} \Big(  \alpha_i(s) \log p +  \beta_i(s) \Big)\Big\rVert_{\mathscr{C}^l(K)} 
		\leq c p^{n-k},
	\end{equation}
	for some $c > 0$.
From the proof of Theorem \ref{thm-gen_asympt_zet}, we see that it is enough to explain why Theorem \ref{thm-gen_asympt_exp} and Propositions  \ref{exp_a_i_u}, \ref{prop_b_p_i_exp},  \ref{prop-streng_zero}, \ref{prop-streng_infty} hold uniformly in $\mathscr{C}^{k}(\pi^{-1}(K))$, for any $k \in \nat$.
For brevity, we prove only the extension of Theorem \ref{thm-gen_asympt_exp}, as other extensions are done in a similar way.
For $s \in B$, we denote by $\laplcomp_{p, s}$ the Kodaira Laplacian on $M_s$ associated with $(L^p \otimes E) |_{M_s}$.
We need to prove that 
there are smooth sections $a_{i,u}(x)$, $i \in \nat$ of $\oplus_{j \geq 0}\enmr{\Lambda^{j}(T^{*(0, 1)} \pi)  \otimes E}$ over $X$, such that for any $l \in \nat$, $u > 0$, there is $c > 0$ such that for any $p \in \nat^*$, we have
		\begin{equation}\label{eq_1}
			\Big\lVert \exp ( - u \laplcomp_{p, \pi(x)}/p )(x,x) - \sum_{i = 0}^{k} a_{i,u}(x) p^{n-i} \Big\rVert_{\mathscr{C}^l(\pi^{-1}(K))}
			\leq c p^{n-k-1}.
		\end{equation}
		But to do so, essentially, we have to repeat the proof of \cite[Theorem 4.2.5]{MaHol} with practically no change, since $x$, varying in the fiber, is already treated as a parameter in it. We need only to replace the words “uniformly on $x \in M_s$" by “uniformly on $x \in \pi^{-1}(K)$".
	\end{proof}

\section{Proof of Theorem \ref{dzeta_asympt}}
	In this section we calculate the coefficients $\alpha_1, \beta_1$ from Theorem \ref{thm-gen_asympt_zet}. More precisely, in Section 4.1 we fix the notation and we derive the formal expressions for $\alpha_1, \beta_1$ in terms of $a_{1,u}$. In Section 4.2 we prove Theorem \ref{dzeta_asympt}. For this we express $\mathscr{O}_1, \mathscr{O}_2$ in terms of creation and annihilation operators and we use the Duhamel's formula for the derivative of the heat kernel to calculate explicitly $A(u)$. This is the most technical part of the article. In Section 4.3 we verify Theorem \ref{dzeta_asympt} on the projective line, we describe how Theorem \ref{dzeta_asympt} is related to arithmetic Riemann-Roch theorem \cite{Soule92} and we make a connection between Theorem \ref{dzeta_asympt} and a result from the article \cite[\S 4]{KlMa} by Klevtsov-Ma-Marinescu-Wiegmann.
	\subsection{Formal expressions for $\alpha_1, \beta_1$}
	Recall that $(M, g^{TM}, \Theta)$ is a compact Kähler manifold of complex dimension $n$ and $(E, h^E), (L, h^L)$ are holomorphic Hermitian vector bundles over $M$. We suppose
	\begin{equation}\label{suppos_prequant}
		\Theta = \omega = \frac{\imun}{2 \pi} R^L.
	\end{equation}
	We take $x \in M$. For the calculation we use the localization procedure from \cite[\S 1.6.2]{MaHol}, where authors use the normal coordinates instead of holomorphic.
	We do so since some part of the calculation was done in this context before. The formula (\ref{eqn_a_i_u_as_derivative}), which is the only prerequisite we need from Section \ref{sect_prelim}, still holds for this localization, since it relies on the wave-propagation technique (see \cite[Theorem 4.2.3]{MaHol}). 
	In this section every notation from Section 3 should be thought in the realms of the localization procedure from \cite[\S 1.6.2]{MaHol}.
	\par For the sake of convenience, in this section we use the following notation 
	\begin{equation}\label{not_A(u)}
		\begin{aligned}
			& A(u) = \int_M \str{N a_{1,u}(x)} \, dv_M(x), \\
			&  R(u) = C_{u} \int_M {\rm{Tr}_s}\Big[ N  e^{- 2 \pi u N}  {\rm{Id}}_{\Lambda^{\bullet} (T^{*(0,1)}_{x}M) \otimes E_x} \Big] \, dv_M(x).
		\end{aligned}
	\end{equation}
	\begin{prop}\label{prop_strace_refin}
		For any $u > 0$, we have
		\begin{equation}
			\textstyle \lim_{p \to \infty} p \big( p^{-n}\str{N \exp ( - u \laplcomp_p / p ) } - R(u) \big) = A(u),
		\end{equation}
		and the convergence is uniform as $u$ varies in a compact subset of $]0, +\infty[$. 
	\end{prop}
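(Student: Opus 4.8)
The plan is to read the statement off from the pointwise heat kernel expansion of Theorem~\ref{thm-gen_asympt_exp}, once we have identified the leading coefficient $a_{0,u}(x)$ explicitly under the hypothesis $\Theta = \omega$.

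First I would apply $\str{N\,\cdot\,}$ to the expansion of Theorem~\ref{thm-gen_asympt_exp} on the diagonal and integrate over the compact manifold $M$, using the identity $\str{N\exp(-u\laplcomp_p/p)} = \int_M \str{N \exp(-u\laplcomp_p/p)(x,x)}\,dv_M(x)$. Since the expansion in Theorem~\ref{thm-gen_asympt_exp} is uniform in $x\in M$ and in $u$ over compact subsets of $]0,+\infty[$, this gives, for every $k$,
\[
p^{-n}\str{N\exp(-u\laplcomp_p/p)} = \sum_{i=0}^{k} p^{-i}\int_M \str{N a_{i,u}(x)}\,dv_M(x) + O(p^{-k-1}),
\]
uniformly for $u$ in compacts. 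Taking $k=1$, the claim reduces to the identity $\int_M \str{N a_{0,u}(x)}\,dv_M(x) = R(u)$, i.e.\ to showing that the leading heat coefficient equals $a_{0,u}(x) = C_u\,e^{-2\pi u N}\,\mathrm{Id}_{\Lambda^\bullet(T^{*(0,1)}_xM)\otimes E_x}$.

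To establish this identity I would use (\ref{eqn_a_i_u_as_derivative}) with $k=0$, which says $a_{0,u}(x) = \exp(-u L^0_{2,x}/2)(0,0)$, together with the explicit form (\ref{defn-L_2^0}) of the model operator $L^0_{2,x}$. The hypothesis $\Theta = \omega = \frac{\imun}{2\pi}R^L$ forces $\matcirc{R^L} = 2\pi\,\mathrm{Id}$, so in an orthonormal frame $w_1,\dots,w_n$ of $T^{(1,0)}_xM$ diagonalizing $R^L_x$ the operator $L^0_{2,x}$ splits into a scalar harmonic oscillator $-\sum_i\nabla_{0,e_i}^2$ shifted by the constant $-\sum_i R^L_x(w_i,\overline{w}_i) = -2\pi n$, acting on $\cinf{T_xM}$, and the fermionic term $2\sum_{i,j}R^L_x(w_i,\overline{w}_j)\,\overline{w}^j\wedge i_{\overline{w}_i} = 4\pi N$, acting on $\Lambda^\bullet(T^{*(0,1)}_xM)\otimes E_x$. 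The heat kernel at the origin of the exponential of $-\tfrac{u}{2}$ times the scalar part is given by the Mehler formula (as in \cite[\S 4.1]{MaHol}), which is precisely the constant $C_u$ appearing in the definition of $R(u)$, while the fermionic part contributes the factor $e^{-2\pi u N}$. This yields $a_{0,u}(x) = C_u\,e^{-2\pi u N}\,\mathrm{Id}$ and hence $\int_M \str{N a_{0,u}(x)}\,dv_M(x) = R(u)$.

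Combining the two steps gives $p\big(p^{-n}\str{N\exp(-u\laplcomp_p/p)} - R(u)\big) = A(u) + O(p^{-1})$, with the convergence uniform for $u$ in compact subsets of $]0,+\infty[$, which is the assertion. The only genuinely computational point is the Mehler-formula identification of $a_{0,u}(x)$ in the second step; I expect this to be the main, though fairly routine, obstacle, since it requires keeping careful track of the normalization constants and uses the Kähler assumption in an essential way — the same computation already appears in the literature in the Kähler setting.
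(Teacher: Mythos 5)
Your proposal is correct and follows the same route as the paper: the paper's proof simply cites the definitions of $A(u)$, $R(u)$ together with Theorem \ref{thm-gen_asympt_exp} and Theorem \ref{thm-HK_form}, the latter being exactly the Mehler-formula identification $a_{0,u}(x)=\exp(-uL_{2,x}^{0}/2)(0,0)=C_u e^{-2\pi uN}\,\mathrm{Id}$ that you rederive in your second step. Nothing is missing.
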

	\begin{proof}
		It follows from definitions of $A(u), R(u)$ and Theorems \ref{thm-gen_asympt_exp}, \ref{thm-HK_form}.
	\end{proof}
	Now by (\ref{thm-a_i-b_i_formula}), we have the following identities (see Notation \ref{not_coefficients})
	\begin{equation}\label{eqn_alph_1}
		\alpha_1 = A^{[0]}, \qquad \qquad \beta_1 = - M[A]'(0).
	\end{equation}
	
\subsection{Proof of Theorem \ref{dzeta_asympt}}
	In this section we prove Theorem \ref{dzeta_asympt}. For this we give an explicit formula for $a_{1,u}(x), A(u)$ and then plug it in (\ref{eqn_alph_1}).
	\par Let $w_1, \ldots, w_n$ be an orthonormal basis of $(T_x^{(1,0)}M, h^{T^{(1,0)}_{x}M})$ and let $w^1, \ldots, w^n$ be its dual basis. For $j=1, \ldots, n,$ the vectors $e_{2j-1} = \tfrac{1}{\sqrt{2}}(w_j + \overline{w}_j), e_{2j} = \tfrac{\imun}{\sqrt{2}}(w_j - \overline{w}_j)$ form an orthonormal basis of $T_xM$. This basis identifies $T_xM$ and $\real^{2n}$. Let's introduce the complex coordinates $(z_1, \ldots, z_n)$ on $\comp^n \simeq \real^{2n}$ such that $Z = z + \overline{z}$ and $w_j = \sqrt{2} \tfrac{\partial}{\partial z_j}, \overline{w}_j = \sqrt{2} \tfrac{\partial}{\partial \overline{z}_i}$. We may consider $z, \overline{z}$ as vector fields by identifying $z$ to $\sum_i z_i  \tfrac{\partial}{\partial z_j}$ and $\overline{z}$ to $\sum_i \overline{z}_i  \tfrac{\partial}{\partial \overline{z}_i}$,
	\par Now we define \textit{creation} and \textit{annihilation} operators (see (\ref{defn-L_2^0}), (\ref{suppos_prequant}))
		\begin{align}\label{defn_creat_annih}
			 b_j = -2 \nabla_{0, \tfrac{\partial}{\partial z_j}} 
			= - 2 \frac{\partial}{\partial z_j} + \pi \overline{z}_j,  \qquad
			b_j^{+} = 2 \nabla_{0, \tfrac{\partial}{\partial \overline{z}_j}} 
			= 2 \frac{\partial}{\partial \overline{z}_j} + \pi z_j.
		\end{align}
	 We recall that by $\scal{\cdot}{\cdot}$ we mean the $\comp$-bilinear extension of $g^{TM}$. From now on, we use Einstein summation convention.
	\begin{thm}\label{thm-O_2_form}
		The following identities hold 
		\begin{align}
				\textstyle L_{2, x}^{0} &=  \sum_j  b_j b_j^{+}  + 4 \pi N, \qquad \qquad
				\mathcal{O}_1 = 0, \label{form_o_1}   \\	 
				\label{form_o_2}
				\mathcal{O}_2 &= \frac{1}{3} \Big \langle R_x^{TM} \Big( \overline{z}, \frac{\partial}{\partial z_i}\Big) \overline{z}, \frac{\partial}{\partial z_j} \Big \rangle b_i^{+}b_j^{+} +
				\frac{1}{3} \Big \langle R_x^{TM}\Big(z, \frac{\partial}{\partial \overline{z}_i}\Big)z, \frac{\partial}{\partial \overline{z}_j} \Big \rangle   b_i b_j  \\
				 & \phantom{= \,\,} - \frac{1}{3}  \Big \langle R_x^{TM}\Big(z, \frac{\partial}{\partial \overline{z}_i}\Big) \overline{z}, \frac{\partial}{\partial z_j} \Big \rangle    b_i b_j^{+} -
				\frac{1}{3}  \Big \langle R_x^{TM}\Big(\overline{z}, \frac{\partial}{\partial z_i}\Big)z, \frac{\partial}{\partial \overline{z}_j} \Big \rangle  b_i^{+} b_j  \nonumber \\
				 & \phantom{= \,\,} - 2 R_x^{E}\Big(\frac{\partial}{\partial z_i}, \frac{\partial}{\partial \overline{z}_i}\Big) - \frac{r_x^{M}}{6} \nonumber \\
				 & \phantom{= \,\,} + \frac{2}{3}  \Big \langle R_x^{TM}\Big(\overline{z}, \frac{\partial}{\partial z_i}\Big) \frac{\partial}{\partial \overline{z}_i} \frac{\partial}{\partial z_j} \Big \rangle b_j^{+} - \frac{2}{3} \Big \langle R_x^{TM}\Big(z, \frac{\partial}{\partial \overline{z}_i}\Big) \frac{\partial}{\partial z_i}, \frac{\partial}{\partial \overline{z}_j} \Big \rangle b_j  \nonumber \\
				 & \phantom{= \,\,} + \frac{\pi}{3}  \Big \langle R_x^{TM}(z,\overline{z}) \overline{z}, \frac{\partial}{\partial z_i} \Big \rangle b_i^{+} - \frac{\pi}{3} \Big \langle  R_x^{TM}(z,\overline{z}) z, \frac{\partial}{\partial \overline{z}_i} \Big \rangle b_i  \nonumber \\
				 & \phantom{= \,\,} -  R_x^{E}\Big(\overline{z}, \frac{\partial}{\partial z_i}\Big) b_i^{+} + R_x^{E}\Big(z, \frac{\partial}{\partial \overline{z}_i}\Big) b_i  \nonumber \\
				 & \phantom{= \,\,} -  R_x^{\Lambda^{\bullet} (T^{*(0,1)}M)}\Big(\overline{z}, \frac{\partial}{\partial z_i}\Big) b_i^{+} + R_x^{\Lambda^{\bullet} (T^{*(0,1)}M)}\Big(z, \frac{\partial}{\partial \overline{z}_i}\Big) b_i  \nonumber \\
				& \phantom{= \,\,} + 2 R_x^{\det}\Big(\frac{\partial}{\partial z_i},\frac{\partial}{\partial z_j}\Big) \overline{w}^j \wedge i_{\overline{w}_i} + 4 R_x^{E}\Big(\frac{\partial}{\partial z_i}, \frac{\partial}{\partial z_j}\Big) \overline{w}^j \wedge i_{\overline{w}_i} \nonumber.
		\end{align}				
	\end{thm}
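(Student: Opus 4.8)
The plan is to extract the operators $L_{2,x}^0$, $\mathcal{O}_1$, $\mathcal{O}_2$ from the Taylor expansion in $t$ of the rescaled operator $L_{2,x}^t = S_t^{-1} t^2 k^{1/2} L_{p,x} k^{-1/2} S_t$ (with $p = 1/t^2$), working directly from the definition (\ref{defn-L_p}) of $L_{p,x}$ and treating its three summands separately: the Bochner Laplacian $\Delta^{B,\Lambda^{0,\bullet}_0\otimes L_0^p\otimes E_0}$, the curvature term $\tfrac12 p\,{}^c(R^{L_0})$, and the zeroth order term $\Phi_{E_0}$. The two decisive simplifications afforded by the prequantization hypothesis (\ref{suppos_prequant}) are: (a) it normalizes $R^L_x(w_i,\bar w_j) = 2\pi\delta_{ij}$, so that the linear potential $\tfrac12 R^L_x(Z,\cdot)$ is exactly the one built into $b_j, b_j^+$ via (\ref{defn_creat_annih}); (b) it forces $\nabla^{TM}$-parallelism of $R^L$ (the Kähler condition), so that in the radial gauge of $S_x$ the connection form of $(L_0, h^{L_0}_\epsilon)$ equals $\tfrac12 R^L_x(Z,\cdot)$ near $0$ with \emph{no} higher order correction, and ${}^c(R^{L_0})$ contributes only its constant value; it also kills the $\dbar\partial\Theta_0$ and $(\dbar-\partial)\Theta_0$ terms in (\ref{defn_phi_e}), so that $\Phi_{E_0}(x) = \tfrac14 r_x^M + \tfrac12{}^c(R^E + R^{\det})_x$.

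First I would verify (\ref{form_o_1}). For $L_{2,x}^0$: starting from (\ref{defn-L_2^0}) and using $\partial/\partial z_j = \tfrac{1}{\sqrt2}w_j$, $\partial/\partial\bar z_j = \tfrac{1}{\sqrt2}\bar w_j$ together with the commutation relations $[b_i,b_j] = [b_i^+,b_j^+] = 0$ and $[b_i,b_j^+] = -4\pi\delta_{ij}$ (immediate from (\ref{defn_creat_annih}) and (\ref{suppos_prequant})), one gets $-\sum_i\nabla_{0,e_i}^2 = \tfrac12\sum_j(b_jb_j^+ + b_j^+b_j) = \sum_j b_jb_j^+ + 2\pi n$, while the remaining curvature terms in (\ref{defn-L_2^0}) equal $4\pi N - 2\pi n$, yielding $L_{2,x}^0 = \sum_j b_jb_j^+ + 4\pi N$. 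For $\mathcal{O}_1 = 0$: after conjugating by $S_t$ (which sends $Z\mapsto tZ$, $\partial_Z\mapsto t^{-1}\partial_Z$) and multiplying by $t^2$, the only sources of odd powers of $t$ would be the odd Taylor coefficients of $g^{TX_0}$, of $k$, and of the connection potentials; the metric and $k$ have no linear term in normal coordinates, the $L_0^p$-potential has no $t$-dependence at all by (b), and the linear terms of the $E_0$- and $\Lambda^{0,\bullet}_0$-potentials come multiplied by an extra factor $t$ and hence land at order $t^2$. Thus $L_{2,x}^t$ has no $t^1$ term.

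The core of the proof is the determination of $\mathcal{O}_2$, obtained by collecting all order-$t^2$ contributions. From $t^2\Phi_{E_0}$ one gets $\Phi_{E_0}(x) = \tfrac14 r_x^M + \tfrac12{}^c(R^E + R^{\det})_x$; from $\tfrac12 p\,{}^c(R^{L_0})$ nothing new, its value already sitting inside $L_{2,x}^0$. From the Bochner Laplacian one Taylor-expands $g^{ij}(tZ) = \delta^{ij} - \tfrac{t^2}{3}\langle R_x^{TM}(Z,e_i)Z,e_j\rangle + O(t^3)$, the associated Christoffel symbols, the factor $k^{1/2}(tZ) = 1 - \tfrac{t^2}{12}\mathrm{Ric}_x(Z,Z) + O(t^3)$ coming from the $k^{1/2}$-conjugation, and the radial-gauge connection forms $\Gamma_i(Z) = \tfrac12 R_x(Z,\partial_i) + O(|Z|^2)$ of $\nabla^{E_0}$ and $\nabla^{B,\Lambda^{0,\bullet}_0}$. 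Multiplying these out against $\nabla_{0,i} = \partial_i + \tfrac12 R^L_x(Z,\partial_i)$, rewriting every occurrence of $\partial/\partial z_j$, $\partial/\partial\bar z_j$, $z_j$, $\bar z_j$ in terms of $b_j, b_j^+$ via (\ref{defn_creat_annih}), and applying the symmetries and first Bianchi identity of $R^{TM}$, all contributions assemble into (\ref{form_o_2}); in particular the scalar-curvature contributions of $\Phi_{E_0}(x)$, of $\log k$ and of the metric expansion combine to $-\tfrac16 r_x^M$, and the ${}^c(R^E + R^{\det})$ term together with the $\Gamma^{E_0}$, $\Gamma^{\Lambda^{0,\bullet}_0}$ cross-terms produces both the constant endomorphisms such as $-2 R_x^E(\partial_{z_i},\partial_{\bar z_i})$ and the terms linear in $b_i, b_i^+$. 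This is the prequantized Kähler specialization of the computation of \cite[\S 1.6.2, \S 4.1]{MaHol}, cf.\ \cite{BVas}, and the identity (\ref{eqn_L_p_full_sqr}) guarantees that the answer is consistent with a self-adjoint square.

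I expect the main obstacle to be purely computational: $\mathcal{O}_2$ receives contributions from the interplay of the metric expansion, three connection potentials, the volume density $k$, and the $S_t$-conjugation, producing a large number of terms, and the real work lies in organizing this bookkeeping and in pinning down every numerical coefficient ($\tfrac13$, $\tfrac23$, $\tfrac{\pi}{3}$, $\ldots$) and sign correctly while using the symmetries of $R^{TM}$ to bring all cross-terms to the displayed form. The conceptual ingredients — the Lichnerowicz-type identity (\ref{eqn_L_p_full_sqr}), the normal-coordinate and radial-gauge expansions, and the parallelism of $R^L$ — are all already in place.
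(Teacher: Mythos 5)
Your proposal sets out to re-derive the formula for $\mathcal{O}_2$ from scratch by Taylor-expanding the rescaled operator $L_{2,x}^{t}$. The paper itself does not carry out this computation: its proof of Theorem \ref{thm-O_2_form} is a citation, invoking \cite[Theorem 5.1]{MaBerg06} (cf.\ \cite[Theorem 4.1.25]{MaHol}) for the degree-$(0,0)$ part and \cite[Theorem 2.2]{MaMar06} for the $Spin^c$ analogue, and merely observing that the last two lines of (\ref{form_o_2}) are the only contributions in positive degree. Since the method in those references is precisely the normal-coordinate/radial-gauge expansion you describe, your route is the same as the one the paper outsources; your verification of $L_{2,x}^{0}=\sum_j b_jb_j^+ + 4\pi N$ via the commutators of (\ref{defn_creat_annih}) and the normalization $R^L_x(w_i,\overline{w}_j)=2\pi\delta_{ij}$ from (\ref{suppos_prequant}) is correct, as is the parity argument for $\mathcal{O}_1=0$.

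There is, however, one substantive slip in your accounting that would corrupt the coefficients if carried out literally. You assert that by parallelism of $R^L$ the $L_0^p$-potential ``has no $t$-dependence at all'' and you subsequently treat $\nabla_{0,i}$ as having the exactly linear potential $\tfrac12 R^L_x(Z,\cdot)$. What parallelism (plus the vanishing of the Christoffel symbols at the center of normal coordinates) gives you is only the vanishing of the \emph{quadratic} Taylor coefficient of the radial-gauge potential of $L_0$ — enough for $\mathcal{O}_1=0$ — but not of the cubic one: the second coordinate derivatives of $R^L$ at $x$ do not vanish and are expressed through $R^{TM}_x$ via the first derivatives of the Christoffel symbols. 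That cubic term, multiplied by $p=t^{-2}$ and rescaled by $S_t$, lands exactly at order $t^2$ and is one of the sources (together with the metric expansion hitting the linear potential) of the terms $\frac{\pi}{3}\langle R_x^{TM}(z,\overline{z})\overline{z},\partial_{z_i}\rangle b_i^{+}$ and $-\frac{\pi}{3}\langle R_x^{TM}(z,\overline{z})z,\partial_{\overline{z}_i}\rangle b_i$ in (\ref{form_o_2}). You must therefore expand the $L_0$-potential to third order in $Z$ (as in \cite[(4.1.101)--(4.1.103)]{MaHol}) before collecting the order-$t^2$ terms; with that correction your plan reproduces the cited computation.
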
		
	\begin{proof}
		In \cite[Theorem 5.1]{MaBerg06} (cf. \cite[Theorem 4.1.25]{MaHol}) authors obtained this result in degree $(0,0)$. In $\mathcal{O}_2$,  the last $2$ lines of its formula is the only contribution of non-zero degree. Theorem \ref{thm-O_2_form} was obtained in \cite[Theorem 2.2]{MaMar06} for $Spin^c$-Dirac operator.
	\end{proof}
	\par  From (\ref{defn-L_2^0}), (\ref{suppos_prequant}), (\ref{form_o_1}) and Mehler formula for harmonic oscillator (see \cite[Appendix E 2.2]{MaHol}), we get
	\begin{thm}\label{thm-HK_form}
		We have the following identity
		\begin{equation}
			\textstyle \exp(-u L_{2,x}^{0})(Z,0) = e^{-4 \pi u N} C_{2 u} \exp ( -B_{2 u} \norm{Z}^2 ) {\rm Id}_{ \Lambda^{\bullet} (T^{*(0,1)}_{x}M) \otimes E_x},
		\end{equation}
		where the operator $N$ is defined in (\ref{defn_number_oper}), $Z = (z_1, \ldots, z_n), \norm{Z}^2 = \sum |z_i|^2$ and
		\begin{equation}
			 B_{u} = \frac{\pi}{2 \tanh(\pi u)}, \qquad C_{u} = \frac{1}{(1 - e^{-2 \pi u})^n}.
		\end{equation}
	\end{thm}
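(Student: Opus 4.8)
The plan is to read off the explicit shape of $L_{2,x}^{0}$ from (\ref{form_o_1}) and then reduce everything to the scalar Mehler formula. By (\ref{form_o_1}) we have $L_{2,x}^{0} = \sum_j b_j b_j^{+} + 4\pi N$, and the key point is that the two summands act on complementary tensor factors: in the trivialisation $\Lambda^{\bullet}(T^{*(0,1)}X_0)\otimes E_0 \simeq \Lambda^{\bullet}(T^{*(0,1)}_{x}M)\otimes E_x$, the operators $b_j = -2\partial/\partial z_j + \pi\overline{z}_j$ and $b_j^{+} = 2\partial/\partial\overline{z}_j + \pi z_j$ of (\ref{defn_creat_annih}) are scalar differential operators in $Z$, hence act as the identity on $\Lambda^{\bullet}(T^{*(0,1)}_{x}M)\otimes E_x$, while $4\pi N$ is, by (\ref{defn_number_oper}), a scalar on the function-and-$E_x$ factor. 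Consequently $\sum_j b_j b_j^{+}$ and $4\pi N$ commute and
\[
	\exp(-u L_{2,x}^{0}) = \exp\Big(-u\textstyle\sum_j b_j b_j^{+}\Big)\otimes e^{-4\pi u N}\otimes {\rm Id}_{E_x},
\]
so it remains to identify the smooth kernel (with respect to $dv_{T_xM}$) of the scalar operator $\exp(-u\sum_j b_j b_j^{+})$ and to check it equals $C_{2u}\exp(-B_{2u}\norm{Z}^2)$.

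Next I would use that the $b_j, b_j^{+}$ for distinct indices commute, so $\sum_j b_j b_j^{+}$ is a sum of $n$ mutually commuting one-variable operators and its heat kernel is the product of the corresponding one-variable heat kernels; it thus suffices to treat a single complex variable. Expanding, $b_j b_j^{+} = -4\partial_{z_j}\partial_{\overline{z}_j} + \pi^2|z_j|^2 - 2\pi - 2\pi(z_j\partial_{z_j} - \overline{z}_j\partial_{\overline{z}_j})$, which in real coordinates is, up to the additive constant $-2\pi$, the two-dimensional harmonic oscillator $-\Delta + \pi^2|z_j|^2$ perturbed by a first-order rotation term. Applying the Mehler formula in the form of \cite[Appendix E 2.2]{MaHol} (which accommodates such first-order terms) and then setting the second argument equal to $0$ collapses the kernel to $\frac{1}{1-e^{-4\pi u}}\exp\big(-\frac{\pi}{2}\coth(2\pi u)|z_j|^2\big)$; multiplying over $j$ and using $\frac{\pi}{2}\coth(2\pi u) = B_{2u}$ and $(1-e^{-4\pi u})^{-n} = C_{2u}$ yields the claimed identity.

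I expect the only delicate part to be the bookkeeping of numerical constants in the Mehler formula with the present normalisations (the Laplacian appears as $-4\partial_z\partial_{\overline z}$, the frequency is $\pi$, and the additive constant $-2\pi$ per variable together with the first-order rotation term must be tracked consistently). A convenient safeguard against sign and factor-of-two errors is to check the resulting formula as $u\to+\infty$: then $C_{2u}\to 1$ and $B_{2u}\to\pi/2$, so the right-hand side tends to the rank-one operator $e^{-\pi\norm{Z}^2/2}$ tensored with the orthogonal projection onto degree-$0$ forms, which is precisely the spectral projection of $L_{2,x}^{0}$ onto $\ker L_{2,x}^{0}$ — consistent with the spectral gap (\ref{eqn_L_t_spec_gap}) and with the fact, visible from (\ref{defn_creat_annih}), that $b_j^{+}e^{-\pi\norm{Z}^2/2} = 0$ for all $j$, so that the normalised Gaussian in degree $0$ spans $\ker L_{2,x}^{0}$.
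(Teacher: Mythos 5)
Your proposal is correct and follows essentially the same route as the paper, whose proof consists precisely of invoking (\ref{defn-L_2^0}), (\ref{suppos_prequant}), the identity $L_{2,x}^{0}=\sum_j b_jb_j^{+}+4\pi N$ from (\ref{form_o_1}), and the Mehler formula of \cite[Appendix E 2.2]{MaHol}; you have simply spelled out the commutation/factorisation bookkeeping and the constants. Your limiting checks ($u\to 0$ recovering the flat heat kernel, $u\to+\infty$ recovering the kernel of the projection onto $\ker L_{2,x}^{0}$ at $(Z,0)$) confirm the normalisations $B_{2u}=\tfrac{\pi}{2}\coth(2\pi u)$ and $C_{2u}=(1-e^{-4\pi u})^{-n}$.
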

	\begin{sloppypar}
	From Duhamel's formula (see \cite[Theorem 4.17]{MaBerg06}), (\ref{eqn_a_i_u_as_derivative}) and (\ref{form_o_1}), we get
		\begin{equation}\label{eqn_a_1_volterra}
			\textstyle a_{1,u}(x) = - \int_{0}^{u/2} \int_{X_0} e^{-vL_{2, x}^{0}}(0,Z) ( \mathcal{O}_2 e^{-(u/2 - v)L_{2, x}^{0}} )(Z, 0) d Z \, d v
		\end{equation}
	From (\ref{eqn_a_1_volterra}), to calculate $a_{1,u}$ we have to calculate $\mathcal{O}_2 e^{-u L_{2, x}^{t}}(Z,0)$ for $Z \in T_xM$.
	To simplify this calculation, we \textit{omit} the terms of the form $P(z_1, \overline{z}_1, z_2, \cdots, \overline{z}_n) \exp ( - v L_{2,x}^{0}/2 ) (Z,0)$, where $P$ is a monomial with different degrees of $z_i$ and $\overline{z}_i$ for some $i \in \nat^*, i \leq n$, since from Theorem \ref{thm-HK_form} those terms disappear after the integration in $Z$ in (\ref{eqn_a_1_volterra}). We denote by $\sim$ the identification up to such \textit{omission}.
		 We note
		 		\end{sloppypar}
		 \begin{equation}\label{notation_curv}
		 	\begin{aligned}
		 		&R_{i\overline{j}k\overline{l}} = \scal{R^{TM}_{x}(\tfrac{\partial}{\partial z_i}, \tfrac{\partial}{\partial \overline{z}_j}) \tfrac{\partial}{\partial z_k}}{\tfrac{\partial}{\partial \overline{z}_l}}, 
		 		&&R^E_{i \overline{j}} = R_x^{E}\big( \tfrac{\partial}{\partial z_i}, \tfrac{\partial}{\partial \overline{z}_j} \big), \\
				&R^{\Lambda}_{i \overline{j}}  = R_x^{\Lambda^{0, \bullet} (T^*M)}\big( \tfrac{\partial}{\partial z_i}, \tfrac{\partial}{\partial \overline{z}_j} \big),  
				&&R^{\det}_{i \overline{j}} = R_x^{\det}\big( \tfrac{\partial}{\partial z_i}, \tfrac{\partial}{\partial \overline{z}_j} \big),
		 	\end{aligned}
		 \end{equation}
		where $R^{\det}$ is the Chern curvature of $(\det T^{(1,0)} M,  h^{\det})$ for the induced by $h^{T^{(1,0)}M}$ Hermitian metric $h^{\det}$. 
		We constantly use the following well-known symmetries of the curvature tensor
		\begin{equation}
			R_{i\overline{i}j\overline{j}} = R_{i\overline{j}j\overline{i}} = R_{j\overline{j}i\overline{i}} = R_{j\overline{i}i\overline{j}},  \qquad R_{i\overline{j}k\overline{l}} = R_{k\overline{l} i\overline{j}}.
		\end{equation}

		\begin{lem}\label{lem_o_2_exp}
			For $u >0, Z \in T_xM$, we have
			\begin{multline}\label{eqn_aux_a_1_3} 
			( \mathcal{O}_2 e^{-u L_{2, x}^{t}} )(Z,0) \sim \Big[ \frac{2}{3}  R_{i\overline{i}j\overline{j}} |z_i|^2 |z_j|^2 ( 2\pi^2 - \delta_{ij} \pi^2 ) - \frac{4}{3}R_{i\overline{i}j\overline{j}}   - 2 R^{E}_{i \overline{i}} 
			 \\ 			
			+ 2 \pi R^{E}_{i \overline{i}} |z_i|^2  + 2 \pi R^{\Lambda}_{i \overline{i}}  |z_i|^2 + 2R^{\det}_{i \overline{j}} \overline{w}^j \wedge i_{\overline{w}_i} + 4R^{E}_{i \overline{j}} \overline{w}^j \wedge i_{\overline{w}_i}  \Big] e^{-u L_{2, x}^{t}}(Z,0) .
			\end{multline}
		\end{lem}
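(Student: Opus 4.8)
The plan is to compute $\mathcal{O}_2 e^{-u L_{2,x}^0}(Z,0)$ term by term, starting from the explicit formula for $\mathcal{O}_2$ in Theorem \ref{thm-O_2_form} and the explicit heat kernel $e^{-u L_{2,x}^0}(Z,0)$ in Theorem \ref{thm-HK_form}. The key observation is that each summand of $\mathcal{O}_2$ is (a curvature coefficient times) a monomial in $z,\overline z$ composed with a product of creation/annihilation operators $b_i, b_i^+$ and exterior/interior operators $\overline w^j\wedge i_{\overline w_i}$, so I only need to know how $b_i, b_i^+$ act on the Gaussian $C_{2u}\exp(-B_{2u}\|Z\|^2)$. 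From (\ref{defn_creat_annih}), $b_i^+ = 2\partial_{\overline z_i} + \pi z_i$ and $b_i = -2\partial_{z_i} + \pi\overline z_i$; applied to $\exp(-B_{2u}\|Z\|^2)$ these give $b_i^+ \mapsto (\pi - 2B_{2u})z_i$ and $b_i \mapsto (\pi - 2B_{2u})\overline z_i$ (times the Gaussian), and compositions follow by iterating, being careful about the commutator $[\,b_i, b_j^+\,] = [\,-2\partial_{z_i}, \pi z_j\,] + [\,\pi\overline z_i, 2\partial_{\overline z_j}\,] = -4\pi\delta_{ij}$ (the harmonic oscillator relation, consistent with $L_{2,x}^0 = \sum_j b_j b_j^+ + 4\pi N$).

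The second ingredient is the \emph{omission} convention $\sim$: after acting with $\mathcal{O}_2$, any resulting term that is a monomial $P(z,\overline z)$ with unequal degrees in $z_i$ and $\overline z_i$ for some $i$ times the Gaussian integrates to zero in $Z$ against $e^{-vL_{2,x}^0}(0,Z)$ (again by Theorem \ref{thm-HK_form}, since that kernel is also a Gaussian times an operator, and Gaussian integrals of such unbalanced monomials vanish). So I systematically discard all such terms as I go. First I would handle the ``scalar'' lines of $\mathcal{O}_2$: the term $-2R_x^E(\partial_{z_i},\partial_{\overline z_i}) - r_x^M/6$ is already a multiplication operator — but note $\sum_i R^{TM}_x(\partial_{z_i},\partial_{\overline z_i}\cdot)$ relates to $r_x^M$ via the Kähler identity $r_x^M = -8\sum_{i,j} R_{i\overline i j\overline j}$ (up to the normalization in use), which is how $-r_x^M/6$ becomes $-\tfrac43 R_{i\overline i j\overline j}$ and $-2R^E_{i\overline i}$ stays. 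Then the four ``quadratic in $z$, quadratic in $\partial$'' curvature terms: each contributes a factor like $b_i^+ b_j^+$ or $b_i b_j$ or $b_i b_j^+$ acting on the Gaussian, and after contracting $\overline z, z$ with the curvature and keeping only balanced monomials I get the $R_{i\overline i j\overline j}|z_i|^2|z_j|^2$ terms with the coefficient $(2\pi^2 - \delta_{ij}\pi^2)$; the $\pi^2$ arises because $(\pi - 2B_{2u})$ combined with the commutator correction collapses — I should double-check that the $u$-dependence of $B_{2u}$ miraculously drops out here, which it must since the right-hand side of (\ref{eqn_aux_a_1_3}) carries the $u$-dependence only through the overall factor $e^{-uL_{2,x}^0}(Z,0)$.

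Next I would treat the ``linear in $Z$, linear in $\partial$'' curvature terms (lines with a single $b_j^+$ or $b_j$, and the $R^{TM}(z,\overline z)$ lines, and the $R^E$, $R^\Lambda$ lines with one creation/annihilation operator): acting on the Gaussian each produces a monomial with a net single extra $z$ or $\overline z$ relative to the contraction with the curvature, and after pairing with the remaining $z$ or $\overline z$ from the curvature argument the balanced survivors yield the $2\pi R^E_{i\overline i}|z_i|^2$ and $2\pi R^\Lambda_{i\overline i}|z_i|^2$ terms (the coefficient $2\pi$ again after the $u$-dependence cancels), while the genuinely unbalanced pieces (e.g. terms linear in a single $z_i$ or cubic) are dropped under $\sim$. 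The last line of $\mathcal{O}_2$, $2R^{\det}_{i\overline j}\overline w^j\wedge i_{\overline w_i} + 4R^E_{i\overline j}\overline w^j\wedge i_{\overline w_i}$, is a pure multiplication/endomorphism with no $z$-dependence, so it passes straight through to the right-hand side unchanged. Assembling all surviving contributions gives exactly (\ref{eqn_aux_a_1_3}).

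The main obstacle I anticipate is bookkeeping: keeping track of the commutators $[b_i, b_j^+] = -4\pi\delta_{ij}$ when several creation/annihilation operators act in succession, correctly contracting the curvature tensors (using the Kähler symmetries $R_{i\overline i j\overline j} = R_{i\overline j j\overline i} = R_{j\overline i i\overline j}$) so that index-summed expressions like $\sum_i R^{TM}_x(z,\partial_{\overline z_i})\partial_{z_i}$ reduce to recognizable combinations of $R_{i\overline i j\overline j}$ and $r^M_x$, and — most delicately — verifying that every $B_{2u}$-dependent coefficient collapses to a pure power of $\pi$ so that the stated clean form holds for \emph{all} $u>0$. A useful sanity check is to restrict to degree $(0,0)$ and compare with \cite[Theorem 5.1]{MaBerg06} (the Bergman-kernel computation), where this same $\mathcal{O}_2$-action on the Gaussian was carried out; the extra-degree terms are exactly the last line of $\mathcal{O}_2$ and contribute the $\overline w^j\wedge i_{\overline w_i}$ terms, which require no Gaussian manipulation at all.
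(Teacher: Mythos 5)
Your plan follows the paper's proof exactly: expand $\mathcal{O}_2$ from Theorem \ref{thm-O_2_form} in terms of $b_i,b_i^+$ contracted with curvatures, push these operators through the Mehler kernel of Theorem \ref{thm-HK_form}, discard unbalanced monomials under $\sim$, convert $-r^M_x/6$ via $r^M_x=8\sum_{i,j}R_{i\overline{i}j\overline{j}}$, and let the endomorphism line pass through untouched. So the route is the right one, but it contains one concrete error that would derail the execution: the action of $b_i$ on the kernel is
\begin{equation*}
	(b_i\, e^{-uL_{2,x}^{0}})(Z,0)=(\pi+2B_{2u})\,\overline{z}_i\, e^{-uL_{2,x}^{0}}(Z,0),
\end{equation*}
with a \emph{plus} sign (cf.\ (\ref{eqn_b_HK})), not $(\pi-2B_{2u})\overline{z}_i$ as you wrote: $-2\partial_{z_i}$ applied to $\exp(-B_{2u}\norm{Z}^2)$ produces $+2B_{2u}\overline{z}_i$, while only $b_i^+=2\partial_{\overline{z}_i}+\pi z_i$ yields $(\pi-2B_{2u})z_i$.

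This asymmetry between $b_i$ and $b_i^+$ is precisely what makes the $u$-dependence disappear, and it does so \emph{across} lines of $\mathcal{O}_2$, not term by term as you anticipate. For instance, the $b_i^+b_j^+$ and $b_ib_j$ contributions carry $(\pi-2B_{2u})^2+(\pi+2B_{2u})^2=2\pi^2+8B_{2u}^2$, whose $B_{2u}^2$ part is cancelled against the $b_ib_j^+$, $b_i^+b_j$ and $b_ib_i^+ + b_i^+b_i$ terms, which produce $(\pi-2B_{2u})(\pi+2B_{2u})=\pi^2-4B_{2u}^2$; likewise the leftover $8B_{2u}|z_j|^2$ from the anticommutator $b_ib_i^+ + b_i^+b_i$ is cancelled by the linear line $\tfrac23 R_{i\overline{i}j\overline{j}}(\overline{z}_jb_j^+ - z_jb_j)$, which evaluates to $-\tfrac{8}{3}B_{2u}R_{i\overline{i}j\overline{j}}|z_j|^2$. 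With your symmetric formula for $b_i$ none of these cancellations occur and the right-hand side of (\ref{eqn_aux_a_1_3}) would come out $u$-dependent; the justification "it must cancel because the stated answer has no extra $u$-dependence" is circular and cannot replace the check. Once the sign is corrected, the rest of your bookkeeping (commutator $[b_i,b_j^+]=-4\pi\delta_{ij}$, survival of only balanced monomials, the endomorphism line) matches the paper's computation.
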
		
		\begin{proof}
		From Theorem \ref{thm-O_2_form}, we get
		\begin{align}
		 ( \mathcal{O}_2 & e^{-u L_{2, x}^{t}} )(Z,0) \sim  \label{eqn_aux_a_1_1} \\ \nonumber
			& 
			\Big[ 
			\frac{1}{3}(2 - \delta_{ij}) R_{i\overline{i}j\overline{j}} \big(  \overline{z}_i \overline{z}_j b_i^{+} b_j^{+}
			+  z_i z_j b_i b_j \big) \\ \nonumber
			& \phantom{\Big\{ }  + \frac{1}{3} (1 - \delta_{ij}) R_{i\overline{i}j\overline{j}} \big(   z_i \overline{z}_j b_i b_j^{+}
			+  z_j \overline{z}_i  b_i^{+} b_j \big) \\ \nonumber
			& \phantom{\big\{ } + \frac{1}{3}  R_{i\overline{i}j\overline{j}} \big( z_j \overline{z}_j b_i b_i^{+} 
			+   z_j \overline{z}_j b_i^{+} b_i \big) 
			- 2 R^{E}_{i \overline{i}}  - \frac{1}{6} r_x^{M}   \\ \nonumber
			& \phantom{\big\{ } + \frac{2}{3} R_{i\overline{i}j\overline{j}}  \big(  \overline{z}_j b_j^{+} - z_j b_j \big)  
			 - \frac{\pi}{3} (2 - \delta_{ij}) R_{i\overline{i}j\overline{j}} z_j \overline{z}_j \big(   \overline{z}_i b_i^{+} +  z_i b_i \big)  \\ \nonumber
			& \phantom{\big\{ } + R^{E}_{i \overline{i}} \big(  \overline{z}_i b_i^{+} + z_i b_i \big)  
			+ R^{\Lambda}_{i \overline{i}}  \big( \overline{z}_i b_i^{+} +  z_i b_i \big) \\ \nonumber
			& \phantom{\big\{ }
			+ 2R^{\det}_{i \overline{j}} \overline{w}^j \wedge i_{\overline{w}_i} + 4 R^{E}_{i \overline{j}} \overline{w}^j \wedge i_{\overline{w}_i}  \Big] e^{-u L_{2, x}^{t}}(Z,0),
		\end{align}
		where $\delta_{ij}$ is the Kronecker delta.
		We have the following formulas from Theorem \ref{thm-HK_form}
		\begin{equation}\label{eqn_b_HK}
			\begin{aligned}
				&( b_i  e^{-u L_{2, x}^{t}} )(Z,0) = ( \pi + 2 B_{2u} ) \overline{z}_i e^{-u L_{2, x}^{t}} (Z, 0), \\
				&( b_i^{+}  e^{-u L_{2, x}^{t}} )(Z,0) = ( \pi - 2 B_{2u} ) z_i e^{-u L_{2, x}^{t}} (Z, 0). 
			\end{aligned}
		\end{equation}
		Let's recall the following identity
		\begin{equation}\label{eqn_r_M_(i,j)}
			\textstyle r^M_{x} = \sum_{i,j} \scal{R(e_i, e_j)e_i}{e_j} = 2 \textstyle \sum_{i,j} \scal{R(w_i, \overline{w}_i) w_j}{\overline{w}_j} = 8 \sum_{i,j} R_{i\overline{i}j\overline{j}}.
		\end{equation}
		From (\ref{eqn_aux_a_1_1}),  (\ref{eqn_b_HK}) and  (\ref{eqn_r_M_(i,j)}), we get (\ref{eqn_aux_a_1_3}).
		\end{proof}

	\begin{lem}\label{lem_a_1_form}
		For $u > 0$ and $x \in M$, we have
		\begin{align}
			a_{1,u}(x) = \bigg[ &- \frac{4}{3} R_{i\overline{i}j\overline{j}} (1 - e^{-2 \pi u})^{-2} \Big( \frac{u}{2} (1 + 4 e^{-2 \pi u} + e^{-4 \pi u}) - \frac{3}{4\pi} (1 - e^{-4 \pi u}) \Big) \label{eqn_a_1_u_explicit} \\ \nonumber
			& + \frac{4}{6}R_{i\overline{i}j\overline{j}} u + R^{E}_{i \overline{i}} u 
			 - 2 R^{E}_{i \overline{i}} (1 - e^{-2 \pi u})^{-1} \Big( \frac{u}{2} +  \frac{u}{2} e^{- 2 \pi u} - \frac{1}{2\pi} (1 - e^{-2 \pi u}) \Big) \\ \nonumber
			& - 2  R^{\Lambda}_{i \overline{i}}  (1 - e^{-2 \pi u})^{-1} \Big( \frac{u}{2} + \frac{u}{2} e^{- 2 \pi u} - \frac{1}{2\pi} (1 - e^{-2 \pi u}) \Big) \\ \nonumber
			& - \big(R^{\det}_{i \overline{j}} \overline{w}^j \wedge i_{\overline{w}_i} + 2 R^{E}_{i \overline{j}} \overline{w}^j \wedge i_{\overline{w}_i} \big) u 
			\bigg]			
			\frac{e^{-2 \pi u N}}{ (1 - e^{-2 \pi u})^{n}} .
		\end{align}
	\end{lem}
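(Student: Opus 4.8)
The starting point is the Volterra-type (Duhamel) formula \eqref{eqn_a_1_volterra}, which expresses $a_{1,u}(x)$ as a double integral over $v\in]0,u/2[$ and $Z\in X_0$ of the product $e^{-vL_{2,x}^0}(0,Z)\,\bigl(\mathcal O_2\,e^{-(u/2-v)L_{2,x}^0}\bigr)(Z,0)$. First I would substitute Lemma \ref{lem_o_2_exp}, which rewrites $\bigl(\mathcal O_2 e^{-wL_{2,x}^0}\bigr)(Z,0)$, up to the $\sim$-omission of monomials with unbalanced $z_i/\bar z_i$ degrees, as a polynomial in $|z_i|^2$ with operator coefficients times $e^{-wL_{2,x}^0}(Z,0)$. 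Then, using the explicit Mehler-type kernel of Theorem \ref{thm-HK_form}, the product $e^{-vL_{2,x}^0}(0,Z)\cdot e^{-(u/2-v)L_{2,x}^0}(Z,0)$ becomes $e^{-2\pi uN}(1-e^{-2\pi u})^{-n}$ times a Gaussian $\exp\!\bigl(-(B_{2v}+B_{u-2v})\|Z\|^2\bigr)$ (here I use $B_{2v}+B_{2(u/2-v)}$), multiplied by the polynomial in $|z_i|^2$ coming from Lemma \ref{lem_o_2_exp}. Note that the normalising factors $C_{2v}\,C_{u-2v}$ and the Gaussian width combine, via $\tfrac{1}{\tanh}$ identities, into the single factor $(1-e^{-2\pi u})^{-n}$; this is the standard ``semigroup property of the Mehler kernel'' computation and I would cite or reproduce it briefly.

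The integral over $Z\in T_xM\simeq\comp^n$ is then a product of elementary Gaussian moment integrals: $\int_{\comp^n}\exp(-a\|Z\|^2)\,dZ$, $\int |z_i|^2\exp(-a\|Z\|^2)\,dZ$, and $\int |z_i|^2|z_j|^2\exp(-a\|Z\|^2)\,dZ$, each a rational function of $a=B_{2v}+B_{u-2v}$; dividing by $\int\exp(-a\|Z\|^2)$ replaces $|z_i|^2\mapsto \tfrac1a$ and $|z_i|^2|z_j|^2\mapsto \tfrac{1+\delta_{ij}}{a^2}$. After this step $a_{1,u}(x)$ equals $e^{-2\pi uN}(1-e^{-2\pi u})^{-n}$ times $-\int_0^{u/2}$ of a rational function of $\tanh(\pi v)$ and $\tanh(\pi(u-2v)/2)$ — wait, more precisely of $B_{2v}$ and $B_{u-2v}$. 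The cleanest route is to observe that $a:=B_{2v}+B_{u-2v}=\tfrac{\pi}{2}\bigl(\coth(\pi v)+\coth(\pi(u/2-v))\bigr)=\tfrac{\pi}{2}\,\tfrac{\sinh(\pi u/2)}{\sinh(\pi v)\sinh(\pi(u/2-v))}$, so $\tfrac1a$ and $\tfrac1{a^2}$ are explicit in $v$, and each resulting $v$-integral over $]0,u/2[$ is evaluated by elementary antiderivatives (polynomials in $e^{-2\pi v}$ after the substitution, or hyperbolic-function primitives). Collecting the coefficient of each curvature term $R_{i\bar ij\bar j}$, $R^E_{i\bar i}$, $R^\Lambda_{i\bar i}$, $R^{\det}_{i\bar j}\bar w^j\wedge i_{\bar w_i}$, $R^E_{i\bar j}\bar w^j\wedge i_{\bar w_i}$ and simplifying gives exactly \eqref{eqn_a_1_u_explicit}.

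The main obstacle is purely computational: carrying out the $v$-integration of the quartic term, whose integrand involves $a^{-2}$ and hence products like $\sinh^2(\pi v)\sinh^2(\pi(u/2-v))/\sinh^2(\pi u/2)$ together with the prefactor $(2\pi^2-\delta_{ij}\pi^2)|z_i|^2|z_j|^2$ from Lemma \ref{lem_o_2_exp}. One must track the $\delta_{ij}$ contributions carefully (the diagonal $i=j$ terms get a different numerical coefficient both from the $2-\delta_{ij}$ in $\mathcal O_2$ and from the $1+\delta_{ij}$ in the Gaussian fourth moment) and check that, after summation over $i,j$, the net coefficient of $R_{i\bar ij\bar j}$ is the stated $-\tfrac43(1-e^{-2\pi u})^{-2}\bigl(\tfrac u2(1+4e^{-2\pi u}+e^{-4\pi u})-\tfrac3{4\pi}(1-e^{-4\pi u})\bigr)+\tfrac23 R_{i\bar ij\bar j}u$. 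The linear terms ($R^E_{i\bar i}$, $R^\Lambda_{i\bar i}$) and the degree-shifting operator terms are easier, involving only $a^{-1}$, and yield the $\bigl(\tfrac u2(1+e^{-2\pi u})-\tfrac1{2\pi}(1-e^{-2\pi u})\bigr)$ factors. I would organise the proof as: (i) insert Lemma \ref{lem_o_2_exp} and Theorem \ref{thm-HK_form} into \eqref{eqn_a_1_volterra}; (ii) perform the Gaussian $Z$-integral, reducing to one-dimensional $v$-integrals with explicit integrands; (iii) evaluate each $v$-integral and collect terms. Steps (i)--(ii) are bookkeeping; step (iii) for the quartic term is where the care is needed.
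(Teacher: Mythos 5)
Your plan coincides with the paper's proof: it likewise substitutes Lemma \ref{lem_o_2_exp} and the Mehler kernel of Theorem \ref{thm-HK_form} into the Duhamel formula (\ref{eqn_a_1_volterra}), reduces to the three model integrals over $Z$ and $v$ (constant, $|z_j|^2$, and $|z_i|^2|z_j|^2$ against the product of heat kernels, recorded as (\ref{lem_int_calcul_1})--(\ref{lem_int_calcul_2}), with the same $1+\delta_{ij}=2^{\delta_{ij}}$ fourth-moment factor), and collects coefficients. The only blemish is a harmless scaling slip in your auxiliary identity for $a=B_{2v}+B_{u-2v}$ (it equals $\tfrac{\pi}{2}\big(\coth(2\pi v)+\coth(\pi(u-2v))\big)=\tfrac{\pi}{2}\,\sinh(\pi u)/\big(\sinh(2\pi v)\sinh(\pi(u-2v))\big)$, i.e.\ your arguments should be doubled), which does not affect the structure of the argument.
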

	\begin{proof}
		From Theorem \ref{thm-HK_form} and the fact that $\exp(-uL_{2,x}^{t}), u > 0$ is a semigroup, we get
		\begin{equation}\label{lem_int_calcul_1}
			 \int_{0}^{u} dv \int_{X_0} e^{-v L_{2, x}^{0} }(0,Z)  e^{-(u-v) L_{2, x}^{0} }(Z,0) d Z = u \frac{e^{-4 \pi u N}}{(1 - e^{-4 \pi u})^{n}}.
		\end{equation}
		Similarly, we get 
		\begin{align}\label{lem_int_calcul_2}
				&  \int_{0}^{u} dv \int_{X_0} e^{-v L_{2, x}^{0} }(0,Z) |z_j|^2 e^{-(u-v) L_{2, x}^{0} }(Z,0) d Z  \\
				& \nonumber  \phantom{\int_{0}^{u} dv \int e^{-v L_{2, x} }(0,Z)}
				= 
				\frac{e^{-4 \pi u N}}{\pi (1 - e^{-4 \pi u})^{n+1}} \Big( u + u e^{- 4 \pi u} - \frac{1}{2\pi} (1 - e^{-4 \pi u}) \Big), \nonumber \\	
				&  \int_{0}^{u} dv \int_{X_0} e^{-v L_{2, x}^{0} }(0,Z) |z_i|^2 |z_j|^2 e^{-(u-v) L_{2, x}^{0} }(Z,0) d Z   \\ \nonumber
				&  \phantom{\int_{0}^{u} dv \int e^{-v L_{2, x} }(0,Z)} = \frac{e^{-4 \pi u N} 2^{\delta_{ij}}}{\pi^2 (1 - e^{-4 \pi u})^{n+2}} \Big( u (1 + 4 e^{-4 \pi u} + e^{-8 \pi u}) - \frac{3}{4\pi} (1 - e^{-8 \pi u}) \Big).
			\end{align}
			We get (\ref{eqn_a_1_u_explicit}) from Lemma \ref{lem_o_2_exp}, (\ref{eqn_a_1_volterra}), (\ref{lem_int_calcul_1}) and (\ref{lem_int_calcul_2}).
	\end{proof}
		Now, we introduce the functions $g_1, g_2, \tilde{g}_2, \tilde{g}_3 : \real \to \real$ by
	\begin{equation}\label{defn_g_fun}
		\begin{aligned}
			&g_1(u) = \frac{e^{-2 \pi u}}{1 - e^{-2 \pi u}}, \qquad  &&g_2(u) = \frac{e^{-2 \pi u}}{(1 - e^{-2 \pi u})^2}, \\
			&\tilde{g}_2(u) = \frac{u e^{-2 \pi u}}{(1 - e^{-2 \pi u})^2},  \qquad  &&\tilde{g}_3(u) = \frac{u e^{-2 \pi u}}{(1 - e^{-2 \pi u})^3}.
		\end{aligned}
	\end{equation}
	
	\begin{lem}\label{lem_A_expl}
		For $u > 0$, we have
		\begin{multline}\label{eqn_A_expl}
			 A(u) = - \rk E \int_M c_1(TM) \frac{\omega^{n-1}}{(n-1)!} \left( g_2(u) + \tfrac{n}{2}g_1(u) - 2 \pi \tilde{g}_3(u) \right) \\ 
			-  \int_M c_1(E) \frac{\omega^{n-1}}{(n-1)!} \left( n g_1(u) - 2 \pi \tilde{g}_2(u) \right).
		\end{multline}
	\end{lem}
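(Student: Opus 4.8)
The plan is to substitute the closed expression for $a_{1,u}(x)$ obtained in Lemma~\ref{lem_a_1_form} into the definition $A(u)=\int_M \str{N\,a_{1,u}(x)}\,dv_M(x)$, reduce the fibrewise supertrace to a short list of elementary supertraces over $\Lambda^{\bullet}(T^{*(0,1)}_{x}M)\otimes E_x$, integrate over $M$, and recognize the resulting curvature integrals as the first Chern forms appearing in~(\ref{eqn_A_expl}).

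First I would separate the bracket in~(\ref{eqn_a_1_u_explicit}) into a \emph{scalar} part, carrying the coefficients $R_{i\overline i j\overline j}$, $R^E_{i\overline i}$ and $R^{\Lambda}_{i\overline i}$ (the first a genuine scalar, the second an endomorphism of $E_x$, the third a derivation of $\Lambda^{\bullet}(T^{*(0,1)}_{x}M)$), and an \emph{exterior-algebra} part $R^{\det}_{i\overline j}\,\overline{w}^{j}\wedge i_{\overline{w}_i}$ and $R^E_{i\overline j}\,\overline{w}^{j}\wedge i_{\overline{w}_i}$. For the scalar part I only need the single identity $\str{N e^{-2\pi u N}}=-n\,\rk E\; e^{-2\pi u}(1-e^{-2\pi u})^{n-1}$, which follows from $\dim\Lambda^{j}(T^{*(0,1)}_{x}M)=\binom{n}{j}$ by differentiating $(1-y)^n$; for the $R^E$-terms the factor $\rk E$ is replaced by ${\rm Tr}_{E_x}[R^E_{i\overline i}]$. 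Dividing by the overall factor $(1-e^{-2\pi u})^{-n}$ present in~(\ref{eqn_a_1_u_explicit}) converts these contributions into multiples of $g_1(u)$ and $\tilde g_2(u)$ of~(\ref{defn_g_fun}).

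Next I would handle the exterior-algebra terms. Since $\overline{w}^{j}\wedge i_{\overline{w}_i}$ is off-diagonal in the monomial basis of $\Lambda^{\bullet}(T^{*(0,1)}_{x}M)$ for $i\ne j$, it has vanishing supertrace against the diagonal operators $N$ and $e^{-2\pi u N}$, so only $i=j$ contributes; and $\overline{w}^{i}\wedge i_{\overline{w}_i}=N_i$, the number operator of the $i$-th factor in $\Lambda^{\bullet}(T^{*(0,1)}_{x}M)\simeq\Lambda^{\bullet}\comp\otimes\Lambda^{\bullet}\comp^{n-1}$. Writing $N=N_i+N'$ and factoring $e^{-2\pi u N}$ accordingly, a one-line computation over the one-dimensional factor and over $\Lambda^{\bullet}\comp^{n-1}$ expresses $\str{N\,N_i\,e^{-2\pi u N}}$ as a combination of $e^{-2\pi u}(1-e^{-2\pi u})^{n-1}$ and $e^{-4\pi u}(1-e^{-2\pi u})^{n-2}$, the second of which produces the $g_2(u)$ contribution after division by $(1-e^{-2\pi u})^{-n}$. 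The derivation $R^{\Lambda}_{i\overline i}$ from the scalar part is expanded in the form $\sum_{k,l}(R^{\Lambda}_{i\overline i})^{l}_{k}\,\overline{w}^{k}\wedge i_{\overline{w}_l}$ and treated by the same off-diagonal vanishing; its trace over $T^{*(0,1)}_{x}M$ is $\pm R^{\det}_{i\overline i}$, so it merges with the $R^{\det}$ term into the $c_1(TM)$ contribution.

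Finally I would integrate over $M$. Using $\sum_i R^{\det}_{i\overline i}=\sum_{i,j}R_{i\overline i j\overline j}=r^M_x/8$ (cf.~(\ref{eqn_r_M_(i,j)})) together with the metric normalization dictated by~(\ref{suppos_prequant}), there are pointwise identities of the form $c_1(TM)\wedge\frac{\omega^{n-1}}{(n-1)!}=c\,\big(\sum_i R^{\det}_{i\overline i}\big)\,dv_M$ and $c_1(E)\wedge\frac{\omega^{n-1}}{(n-1)!}=c\,\big(\sum_i{\rm Tr}_{E_x}[R^E_{i\overline i}]\big)\,dv_M$ for an explicit convention-dependent constant $c$; hence $\int_M\big(\sum_i R^{\det}_{i\overline i}\big)f(u)\,dv_M$ is, up to $c$, equal to $f(u)\int_M c_1(TM)\frac{\omega^{n-1}}{(n-1)!}$, and similarly for $E$. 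Plugging in the fibrewise supertrace computed above and simplifying the rational–exponential functions of $u$ that appear — there is substantial cancellation among the numerators $1+4e^{-2\pi u}+e^{-4\pi u}$, $1-e^{-4\pi u}$ and $1-e^{-2\pi u}$ — one recognizes precisely the combinations $g_1(u)$, $g_2(u)$, $\tilde g_2(u)$, $\tilde g_3(u)$ of~(\ref{defn_g_fun}) and arrives at~(\ref{eqn_A_expl}). The main obstacle is purely bookkeeping: carrying the many curvature monomials of~(\ref{eqn_a_1_u_explicit}) correctly through the supertrace, implementing the off-diagonal vanishing and the tensor-product evaluation of the $\overline{w}^{j}\wedge i_{\overline{w}_i}$- and $R^{\Lambda}_{i\overline i}$-supertraces, and pinning down the $2\pi$- and $n$-dependent normalization constants so that the four $u$-functions and the two Chern forms emerge with exactly the coefficients stated in~(\ref{eqn_A_expl}).
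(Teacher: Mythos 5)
Your proposal is correct and follows essentially the same route as the paper: evaluate the fibrewise supertraces $\str{N e^{-2\pi u N}}$ and $\str{N\,a_{i,j}\,\overline{w}^i\wedge i_{\overline{w}_j}\,e^{-2\pi u N}}$ (the paper's (\ref{lem_trs_comp})), identify $\sum_i R^{\Lambda}_{i\overline i}$ with the $R^{\det}$-operator as in (\ref{eqn_R_lamb}), and convert the resulting curvature traces to Chern forms via $\Lambda_{\omega}$ as in (\ref{r_ii_iden}). The only slip is the normalization $\sum_i R^{\det}_{i\overline i}=2\sum_{i,j}R_{i\overline i j\overline j}=r^M_x/4$ (not $\sum_{i,j}R_{i\overline i j\overline j}$), a factor of $2$ you would catch in the final bookkeeping you describe.
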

	\begin{proof}
		Let $a_{i,j} \in \enmr{E}; i,j = 1, \ldots, n$ then
		\begin{align}
			& \str{e^{-2 \pi u N}} = \rk E (1 - e^{-2 \pi u})^{n}, \\
			& \textstyle \str{\sum_{k, l=1}^{n} a_{k,l} \overline{w}^k \wedge i_{\overline{w}_l} e^{-2 \pi u N}} 
			= \sum_{j =1}^n (-1)^j e^{- 2 \pi j u} \sum_{k=1}^{n} {\rm{Tr}}^E [ a_{k,k} ]  \binom{n-1}{j-1} \\
			& \textstyle \phantom{\str{\sum_{k, l=1}^{n} a_{k,l} \overline{w}^k \wedge i_{\overline{w}_l} e^{-2 \pi u N}}} 
			= - {\rm{Tr}}^E \big[ \sum_{i = 1}^{n} a_{i,i} \big] e^{- 2 \pi u} (1 - e^{- 2 \pi u})^{n-1}. \nonumber
		\end{align}
		By taking derivatives of those identities, we get
		\begin{align}\label{lem_trs_comp}
				& \textstyle \str{N e^{-2 \pi u N}} = - \rk E n e^{-2 \pi u} (1 - e^{-2 \pi u})^{n-1}, \\
				& \textstyle \str{N \sum_{i, j=1}^{n} a_{i,j} \overline{w}^i \wedge i_{\overline{w}_j} e^{-2 \pi u N}} 
				= - {\rm{Tr}}^E \big[ \sum_{i = 1}^{n} a_{i,i} \big] e^{-2 \pi u} (1 - ne^{-2 \pi u})(1 - e^{- 2 \pi u})^{n-2}. \nonumber
		\end{align}
		By (\ref{notation_curv}), we have 
	\begin{equation}\label{eqn_R_lamb}
		\begin{aligned}
			\textstyle \sum_i R^{\Lambda}_{i \overline{i}}  =  \tfrac{1}{2} \scal{R^{T^{(1,0)}M}(w_k, \overline{w}_k) w_i}{\overline{w}_j} \overline{w}^j \wedge i_{\overline{w}_i} 
			= \sum_{i,j} R^{\det}_{i \overline{j}} \overline{w}^j \wedge i_{\overline{w}_i} .
		\end{aligned}	
	\end{equation}
	For a $2$-form $\alpha$, we define the function $\Lambda_{\omega} \left[ \alpha \right]$ by the identity $\Lambda_{\omega} \left[ \alpha \right] \tfrac{\omega^{n}}{n!} = \alpha \tfrac{\omega^{n-1}}{(n-1)!}$, 
	then
	\begin{equation}\label{r_ii_iden}
	\begin{aligned}
		& \textstyle r^M_{x} =  8 \sum_{i,j} R_{i\overline{i}j\overline{j}} = 4 \sum_i R^{\det}_{i \overline{i}} = 4 \pi \Lambda_{\omega} \big[ c_1(T^{(1,0)}M, h^{T^{(1,0)}M}) \big],\\
		& \textstyle \sum_i \tr{R^E_{i \overline{i}}} = \pi \Lambda_{\omega} \left[ c_1(E, h^E) \right].
	\end{aligned}
	\end{equation}
	By Lemma \ref{lem_a_1_form}, (\ref{eqn_r_M_(i,j)}), (\ref{lem_trs_comp}), (\ref{eqn_R_lamb}) and (\ref{r_ii_iden}) to get 
	\begin{multline}\label{eqn_str_a_1_g}
			 \str{N a_{1,u}(x)} = - \rk E \Lambda_{\omega} \big[ c_1(T^{(1,0)}M, h^{T^{(1,0)}M})  \big] \Big( g_2(u) + \tfrac{n}{2}g_1(u) - 2 \pi \tilde{g}_3(u) \Big) \\
			 - \Lambda_{\omega} \big[ c_1(E, h^E) \big]  \Big( n g_1(u) - 2 \pi \tilde{g}_2(u) \Big).
	\end{multline}
	By (\ref{not_A(u)}) and (\ref{eqn_str_a_1_g}) we deduce (\ref{eqn_A_expl}).	
	\end{proof}
\begin{proof}[Proof of Theorem \ref{dzeta_asympt}.]
	We verify that as $u \to 0$,
	\begin{equation}\label{lem_g_exp}
		\begin{aligned}
			&g_1(u) = g_1^{[-1]} u^{-1} - \tfrac{1}{2} + O(u),  \qquad 
			&& g_2(u) = g_2^{[-2]} u^{-2} + g_2^{[-1]} u^{-1} - \tfrac{1}{12} + O(u), \\
			&\tilde{g}_2(u) = \tilde{g}_2^{[-1]} u^{-1} + O(u),  \qquad  
			&& \tilde{g}_3(u) = \tilde{g}_3^{[-2]} u^{-2} + \tilde{g}_3^{[-1]} u^{-1} + O(u). 
		\end{aligned}
	\end{equation}
	From Lemma \ref{lem_A_expl}, (\ref{eqn_alph_1}) and (\ref{lem_g_exp}), we get (\ref{eqn_alpha_1}).
	\par Let $\zeta(z)$ be the Riemann zeta function.
	By (\ref{defn_g_fun}), we have
		\begin{align}\label{lem_g_mell_1}
			& \textstyle \mell{g_1}(z)
				= \frac{1}{\Gamma(z)} \int_0^{+ \infty} \sum_{j \geq 1} e^{-2 \pi j u} u^{z-1} \,du 
				= (2 \pi)^{-z} \zeta(z),
	\end{align}
	Similarly, we get
	\begin{equation}\label{lem_g_mell_2}
		\begin{aligned}
			&\mell{g_2}(z) = (2 \pi)^{-z} \zeta(z-1), \quad
			 \mell{\tilde{g}_2}(z) = z (2 \pi)^{-(z+1)} \zeta(z),  \\
			 &\mell{\tilde{g}_3}(z) = z (2 \pi)^{-(z+1)} \big( \zeta(z-1) + \zeta(z) \big)/2. 
		\end{aligned}	
	\end{equation}
	We recall that
	\begin{equation}\label{eqn_zeta_ident}
		\zeta'(0) = -\tfrac{1}{2} \log(2 \pi) , \qquad \zeta(0) = -\tfrac{1}{2}, \qquad \zeta(-1) = - \tfrac{1}{12}.
	\end{equation}
	From Lemma \ref{lem_A_expl}, (\ref{eqn_alph_1}), (\ref{lem_g_mell_1}), (\ref{lem_g_mell_2}) and (\ref{eqn_zeta_ident}), we get (\ref{eqn_beta_1}).
	\end{proof}
\subsection{Relations to previous works} \label{rel_prev_work}
	\paragraph{Verification.}\label{verification}
	The analytic torsion of $\mathbb{CP}^n$ for $n \geq 1$ with trivial line bundle was computed by Gillet-Soulé and Zagier in \cite{GilSoulTodd}  and it played an important role in the formulation and proof of the arithmetic Riemann-Roch theorem by Gillet-Soulé in \cite{GilSoul92}. Later, it was reobtained by Bost in \cite{BostCPn} as a direct consequence of Bismut-Lebeau immersion theorem \cite{BisLeb91}, \cite{BisKoz}. 
	\par 
	Let's denote now by $M = \mathbb{CP}^1$ and  by $L = \mathcal{O}(1)$ the hyperplane line bundle. We endow $\mathcal{O}(-1)$ with the Hermitian metric induced from the inclusion
	$\mathcal{O}(-1) \xhookrightarrow{} \comp^2 : ([z], \lambda z) \mapsto \lambda z, z \in \comp^2 \setminus \{0\}$.  Let $h^L$ be the dual Hermitian metric on $L = \mathcal{O}(-1)^*$.
	 The Fubiny-Study metric $g^{TM}$ on $TM$ is by definition the metric, associated to the positive 2-form $\omega = c_1(L,h^L) = \tfrac{1}{2 \pi} \imun R^L$. 
	 By \cite[Theorem 18]{Koh95} or \cite[(12), (73), (74)]{KlMa}\footnote{Notice the last two terms, which appear because the metric considered in the article \cite{KlMa} differs from our metric by a factor $2 \pi$.},  we get for $p \geq 1$
	\begin{multline}\label{form_zeta_asympt_proj}
		\textstyle  2 \log T(g^{TM}, h^{L^p}) = 2 \sum_{j = 1}^{p} (p - j) \log(j+1) - (p+1) \log(p+1)!  \\ 
		\textstyle - 4 \zeta'(-1) + \frac{1}{2}(p+1)^2  - \frac{\log( 2 \pi)}{2}p - \frac{2}{3} \log(2 \pi).
	\end{multline}
	By \cite[(5.11.1), (5.17.2), (5.17.5)]{NistFunctions}, we have the following asymptotic expansions of Barnes G-function and factorial as $p \to + \infty$
	\begin{equation}\label{eqn_NT_exp}
		\begin{aligned}
			& \textstyle \log  \prod_{i=1}^{p - 1} i!  = \frac{1}{2} p^2 \log p - \frac{3}{4} p^2 + \frac{1}{2} \log(2 \pi) p - \frac{1}{12} \log p + \zeta'(-1) + O(p^{-1}),  \\
			& \textstyle \log p! = p \log p - p + \frac{1}{2} \log p + \frac{1}{2} \log (2 \pi) + \frac{1}{12}p^{-1} + O(p^{-2}).
		\end{aligned}
	\end{equation}
	We note that from \cite{NistFunctions} we can actually get each coefficient in the expansion of 
	$\log T(g^{TM}, h^{L^p})$.
	We substitute (\ref{eqn_NT_exp}) into (\ref{form_zeta_asympt_proj}) and we get, as $p \to + \infty$
	\begin{equation}\label{asymp_proj}
		2 \log T(g^{TM}, h^{L^p}) = - \tfrac{1}{2}p \log p - \tfrac{2}{3} \log p - \tfrac{1}{6} \log (2 \pi) - \tfrac{7}{12} - 2 \zeta'(-1) + O(p^{-1}).
	\end{equation}
	Since $\int_M c_1(TM) = 2$, this formula coincides with Theorem \ref{dzeta_asympt} for $E$ is trivial.
	\par To check the coefficients of $c_1(E)$ for general $n$-dimensional manifold $M$, the reader may compare the coefficients of $k^{n-1} \log k$ and $k^{n-1}$ from Theorem \ref{dzeta_asympt} applied for $E = L, p = k-1$ and $E = \mathcal{O}_M, p = k$.
	\paragraph{Connection with the arithmetic Riemann-Roch theorem.}\label{ar_rrh} Now let's describe an informal connection between $\zeta'(-1)$, which appears in Theorem \ref{dzeta_asympt}, and the one which appears in the $R$-genus of Gillet-Soulé.
	\par Let $X$ be an arithmetic variety in the sense of the book of Soulé \cite[p. 55]{Soule92}. In \cite{GilSArithm}, Gillet-Soulé defined arithmetic Chow groups $\widehat{CH}^k(X)$, for $k \in \nat$. Those groups are generated by pairs $(Z, g_Z)$, where $Z$ is a cycle of codimension $k$ and $g_Z$ is a current over $X(\comp)$ of bi-degree $(k-1, k-1)$, for which $\tfrac{\dbar \partial}{2 \pi \imun} g_Z + \delta_Z$ is smooth. There is an intersection pairing $\widehat{CH}^r(X) \times \widehat{CH}^q(X) \to \widehat{CH}^{r+q}(X)$ and pushforward operations for morphisms between arithmetic varieties. 
	\par Let $E$ be an algebraic vector bundle over $X$ with a Hermitian metric $h^E$ invariant under the complex conjugation over $X(\comp)$. Then a pair $\overline{E} := (E, h^E)$ is a Hermitian vector bundle over $X$ in the sense of \cite[p. 84]{Soule92}. Let $\widehat{\ch}(\overline{E}) \in \oplus_{k} \widehat{CH}^k(X)_{\mathbb{Q}}$ be the arithmetic Chern character. It satisfies the usual axioms of a Chern character, but it does depend on the choice of the metric. When $h^E$ is replaced by $h^E_{0}$, the difference $\widehat{\ch}(E, h^E) - \widehat{\ch}(E, h^E_{0})$ is given by $(0, \widetilde{{\rm{ch}}} (h^E, h^E_{0}))$, where $\widetilde{{\rm{ch}}} (h^E, h^E_{0})$ is the Bott-Chern secondary characteristic class  (cf. \cite[(1.124)]{BGS1}).
	\par Let $f : X \to B$ be a proper morphism between arithmetic varieties, smooth on generic fiber $X(\mathbb{Q})$. Let $\overline{E}$ be a Hermitian vector bundle over $X$. Grothendieck and Knudsen-Mumford defined an algebraic line bundle $\lambda(E)$ over $B$, see \cite{Knudsen1976}. The fiber at every point $y \in B$ is the alternated tensor product $\lambda(E)_y = \otimes_{q \geq 0} ( \det H^q(f^{-1}(y), E) )^{(-1)^q}$. The line bundle $\lambda(E)$ induces a holomorphic line bundle over the set of complex points $B(\comp)$. Over $X(\comp)$,  the Kähler form $\omega$ induces a Hermitian metric $h^{Tf}$ on the relative tangent bundle $Tf_{\comp}$. This defines a Hermitian vector bundle $ \overline{ Tf_{\comp}} := (Tf_{\comp}, h^{Tf})$. Then one defines Quillen metric on $\lambda(E)$ over $B(\comp)$ as a product of $L^2$ metric and analytic torsion of the fiber, see \cite[Definition 1.12, Theorem 1.15]{BGS3}. Thus, we get a Hermitian line bundle $\lambda(\overline{E})$ over $B(\comp)$. The arithmetic Riemann-Roch theorem of Gillet-Soulé \cite[Theorem VIII.1']{GilSoul92} says 
	\begin{equation}\label{arith_1}
		\widehat{c}_1(\lambda(\overline{E})) = f_*( \widehat{\ch}(\overline{E}) \widehat{\td}(\overline{ Tf_{\comp}})  ) - 
		(0, f_*( \ch(\overline{E}_{\comp}) \td( \overline{ Tf_{\comp}} ) R(  \overline{ Tf_{\comp}} ) ) ),
	\end{equation}
	where $\widehat{\td}(\overline{ Tf_{\comp}}) \in \oplus_{k} \widehat{CH}^k(X)_{\mathbb{Q}}$ is the arithmetic Todd class of $\overline{ Tf_{\comp}}$ and $R$ is the additive genus of Gillet-Soulé defined by the power series $R(z)$ 
	\begin{equation}\label{arith_2}
		  R(z) = \sum_{n \geq 1}^{n \text{ odd}} \Big( 2 \frac{\zeta'(-n)}{\zeta(-n)} + \sum_{j = 1}^n \frac{1}{j} \Big) \zeta(-n) \frac{z^n}{n!}.
	\end{equation}
	Now let's suppose $B = \text{Spec}(\integ)$. Then $\widehat{CH}^1(B) = \real$, and by \cite[Lemma VIII.1.1]{Soule92}, we have 
	\begin{equation}\label{arith_3}
		\textstyle \widehat{c}_1(\lambda(\overline{E})) = \sum_{q = 0}^{n} (-1)^{q} \left( \log \# H^q(X, E)_{tors} - \log \text{Vol}_{L^2}(H^q(X, E))  \right) + 2 \log T(X, \overline{E}),
	\end{equation}
	where $\text{Vol}_{L^2}(H^q(X, E)_{\real})$ is the $L^2$ co-volume of the integer lattice $H(X, E)_{\text{free}}$, which is the free part in cohomology $H^q(X, E) \otimes \real$ and $T(X, \overline{E})$ is the analytic torsion associated with $\overline{E}$ and $\overline{Tf_{\comp}}$. 
	\begin{sloppypar}
	Let's consider the simplest case when $X$ is a projective plane from previous paragraph. For the coordinates $z_0, z_1$ on $\mathbb{C}^2$, we identify the basis of $H^0(\mathbb{CP}^1, \mathcal{O}(p))$ with homogeneous polynomials $x_j = z_0^{j} z_1^{p-j}, j=0,\ldots,p$ of degree $p$.
	By the fact that $\{ x_j \}$ form an orthogonal basis in cohomology with respect to $\norm{\cdot}_{L^2}$, by $\norm{x_j}_{L^2} = j! (p-j)! / (p+1)!$, $H^1(\mathbb{CP}^1, \mathcal{O}(p)) = 0$ for $p \geq 1$ and some calculations with characteristic and secondary characteristic classes, we have
	\begin{multline}\label{eqn_vol_l2}
		 \textstyle \sum_{q = 0}^{1} (-1)^q \log \text{Vol}_{L^2}(H^q(\mathbb{CP}^1, \mathcal{O}(p))) = \sum_{j=0}^{p} \log \norm{x_j}_{L^2}
		   = 2 \log \prod_1^{p} j! - (p+1) \log(p+1)!,
	\end{multline}
	\vspace{-15pt}
	\begin{multline}
		\textstyle f_*( \widehat{\ch}(\overline{E}) \widehat{\td}(Tf_{\comp})  ) - 
		(0, f_*( \ch(\overline{E}_{\comp}) \td( \overline{ Tf_{\comp}} ) R(  \overline{ Tf_{\comp}} ) ) ) \\ 
		= 
		\tfrac{1}{2}(p+1)^2 - 4 \zeta'(-1) -  \tfrac{\log (2 \pi)}{2}p - \tfrac{2 \log (2 \pi)}{3}.	
	\end{multline}
	This goes in line with (\ref{form_zeta_asympt_proj}), (\ref{arith_1}) and (\ref{arith_3}).
	Now, by (\ref{eqn_NT_exp}) and (\ref{eqn_vol_l2}), we have, as $p \to \infty$
	\begin{multline}\label{covol_asy}
		 \sum_{q = 0}^{1} (-1)^q \log \text{Vol}_{L^2}(H^q(\mathbb{CP}^1, \mathcal{O}(p))) = 
		-\frac{1}{2}p^2 - \frac{1}{2}p \log p + \Big( \frac{\log (2 \pi)}{2} - 1 \Big) p  \\
		- \frac{2}{3} \log p + 2 \zeta'(-1) + \frac{1}{2} \log(2 \pi)
 - \frac{13}{12} + O(p^{-1}).
 	\end{multline}
	Thus, the right-hand side of (\ref{covol_asy}) contains $2\zeta'(-1)$	in the constant term.
	It is an interesting question if one could understand the appearance of $\zeta'(-1)$ in the asymptotics of the first summand in the right-hand-side of (\ref{arith_3}) for a general arithmetic variety without using Theorem \ref{dzeta_asympt}.
	\end{sloppypar}
\paragraph{Relation with the result of Klevtsov-Ma-Marinescu-Wiegmann \cite{KlMa}.}\label{sect_KlMA}
	Now let's describe a result from \cite[\S 4]{KlMa}. We denote by $M$ a Riemann surface and by $g_0^{TM}, g_1^{TM}$ Riemann metrics on $M$. Let $L$ be a holomorphic line bundle over $M$ and let $h_0^{L}, h_1^{L}$ be Hermitian metrics such that $L$ is positive with respect to any of $h_0^{L}, h_1^{L}$.
	Let $\matcirc{R^L_{i}} \in \enmr{T^{(1,0)}M}$ be defined as in (\ref{defn_R_L_0}) with respect to $(h_i^{L}, g_i^{TM})$. We denote by $\Delta_{g_i^{TM}}, d v_{i, M}$ the scalar Laplacian and the volume form associated to $g_i^{TM}$.	
	Then \cite[(73)]{KlMa} says
	\begin{equation}
		2 \log T(g_1^{TM}, h_1^{L^p}) - 2 \log T(g_0^{TM}, h_0^{L^p}) = \mathcal{F}_1 - \mathcal{F}_0, 
	\end{equation}
	where 
	\begin{multline}
	 \mathcal{F}_i = - \frac{1}{2} \int_M p \log \Big( \frac{p \matcirc{R^L_{i}}}{2 \pi} \Big) \omega - \frac{1}{3} \int_M \log \Big( \frac{p \matcirc{R^L_{i}}}{2 \pi} \Big) c_1(TM)  \\
	 - \frac{1}{48 \pi} \int_M \log \big( \matcirc{R^L_{i}} \big) \Delta_{g_i^{TM}} \big( \log \matcirc{R^L_{i}} \big) \, d v_{i, M} + O(p^{-1}).
	\end{multline}
	\par The authors observed the equality between the first two terms of the expansion of $\mathcal{F}_1$ and the first two terms of the expansion of $2 \log T(g_1^{TM}, h_1^{L^p})$ (see (\ref{form_B_Vas})), so they conjectured that the third and forth terms will also coincide. We see by Theorem \ref{dzeta_asympt} that the third term of the asymptotic expansion of $2 \log T(g_1^{TM}, h_1^{L^p})$ is $-\tfrac{1}{3} \int_M c_1(TM)$, which coincides with the third term of $\mathcal{F}_1$. The forth term of the asymptotic expansion of $2 \log T(g_1^{TM}, h_1^{L^p})$ is 
	\begin{equation}
		 \textstyle - \frac{1}{24} \rk E \left(24 \zeta'(-1) + 2 \log (2 \pi) + 7 \right) \int_M c_1(TM), 
	\end{equation}
	and in $\mathcal{F}_1$ it is $0$. So the conjecture is valid for the third term, but not for the forth.
	
\section{General asymptotic expansion for orbifolds, Theorem \ref{thm-gen_asympt_orbi}}\label{sect_orbi}
	In this section we prove Theorem \ref{thm-gen_asympt_orbi}. The general framework of  Section \ref{subsec_idea} stays the same. We are still able to do the localization in the calculation of the asymptotic expansion of the analytic torsion. Once we localize the problem, the analysis differs from the manifold's case only in the neighbourhood of singular points, where the problem reduces to the $G-$manifold case and the results of Section \ref{paragr_off_diag} could be applied.
	\par This section is organized as follows. In Section 5.1 we recall the definition of an orbifold and fix some notation. In Section 5.2 we prove some technical lemmas which facilitate further exposition. In Section 5.3 we establish Theorem \ref{thm-gen_asympt_orbi_gen}, which is the full statement of Theorem \ref{thm-gen_asympt_orbi}. We also explain how this theorem implies the main result of Hsiao-Huang \cite{Hsiao16}. 
	
\subsection{Orbifold preliminaries}
	In this section we recall some definitions from orbifolds theory. The content here is taken almost verbatim from the article \cite[\S 1.1]{MaOrbif2005} and the book \cite[\S 5.4]{MaHol}.
	\par We define a category $\mathcal{M}_s$ as follows: the objects of $\mathcal{M}_s$ are the class of pairs $(G, M)$ where $M$ is a connected smooth manifold and $G$ is a finite group acting effectively on $M$. Let $(G, M)$ and $(G', M')$ be two objects, then a morphism $\Phi : (G, M) \to (G', M') $ is a family of open embeddings $\phi : M \to M'$ satisfying:
	\par 1. For each $\phi \in \Phi$, there is an injective group homomorphism $\lambda_{\phi}: G \to G'$ such that $\phi$ is $\lambda_{\phi}$-equivariant.
	\par 2. For $g \in G', \phi \in \Phi,$ we define $g \phi : M \to M'$ by $(g \phi)(x) = g (\phi(x))$ for $x \in M$. If $(g \phi)(M) \cap \phi(M) \neq \emptyset$, then $g \in \lambda_{\phi}(G)$.
	\par 3. For $\phi \in \Phi$, we have $\Phi = \{ g \phi, g \in G' \}.$
	\begin{defn}[Definition of an orbifold]\label{defn_orbi}
		Let $\mm$ be a paracompact Hausdorff space and let $\mathcal{U}$ be a covering of $\mm$ consisting of connected open subsets. We assume $\mathcal{U}$ is \textit{dense}, i.e.
		\par For any $x \in U \cap U', U, U' \in \mathcal{U}$, there is $U'' \in \mathcal{U}$ such that $x \in U'' \subset U \cap U'$. \\
		Then an orbifold structure $\mathcal{V}$ on $\mm$ is the following:
		\par 1. For $U \in \mathcal{U}, \mathcal{V}(U) = ((G_U, \tilde{U}) \to U)$ is a ramified covering, giving an isomorphism $U \simeq \tilde{U}/G_U.$ 
		\par 2. For $U, V \in \mathcal{U}, U \subset V$, there is a morphism $\phi_{VU} : (G_U, \tilde{U}) \to (G_V, \tilde{V})$ that covers the inclusion $U \subset V$.
		\par 3. For $U, V, W \in \mathcal{U}, U \subset V \subset W$, we have $\phi_{WU} = \phi_{WV} \circ \phi_{VU}$.
		If $\, \mathcal{U}'$ is a dense refinement of $\, \mathcal{U}$ we say that the restriction $\mathcal{V}'$ of the orbifold structure $\mathcal{V}$ to $\mathcal{U}'$ is equivalent to $\mathcal{V}$. A pair of $\mm$ and an equivalence class $[\mathcal{V}]$ is called an orbifold.
	\end{defn}
	\begin{rem} This definition corresponds to “an effective orbifold" in the standard terminology.
		\par In Definition \ref{defn_orbi}, we can replace $\mathcal{M}_s$ by a category with manifolds with additional structure (orientation, Hermitian or Riemannian structure) as objects and maps, which preserve this structure, as morphisms. So we can define oriented,  Hermitian or Riemannian orbifolds. 
	\end{rem}
	Let $(\mm, [\mathcal{V}])$ be an orbifold. For each $x \in \mm$, we can choose a small neighbourhood $(G_x, \tilde{U}_x) \to U_x$ such that $x \in \tilde{U}_x$ is a fixed point of $G_x$. The isomorphism class of $G_x$ doesn't depend on the choice of a chart. Let's define $\mm^{sing} = \{ x \in \mm : |G_x| \neq 1 \}$.
	\begin{defn}\label{defn_orbi_v_b}
	 An orbifold vector bundle ${\ee}$ over an orbifold $(\mm, \mathcal{V})$ is defined as follows: ${\ee}$ is an orbifold, for $U \in \mathcal{U}, (G_U^{{\ee}}, \tilde{p}_U : \tilde{{\ee}}_U \to \tilde{U})$ is a $G_U^{{\ee}}$-equivariant vector bundle and $(G_U^{{\ee}}, \tilde{{\ee}}_U)$ is an orbifold structure of ${\ee}$ such that the transition maps in this structure are given by equivariant maps of those vector bundles. Moreover, $(G_U = G_U^{{\ee}} / K_U^{{\ee}}, \tilde{U}), K_U^{{\ee}} = \ker (G_U^{{\ee}} \to {\rm{Diffeo}}(\tilde{U}))$ is an orbifold structure of $\mm$. If $K_U^{{\ee}} = \{1\}$, we call ${\ee}$ a proper orbifold vector bundle.
	\end{defn}
	
	For example, the \textit{orbifold tangent bundle} $T\mm$ of an orbifold $\mm$ is defined by $(G_U, T \tilde{U} \to \tilde{U})$, for $U \in \mathcal{U}$. It is a proper orbifold vector bundle. Let ${\ee} \to \mm$ be an orbifold vector bundle. A section $s : \mm \to {\ee}$ is smooth (or holomorphic if $\mm$ is a complex orbifold), if for each $U \in \mathcal{U}$, $s|_{U}$ is covered by a $G_U^{{\ee}}$-invariant smooth (or holomorphic) section $\tilde{s}_U : \tilde{U} \to \tilde{{\ee}}_U$.
	\par For an oriented orbifold $\mm$ and a form $\alpha$ over $\mm$ (i.e.,  a section of $\Lambda^{\bullet}(T^{*} \mm)$) we define
	\begin{equation}
		\tinyint_\mm \alpha := \tfrac{1}{|G_U|} \tinyint_{\tilde{U}} \tilde{\alpha}_U, \text{ where supp }  \alpha \subset U \in \mathcal{U} 	
	\end{equation}
	We can extend this definition by $\real$-linearity to any differential form with compact support.
	
	\begin{lem}[cf. {\cite[Lemma 5.4.3]{MaHol}}]\label{lem_local_linear}
		We can choose local coordinates $\tilde{U}_x \subset \real^n$ (or $\comp^n$ if orbifold is complex) such that the finite group $G_x$ acts linearly (or $\comp$-linearly) on $\tilde{U}_x$.
	\end{lem}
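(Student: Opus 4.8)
The plan is to invoke the classical linearization (averaging) theorem of Bochner, in the holomorphic case due to H.~Cartan: a smooth (resp.\ holomorphic) action of a finite group fixing a point is, near that point, conjugate to its linearization on the tangent space. Using the discussion preceding the statement, I would first fix an orbifold chart $(G_x, \tilde{U}_x) \to U_x$ in which $x$ is a fixed point of $G_x$, and identify $\tilde{U}_x$ with a neighbourhood $\Omega$ of $0$ in $\real^n$ (resp.\ $\comp^n$), with $x \mapsto 0$; shrinking $\Omega$ to $\bigcap_{g \in G_x} g(\Omega)$, one may assume $\Omega$ is $G_x$-invariant. Writing $g \colon \Omega \to \Omega$ for the action of $g \in G_x$ and $A_g := Dg(0)$, the chain rule together with $(gh)(z) = g(h(z))$ and $g(0)=0$ shows that $g \mapsto A_g$ is a representation of $G_x$ into $GL(n,\real)$ (resp.\ $GL(n,\comp)$).

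Next I would introduce the averaging map
\begin{equation}
	\phi(z) := \frac{1}{|G_x|} \sum_{g \in G_x} A_g^{-1}\, g(z),
\end{equation}
which is smooth (resp.\ holomorphic, being a finite sum of holomorphic maps post-composed with $\comp$-linear ones), satisfies $\phi(0) = 0$, and has
\begin{equation}
	D\phi(0) = \frac{1}{|G_x|} \sum_{g \in G_x} A_g^{-1} A_g = \mathrm{Id}.
\end{equation}
By the inverse function theorem $\phi$ restricts to a diffeomorphism (resp.\ biholomorphism) of a smaller $G_x$-invariant neighbourhood of $0$ onto its image, which I take as the new chart. The one computation to carry out is the equivariance $\phi \circ h = A_h \circ \phi$: expanding $\phi(h(z)) = \tfrac{1}{|G_x|}\sum_{g} A_g^{-1} (gh)(z)$ and reindexing by $k = gh$, so that $g = kh^{-1}$ and $A_g^{-1} = (A_k A_h^{-1})^{-1} = A_h A_k^{-1}$, gives $\phi(h(z)) = A_h\, \phi(z)$. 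Hence in the coordinates furnished by $\phi$ the group $G_x$ acts by the linear (resp.\ $\comp$-linear) representation $h \mapsto A_h$, which is the assertion.

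I do not expect any genuine obstacle here, as the argument is elementary; the only points that require some care are bookkeeping ones. One must intersect with the $G_x$-translates of the domain both when forming $\phi$ and after applying the inverse function theorem, so that the final coordinate neighbourhood is honestly $G_x$-invariant and thus constitutes a bona fide orbifold chart; and in the complex case one should record explicitly that $\phi$ is holomorphic precisely because each $g$ is holomorphic and the $A_g$ are $\comp$-linear, so that the linearized action takes values in $GL(n,\comp)$.
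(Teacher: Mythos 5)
Your proof is correct: it is the classical Bochner--Cartan linearization by averaging, and all the steps (the representation $g \mapsto A_g$, the equivariance computation $\phi \circ h = A_h \circ \phi$, the inverse function theorem, and the shrinking to a $G_x$-invariant neighbourhood) are carried out properly. The paper itself offers no proof of this lemma, only the citation to \cite[Lemma 5.4.3]{MaHol}, and the argument given there is exactly this averaging trick, so your approach coincides with the intended one.
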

	
	Let $(1), (h_x^{1}), \cdots, (h_x^{\rho_x})$ be all the conjugacy classes in $G_x$. Let $Z_{G_x}(h_x^{j})$ be the centralizer of $h_x^{j}$ in $G_x$. We also denote by $\tilde{U}_x^{h_x^{j}}$ the fixed point set of $h_x^{j}$ in $\tilde{U}_x$. Then we have a natural bijection 
	$$ \{ (y, (h_y^{j})) | y \in U_x, j=1, \cdots, \rho_y \} \simeq \coprod_{j=1}^{\rho_x} \tilde{U}_x^{h_x^{j}} / Z_{G_x}(h_x^{j}). $$
	\begin{defn}[Strata of an orbifold]\label{defn_strata}
	We can globally define 
	$$ \Sigma \mm = \{ (x, (h_x^{j})) | x \in \mm, G_x \neq 1, j = 1, \cdots, \rho_x \} $$
	and endow $\Sigma \mm$ with a natural orbifold structure defined by 
	$$ \big\{ (Z_{G_x}(h_x^{j}) / K_x^{j}, \tilde{U}_x^{h_x^{j}}) \to \tilde{U}_x^{h_x^{j}} / Z_{G_x}(h_x^{j}) \big\}_{(x, U_x, j)}, $$
	where $K_x^{j}$ is the kernel of the representation $Z_{G_x}(h_x^{j}) \to {\rm{Diff}}(\tilde{U}_x^{h_x^{j}})$ and ${\rm{Diff}}(\tilde{U}_x^{h_x^{j}})$ is the set of diffeomorphisms of $\tilde{U}_x^{h_x^{j}}$.
	\end{defn}
	Till the end of this section we denote by
	\begin{equation}\label{defn_conn_comp_sigm}
		\Sigma \mm^{[j]}, j \in J \quad \text{the connected components of $\Sigma \mm$}, \qquad n_j = \dim_{\comp} \Sigma \mm^{[j]},
	\end{equation}
	\vspace{-20pt}
	\begin{equation}\label{defn_multipl}
		m_j = |K_j| \quad \text{ the multiplicity of } \Sigma \mm^{[j]},
	\end{equation}
	where $K_j$ was defined in Definition \ref{defn_strata}. 
	We have a natural map $\pi : \Sigma \mm \to \mm, (x, (h_x^{j}))  \mapsto x$. Then $\pi|_{\Sigma \mm^{[j]}}$ is an embedding.
	\begin{sloppypar}
	
\subsection{General setup and some auxiliary lemmas}\label{sect_orbi_gen_set}
	Let's fix a compact Hermitian orbifold $(\mm, g^{T \mm}, \Theta)$ of complex dimension $n$. Then its strata $\Sigma \mm$ is naturally a Hermitian orbifold.
	We fix $x \in \Sigma {\mm}^{[j]}$ and we denote by 
	\begin{center}
	$g_j \in G_x$ some element such that $(x, g_j) \in \Sigma {\mm}^{[j]}$.
	\end{center}
	Let's denote by $\tilde{{\nn}}_{j}$ the normal vector bundle to $\tilde{U}_x^{g_j}$ in $\tilde{U}_x$. We introduce the projection $\pi_{(j)}: \tilde{{\nn}}_{j} \to \Sigma {\mm}^{[j]}$. We see that $\tilde{{\nn}}_{j}$ is naturally endowed with the Hermitian metric. We denote by $d v_{\nn}$ its Riemannian volume form. Exponential mapping gives a map $\phi$ from the neighbourhood of the zero section of $\tilde{{\nn}}_{j}$ to the neighbourhood of $\pi(\Sigma {\mm}^{[j]})$ in $\mm$. We define a function $k_j$ in this neighbourhood of $\pi(\Sigma {\mm}^{[j]})$ by 
	\begin{equation}\label{defn_k_j}
		\phi^* d v_{\mm}(x,Z) = k_j(x, Z) (( \phi \pi)^*  d v_{\Sigma \mm^{[j]}}(x)) \wedge dv_{\nn}(x,Z), 
	\end{equation}
	where $d v_{\mm}, dv_{\Sigma \mm}$ are the Riemannian volume forms of $\mm$ and $\Sigma \mm$ respectively.  We extend the function $k_j$ to the whole $\nn_j$ in such a way that all its derivatives are bounded.
	\end{sloppypar}
	\par Let $({\ee}, h^{\ee})$ be a Hermitian proper orbifold vector bundle on $\mm$. By Lemma \ref{lem_local_linear}, we can define the operator $\dbar{}^{\ee}$ locally on each local chart $\tilde{U}$ and patch it globally. As usually, we define the operators $\dbar{}^{\ee *}, \laplcomp^{\ee}$.  By \cite{MaOrbif2005}, the heat operator  $\exp(- t\laplcomp^{\ee})$ has a smooth kernel $\exp(- t\laplcomp^{\ee})(x,y), x,y \in \mm$ with respect to $dv_{\mm}$.
	\par Let $({\lin}, h^{\lin})$ be a holomoprhic Hermitian proper positive orbifold line bundle on $\mm$. We denote by $\theta_j \in 2\pi \rat, j \in J$ the number such that for any $x \in \pi(\Sigma \mm^{[j]})$ 
	\begin{equation}\label{defn_theta_j}
		\text{the action of $g_j \in G_x$ on $\lin_x$ is given by $e^{\imun \theta_j}$.}
	\end{equation}		
	This number is independent of the choice of $x$ and $g_j$. We denote by $\laplcomp_p$ the Laplacian $\laplcomp^{\lin^{p} \otimes \ee}$  and define the analytic torsion $T(g^{T \mm},h^{\lin^{p} \otimes \ee})$ as in Definition \ref{defn-RS_anal_torsion}. We have
	\begin{thm}[{\cite[Theorem 5.4.9]{MaHol}}]\label{thm_orbif_spec_gap}
		There exists $\mu > 0$ such that for any $p \gg 1$, we have
		$$\spec ( \laplcomp_{p}) \subset \{ 0 \} \cup [\mu p, + \infty[, \qquad  \ker (\laplcomp_{p}) \subset \dfor[(0,0)]{M, L^p \otimes E}. $$
	\end{thm}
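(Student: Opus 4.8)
The plan is to deduce the statement from the Bochner--Kodaira--Nakano a priori estimate, exactly the mechanism used in the manifold case (Theorem~\ref{thm-lapl_spectral_gap}), observing that this estimate localizes well on an orbifold because it comes from a pointwise differential identity. Concretely, inside every uniformizing chart $(G_{\tilde U},\tilde U)$ of $\mm$ the Dolbeault Laplacian of $\lin^{p}\otimes\ee$ is computed by Bismut's formula \cite[Theorem~1.3]{BisNonKah} (the unfrozen analogue of (\ref{eqn_L_p_full_sqr})), $2\laplcomp_p=D_p^2=\Delta^{B,\Lambda^{0,\bullet}\otimes\lin^{p}\otimes\ee}+\tfrac{p}{2}{}^{c}(R^{\lin})+\Phi_{\ee}$, where $\Phi_{\ee}$ is given by the orbifold analogue of (\ref{defn_phi_e}); since this identity is compatible with the transition morphisms of the orbifold structure it patches to a global identity on $\dfor[(0,\bullet)]{\mm,\lin^{p}\otimes\ee}$. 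As $\mm$ is compact, $\Phi_{\ee}$ and the scalar curvature are uniformly bounded, while the positivity of $R^{\lin}$ provides a uniform lower bound $2\mu_0>0$ for the eigenvalues of $\matcirc{R^{\lin}}/(2\pi)$ over $\mm$, hence over every $\tilde U$.

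Next I would run the classical positivity argument on forms of positive degree. Using the Bochner--Kodaira--Nakano identity on each chart, integrating it over $\mm$ via $\int_{\mm}=\tfrac{1}{|G_{\tilde U}|}\int_{\tilde U}$ on $G_{\tilde U}$-invariant forms, and absorbing the torsion terms built from $(\partial-\dbar)\Theta$ (the only feature distinguishing the non-Kähler from the Kähler case) by a Cauchy--Schwarz estimate with a uniform constant, one gets $C>0$ such that for every $q\geq1$ and every $s\in\dfor[(0,q)]{\mm,\lin^{p}\otimes\ee}$
\begin{equation*}
	\norm{\dbar_p s}_{L^2}^{2}+\norm{\dbar_p^{*}s}_{L^2}^{2}\geq(\mu_0 p-C)\norm{s}_{L^2}^{2}.
\end{equation*}
For $p\gg1$ the right-hand side exceeds $\mu p$ with, say, $\mu=\mu_0/2$. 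This gives at once $\ker(\laplcomp_p)\subset\dfor[(0,0)]{\mm,\lin^{p}\otimes\ee}$ (equivalently the orbifold Kodaira vanishing $H^{q}(\mm,\lin^{p}\otimes\ee)=0$ for $q\geq1$, $p\gg1$, which one may also invoke directly), and $\spec(\laplcomp_p|_{\dfor[(0,q)]{\mm,\lin^{p}\otimes\ee}})\subset[\mu p,+\infty[$ for all $q\geq1$.

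It remains to treat degree $0$, where $\laplcomp_p=\dbar_p^{*}\dbar_p$, so $\ker(\laplcomp_p|_{\dfor[(0,0)]{\mm,\lin^{p}\otimes\ee}})=H^{0}(\mm,\lin^{p}\otimes\ee)$. If $s$ is orthogonal to this kernel and $\laplcomp_p s=\lambda s$ with $\lambda>0$, then $\dbar_p s\neq0$ and $\laplcomp_p(\dbar_p s)=\dbar_p(\laplcomp_p s)=\lambda\,\dbar_p s$, so $\lambda$ is an eigenvalue of $\laplcomp_p$ on $\dfor[(0,1)]{\mm,\lin^{p}\otimes\ee}$ and hence $\lambda\geq\mu p$ by the previous step; this yields $\spec(\laplcomp_p)\subset\{0\}\cup[\mu p,+\infty[$. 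The only genuinely orbifold-specific point is that every estimate above must be carried out $G$-equivariantly in the uniformizing charts covering the strata $\Sigma\mm$, so the main (and mild) obstacle is to verify that the Bochner--Kodaira--Nakano estimate, and in particular the torsion bound, has constants independent of the chart; this follows because $\mm$ admits a finite atlas of uniformizing charts of uniformly bounded geometry, exactly as in the smooth compact case. Alternatively, and more in the spirit of this paper, one can combine the spectral gap (\ref{eqn_L_t_spec_gap}) for the model operators of Theorem~\ref{thm_spec_gap_L_p} with the heat-kernel comparison of Lemma~\ref{lem-lapl_L_p-kernels}, valid on orbifolds by Remark~\ref{lem_lapl_l_p_off_diag}, and argue by contradiction that no eigenvalue lies in $]0,\mu p[$ using the trace of $\exp(-u\laplcomp_p^{(>0)}/p)$.
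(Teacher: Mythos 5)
The paper does not prove this statement at all: it is imported verbatim as a quoted result, \cite[Theorem 5.4.9]{MaHol}, so there is no internal proof to compare against. Your main argument is essentially the standard proof of that cited theorem: the Bochner--Kodaira--Nakano (equivalently, Bismut--Lichnerowicz) inequality gives $\langle \laplcomp_p s, s\rangle_{L^2} \geq (\mu_0 p - C)\norm{s}_{L^2}^2$ on $(0,q)$-forms with $q\geq 1$, the identity being pointwise and $G$-invariant so that it descends to the orbifold, and the degree-$0$ case follows from the commutation $\laplcomp_p\dbar_p=\dbar_p\laplcomp_p$; this is sound and is exactly how Ma--Marinescu establish the orbifold spectral gap. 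One caveat: the ``alternative'' route you sketch at the end (model-operator gap plus Lemma \ref{lem-lapl_L_p-kernels}) is not a substitute as stated, since the error term $Cp^{l}\exp(-cp/u)$ in that lemma does not decay as $u\to+\infty$, so a trace comparison alone cannot exclude eigenvalues in $]0,\mu p[$; but since your primary argument is complete, this does not affect the verdict.
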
	 
	\begin{sloppypar}
		Locally, over an orbifold chart, we define the function $k$ (see (\ref{defn_k_fun})) and the operators $\widetilde{L_{p,x}}, \widetilde{L_{2,x}^{t}}, \widetilde{L_{4,x}^{t,v}}$ (see (\ref{defn-L_p}), (\ref{defn-L_2^t}) and (\ref{defn_L_4_t})) as we did in the manifolds case. Those objects are $G_x$-invariant. For brevity, we note for $w > 0$
	\end{sloppypar}
	\begin{equation}\label{not_orb_exp}
		e^{-w L_{2, x}^{t}} = \exp(- w  \widetilde{L_{2,x}^{t}}), \qquad e^{-w L_{4,x}^{t,v}} = \exp(- w \widetilde{L_{4,x}^{t,v}}).	
	\end{equation}
	Now we write down some simple corollaries of Section \ref{paragr_off_diag}, which simplify largely the proof of Theorem \ref{thm-gen_asympt_orbi}. Here and after, let $(g_1, g_2) \in G_x \times G_x$ acts on 
	$$ (\xi_1, \xi_2) \in (\Lambda^{\bullet} ( T^{*(0,1)}_{y} \mm ) \otimes {\lin}^p_{y} \otimes {\ee}_y) \otimes (\Lambda^{\bullet} (T^{*(0,1)}_{z} \mm) \otimes {\lin}^p_{z} \otimes {\ee}_z)^{*}, y, z \in \mm, \text{ by} $$
		$$ (g_1, g_2) (\xi_1, \xi_2) = (g_1 \xi_1, g_2 \xi_2) \in (\Lambda^{\bullet} (T^{*(0,1)}_{g_1 y} \mm) \otimes {\lin}^p_{g_1 y} \otimes {\ee}_{g_1 y}) \otimes (\Lambda^{\bullet} (T^{*(0,1)}_{g_2 z} \mm) \otimes {\lin}^p_{g_2 z} \otimes {\ee}_{g_2 z})^{*}. $$
	\begin{lem}\label{lem_aux_0}
		 For any $j \in J, u > 0$ fixed, the function
		 \begin{equation}
		 	\textstyle \int_{Z \in \tilde{{\nn}}_{j,x}} 
			\str{N (g_j,1) e^{-u L_{2, x}^{t}}(g_j^{-1} Z, Z)} (k^{-1} k_j)(x, tZ) \, dv_{{\nn}_j}(Z)
		 \end{equation}
		is differentiable in $(x, t) \in \mm \times [0,1]$ and it's derivatives $ \tfrac{\partial^{a + b}}{\partial x^a \, \partial t^b}|_{t=0} $ vanish for $b$ odd.
	\end{lem}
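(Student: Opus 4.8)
The plan is to prove the lemma by differentiating under the integral sign, which needs an integrable majorant for the integrand together with all its $(x,t)$‑derivatives, and then to deduce the vanishing of the odd $t$‑derivatives at $t=0$ from a parity argument in the variable $Z$, entirely parallel to the one behind \eqref{eqn-der_vanishes} and \eqref{eq_L_t_gen_2}.

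\emph{Step 1 (domination and smoothness).} First I would observe that $g_j$ fixes $\tilde{U}_x^{g_j}$ pointwise, hence acts on the normal space $\tilde{\nn}_{j,x}$ with no nonzero fixed vector, so $g_j^{-1}-\mathrm{Id}$ is invertible on $\tilde{\nn}_{j,x}$; since the isotropy data is locally constant along the connected stratum $\Sigma\mm^{[j]}$, there is a constant $c_1>0$, uniform for $x$ in (a compact subset of) $\Sigma\mm^{[j]}$, with $|g_j^{-1}Z-Z|\ge c_1|Z|$ for all $Z\in\tilde{\nn}_{j,x}$ (recall also $|g_j^{-1}Z|=|Z|$). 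Fix $u>0$. For $|Z|\ge c_1^{-1}$ we have $|g_j^{-1}Z-Z|\ge 1$, so Theorem \ref{thm_L_2_t_off_diag} (with $c_0=1$) gives
\[
\big|e^{-uL_{2,x}^{t}}(g_j^{-1}Z,Z)\big|_{\mathscr{C}^{m}(\mm\times[0,t_0])}\le C(1+|Z|)^{C'}\exp(-c\,c_1^{2}|Z|^{2}/u);
\]
for $|Z|<c_1^{-1}$ the pair $(g_j^{-1}Z,Z)$ stays in a bounded set, on which $e^{-uL_{2,x}^{t}}(Z',Z)$ depends smoothly on $(x,t)$ with $\mathscr{C}^{m}$‑bounds uniform there, by \cite[Theorem 4.2.5]{MaHol} (cf.\ \eqref{thm_L_2_t_off_diag_u_infty_eqn1}). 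Since $k$ is bounded away from $0$ and $k^{-1}$, $k_j$ are smooth with all derivatives bounded, the integrand and each of its $(x,t)$‑derivatives of any order is dominated, uniformly in $(x,t)$, by an integrable function of $Z$ of the form $C(1+|Z|)^{C'}\exp(-c'|Z|^{2}/u)$ on the real vector space $\tilde{\nn}_{j,x}$. The Leibniz rule for parameter integrals then yields that the function in the statement is $\mathscr{C}^{\infty}$ on $\mm\times[0,1]$ and that all its derivatives are obtained by differentiating under the integral.

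\emph{Step 2 (parity in $Z$).} By Step 1 it suffices to show that, for each fixed $x$, $\partial_t^{2i+1}|_{t=0}$ of the integral vanishes; the mixed statement for $b$ odd then follows by differentiating this identically‑vanishing function in $x$. I would reuse the parity mechanism of the proof of \eqref{eqn-der_vanishes}: from $L_{2,x}^{t}=L_{2,x}^{0}+\sum_{r\ge1}t^{r}\mathcal{O}_{r}$ (see \eqref{defn-L_2^0}, \eqref{defn_O_i}), conjugation by the involution $Z\mapsto -Z$ of $X_0$ fixes $L_{2,x}^{0}$ and sends $\mathcal{O}_{r}$ to $(-1)^{r}\mathcal{O}_{r}$, so in the Duhamel expansion in powers of $t$ the coefficient of $t^{k}$ in $e^{-uL_{2,x}^{t}}(Z',Z)$ is a kernel $h_{k}(Z',Z)$ with $h_{k}(-Z',-Z)=(-1)^{k}h_{k}(Z',Z)$. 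In particular $Z\mapsto h_{k}(g_j^{-1}Z,Z)$ has parity $(-1)^{k}$; since $N$, the endomorphism $(g_j,1)$ and the supertrace do not depend on $Z$, while $\partial_t^{b}|_{t=0}(k^{-1}k_j)(x,tZ)$ is homogeneous of degree $b$ in $Z$, the Leibniz rule shows that $\partial_t^{2i+1}|_{t=0}$ of the integrand is an odd function of $Z$. As $dv_{\nn_j}$ is invariant under $Z\mapsto -Z$, the substitution $Z\mapsto -Z$ forces its integral over $\tilde{\nn}_{j,x}$ to vanish, which finishes the proof.

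The step I expect to cost the most care is the near‑diagonal regime $|Z|\lesssim 1$ in Step 1, where one must extract uniform $\mathscr{C}^{m}$‑control in $(x,t)$ of the twisted kernel $e^{-uL_{2,x}^{t}}(g_j^{-1}Z,Z)$ from \cite[Theorem 4.2.5]{MaHol}, since Theorem \ref{thm_L_2_t_off_diag} is stated only away from the diagonal; the parity bookkeeping of Step 2 is routine once one records the $(-1)^{r}$‑behavior of $\mathcal{O}_{r}$ under $Z\mapsto -Z$, which is exactly what underlies \eqref{eqn-der_vanishes} and \eqref{eq_L_t_gen_2}.
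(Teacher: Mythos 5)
Your proposal is correct and follows essentially the same route as the paper's proof: the integrability and differentiation under the integral sign come from the off-diagonal Gaussian estimates (Theorems \ref{thm_L_2_t_off_diag_u_infty}, \ref{thm_L_2_t_off_diag}) together with the absence of fixed points of $g_j$ on $\tilde{\nn}_{j,x}$, and the vanishing of the odd $t$-derivatives at $t=0$ comes from the Duhamel expansion combined with the parity of the coefficients under $Z\mapsto -Z$ (the evenness of $e^{-u L_{2,x}^{0}}(g_j^{-1}Z,Z)$ and the $(-1)^r$-behaviour of $\mathcal{O}_r$). You simply spell out in more detail what the paper compresses into citations.
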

	\begin{proof}
		First of all, the integral makes sense due to Theorem \ref{thm_L_2_t_off_diag_u_infty} and to the fact that the action of $g_j$ on $\tilde{{\nn}}_{j,x}$ has no fixed points. Due to the $G_x$-invariance of $\widetilde{L_{2,x}^{t}}$ and the fact that $g_j$ acts by isometries, the integral doesn't depend on the choice of $g_j$.
		\par The first part is a consequence of the Lebesgue dominated convergence theorem and Theorem \ref{thm_L_2_t_off_diag}. The vanishing result follows from the Duhamel's formula (\ref{eqn_a_1_volterra}) (cf.  \cite[Theorem 4.17]{MaBerg06}), \cite[Theorem 4.1.7]{MaHol} and the fact that $\exp ( - u \widetilde{L_{2,x}^{0}} )(g_j^{-1} Z, Z)$ is an even function in $Z$ (cf. \cite[Appendix D]{MaHol}).
	\end{proof}	
	
	\begin{lem}\label{lem_aux_2}
	 For any $j \in J$, the function
	 \begin{equation}
		\textstyle \int_{Z \in \tilde{{\nn}}_{j,x}} 
			\str{N (g_j,1)e^{-L_{4,x}^{t,v}}(g_j^{-1} Z, Z)} (k^{-1} k_{j})(x, tv Z) \, dv_{{\nn}_j}(Z)
	 \end{equation}
			is differentiable in $(x, t, v) \in \mm \times [0,1] \times [0, + \infty [$ and it's derivatives $ \tfrac{\partial^{a + b + c}}{\partial x^a \, \partial t^b \, \partial v^c}|_{t=0, v=0} $ vanish whenever $b$ or $c$ is odd.
	\end{lem}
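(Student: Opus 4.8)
The plan is to follow the proof of Lemma~\ref{lem_aux_0} line by line, with $L_{2,x}^{t}$ replaced by $L_{4,x}^{t,v}$ and with Proposition~\ref{prop_l_4_off_diag} playing the role of Theorems~\ref{thm_L_2_t_off_diag_u_infty} and \ref{thm_L_2_t_off_diag}; the only genuinely new ingredient is the bookkeeping for the second scaling parameter~$v$. As in Lemma~\ref{lem_aux_0}, the proof will consist of three parts: (a)~the integral is absolutely convergent; (b)~one may differentiate under the integral sign in $(x,t,v)\in\mm\times[0,1]\times[0,+\infty[$, so that the resulting function is $\mathscr{C}^{\infty}$; (c)~the value of $\tfrac{\partial^{a+b+c}}{\partial x^{a}\,\partial t^{b}\,\partial v^{c}}\big|_{t=0,v=0}$ vanishes whenever $b$ or $c$ is odd.

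For~(a) and~(b): by Lemma~\ref{lem_local_linear} we may take $g_j$ to act $\comp$-linearly on the chart, hence linearly on $\tilde{{\nn}}_{j,x}$ with no nonzero fixed vector, so there is $c_0>0$ with $|g_j^{-1}Z-Z|\geq c_0|Z|$ for all $Z\in\tilde{{\nn}}_{j,x}$. Proposition~\ref{prop_l_4_off_diag} then bounds $e^{-L_{4,x}^{t,v}}(g_j^{-1}Z,Z)$ and each of its derivatives in $(x,t,v)$, uniformly for $(x,t,v)\in\mm\times[0,t_0]\times[0,v_0]$, by $C(1+|Z|)^{C'}\exp(-c\,c_0^{2}|Z|^{2})$; together with the smoothness and at most polynomial growth in $Z$ of $(k^{-1}k_j)(x,tvZ)$ and its derivatives, this provides an integrable Gaussian majorant, which justifies both the convergence and the differentiation under the integral sign. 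In particular $\tfrac{\partial^{a}}{\partial x^{a}}$ will commute with $\tfrac{\partial^{b+c}}{\partial t^{b}\,\partial v^{c}}\big|_{t=0,v=0}$, so it suffices to prove that, for $b$ or $c$ odd, $\tfrac{\partial^{b+c}}{\partial t^{b}\,\partial v^{c}}\big|_{t=0,v=0}$ of the integral vanishes identically in $x$.

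For~(c): let $\iota$ denote the reflection $Z\mapsto-Z$ on $T_x\mm$, acting on functions by $(\iota f)(Z)=f(-Z)$. From $(S_vs)(Z)=s(Z/v)$ one gets $S_{-v}=\iota S_v$, hence $S_{-v}^{-1}=S_v^{-1}\iota=\iota S_v^{-1}$. Since in $L_{3,x}^{t}$ the cut-off $\rho(|Z|/\epsilon)$ and the flat Laplacian are $\iota$-invariant, while the Taylor coefficients $\mathcal{O}_r$ of $L_{2,x}^{t}$ at $t=0$ (see~(\ref{defn_O_i})) satisfy $\iota\,\mathcal{O}_r\,\iota=(-1)^{r}\mathcal{O}_r$ — the same parity facts used in the proof of Lemma~\ref{lem_aux_0} (cf.\ \cite[Theorem~4.1.7, Appendix~D]{MaHol}) — one obtains, as formal power series in $(t,v)$ at the origin,
\begin{equation*}
	\iota\,L_{4,x}^{t,v}\,\iota=L_{4,x}^{-t,v},\qquad L_{4,x}^{t,-v}=L_{4,x}^{-t,v}.
\end{equation*}
By Duhamel's formula these identities pass to the heat operator: writing $e^{-L_{4,x}^{t,v}}(Z,Z')=\sum_{b,c\geq0}t^{b}v^{c}B_{b,c}(Z,Z')$ (Taylor expansion at the origin, with uniform Gaussian remainder by Proposition~\ref{prop_l_4_off_diag}), one gets $B_{b,c}\equiv0$ unless $b\equiv c\pmod 2$, and $B_{b,c}(-Z,-Z')=(-1)^{b}B_{b,c}(Z,Z')$. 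On the other hand $(k^{-1}k_j)(x,tvZ)=\sum_{m\geq0}(tv)^{m}P_m(x,Z)$ with $P_m(x,\cdot)$ homogeneous of degree $m$, so $P_m(x,-Z)=(-1)^{m}P_m(x,Z)$. Multiplying the two expansions, the coefficient of $t^{b}v^{c}$ in the integrand equals $\sum_{m\geq0}\str{N(g_j,1)B_{b-m,c-m}(g_j^{-1}Z,Z)}P_m(x,Z)$ (with $B_{\bullet}\equiv0$ whenever an index is negative). If $b\not\equiv c\pmod 2$, every summand vanishes because $b-m\not\equiv c-m$. If $b$ (hence $c$) is odd, then under $Z\mapsto-Z$ — which preserves $\tilde{{\nn}}_{j,x}$ and its volume form and commutes with the linear map $g_j^{-1}$ — the $m$-th summand is multiplied by $(-1)^{b-m}(-1)^{m}=(-1)^{b}=-1$, hence is odd in $Z$ and integrates to $0$ over $\tilde{{\nn}}_{j,x}$. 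In either case $\tfrac{\partial^{b+c}}{\partial t^{b}\,\partial v^{c}}\big|_{t=0,v=0}$ of the integral is $0$, which, with~(b), proves the lemma.

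I expect the main obstacle to be technical rather than conceptual: one must check that $L_{4,x}^{t,v}$ indeed admits a $\mathscr{C}^{\infty}$ Taylor expansion in $(t,v)$ with differential-operator coefficients of the stated parity, and that Duhamel's formula transfers this parity to $e^{-L_{4,x}^{t,v}}$ with remainders controlled by a Gaussian uniformly in $(x,Z,Z')$. This is the two-parameter analogue of the analysis in \cite[\S 4.1, \S 4.2, Appendix D]{MaHol}, to be carried out exactly as in the proof of Proposition~\ref{prop_l_4_off_diag} by treating the parameter called $t$ there as the pair $(t,v)$; granting it, the argument above is routine.
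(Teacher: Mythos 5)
Your proposal is correct and follows exactly the route the paper takes: the paper's proof of this lemma is literally ``by Proposition \ref{prop_l_4_off_diag}, the proof is the same as in Lemma \ref{lem_aux_0}'', i.e.\ Gaussian off-diagonal bounds give convergence and differentiation under the integral, and the parity of the rescaled operators under $Z\mapsto -Z$ (via Duhamel and \cite[Theorem 4.1.7]{MaHol}) gives the vanishing of odd derivatives. Your explicit two-parameter bookkeeping, in particular the identities $\iota L_{4,x}^{t,v}\iota=L_{4,x}^{-t,v}$ and $L_{4,x}^{t,-v}=\iota L_{4,x}^{t,v}\iota$ and the resulting constraints on the coefficients $B_{b,c}$, is a correct and more detailed rendering of what the paper leaves implicit.
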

	\begin{proof}
		By Proposition \ref{prop_l_4_off_diag}, the proof is the same as in Lemma \ref{lem_aux_0}.
	\end{proof}	
	We fix $t_0$ as in (\ref{eqn_L_t_spec_gap}).
	\begin{lem}\label{lem_aux_1}
		For any $m \in \nat^*, u_0 > 0$ there are $c, C > 0$ such that for any $u \in ]0, u_0]; j \in J$
		\begin{align}
			&  \Big|
			\int_{Z \in \tilde{{\nn}}_{j,x}} 
			\str{N (g_j,1) e^{-u L_{2, x}^{t}} (g_j^{-1} Z, Z)} (k^{-1} k_{j})(x, tZ) \, dv_{{\nn}_j}(Z) \nonumber
			\\
			& 
			\qquad -u^{-n_j} \int_{Z \in \tilde{{\nn}}_{j,x}} 
			\str{N (g_j,1) e^{-L_{4,x}^{t,v}}(g_j^{-1} Z, Z)} (k^{-1} k_{j} )(x, tvZ) \, dv_{{\nn}_j}(Z)
		 	\Big|_{\mathscr{C}^{m}(\mm \times [0, \, t_0])} \nonumber \\
		 	&  \quad \qquad \qquad \leq C \exp ( - c / u ).
		\end{align}
	\end{lem}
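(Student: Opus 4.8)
The plan is to reduce the stated $\mathscr{C}^m$-estimate to a comparison of the heat kernels of $L_{2,x}^t$ and $L_{3,x}^t$, and then to split the integration over $\tilde{\nn}_{j,x}$ into the region $\{|Z|\le\epsilon\}$, where the two operators coincide, and its complement, where the relevant kernels are already tiny off the twisted diagonal. First I would apply the rescaling identity (\ref{eqn_L_3_L_4}), the $\comp$-linearity of $g_j$ on $\tilde{\nn}_{j,x}$ (so that $g_j^{-1}$ commutes with the dilation $Z\mapsto vZ$, cf. Lemma \ref{lem_local_linear}), and the change of variables $W=vZ$ with $v=\sqrt u$, under which $dv_{\nn_j}(Z)=u^{-(n-n_j)}\,dv_{\nn_j}(W)$; since $u^{-n_j}\cdot u^{n}\cdot u^{-(n-n_j)}=1$, this shows that the second integral in the statement equals, exactly,
\[
\int_{W\in\tilde{\nn}_{j,x}} \str{N(g_j,1)\,e^{-uL_{3,x}^{t}}(g_j^{-1}W,W)}\,(k^{-1}k_j)(x,tW)\,dv_{\nn_j}(W).
\]
Hence it suffices to bound, in $\mathscr{C}^m(\mm\times[0,t_0])$, the integral over $\tilde{\nn}_{j,x}$ of $\str{N(g_j,1)\big(e^{-uL_{2,x}^t}-e^{-uL_{3,x}^t}\big)(g_j^{-1}Z,Z)}\,(k^{-1}k_j)(x,tZ)$; as $N$ and $(g_j,1)$ are fibrewise bounded, this reduces to pointwise estimates on the matrix-valued kernels $e^{-uL_{2,x}^t}(g_j^{-1}Z,Z)$ and $e^{-uL_{3,x}^t}(g_j^{-1}Z,Z)$, together with bounds on $(k^{-1}k_j)(x,tZ)$ and its $x,t$-derivatives (which are polynomial in $|Z|$, since $k>0$, $k(0)=1$, and $k,k_j$ have bounded derivatives).

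On $\{|Z|\le\epsilon\}$ both $g_j^{-1}Z$ and $Z$ have norm $\le\epsilon$, and $L_{2,x}^t$, $L_{3,x}^t$ agree on $\{|Z|<2\epsilon\}$ (there $\rho(|Z|/\epsilon)=1$); by the finite-propagation-speed comparison of $e^{-uL_{2,x}^t}$ and $e^{-uL_{3,x}^t}$ underlying Proposition \ref{prop_l_2_l_4} (cf. \cite[Theorem D.2.1, (5.5.81)]{MaHol} and the argument of Lemma \ref{lem-lapl_L_p-kernels}), their difference at $(g_j^{-1}Z,Z)$ is $O(\exp(-c/u))$ in $\mathscr{C}^m(\mm\times[0,t_0])$, uniformly for $u\in]0,u_0]$; integrating over the bounded set $\{|Z|\le\epsilon\}$ keeps the bound $C\exp(-c/u)$. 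On $\{|Z|>\epsilon\}$ the decisive fact is that $g_j$ has no eigenvalue $1$ on $\tilde{\nn}_{j,x}$ — this being the defining property of the normal bundle to the fixed-point set — so there is $c_1>0$, uniform in $x\in\Sigma\mm^{[j]}$ and in the finitely many $j\in J$, with $|g_j^{-1}Z-Z|\ge c_1|Z|>c_1\epsilon$. Theorem \ref{thm_L_2_t_off_diag} then gives $\big|e^{-uL_{2,x}^t}(g_j^{-1}Z,Z)\big|_{\mathscr{C}^m(\mm\times[0,t_0])}\le C(1+|Z|)^{C'}\exp(-c|Z|^2/u)$, and the same bound for $e^{-uL_{3,x}^t}$ follows from (\ref{eqn_L_3_L_4}) and Proposition \ref{prop_l_4_off_diag} after absorbing the resulting negative powers of $u$ into the Gaussian (licit for $u\le u_0$). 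Writing $\exp(-c|Z|^2/u)\le\exp(-c\epsilon^2/(2u))\exp(-c|Z|^2/(2u))$ on $\{|Z|>\epsilon\}$ and integrating the remaining Gaussian (which, after $Z\mapsto\sqrt u\,Z$, contributes a factor $O(u^{n-n_j})$, hence bounded for $u\le u_0$), one obtains a bound $C\exp(-c/u)$ on this part too; adding the two contributions gives the lemma.

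The main obstacle I anticipate is the uniformity in $(x,t)$, i.e. passing from pointwise estimates to bounds on the full $\mathscr{C}^m(\mm\times[0,t_0])$-norm. On $\{|Z|>\epsilon\}$ this is essentially automatic, since Theorem \ref{thm_L_2_t_off_diag} and Proposition \ref{prop_l_4_off_diag} are already stated with $\mathscr{C}^m$-control in $x$ (and, in the latter, in $v$), and $t$-derivatives falling on $(k^{-1}k_j)(x,tZ)$ only produce factors polynomial in $|Z|$, absorbed by the Gaussian. On $\{|Z|\le\epsilon\}$ the delicate point is that the finite-propagation comparison of $e^{-uL_{2,x}^t}$ and $e^{-uL_{3,x}^t}$ survives $x$- and $t$-differentiation; this is the $\mathscr{C}^m$ version of the comparison underlying Lemma \ref{lem-lapl_L_p-kernels}, whose proof applies verbatim here because, by \cite[(1.6.31)]{MaHol}, the adjoint of $L_{3,x}^t$ is again an operator of the same type, so the wave-kernel/finite-propagation estimates of \cite{MaHol} carry over.
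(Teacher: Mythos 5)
Your proposal is correct and follows essentially the same route as the paper: split both integrals at $|Z|=\epsilon$ (equivalently $|Z|=\epsilon/v$ before rescaling), kill the outer regions with the off-diagonal bounds of Theorem \ref{thm_L_2_t_off_diag} and Proposition \ref{prop_l_4_off_diag} together with the fixed-point-free action of $g_j$ on $\tilde{\nn}_{j,x}$, and compare the inner regions via Proposition \ref{prop_l_2_l_4}, which through (\ref{eqn_L_3_L_4}) is exactly your $L_{2,x}^t$ versus $L_{3,x}^t$ comparison. Your preliminary change of variables $W=vZ$ merely recasts the paper's direct $I_1$ versus $J_1$ comparison; the substance is identical, and your added details (linearity of $g_j$, control of $k^{-1}k_j$) are correct fillings-in of what the paper leaves implicit.
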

	\begin{proof}
		Let's fix $\epsilon > 0$ small enough. 
		We break up the integral
		\begin{equation}
			 \int_{Z \in \tilde{{\nn}}_{j,x}} 
			\str{N (g_j,1) e^{-u L_{2, x}^{t}}(g_j^{-1} Z, Z)} (k^{-1} k_{j})(x, tZ) 
			\, dv_{{\nn}_j}(Z)  		
		\end{equation}
		into two parts $I_1 = \int_{|Z| \leq \epsilon}$  and $I_2 = \int_{|Z| > \epsilon} $. Similarly, we break the integral 
		\begin{equation}
			 \int_{Z \in \tilde{{\nn}}_{j,x}} 
			\str{N (g_j,1) e^{-L_{4,x}^{t,v}}(g_j^{-1} Z, Z)} (k^{-1} k_{j})(x, tvZ) 
			\, dv_{{\nn}_j}(Z)		
		\end{equation}
		into two parts $J_1 = \int_{|Z| \leq \epsilon/v}' $ and $J_2 = \int_{|Z| > \epsilon/v}'$.
		By Theorem \ref{thm_L_2_t_off_diag} and Proposition \ref{prop_l_4_off_diag}, there are constants $c,C > 0$ such that $| I_2|_{\mathscr{C}^{m'}(\mm)} , |J_2|_{\mathscr{C}^{m'}(\mm)} \leq C \exp(-c/u)$. By Proposition \ref{prop_l_2_l_4}, we get the estimate
		$
			\left| I_1 - J_1 \right|_{\mathscr{C}^{m'}(\mm)} \leq C \exp(-c/u)
		$.
	\end{proof}
	\begin{lem}\label{lem_aux_3}
		For any $u_0>0, m \in \nat$ there exist $c, C > 0$ such that for any $u > u_0, j \in J$
		\begin{equation}
		\textstyle
		\big| 
			\int_{Z \in \tilde{{\nn}}_{j,x}} 
			\str{N (g_j,1) e^{-u L_{2, x}^{t}}(g_j^{-1} Z, Z)} (k^{-1} k_{j})(x, tZ)  \, dv_{{\nn}_j}(Z)
		 	\big|_{\mathscr{C}^m(\mm \times [0, \, t_0])}
		 	 \leq C \exp ( - c u ).
		\end{equation}
	\end{lem}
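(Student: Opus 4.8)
The plan is to reduce the claim, exactly as in the proof of Theorem \ref{thm-exp_proj_expansion} (and as in Lemma \ref{lem_aux_0}), to the positive-degree part of the heat kernel, where the spectral gap forces exponential decay in $u$, and then to use the absence of fixed points of $g_j$ on $\tilde{\nn}_{j,x}$ to turn the Gaussian factor of the off-diagonal estimate of Theorem \ref{thm_L_2_t_off_diag_u_infty} into genuine decay of the $Z$-integral.

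First I would observe that, since $\ker(\widetilde{L_{2,x}^{t}}) \subset \dfor[(0,0)]{X_0, E_0}$ by (\ref{eqn_L_t_spec_gap}), and $N$ vanishes in degree $0$ while both $\widetilde{L_{2,x}^{t}}$ and the $g_j$-action preserve the $\integ$-grading, the integrand is unchanged if $e^{-uL_{2,x}^{t}}$ is replaced by its restriction $e^{-uL_{2,x}^{t,>0}}$ to positive degree. Theorem \ref{thm_L_2_t_off_diag_u_infty} then applies and yields constants $c, C, C' > 0$ such that for $u > u_0$, $x \in \mm$ and $Z \in \tilde{\nn}_{j,x}$,
\[
	\big| \str{N (g_j,1) e^{-u L_{2, x}^{t}}(g_j^{-1} Z, Z)} \big|_{\mathscr{C}^{m}(\mm \times [0, t_0])}
	\leq C (1 + |Z|)^{C'} \exp\big( -cu - c |g_j^{-1}Z - Z|^2 / u \big),
\]
where I used that $g_j$ acts by isometries, so $|g_j^{-1}Z| = |Z|$.

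Next I would bring in the geometry of the stratum. By Lemma \ref{lem_local_linear} the element $g_j$ acts $\comp$-linearly on the chart, and $\tilde{\nn}_{j,x}$ is by construction the orthogonal complement of the $g_j$-fixed subspace, so $g_j^{-1} - 1$ is invertible on $\tilde{\nn}_{j,x}$; its eigenvalues on the fibres of $\nn_j \to \Sigma\mm^{[j]}$ are roots of unity $\neq 1$, locally constant, hence constant on the connected $\Sigma\mm^{[j]}$, and by compactness of $\Sigma\mm^{[j]}$ together with the finiteness of $J$ this produces a constant $\delta > 0$, independent of $x \in \pi(\Sigma\mm^{[j]})$ and of $j$, with $|g_j^{-1}Z - Z| \geq \delta |Z|$. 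Since $(k^{-1}k_j)(x,tZ)$ and all its derivatives are bounded on $\mm \times [0,t_0] \times \tilde{\nn}_{j,x}$, and one may work in a fixed local trivialisation of $\nn_j$ so that the domain of integration does not move with $x$, the $\mathscr{C}^m$-norm of the full integrand is dominated, for $u > u_0$, by $C(1+|Z|)^{C'}\exp(-cu - c\delta^2|Z|^2/u)$. Integrating over $Z$ and substituting $Z = \sqrt{u}\,W$ produces a factor $u^{(n-n_j)+C'/2}e^{-cu}$ times a convergent Gaussian integral, and for $u > u_0$ the polynomial prefactor is absorbed into $e^{-cu/2}$; relabelling $c$ gives the claim.

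The only mildly delicate points — and I do not expect a genuine obstacle, since all the substantive analytic input already sits in Theorem \ref{thm_L_2_t_off_diag_u_infty} — are securing the uniformity of $\delta$ over $x$ and over $j \in J$, and checking that differentiating the integral in $x$ (the $t$-derivatives being handled inside Theorem \ref{thm_L_2_t_off_diag_u_infty}) does not destroy the Gaussian domination required to bound the integral.
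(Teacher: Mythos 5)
Your argument is correct and is essentially the paper's own (one-line) proof, which simply cites Theorem \ref{thm_L_2_t_off_diag_u_infty} together with domination of the $Z$-integral; you have merely made explicit the reduction to the positive-degree kernel via the factor $N$, the uniform lower bound $|g_j^{-1}Z-Z|\geq\delta|Z|$ coming from the absence of fixed points of $g_j$ on the normal space, and the Gaussian integration that absorbs the polynomial growth in $u$ into $e^{-cu/2}$. No gaps.
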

	\begin{proof}
		It follows from Lebesgue dominated convergence theorem and Theorem \ref{thm_L_2_t_off_diag_u_infty}.
	\end{proof}
	
	\begin{lem}\label{lem_aux_4}
		For any $u_0 > 0; m, k' \in \nat$, there exists $C > 0$ such that for any $u \in ]0, u_0], v = \sqrt{u}, j \in J$, we have
		\begin{multline}
			\Big| 
			\int_{Z \in \tilde{{\nn}}_{j,x}} 
			\str{N (g_j,1) e^{-u L_{2, x}^{t}}(g_j^{-1} Z, Z)} (k^{-1} k_{j})(x, tZ)  \, dv_{{\nn}_j}(Z)  \\
			 -
			\sum_{h = 0}^{k' + n_j} \frac{u^{h}}{(2h)!} \frac{\partial^{2h}}{\partial v^{2h}} 
			\Big( \int_{Z \in \tilde{{\nn}}_{j,x}}
			\str{N (g_j,1) e^{- L_{4,x}^{t,v}}(g_j^{-1} Z, Z)} \\ 
			\cdot (k^{-1} k_{j})(x, tvZ)  \, dv_{{\nn}_j}(Z) 
			\Big)|_{v = 0}
		 	\Big|_{\mathscr{C}^{m}(\mm \times [0, \ t_0])} 
		 	 \leq C u^{k' + 1}.
		\end{multline}
	\end{lem}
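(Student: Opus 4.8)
The plan is to reduce the statement to a $v$-Taylor expansion of the $L_{4}$-heat-kernel integral, using Lemma \ref{lem_aux_1} to pass from $L_{2,x}^{t}$ to $L_{4,x}^{t,v}$. With $v=\sqrt u$, abbreviate
\[
	H(x,t,v) := \int_{Z \in \tilde{\nn}_{j,x}} \str{N (g_j,1)\, e^{-L_{4,x}^{t,v}}(g_j^{-1} Z, Z)}\, (k^{-1}k_j)(x,tvZ)\, dv_{\nn_j}(Z),
\]
so the finite sum in the statement is $u^{-n_j}$ times the even part of the $v$-Taylor polynomial of $H$ at $v=0$, i.e. $\sum_{h=0}^{k'+n_j}\frac{u^{h-n_j}}{(2h)!}\frac{\partial^{2h}H}{\partial v^{2h}}(x,t,0)$ (this is where the negative powers of $u$ that later feed the Mellin analysis come from). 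First I would apply Lemma \ref{lem_aux_1}: for $u\in\,]0,u_0]$ the integral on the left of the asserted inequality differs from $u^{-n_j}H(x,t,v)$ by a term of $\mathscr{C}^m(\mm\times[0,t_0])$-norm at most $C\exp(-c/u)\le C'u^{k'+1}$, so that error is harmless and it remains to compare $u^{-n_j}H(x,t,v)$ with $u^{-n_j}$ times the degree-$2(k'+n_j)$ Taylor polynomial of $H$ in $v$ about $v=0$.

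Next I would use Lemma \ref{lem_aux_2}: on the compact set $\mm\times[0,t_0]\times[0,\sqrt{u_0}]$ the function $H$ is smooth with all of its derivatives bounded, and all odd-order $v$-derivatives of $H$ vanish at $v=0$. Writing Taylor's formula in $v$ about $v=0$ in integral-remainder form and discarding the odd-order terms (which vanish) gives
\[
	H(x,t,v) - \sum_{h=0}^{k'+n_j}\frac{v^{2h}}{(2h)!}\,\frac{\partial^{2h}H}{\partial v^{2h}}(x,t,0) = \int_0^v \frac{(v-s)^{2k'+2n_j+1}}{(2k'+2n_j+1)!}\,\frac{\partial^{2k'+2n_j+2}H}{\partial s^{2k'+2n_j+2}}(x,t,s)\,ds.
\]
Applying $\partial_x^{a}\partial_t^{b}$ with $a+b\le m$ under the integral sign and bounding $\partial_x^{a}\partial_t^{b}\partial_s^{2k'+2n_j+2}H$ uniformly (Lemma \ref{lem_aux_2}), the $\mathscr{C}^m(\mm\times[0,t_0])$-norm of the remainder is $\le C|v|^{2k'+2n_j+2}=Cu^{k'+n_j+1}$. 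Multiplying by $u^{-n_j}$, using $u^{h}=v^{2h}$, and adding back the exponentially small error from Lemma \ref{lem_aux_1} then produces the asserted bound $Cu^{k'+1}$.

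The main obstacle is getting the Taylor remainder estimate uniformly in the $\mathscr{C}^m(\mm\times[0,t_0])$-norm, i.e. so that it survives differentiation in the base-point $x$ and in $t$; a bare pointwise expansion in $v$ would not do. This is exactly what forces the full strength of Lemma \ref{lem_aux_2}, namely joint smoothness of $H$ in $(x,t,v)$ together with locally uniform bounds on all of its derivatives, and the integral form of the remainder is chosen precisely so that $\partial_x^{a}\partial_t^{b}$ can be moved inside. The parity statement of Lemma \ref{lem_aux_2} is equally essential: without the vanishing of odd-order $v$-derivatives at $v=0$, a term $\tfrac{v^{2c+1}}{(2c+1)!}\partial_v^{2c+1}H(x,t,0)$ would, after multiplication by $u^{-n_j}=v^{-2n_j}$, contribute a half-integer power $u^{c+1/2-n_j}$ that cannot be absorbed into the $O(u^{k'+1})$ error.
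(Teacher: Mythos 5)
Your proof is correct and is precisely the argument the paper compresses into one line (``This follows immediately from Lemmas \ref{lem_aux_2}, \ref{lem_aux_1}''): Lemma \ref{lem_aux_1} reduces the $L_{2,x}^{t}$ integral to $u^{-n_j}$ times the $L_{4,x}^{t,v}$ quantity up to an error $O(e^{-c/u})=O(u^{k'+1})$ on $]0,u_0]$, and Lemma \ref{lem_aux_2} supplies the joint smoothness and the parity needed for the even Taylor expansion in $v=\sqrt{u}$ with remainder $O(v^{2(k'+n_j)+2})=O(u^{k'+n_j+1})$. The only caveat is that you, exactly like the paper, invoke the vanishing of the odd $v$-derivatives of $H$ at $v=0$ for every $t\in[0,t_0]$, whereas Lemma \ref{lem_aux_2} literally asserts it only at $t=0$, $v=0$ --- a looseness inherited from the source rather than a defect you introduce.
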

	\begin{proof}
		This follows immediately from Lemmas \ref{lem_aux_2}, \ref{lem_aux_1}.
	\end{proof}
	
\subsection{Proof of Theorem \ref{thm-gen_asympt_orbi}}\label{subsect_proof_orbi}
	In this section we prove Theorem \ref{thm-gen_asympt_orbi}. Then we will show that Theorem \ref{thm-gen_asympt_orbi} gives a refinement of the main result of Hsiao-Huang \cite{Hsiao16}. One of the main ingredients here is Lemma \ref{lem_M_sing_kernel}, which localizes the calculation of the asymptotic expansion of the heat kernel near the singular locus. Once this lemma is established, the main strategy of the proof is the same as in the manifold's case from Section \ref{subsec_idea}. The only technical modification will consist in exploiting the results of Section \ref{paragr_off_diag} on off-diagonal expansion of the heat kernel. We use the notation from Section \ref{sect_orbi_gen_set}.
	\begin{sloppypar}
	We begin by giving the definition of the sections $\widetilde{a_{i,u}}$ of the vector bundle $\enmr{\Lambda^{\bullet} (T^{*(0,1)} \mm) \otimes \ee}$, which are the orbifold's counterparts of $a_{i,u}$, defined in Theorem \ref{thm-gen_asympt_exp}.
	If $x \in \mm$ is nonsingular, we define $\widetilde{a_{i,u}}(x)$ by (\ref{eqn_a_i_u_as_derivative}). If $x$ is singular, we define the following local section (see (\ref{not_orb_exp}))
	\begin{equation}\label{defn_a_i_u_sing}
		a'_{k,u}(x) = \frac{1}{(2k)!} \frac{\partial^{2k}}{\partial t^{2k}} e^{-u L_{2, x}^{t} /2}(0,0) |_{t=0}.
	\end{equation}
	It is $G_U$-invariant over an orbifold neighbourhood $\tilde{U}$, so it gives a section of $\enmr{\Lambda^{\bullet} (T^{*(0,1)} \mm) \otimes \ee}$ over $\tilde{U} / G_U$, which we denote by $\widetilde{a_{i,u}}$. 
	\par We prove in Proposition \ref{prop_mellin_orbi} that $\widetilde{a_{i,u}}$ has an expansion of the form (\ref{eqn_mell_1}) as $u \to 0$ (so Notation \ref{not_coefficients} makes sense), and we can perform the Mellin transform for the trace of $\widetilde{a_{i,u}}$. Now let's state Theorem \ref{thm-gen_asympt_orbi} precisely.
	\end{sloppypar}	 
	\begin{thm}\label{thm-gen_asympt_orbi_gen}
		There are $\widetilde{\alpha_i} , \widetilde{\beta_i} \in \real, i \in \nat$ and $\gamma_{j,i}, \kappa_{j,i} \in \real, j \in J, i \in \nat$ such that the asymptotic expansion (\ref{eqn_orbi_asympt}) holds, as $p \to \infty$.
		Moreover,
		\begin{align}
			& \textstyle \widetilde{\alpha_i}= \int_{\mm} \str{N \widetilde{a_i}^{[0]}(x)} \, dv_{\mm}(x), 
			& \textstyle \widetilde{\beta_i} =  -{\rm{M}}_u \big[ \int_{\mm} \str{N \widetilde{a_{i,u}}(x)} \, dv_{\mm}(x) \big]'(0).
		\end{align}
		Also there are functions $c_{j,u,i}, j \in J, u \in ]0, + \infty[, i \in \nat$ on $\Sigma \mm^{[j]}$, given by (\ref{form_c_j_u_i}), such that
		\par 1. For $x \in \mm$ the value $c_{j,u,i}(x)$ depends only on the local geometry of $\mm$ in $x$ and on the action of $g_j \in G_x$ on the normal bundle $\tilde{{\nn}}_{j,x}$,
		\par 2. The equations (\ref{eqn_mell_1}), (\ref{eqn_mell_2}) hold for the functions $u \mapsto c_{j,u,i}(x), \, u > 0$, so we can apply the Mellin transform, and Notation \ref{not_coefficients} makes sense.
		We have the following identities:
		\begin{align}\label{thm_gen_orbi_iden}
			& \textstyle \gamma_{j, i} = \int_{\Sigma \mm^{[j]}} c_{j,i}^{[0]}(x) \, dv_{\Sigma \mm^{[j]}}(x), 
			& \textstyle \kappa_{j, i} =  - {\rm{M}}_u \left[ \int_{\Sigma \mm^{[j]}} c_{j,u,i}(x) \, dv_{\Sigma \mm^{[j]}}(x) \right]'(0).
		\end{align}
		Finally, the identities (\ref{coroll}), (\ref{eqn_gamma_form}) hold and the proportion $\kappa_{j, 0} / { \rm{Vol}} (\Sigma \mm^{[j]})$ depends only on the action of $g_j \in G_x, (x, g_j) \in \Sigma \mm^{[j]}$ on the normal bundle $\tilde{{\nn}}_{j,x}$ of a fixed point $x \in \Sigma \mm^{[j]}$, and for $c_j$ from (\ref{eqn_gamma_form}), we have a precise formula 
		\begin{equation}\label{form_c_j_final}
			c_j = -n \rk{E} \big( \det ({\rm{Id}} - g_j|_{\tilde{{\nn}}_{j}}) \big)^{-1/2}.
		\end{equation}
	\end{thm}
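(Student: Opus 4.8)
The plan is to run exactly the machine used in the proof of Theorem \ref{thm-gen_asympt_zet_gen}, feeding in one genuinely new input: a localization lemma (this is Lemma \ref{lem_M_sing_kernel}) describing the orbifold heat kernel $\exp(-u\laplcomp_p/p)$ near the singular locus. Using the description of the orbifold heat kernel in \cite{MaOrbif2005} together with finite propagation speed (valid on orbifolds, cf.\ Remark \ref{lem_lapl_l_p_off_diag}), one shows that $\str{N\exp(-u\laplcomp_p/p)}$ splits, up to an error smaller than any power of $p$, into an \emph{interior} contribution --- computed exactly as in the manifold case from the diagonal expansion of $\exp(-uL_{2,x}^{t}/2)(0,0)$ --- plus, for each stratum $\Sigma\mm^{[j]}$, a contribution that after the rescaling (\ref{eqn-L_p_L^t_idty}) and the substitution $W=Z/t$ in the normal directions takes the shape $\frac{e^{\imun\theta_j p}}{m_j}p^{n_j}\int_{\Sigma\mm^{[j]}}I_j(u,t,x)\,dv_{\Sigma\mm^{[j]}}(x)$ with $t=1/\sqrt p$, where $I_j(u,t,x)$ is precisely the normal-bundle integral of Lemmas \ref{lem_aux_0}--\ref{lem_aux_4}. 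The factor $e^{\imun\theta_j p}$ is the action of $g_j$ on $\lin^p_x$, which is pulled out of the Taylor-in-$t$ expansion (so that what remains is smooth in $t$, as in Lemma \ref{lem_aux_0}); convergence of the normal integral is exactly where the off-diagonal estimates of Section \ref{paragr_off_diag} enter, since $g_j$ acts on $\tilde{\nn}_{j,x}$ without fixed points and hence $|g_j^{-1}W-W|\geq c|W|$ forces Gaussian decay.

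Granting this, I would define $\widetilde{a_{i,u}}$ by (\ref{eqn_a_i_u_as_derivative}) at nonsingular points and by (\ref{defn_a_i_u_sing}) near singular points, and $c_{j,u,i}(x):=\tfrac{1}{(2i)!}\tfrac{\partial^{2i}}{\partial t^{2i}}I_j(u,t,x)|_{t=0}$ (this is formula (\ref{form_c_j_u_i})); Lemma \ref{lem_aux_0} gives smoothness in $(x,t)$ and vanishing of odd $t$-derivatives, so these are well defined, and they depend only on the local geometry of $\mm$ at $x$ and on the $g_j$-action on $\tilde{\nn}_{j,x}$ because they are built from the model operator $\widetilde{L_{2,x}^{t}}$ and the $g_j$-twist. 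Next I would check that $\widetilde{a_{i,u}}$ and $c_{j,u,i}(x)$ satisfy the hypotheses (\ref{eqn_mell_1}), (\ref{eqn_mell_2}) of the Mellin transform (this is Proposition \ref{prop_mellin_orbi}): the small-$u$ expansion --- with leading power $u^{-n}$, resp.\ $u^{-n_j}$ --- comes from the $v=\sqrt u$ rescaling $L_{4,x}^{t,v}$ via Lemmas \ref{lem_aux_1}, \ref{lem_aux_2}, \ref{lem_aux_4}, and the exponential decay for $u\gg1$ from the spectral gap, i.e.\ Theorem \ref{thm-exp_proj_expansion} for the interior part and Lemma \ref{lem_aux_3} for the strata.

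With the Mellin transforms legitimized, the expansion (\ref{eqn_orbi_asympt}) is obtained by repeating verbatim the dominated-convergence argument of the proof of Theorem \ref{thm-gen_asympt_zet_gen}: for the interior part one uses the orbifold analogues of Propositions \ref{prop-streng_zero}, \ref{prop-streng_infty} (which follow from Theorems \ref{eqn-L^t-general_est}, \ref{thm-exp_proj_expansion} just as before), and for the strata one uses Lemmas \ref{lem_aux_1}--\ref{lem_aux_4} in their place. Passing to $\tilde\zeta_p(z)=p^{z-n}\zeta_p(z)$ and using the scaling identity between ${\rm M}_u[\str{N\exp(-u\laplcomp_p)}]$ and ${\rm M}_u[\str{N\exp(-u\laplcomp_p/p)}]$ exactly as in (\ref{form-zeta'})--(\ref{zeta_tilde_mellin}), the interior part produces $\widetilde{\alpha_i}\log p+\widetilde{\beta_i}$ with the stated formulas, while the $j$-th stratum part carries the extra factor $p^{n_j-n}e^{\imun\theta_j p}/m_j$ and hence contributes $\tfrac{p^{n_j-i}}{m_j}e^{\imun\theta_j p}(\gamma_{j,i}\log p+\kappa_{j,i})$ with $\gamma_{j,i}={\rm M}_u\big[\int_{\Sigma\mm^{[j]}}c_{j,u,i}\,dv\big](0)=\int_{\Sigma\mm^{[j]}}c_{j,i}^{[0]}\,dv$ and $\kappa_{j,i}=-{\rm M}_u\big[\int_{\Sigma\mm^{[j]}}c_{j,u,i}\,dv\big]'(0)$, which are (\ref{thm_gen_orbi_iden}). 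Metric independence of $\widetilde{\alpha_i}$, $\gamma_{j,i}$ follows from the orbifold version of the anomaly formula of \cite{BGS3}, as at the end of the proof of Theorem \ref{thm-gen_asympt_zet_gen}. Finally (\ref{coroll}) follows from $\widetilde{a_{0,u}}(x)=e^{-2\pi uN}C_u\,{\rm Id}$ (Theorem \ref{thm-HK_form}) exactly as (\ref{form_B_Vas}); and for $\gamma_{j,0}$ one computes $c_{j,u,0}(x)=I_j(u,0,x)$ with the explicit model heat kernel, decoupling the tangential and normal directions of the stratum, using a Mehler formula for the harmonic oscillator twisted by the rotation $g_j$, and extracting the $u^{0}$ coefficient: the resulting constant vanishes unless $\tilde{\nn}_j$ is one-complex-dimensional, and in that codimension-one case the residual Gaussian integral over the normal line yields $(\det({\rm Id}-g_j|_{\tilde{\nn}_{j}}))^{-1/2}$ and hence (\ref{form_c_j_final}); the general expression for $\kappa_{j,0}$ then follows from the same computation together with (\ref{eq_mell_transform}).

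The main obstacle is the localization lemma itself (Lemma \ref{lem_M_sing_kernel}): one must show that the part of $\str{N\exp(-u\laplcomp_p/p)}$ coming from a neighbourhood of $\Sigma\mm^{[j]}$ is, uniformly in $p$ and down to errors of order $o(p^{n-k})$, captured by the twisted normal-bundle model, which requires combining the orbifold heat-kernel formula of \cite{MaOrbif2005} with the full off-diagonal estimates of Section \ref{paragr_off_diag} (Theorems \ref{thm_L_2_t_off_diag_u_infty}, \ref{thm_L_2_t_off_diag}, Propositions \ref{prop_l_2_l_4}, \ref{prop_l_4_off_diag}); this is the only place where genuinely off-diagonal heat-kernel behaviour enters, and everything downstream is a bookkeeping variant of the manifold argument. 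The remaining delicate point is the explicit model computation producing the codimension-one dichotomy for $\gamma_{j,0}$ and the precise constant $c_j$ in (\ref{form_c_j_final}).
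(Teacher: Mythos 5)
Your proposal follows essentially the same route as the paper: the same decomposition into an interior term and stratum terms via the localization Lemma \ref{lem_M_sing_kernel}, the same definition of $c_{j,u,i}$ through the twisted normal-bundle integrals controlled by the off-diagonal estimates of Section \ref{paragr_off_diag}, the same Mellin-transform bookkeeping repeating the proof of Theorem \ref{thm-gen_asympt_zet_gen}, and the same Mehler-formula computation for $\gamma_{j,0}$ and $c_j$. I see no gaps beyond the technical details you already flag as requiring care.
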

	Now we give a proof of Theorem \ref{thm-gen_asympt_orbi_gen}. 
	Let $\epsilon > 0$ be small enough, we introduce
	\begin{equation}\label{eqn_A_p_u_B_p_u_defn}
		\begin{aligned}
			& A(p, u) = \int_{\mm \setminus B(\mm^{sing}, \epsilon)} \str{N \exp ( - u \laplcomp_p / p )(x,x)} \, dv_{\mm}(x) 
			 \\ 
			& \phantom{A(p, u) = \sum_{j \in J} \tfrac{p^{n_j} e^{\imun \theta_j p}}{m_j} \int_{\Sigma \mm^{[j]}}}+  p^n \int_{B(\mm^{sing}, \epsilon)} \str{N 	
			e^{-u L_{2, x}^{t}/2}(0,0)} \, dv_{\mm}(x).\\ 
			& B(p, u) = \sum_{j \in J} \frac{1}{m_j} p^{n_j} e^{\imun \theta_j p} \int_{\Sigma \mm^{[j]}} \int_{Z \in \tilde{{\nn}}_{j,x}} \str{N 		
			(g_j,1) e^{-u L_{2, x}^{t}/2}(g_j^{-1} Z, Z)}
			\\  
			& \phantom{B(p, u) = \sum_{j \in J} \tfrac{p^{n_j} e^{\imun \theta_j p}}{m_j} \int_{\Sigma \mm^{[j]}}}\, \cdot
			(k^{-1} k_j)(x, tZ) \, dv_{{\nn}_j}(Z) \, dv_{\Sigma \mm^{[j]}}(x).
		\end{aligned}
	\end{equation}
	The following Lemma which explains the difference of the manifold's case and the orbifold's case, and why the results from Section \ref{paragr_off_diag} are necessary for the orbifold's case.
	\begin{lem}\label{lem_M_sing_kernel}
		For $\epsilon > 0$ small enough, there are $c, C > 0$ such that for any $p \in \nat^*, u > 0$:
		\begin{align}\label{eqn_decomp}
			\str{N \exp ( - u \laplcomp_p / p ) } = A(p, u) + B(p, u) + O (  p^{C} \exp ( - c p / u ) ).
		\end{align}
	\end{lem}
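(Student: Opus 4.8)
The plan is to evaluate $\str{N\exp(-u\laplcomp_p/p)}$ by passing to local uniformizing charts and expanding the orbifold heat kernel as a sum over group elements, following \cite{MaOrbif2005} and \cite[\S 5.4]{MaHol}, and then isolating the contribution of the identity (which reassembles $A(p,u)$) from those of the nontrivial group elements (which reassemble $B(p,u)$). First I would split $\int_{\mm}=\int_{\mm\setminus B(\mm^{sing},\epsilon)}+\int_{B(\mm^{sing},\epsilon)}$: on the genuine manifold $\mm\setminus B(\mm^{sing},\epsilon)$ the trace density is the ordinary diagonal heat kernel and it gives the first integral in the definition of $A(p,u)$ verbatim. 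On $B(\mm^{sing},\epsilon)$ I would cover by finitely many orbifold charts $(G_U,\tilde U)\to U$ with a subordinate partition of unity; over each chart the trace density integrates to $\tfrac{1}{|G_U|}\int_{\tilde U}\sum_{g\in G_U}\str{N(g,1)\,\widetilde{\exp(-u\laplcomp_p/p)}(g^{-1}\tilde x,\tilde x)}$. Since $\laplcomp_p$ is essentially self-adjoint, finite propagation speed — as in the proof of Lemma \ref{lem-lapl_L_p-kernels}, valid on orbifolds by Remark \ref{lem_lapl_l_p_off_diag} — lets one evaluate each value $\widetilde{\exp(-u\laplcomp_p/p)}(g^{-1}\tilde x,\tilde x)$ through the model operator at $x$ with an error $O(p^{C}\exp(-cp/u))$, and the same heat kernel decays like $p^{C}\exp(-c\,p\,d(g^{-1}\tilde x,\tilde x)^2/u)$, so only those $g$ with $\tilde x$ close to the fixed-point locus $\tilde U_x^{g}$ contribute, modulo $O(p^{C}\exp(-cp/u))$.

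The term $g=1$ is exactly the manifold case: Lemma \ref{lem-lapl_L_p-kernels} replaces $\widetilde{\exp(-u\laplcomp_p/p)}(\tilde x,\tilde x)$ by $\exp(-uL_{p,x}/(2p))(0,0)$, and then the rescaling identity (\ref{eqn-L_p_L^t_idty}) together with $k(0)=1$ (see (\ref{defn_k_fun})) rewrites it as $p^{n}\,e^{-u L_{2,x}^{t}/2}(0,0)$; summing over the charts reproduces, up to $O(p^{C}\exp(-cp/u))$, the second integral in the definition of $A(p,u)$ in (\ref{eqn_A_p_u_B_p_u_defn}).

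For the terms $g\neq 1$: if $g\in G_x$ does not fix $\tilde x$, then the Riemannian distance $d(g^{-1}\tilde x,\tilde x)$ is bounded below once $\tilde x$ leaves a fixed neighbourhood of the fixed-point locus $\tilde U_x^{g}$ (a piece of some $\pi(\Sigma \mm^{[j]})$), because $g$ acts isometrically with fixed set of positive codimension, so by the decay above such $\tilde x$ contribute $O(p^{C}\exp(-cp/u))$. Near $\pi(\Sigma \mm^{[j]})$ I would parametrize $\tilde x$ by the normal exponential map with Jacobian $k_j$ as in (\ref{defn_k_j}), use finite propagation once more together with (\ref{eqn-L_p_L^t_idty}), and rescale; the prefactor $p^{n}$ times the Jacobian of the rescaling along the normal directions collapses to $p^{n_j}$, so the $g$-contribution becomes $p^{n_j}\int_{\Sigma \mm^{[j]}}\int_{|Z|<\epsilon\sqrt p}\str{N(g_j,1)\,e^{-u L_{2,x}^{t}/2}(g_j^{-1}Z,Z)}(k^{-1}k_j)(x,tZ)\,dv_{{\nn}_j}(Z)\,dv_{\Sigma \mm^{[j]}}(x)$. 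The Gaussian factor $\exp(-c|g_j^{-1}Z-Z|^2/u)$ from Theorems \ref{thm_L_2_t_off_diag} and \ref{thm_L_2_t_off_diag_u_infty} (for $u$ in a bounded range, resp. $u$ large), combined with $|g_j^{-1}Z-Z|\geq c_0|Z|$ — which holds because $g_j$ has no nonzero fixed vector on the normal space $\tilde{\nn}_{j}$, i.e. $\det({\rm Id}-g_j|_{\tilde{\nn}_{j}})\neq0$ — shows both that the normal-bundle integral over all of $\tilde{\nn}_{j,x}$ converges and that enlarging the domain from $|Z|<\epsilon\sqrt p$ to $\tilde{\nn}_{j,x}$ costs only $O(p^{C}\exp(-cp/u))$. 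Finally, pulling the scalar $e^{\imun\theta_j p}$ — the action of $g_j$ on $\lin^p_x$, by (\ref{defn_theta_j}) — out of the trace, and collecting the sums over conjugacy classes and over charts into a single integral over each component $\Sigma \mm^{[j]}$ with multiplicity $1/m_j=1/|K_j|$ (Definition \ref{defn_strata}, (\ref{defn_multipl})), yields exactly $B(p,u)$.

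The main obstacle is not a single estimate but making all of these localizations uniform in $(p,u)$ simultaneously: the contributions of $g=1$ beyond the injectivity radius, of $g\neq1$ with $\tilde x$ far from $\pi(\Sigma \mm^{[j]})$, of the passage from genuine to model heat kernels, and of the tails $|Z|>\epsilon\sqrt p$ of the normal-bundle integrals must all be dominated at once by $O(p^{C}\exp(-cp/u))$. This is precisely what the degree-respecting localization of Section \ref{sect_prelim} and the off-diagonal heat-kernel estimates of Section \ref{paragr_off_diag} are designed to deliver. The only remaining point is the elementary group-theoretic bookkeeping producing the factor $1/m_j$ and the phase $e^{\imun\theta_j p}$, which proceeds as in \cite[\S 5.4]{MaHol}.
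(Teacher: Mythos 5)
Your proposal is correct and follows essentially the same route as the paper: split off $\mm\setminus B(\mm^{sing},\epsilon)$, expand the orbifold heat kernel over each chart as a sum over $g\in G_{x_i}$ via finite propagation speed (the $F_u/G_u$ splitting underlying Lemma \ref{lem-lapl_L_p-kernels} and Remark \ref{lem_lapl_l_p_off_diag}), identify the $g=1$ term with the second piece of $A(p,u)$, and reassemble the $g\neq 1$ terms into $B(p,u)$ by the normal-bundle parametrization, the rescaling $Z\to Z/t$, and the off-diagonal Gaussian estimates of Theorems \ref{thm_L_2_t_off_diag_u_infty} and \ref{thm_L_2_t_off_diag}. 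The bookkeeping producing $1/m_j$ and $e^{\imun\theta_j p}$ is also exactly as in the paper's proof.
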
		
	\begin{proof}
		We suppose $\epsilon$ satisfies (\ref{eqn_inj_rad}) and (\ref{curv_positive}). 
		Let $f : \real \to [0,1] $ be a smooth even function, which satisfies
		\begin{equation} \label{eqn_f_defn}
			f(v) = 
			\begin{cases} 
      			1, & \text{ for } |v| \leq \epsilon/2,  \\
      			0, & \text{ for } |v| \geq \epsilon.
  			\end{cases}
		\end{equation}
		 For $u > 0, a \in \comp$ we denote  holomorphic even functions $F_u, G_u$ on $\comp$ by (cf. \cite[(1.6.13)]{MaHol})
		\begin{equation}
		\begin{aligned}
			& F_u(a) = \int_{- \infty}^{+\infty} e^{iva} \exp(- v^2/ 2) f(\sqrt{u} v) \frac{d v}{\sqrt{2 \pi}}, \\
			& G_u(a) = \int_{- \infty}^{+\infty} e^{iva} \exp(- v^2/ 2) (1- f(\sqrt{u} v)) \frac{d v}{\sqrt{2 \pi}}.
		\end{aligned}
		\end{equation}
		Then we have (cf. \cite[(1.6.14)]{MaHol})
		\begin{equation}\label{eqn_aux_sum_F_G}
			F_u(v D_p) + G_u(v D_p) = \exp (- v^2 D_p^{2} / 2 ). 
		\end{equation} 
		By \cite[Proposition 1.6.4, (5.5.72)]{MaHol}, there are $c, C > 0, k \in \nat$ such that for any $x, x' \in M, p \in \nat^*, u > 0$
		\begin{equation}\label{eqn_aux_G_p_form_1}
			\textstyle \big| G_{ u / p } ( \sqrt{u/p} D_p )(x, x') \big| \leq C p^k \exp(- c p / u).
		\end{equation}
		\par We construct an open cover of $B(\mm^{sing}, \epsilon)$ by a finite number of balls $B_i := B(x_i, \epsilon_i), i \in I$ for $\epsilon_i < 2 \epsilon$ and $x_i \in \mm^{sing}$. 
		We require that $x \in B(\pi( \Sigma \mm^{[j]} ), \epsilon) \cap B_i$ implies $x_i \in \pi( \Sigma \mm^{[j]} )$, for a natural embedding $\pi : \Sigma \mm \to \mm$. 
		We construct a partition of unity $\rho_i$ subordinate to $B_i, i \in I$. 
		We implicitly identify a neighbourhood of $0 \in T_{x_i}\mm$, parametrized by variable $Z$, with a neighbourhood of $x_i$. 	
		\par By the identity $g F_{u / p} ( \sqrt{u / p} D_p ) = F_{u / p} ( \sqrt{u / p} D_p ) g$ and finite propagation speed of solutions of the hyperbolic equations, we have the identity (cf. \cite[(5.4.18)]{MaHol})
		\begin{multline}\label{eqn_orbi_aux_oper_off}
			\textstyle F_{u / p} ( \sqrt{u / p} D_p ) (\exp_{x_i}(Z), \exp_{x_i}(Z)) \\ 
			\textstyle = \sum_{g \in G_{x_i}} (g,1) F_{u / p} ( \sqrt{u / p} \tilde{D}_p ) (\exp_{x_i}(g^{-1} \tilde{Z}), \exp_{x_i}(\tilde{Z})) ,
		\end{multline}
		where $ Z \in T_{x_i} \mm, |Z| \leq \epsilon, \tilde{Z} \in T_{x_i} \tilde{U}_{x_i}$ represents $Z$ in the orbifold chart $\tilde{U}_{x_i}$ of $x_i$.		
		\par In the following series of identities we use Remark \ref{lem_lapl_l_p_off_diag}, (\ref{eqn-L_p_L^t_idty}), (\ref{defn_multipl}), (\ref{eqn_aux_G_p_form_1}), (\ref{eqn_orbi_aux_oper_off}) and the fact that $k$ is $G_x$-invariant.
		\begin{align}
			& \textstyle \int_{B(\mm^{sing}, \epsilon)} \str{N \exp ( - u \laplcomp_p / p )(x,x)} \, dv_{\mm}(x) 
			\\	\nonumber
			& \textstyle \phantom{\int} = 
			\sum_{j \in I} p^n \int_{B(x_j, \epsilon_i)} \rho_j(x) \str{N 	
			e^{-u L_{2, x}^{t}/2}, 0)} \, dv_{\mm}(x) 
			\\			\nonumber
			& \textstyle \phantom{\int =} + \sum_{j \in I} \int_{B(x_j, \epsilon_j)} \rho_j(Z) \str{N 
			\sum_{g \in G_{x_j} \setminus \{1\}}			
			(g,1)\exp ( - u \widetilde{L_{p,\pi_{(j)}(Z)}} / (2p) )(g^{-1}Z,Z)} \, dv_{\mm}(Z) 
			\\ \nonumber
			& \textstyle \phantom{\int =} + O (  p^{C'}   \exp ( - c p / u ) ) \\ \nonumber
			& \textstyle \phantom{\int} =	
			p^n \int_{B(\mm^{sing}, \epsilon)} \str{N 	
			e^{-u L_{2, x}^{t}/2}(0, 0)} \, dv_{\mm}(x) \\ \nonumber
			& \textstyle \phantom{\int =}  + p^n \sum_{j \in J} \frac{1}{m_j} e^{\imun \theta_j p} \int_{\Sigma \mm^{[j]}} \int_{Z \in \tilde{{\nn}}_{j,x}, |Z| \leq \epsilon} \str{N 			
			(g_j,1) e^{-u L_{2, x}^{t}/2}(g_j^{-1}Z/t, Z/t)} 
			\\			 \nonumber
			& \textstyle \phantom{\int = \int p^n \int_{B(\mm^{sing}) } \int_{\Sigma \mm^{[j]}} } \cdot (k^{-1} k_j)(x, tZ) \, dv_{{\nn}_j}(Z) dv_{\Sigma \mm^{[j]}}(x)
			+ O (  p^{C} \exp ( - c p / u ) ),
		\end{align}
		After a change of variables $Z \to Z/t$ and an application of Theorems \ref{thm_L_2_t_off_diag_u_infty}, \ref{thm_L_2_t_off_diag} we conclude.
	\end{proof}
	The terms $A(p,u), B(p,u)$ appear in each proof till the end of this section. Due to the finite propagation speed of solutions of the hyperbolic equations, the analysis of $A(p,u)$ is always the same as in Section \ref{subsec_idea}. The main contribution here is the analysis of $B(p,u)$. 
	The following proposition is an orbifold's version of Theorem \ref{thm-gen_asympt_exp}.
	\begin{prop}\label{prop_str_general_orbi}
		For any $k \in \nat$ and $u>0$ fixed, we have as $p \to + \infty$
		\begin{multline}
			 \str{N \exp ( - u \laplcomp_p / p ) } =  
			\sum_{i=0}^{k} p^{n - i} \int_{\mm} \str{N \widetilde{a_{i,u}}(x)} \, dv_{\mm}(x) 
			\\ 
			+ \sum_{i=0}^{k + n_j - n} \sum_{j \in J} \frac{p^{n_j - i}}{m_j} e^{\imun \theta_j p}  \int_{\Sigma \mm^{[j]}} c_{j,u,i}(x) \, dv_{\Sigma \mm^{[j]}}(x) + o(p^{n-k}),
		\end{multline}
		where
		\begin{equation}\label{form_c_j_u_i}
			 c_{j,u,i}(x) = \frac{1}{(2i)!} \int_{\tilde{{\nn}}_{j,x}} {\rm{Tr}_s} \Big[
			N (g_j, 1) \frac{\partial^{2i}}{\partial t^{2i}} \Big( e^{-u L_{2, x}^{t}/2}(g_j^{-1}Z, Z) (k^{-1} k_j)(x, tZ) \Big)|_{t = 0} 
			\Big] dZ,
		\end{equation}
		and the term $o(p^{n-k})$ is uniform as $u$ varies in compact subsets of $]0, + \infty[$.		 
	\end{prop}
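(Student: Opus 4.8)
The plan is to combine the localization Lemma \ref{lem_M_sing_kernel} with the Taylor-expansion-in-$t$ technology from Section \ref{paragr_off_diag}, treating the interior part $A(p,u)$ exactly as in the manifold case and concentrating all new work on the boundary-layer term $B(p,u)$. First I would write, by Lemma \ref{lem_M_sing_kernel}, $\str{N\exp(-u\laplcomp_p/p)} = A(p,u) + B(p,u) + O(p^C \exp(-cp/u))$, and note that since $u$ is fixed the error is $O(p^{-\infty})$ and can be ignored. For $A(p,u)$, the argument is the orbifold analogue of the proof of Theorem \ref{thm-gen_asympt_exp} combined with the proof of Theorem \ref{thm-gen_asympt_zet_gen}: away from $\mm^{sing}$ finite propagation speed (Lemma \ref{lem-lapl_L_p-kernels}) lets us replace $\exp(-u\laplcomp_p/p)(x,x)$ by $p^n \exp(-uL^t_{2,x}/2)(0,0)k^{-1/2}(0)k^{-1/2}(0) = p^n\exp(-uL^t_{2,x}/2)(0,0)$, and then Proposition \ref{prop_cinf_prol}, the identities (\ref{eqn-der_vanishes}), (\ref{eqn_a_i_u_as_derivative}) and a Taylor expansion in $t = 1/\sqrt p$ give $A(p,u) = \sum_{i=0}^{k} p^{n-i}\int_{\mm}\str{N\widetilde{a_{i,u}}(x)}\,dv_{\mm}(x) + o(p^{n-k})$, with the odd-order terms vanishing so that only integer powers $p^{n-i}$ occur.

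The main work is the analysis of $B(p,u) = \sum_{j\in J} \frac{1}{m_j}p^{n_j}e^{\imun\theta_j p}\int_{\Sigma\mm^{[j]}}\Psi_j(x,t)\,dv_{\Sigma\mm^{[j]}}(x)$, where $\Psi_j(x,t) := \int_{Z\in\tilde{\nn}_{j,x}}\str{N(g_j,1)e^{-uL^t_{2,x}/2}(g_j^{-1}Z,Z)}(k^{-1}k_j)(x,tZ)\,dv_{\nn_j}(Z)$ and $t = 1/\sqrt p$. By Lemma \ref{lem_aux_0} the integral $\Psi_j(x,t)$ is a smooth function of $(x,t)\in\mm\times[0,1]$ (the fixed-point-free action of $g_j$ on $\tilde{\nn}_{j,x}$ together with the off-diagonal decay of Theorem \ref{thm_L_2_t_off_diag_u_infty}, \ref{thm_L_2_t_off_diag} makes the integral converge and smooth), and its odd $t$-derivatives at $t=0$ vanish. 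Hence Taylor expansion in $t$ to order $2(k+n_j-n)$ yields, for $t=1/\sqrt p$,
\[
  \Psi_j(x,t) = \sum_{i=0}^{k+n_j-n} p^{-i}\,\frac{1}{(2i)!}\,\frac{\partial^{2i}}{\partial t^{2i}}\Psi_j(x,t)\big|_{t=0} + o\big(p^{-(k+n_j-n)}\big),
\]
uniformly in $x$ (and uniformly in $u$ on compacts of $]0,+\infty[$). Setting $c_{j,u,i}(x)$ equal to $\frac{1}{(2i)!}\int_{\tilde{\nn}_{j,x}}\str{N(g_j,1)\frac{\partial^{2i}}{\partial t^{2i}}\big(e^{-uL^t_{2,x}/2}(g_j^{-1}Z,Z)(k^{-1}k_j)(x,tZ)\big)|_{t=0}}\,dZ$, which is exactly the $t$-derivative of the integrand (differentiation under the integral sign being justified by the decay estimates), multiplying by $\frac{1}{m_j}p^{n_j}e^{\imun\theta_j p}$ and summing over $j$ gives the $\Sigma\mm^{[j]}$-contribution in the claimed expansion. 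One has to check that $\sum_j \frac{p^{n_j}}{m_j}e^{\imun\theta_j p}o(p^{-(k+n_j-n)})$ is $o(p^{n-k})$, which is immediate since $|e^{\imun\theta_j p}| = 1$.

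Finally, I would note that for $x$ in a nonsingular orbifold chart the same computation degenerates: the only term is the trivial conjugacy class, $\tilde{\nn}_{j,x}$ is a point, and $c_{j,u,i}$ reduces to $\str{N\widetilde{a_{i,u}}(x)}$, consistent with the definition (\ref{eqn_a_i_u_as_derivative})--(\ref{defn_a_i_u_sing}) of $\widetilde{a_{i,u}}$. The statement that $c_{j,u,i}(x)$ depends only on the local geometry at $x$ and on the $g_j$-action on $\tilde{\nn}_{j,x}$ follows because $L^t_{2,x}$, $k$ and $k_j$ are built entirely from the jets of the geometric data at $x$; the verification of the expansion properties (\ref{eqn_mell_1}), (\ref{eqn_mell_2}) for $u\mapsto c_{j,u,i}(x)$ is deferred (it is the content of Proposition \ref{prop_mellin_orbi}), and here we only use smoothness and uniformity in $u$ on compacts. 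The main obstacle is controlling the $Z$-integral over the noncompact fibre $\tilde{\nn}_{j,x}$ uniformly in $(x,t,u)$ — including interchanging this integration with the $t$-differentiation and the $\Sigma\mm^{[j]}$-integration — and this is precisely what the off-diagonal estimates of Section \ref{paragr_off_diag} (Theorems \ref{thm_L_2_t_off_diag_u_infty}, \ref{thm_L_2_t_off_diag}, Propositions \ref{prop_l_2_l_4}, \ref{prop_l_4_off_diag}) and the auxiliary Lemmas \ref{lem_aux_0}--\ref{lem_aux_4} are designed to supply.
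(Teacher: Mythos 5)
Your proposal is correct and follows essentially the same route as the paper: decompose via Lemma \ref{lem_M_sing_kernel} into $A(p,u)+B(p,u)$ plus an exponentially small error, treat $A(p,u)$ by Lemma \ref{lem-lapl_L_p-kernels} and Proposition \ref{prop_cinf_prol} as in the manifold case, and Taylor-expand $B(p,u)$ in $t=1/\sqrt{p}$ using the smoothness and vanishing of odd $t$-derivatives supplied by Lemma \ref{lem_aux_0} together with the off-diagonal decay estimates. The paper's own proof is just a terse citation of these same ingredients, so your more detailed write-up of the Taylor-expansion step for $\Psi_j(x,t)$ is a faithful elaboration rather than a different argument.
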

	\begin{proof}
		By Lemma \ref{lem-lapl_L_p-kernels}, Proposition \ref{prop_cinf_prol} and (\ref{eqn_A_p_u_B_p_u_defn}), we get
		\begin{equation}\label{eqn_aux_orbi_general}
			\textstyle A(p,u)
			= \sum_{i=0}^{k} p^{n - i} \int_{\mm} \str{N \widetilde{a_{i,u}}(x)} \, dv_{\mm}(x) 
			 + o(p^{n-k}).
		\end{equation}
		By Lemma \ref{lem_aux_3}, (\ref{eqn_A_p_u_B_p_u_defn}) and (\ref{form_c_j_u_i}), we get
		\begin{equation}\label{eqn_aux_orbi_general_1}
			\textstyle B(p,u) = \sum_{i=0}^{k + n_j - n} \sum_{j \in J} \frac{1}{m_j} p^{n_j - i} e^{\imun \theta_j p} \int_{\Sigma \mm^{[j]}} c_{j,u,i}(x) \, dv_{\Sigma \mm^{[j]}}(x) + o(p^{n-k}).
		\end{equation}
		Now, Lemma \ref{lem_M_sing_kernel}, (\ref{eqn_aux_orbi_general}) and (\ref{eqn_aux_orbi_general_1}) imply the proposition. 
	\end{proof}
	The next proposition is an analogue of Proposition \ref{exp_a_i_u}. It implies that we can do the Mellin transform in $u$ for $\widetilde{a_{i,u}}(x), c_{j,u,i}(x)$; thus, the statement of Theorem \ref{thm-gen_asympt_orbi_gen} makes sense.
	\begin{prop}\label{prop_mellin_orbi}
		There are smooth sections $\widetilde{a_i}^{[l]}(x), i \in \nat, l \in \integ, l \geq -n$ of the vector bundle $\enmr{\Lambda^{\bullet} ( T^{*(0,1)}\mm ) \otimes \ee}$ such that the following asymptotic expansion holds for any $ k \in \nat$:
		\begin{equation}
		\textstyle \widetilde{a_{i,u}}(x) = \sum_{l = -n}^k \widetilde{a_i}^{[l]}(x) u^l + o(u^k), \quad \text{ as } \quad u \to 0.
		\end{equation}
		 Moreover, there are $c_i, d_i > 0$ such that we have the following estimation
		\begin{equation}
			 | N \widetilde{a_{i,u}}(x) | \leq c_i \exp(-d_i u).
		\end{equation}
		Similarly for $i \in \nat, j \in J, h \in \integ, h \geq -n_j$ there are functions $c^{[h]}_{ji}(x)$ on $\Sigma \mm^{[j]}$ such that for any $ k \in \nat$, we have the following asymptotic expansion, as $u \to 0$:
		\begin{equation}\label{eqn_c_jui_near_0}
		\textstyle		c_{j,u,i}(x) = \sum_{h = - n_j}^{k} c^{[h]}_{ji}(x) u^{h} + o(u^k). 
		\end{equation}
		Moreover, there are $c_{i}, d_{i} > 0$ such that we have the following estimation
		\begin{equation}\label{eqn_c_jui_infty}
			| c_{j,u,i}(x) | \leq c_{i} \exp(-d_{i} u).
		\end{equation}
	\end{prop}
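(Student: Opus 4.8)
The plan is to obtain all four assertions from material already in place: the small-$u$ expansion and the large-$u$ decay of $\widetilde{a_{i,u}}$ will follow from the local analysis of Section \ref{sect_prelim} applied verbatim to the $G_x$-invariant model operator $\widetilde{L_{2,x}^t}$, exactly as in the proof of Proposition \ref{exp_a_i_u}, while the corresponding facts for $c_{j,u,i}$ will follow from the off-diagonal estimates of Section \ref{paragr_off_diag} together with the auxiliary Lemmas \ref{lem_aux_0}--\ref{lem_aux_4}.

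For $\widetilde{a_{i,u}}$: at a nonsingular point the statement is literally Proposition \ref{exp_a_i_u}. At a singular point one works in an orbifold chart, where $\widetilde{a_{i,u}}$ is represented by the $G_U$-invariant section $a'_{i,u}$ of (\ref{defn_a_i_u_sing}), built from the same type of operator $\widetilde{L_{2,x}^t}$; since Theorem \ref{eqn-L^t-general_est} and (\ref{eqn_a_i_u_as_derivative}) are purely local, they give $\widetilde{a_{i,u}}(x) = \sum_{l=-n}^{k}\widetilde{a_i}^{[l]}(x)\,u^l + o(u^k)$ with $\widetilde{a_i}^{[l]}(x) := \tfrac{1}{(2i)!}\tfrac{\partial^{2i}}{\partial t^{2i}}B_{t,l}(x)|_{t=0}$, these sections being smooth over $\mm$ because the $B_{t,l}$ are $G_U$-invariant, and with remainder $O(u^{k+1})$ for every $k$. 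Since $N$ annihilates degree $0$, one has $N\widetilde{a_{i,u}}(x) = \tfrac{1}{(2i)!}\tfrac{\partial^{2i}}{\partial t^{2i}}e^{-uL_{2,x}^{t,>0}/2}(0,0)|_{t=0}$, the orbifold analogue of (\ref{eqn_NF_r}), so the bound $|N\widetilde{a_{i,u}}(x)|\leq c_i e^{-d_i u}$ comes from (\ref{thm_proj_est_1}) of Theorem \ref{thm-exp_proj_expansion} by estimating the $t$-derivative through the $\mathscr{C}^{2i}(\mm\times[0,t_0])$-norm appearing there.

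For $c_{j,u,i}$: the off-diagonal estimates of Theorems \ref{thm_L_2_t_off_diag_u_infty} and \ref{thm_L_2_t_off_diag} — the latter applicable because $g_j$ has no fixed point on $\tilde{{\nn}}_{j,x}$, so $|g_j^{-1}Z - Z|$ is bounded below by a positive multiple of $|Z|$, which gives Gaussian decay in $Z$ — justify pulling $\tfrac{\partial^{2i}}{\partial t^{2i}}|_{t=0}$ out of the $Z$-integral in (\ref{form_c_j_u_i}), so that $c_{j,u,i}(x) = \tfrac{1}{(2i)!}\tfrac{\partial^{2i}}{\partial t^{2i}}|_{t=0}\,I_j(x,t,u)$, where $I_j(x,t,u)$ is the integral over $\tilde{{\nn}}_{j,x}$ that occurs in Lemmas \ref{lem_aux_0}--\ref{lem_aux_4} (up to the harmless rescaling $u\leftrightarrow u/2$). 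Then Lemma \ref{lem_aux_4} expands $I_j(x,t,u)$ in $\mathscr{C}^m(\mm\times[0,t_0])$-norm, $m\geq 2i$, as $u^{-n_j}$ times a polynomial in $u$ of degree $k+n_j$ with coefficients smooth in $(x,t)$, plus $O(u^{k+1})$; applying $\tfrac{1}{(2i)!}\tfrac{\partial^{2i}}{\partial t^{2i}}|_{t=0}$ and relabelling the summation index gives (\ref{eqn_c_jui_near_0}) with each $c^{[h]}_{ji}$ a smooth function on $\Sigma \mm^{[j]}$. For the large-$u$ bound, Lemma \ref{lem_aux_3} gives $|I_j(x,t,u)|_{\mathscr{C}^m(\mm\times[0,t_0])}\leq C e^{-cu}$ for $u>u_0$, and estimating $\tfrac{\partial^{2i}}{\partial t^{2i}}|_{t=0}$ through the $\mathscr{C}^{2i}$-norm gives (\ref{eqn_c_jui_infty}).

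The one genuinely delicate point is the reduction just used: that $I_j(x,t,u)$ — obtained by integrating the twisted model kernel $(g_j,1)e^{-uL_{2,x}^t/2}(g_j^{-1}Z,Z)$ against $Z$ — expands in \emph{integer} powers of $u$, not in powers of $\sqrt u$. This is exactly what forces the passage to the rescaled operator $L_{4,x}^{t,v}$ with $v=\sqrt u$: the needed half-integer cancellations are precisely the vanishing of the odd $v$-derivatives in Lemma \ref{lem_aux_2}, which in turn comes from the parity of the model heat kernel, $\exp(-u\widetilde{L_{2,x}^0})(g_j^{-1}Z,Z)$ being even in $Z$. Granting this, all remaining steps are routine bookkeeping on top of Section \ref{paragr_off_diag} and the manifold case.
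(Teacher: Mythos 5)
Your proof is correct and follows the same route as the paper: the statements about $\widetilde{a_{i,u}}$ are reduced to Proposition \ref{exp_a_i_u} (via Theorems \ref{eqn-L^t-general_est} and \ref{thm-exp_proj_expansion} applied in an orbifold chart), and the expansion (\ref{eqn_c_jui_near_0}) and decay (\ref{eqn_c_jui_infty}) of $c_{j,u,i}$ are deduced from Lemma \ref{lem_aux_4} and Lemma \ref{lem_aux_3} respectively, applied to (\ref{form_c_j_u_i}). The only difference is that you spell out the interchange of $\partial_t^{2i}|_{t=0}$ with the $Z$-integral and the role of the vanishing odd $v$-derivatives, which the paper leaves implicit.
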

	\begin{proof}
		The statements about $\widetilde{a_{i,u}}(x)$ are proved in the same way as Proposition \ref{exp_a_i_u}.
		Estimation (\ref{eqn_c_jui_near_0}) follows from Lemma \ref{lem_aux_4} and (\ref{form_c_j_u_i}). 
		Moreover, it proves that
		\begin{multline}\label{form_c_j_p_i}
			 c^{[h]}_{jk}(x) = \frac{1}{(2k)! (2(h+n_j))!} 
			\int_{Z \in \tilde{\nn}_{j,x}}  {\rm{Tr}_s} \Big[ N (g_j, 1) \\ \cdot
			\frac{\partial^{2(k+h + n_j)}}{\partial t^{2k} \partial v^{2(h + n_j)}}  
			 \Big( e^{-L_{4,x}^{t,v}/2}(g_j^{-1} Z, Z) (k^{-1} k_j)(x, tvZ) \Big) |_{t=0, v=0}	 \Big] \, dZ.
		\end{multline}
		Estimation (\ref{eqn_c_jui_infty}) follows from Lemma \ref{lem_aux_3} and (\ref{form_c_j_u_i}). 
	\end{proof}
	We also have the following version of Proposition \ref{prop_b_p_i_exp} and (\ref{defn-b_pj}).
	\begin{prop}\label{prop_b_p_i_exp_orbi}
		For $ p \in \nat^*$ there are $\widetilde{b_{p,i}} \in \comp, i \in \integ,  i \geq -n$ and $\widetilde{b_{j,p,i}} \in \comp, j \in J, i \in \integ, i \geq -n_j$ such that for any $ k \in \nat$, we have the following asymptotic expansion, as $u \to 0$:
		\begin{equation}\label{eqn_b_i_defn_exp_orbi}
			 \str{N \exp ( - u \laplcomp_p / p )} = p^n \sum_{i = -n}^{k} \widetilde{b_{p,i}} u^i + \sum_{j \in J} \frac{p^{n_j}}{m_j}  e^{\imun \theta_j p} \sum_{i = -n_j}^{k} \widetilde{b_{j,p,i}} u^{i} + o(u^k),
		\end{equation}
		We also have the following expansions as $p \to + \infty$
		\begin{align}\label{exp_b_p_i}
			& \textstyle \widetilde{b_{p,i}} = \sum_{l=0}^{k} \widetilde{b_{i}}^{[l]} p^{-l} + o(p^{-k}),  
			&&\text{with}
			&&&\textstyle \widetilde{b_{i}}^{[l]} = \int_{\mm} \str{N \tilde{a_{l}}^{[i]}(x)} \, dv_{\mm}(x),
			\\\label{form_b_tilde_a}
			& \textstyle \textstyle \widetilde{b_{j,p,i}} = \sum_{h=0}^{k} \widetilde{b_{ji}}^{[h]} p^{-h} + o(p^{-k}), 
			&&\text{with}
			&&&\textstyle \widetilde{b_{ji}}^{[h]} = \int_{\Sigma \mm^{[j]}} c_{jh}^{[i]}(x) \, dv_{\Sigma \mm^{[j]}}(x). 
		\end{align}
		Moreover, for $x \in \Sigma \mm^{[j]}$, $\widetilde{b_{j,p,i}}(x)$ depends only on the geometry of $\mm^{[j]}$, ${\nn}_{j}$ at $x$ and on the action of $g_j$ on $\tilde{{\nn}}_{j,x}$.
	\end{prop}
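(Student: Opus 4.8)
The plan is to mimic the two-step expansion used for Proposition \ref{prop_b_p_i_exp} and (\ref{defn-b_pj}), but starting from the splitting of the supertrace provided by Lemma \ref{lem_M_sing_kernel}: for $\epsilon>0$ small and $t=1/\sqrt p$,
\[
	\str{N\exp(-u\laplcomp_p/p)} = A(p,u) + B(p,u) + O(p^{C}\exp(-cp/u)),
\]
with $A(p,u),B(p,u)$ as in (\ref{eqn_A_p_u_B_p_u_defn}). The remainder is $o(u^{N})$ for every $N$ as $u\to 0$ with $p$ fixed, and $o(p^{-N})$ for every $N$ as $p\to\infty$ with $u$ ranging over a compact subset of $]0,+\infty[$, hence invisible to both asymptotic expansions; so it suffices to expand $A(p,u)$ and $B(p,u)$ separately, first in powers of $u$ (for each fixed $p$) and then in powers of $t^{2}=1/p$. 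That the heat supertrace admits, for each fixed $p$, a $u$-expansion which already decomposes into a ``manifold'' part and a sum over the singular strata $\Sigma\mm^{[j]}$ is the classical orbifold heat-trace asymptotics of \cite{MaOrbif2005}; this defines $\widetilde{b_{p,i}}$ and $\widetilde{b_{j,p,i}}$ for all $p$, and the rest of the argument (valid once $t<t_0$, i.e. for $p$ large) only serves to identify the behaviour of these coefficients as $p\to\infty$, so the finitely many small values of $p$ are irrelevant.

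For $A(p,u)$ I would argue exactly as in the proof of Proposition \ref{prop_b_p_i_exp}. By Lemma \ref{lem-lapl_L_p-kernels} and (\ref{eqn-L_p_L^t_idty}) (at $Z=Z'=0$, where $k(0)=1$), $A(p,u)=p^{n}\int_{\mm}\str{N\exp(-uL_{2,x}^{t}/2)(0,0)}\,dv_{\mm}(x)+O(p^{l}\exp(-cp/u))$ with $t=1/\sqrt p$; feeding in the $\mathscr{C}^{m}(\mm\times[0,t_0])$-expansion (\ref{eq_L_t_gen_1}) of Theorem \ref{eqn-L^t-general_est} gives the $u$-expansion of $A(p,u)$ with $u^{i}$-coefficient $p^{n}\widetilde{b_{p,i}}$, where $\widetilde{b_{p,i}}=\int_{\mm}\str{N B_{t,i}(x)}\,dv_{\mm}(x)|_{t=1/\sqrt p}$, the orbifold analogue of (\ref{eqn_b_p_i_B_t_i}). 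Since the $u$-remainder in (\ref{eq_L_t_gen_1}) is uniform in $t\in[0,t_0]$ and the odd $t$-derivatives of $B_{t,i}$ vanish by (\ref{eq_L_t_gen_2}), a Taylor expansion at $t=0$ gives (\ref{exp_b_p_i}) with $\widetilde{b_i}^{[l]}=\int_{\mm}\str{N\widetilde{a_l}^{[i]}(x)}\,dv_{\mm}(x)$, using the identity (\ref{a_i_j_B_t_j}) between $\widetilde{a_l}^{[i]}$ and the $t$-Taylor coefficients of $B_{t,i}$.

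For $B(p,u)$ the new input is Section \ref{paragr_off_diag}. Lemma \ref{lem_aux_4} (built from Lemmas \ref{lem_aux_1}, \ref{lem_aux_2} via the rescaled operator $L_{4,x}^{t,v}$) furnishes, for each fixed $t$, a $u$-expansion of the inner integral over $\tilde{\nn}_{j,x}$ appearing in (\ref{eqn_A_p_u_B_p_u_defn}), starting at $u^{-n_j}$, with remainder uniform in $x\in\Sigma\mm^{[j]}$ and $t\in[0,t_0]$. Integrating over $\Sigma\mm^{[j]}$ and recalling the definition (\ref{form_c_j_u_i}) of $c_{j,u,i}$, this yields the $u$-expansion of $B(p,u)$ with $u^{i}$-coefficient $\tfrac{p^{n_j}}{m_j}e^{\imun\theta_j p}\widetilde{b_{j,p,i}}$, where $\widetilde{b_{j,p,i}}$ is the integral over $\Sigma\mm^{[j]}$ of the $u^{i}$-coefficient of that inner integral, evaluated at $t=1/\sqrt p$; together with the $A(p,u)$ contribution and the negligible remainder this is (\ref{eqn_b_i_defn_exp_orbi}). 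For (\ref{form_b_tilde_a}) I would invoke Lemma \ref{lem_aux_0}: the inner integral is smooth in $(x,t)$ with vanishing odd $t$-derivatives at $t=0$, so its $t$-Taylor expansion at $t=1/\sqrt p$ is a genuine series in $1/p$; the expansions in $u$ and in $t$ commute by the uniformities just noted, so the $p^{-h}$-coefficient of $\widetilde{b_{j,p,i}}$ equals $\int_{\Sigma\mm^{[j]}}c_{jh}^{[i]}(x)\,dv_{\Sigma\mm^{[j]}}(x)$ with $c_{jh}^{[i]}$ the coefficient of Proposition \ref{prop_mellin_orbi} (obtained by combining (\ref{eqn_c_jui_near_0}) with (\ref{form_c_j_u_i}), cf. (\ref{form_c_j_p_i})). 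The locality assertion follows at once: $c_{jh}^{[i]}(x)$ is assembled solely from $L_{2,x}^{t}$, $L_{4,x}^{t,v}$ and the volume-density functions $k,k_j$, all of which are determined by the $\infty$-jet of the metrics at $x$ and by the linear action of $g_j$ on $\tilde{\nn}_{j,x}$.

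The main obstacle is the careful handling of this \emph{double} interchange of asymptotic expansions — small $u$ versus large $p=1/t^{2}$, for the bulk term $A(p,u)$ and for each singular term in $B(p,u)$. One must check that every $o(u^{k})$ remainder can be taken uniform in $t\in[0,t_0]$, so that it may subsequently be Taylor-expanded in $t$, and that the odd-order $t$-derivatives vanish, so that the resulting series is genuinely in $1/p$ rather than in $1/\sqrt p$. For $A(p,u)$ this is precisely Theorem \ref{eqn-L^t-general_est}; for $B(p,u)$ it is packaged in Lemmas \ref{lem_aux_0}--\ref{lem_aux_4}, which in turn rest on the off-diagonal heat-kernel estimates of Section \ref{paragr_off_diag} (Theorems \ref{thm_L_2_t_off_diag_u_infty}, \ref{thm_L_2_t_off_diag}, Propositions \ref{prop_l_2_l_4}, \ref{prop_l_4_off_diag}) together with the fact that $g_j$ acts on $\tilde{\nn}_{j,x}$ without nonzero fixed vectors — which is exactly what makes the integrals over $\tilde{\nn}_{j,x}$ converge.
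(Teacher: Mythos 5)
Your proposal is correct and follows essentially the same route as the paper: split the supertrace via Lemma \ref{lem_M_sing_kernel}, treat $A(p,u)$ exactly as in Proposition \ref{prop_b_p_i_exp}, and obtain the $u$- and $p$-expansions of $B(p,u)$ from Lemma \ref{lem_aux_4} together with the smoothness and parity statements of Lemma \ref{lem_aux_2} (which is the lemma actually needed for (\ref{form_b_tilde_a}), rather than Lemma \ref{lem_aux_0}, though you do use its content through (\ref{form_c_j_p_i})). No substantive gap.
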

	\begin{proof}
		Similarly to Proposition \ref{prop_b_p_i_exp}, by (\ref{eqn_A_p_u_B_p_u_defn}), we get
		\begin{equation}\label{eqn_orbi_suppl_1}
			\textstyle A(p,u)  = p^n \sum_{i = -n}^{k} \widetilde{b_{p,i}} u^i + o(u^k).
		\end{equation}
		This proves (\ref{exp_b_p_i}).
		Now let's denote for $j \in J, i \in \nat, i \geq -n_j$
		\begin{multline}\label{form_b_j_p_i}
			 \widetilde{b_{j,p,i}} =  \frac{1}{(2(i + n_j))!} \int_{\Sigma \mm^{[j]}} \int_{Z \in \tilde{{\nn}}_{j,x}} \frac{\partial^{2(i + n_j)}}{\partial v^{2(i + n_j)}} \Big(	
			\str{N 
			(g_j,1) 
			e^{-L_{4,x}^{t,v}/2}(g_j^{-1} Z, Z)} 
			\\  
			\vphantom{\str{N e^{-L_{4,x}^{t,v}/2}}}
			\cdot (k^{-1}k_j)(x, tvZ)
			\Big)|_{v=0}
			\, dv_{{\nn}_j}(Z) \, dv_{\Sigma \mm^{[j]}}(x).
		\end{multline}		
			By Lemma \ref{lem_aux_4} and (\ref{eqn_A_p_u_B_p_u_defn}), we get
			\begin{equation}\label{eqn_orbi_suppl_3}
				\textstyle B(p, u) = \sum_{j \in J} \frac{1}{m_j}p^{n_j} e^{\imun \theta_j p} \sum_{i = -n_j}^{k} \widetilde{b_{j,p,i}} u^{i} + o(u^k).
			\end{equation}
			By Lemma \ref{lem_M_sing_kernel}, (\ref{eqn_orbi_suppl_1}) and (\ref{eqn_orbi_suppl_3}), we get (\ref{eqn_b_i_defn_exp_orbi}). Now (\ref{form_b_tilde_a}) follows from Lemma \ref{lem_aux_2} and (\ref{form_c_j_p_i}).
	\end{proof}
	Now we prove the orbifold's analogue of Proposition \ref{prop-streng_zero}.
	\begin{thm}\label{thm_str_zero_orbi}
		For any $k \in \nat, u_0 > 0$ there exists $C > 0$ such that for $u \in ]0, u_0], p \in \nat^*$, we get
		\begin{align}\label{thm_str_zero_orbi_eqn}
			p^k 
			\bigg| 
					& \str{N \exp ( - u \laplcomp_p / p )} - p^n \sum_{i = -n}^{0} \widetilde{b_{p,i}} u^i - \sum_{j \in J} \frac{p^{n_j} }{m_j} e^{\imun \theta_j p} \sum_{i = -n_j}^{0} \widetilde{b_{j,p,i}} u^{i} 
				\\		 
				 &
				 - 
				 \sum_{h = 0}^{n+k}p^{n-h}
				 \Big( 
					\int_{\mm} \str{N \widetilde{a_{h,u}}(x) } \, dv_{\mm}(x) - \sum_{i = -n}^{0} \widetilde{b_{i}}^{[h]} u^i
				\Big)
				\nonumber
				\\				
				&
				-
				\sum_{h = 0}^{k + n_j}
				\sum_{j \in J}
				\frac{p^{n_j-h}}{m_j}e^{\imun \theta_j p}
				\Big( 
					\int_{\Sigma \mm^{[j]}} c_{j, u, h}(x) \, dv_{\Sigma \mm^{[j]}}(x) - \sum_{i = -n_j}^{0} \widetilde{b_{ji}}^{[h]} u^{i} 
				\Big)
			\bigg| \leq C u. \nonumber
		\end{align}
	\end{thm}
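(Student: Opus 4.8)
The plan is to reduce the estimate, by Lemma~\ref{lem_M_sing_kernel}, to separate bounds for the two model quantities $A(p,u)$ and $B(p,u)$ of (\ref{eqn_A_p_u_B_p_u_defn}). The exponentially small error $O(p^{C}\exp(-cp/u))$ there is harmless: for $u \in ]0, u_{0}]$ one has $\sup_{p \in \nat^{*}} p^{k+C}\exp(-cp/u) \leq C'u^{k+C} \leq C''u$, so it contributes at most $C''u$ to the left side of (\ref{thm_str_zero_orbi_eqn}) after multiplication by $p^{k}$. The subtracted terms split along the same lines: the term with $p^{n}\sum_{i=-n}^{0}\widetilde{b_{p,i}}u^{i}$ together with the line $\sum_{h=0}^{n+k}p^{n-h}(\cdots)$ is matched against $A(p,u)$, while the two lines carrying $\widetilde{b_{j,p,i}}$ and $c_{j,u,h}$ are matched against $B(p,u)$; it thus suffices to bound each of the two differences by $Cu/p^{k}$.

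For $A(p,u)$ this is a transcription of the proof of Proposition~\ref{prop-streng_zero}. By Lemma~\ref{lem-lapl_L_p-kernels}, (\ref{eqn-L_p_L^t_idty}) (valid for orbifolds by Remark~\ref{lem_lapl_l_p_off_diag}) and $k(0)=1$, one has $A(p,u)=p^{n}\int_{\mm}\str{N e^{-uL_{2,x}^{t}/2}(0,0)}\,dv_{\mm}(x)+O(p^{l+n}\exp(-cp/u))$ with $t=1/\sqrt p$. I would first subtract the singular-in-$u$ part $\sum_{r=-n}^{0}u^{r}B_{t,r}(x)$: by Theorem~\ref{eqn-L^t-general_est} with $k=0$, the difference $e^{-uL_{2,x}^{t}/2}(0,0)-\sum_{r=-n}^{0}u^{r}B_{t,r}(x)$ is $O(u)$ in $\mathscr{C}^{m}(M\times[0,t_{0}])$ for every $m$, and by (\ref{eqn-der_vanishes}) it is even in $t$; Taylor expanding it in $t$ to order $2(n+k)$ --- the coefficients being, after integration and by (\ref{eqn_a_i_u_as_derivative}), (\ref{a_i_j_B_t_j}), (\ref{eqn_b_p_i_B_t_i}), exactly the quantities $\int_{\mm}\str{N\widetilde{a_{h,u}}(x)}\,dv_{\mm}(x)-\sum_{i=-n}^{0}\widetilde{b_{i}}^{[h]}u^{i}$ --- one is left with a remainder of size $t^{2(n+k+1)}\cdot O(u)=O(p^{-(n+k+1)}u)$. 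Restoring the term $p^{n}\sum_{i\leq 0}u^{i}\widetilde{b_{p,i}}=p^{n}\int_{\mm}\str{N\sum_{r\leq 0}u^{r}B_{t,r}}\,dv_{\mm}$ and multiplying by $p^{n}\cdot p^{k}$ then yields $\leq Cu$.

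For $B(p,u)$ the off-diagonal estimates of Section~\ref{paragr_off_diag} are essential. Write $B(p,u)=\sum_{j\in J}\tfrac{p^{n_{j}}}{m_{j}}e^{\imun\theta_{j}p}\,\mathcal{G}_{j}(t,u)$ with $t=1/\sqrt p$, where $\mathcal{G}_{j}(t,u)$ is the double integral over $\Sigma \mm^{[j]}$ and $\tilde{{\nn}}_{j,x}$ appearing in (\ref{eqn_A_p_u_B_p_u_defn}). By Lemma~\ref{lem_aux_0} the function $\mathcal{G}_{j}(t,u)$ is smooth in $t\in[0,1]$ and even in $t$, with $t$-Taylor coefficients $\int_{\Sigma \mm^{[j]}}c_{j,u,h}(x)\,dv_{\Sigma \mm^{[j]}}(x)$ by (\ref{form_c_j_u_i}). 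Again I would subtract the singular-in-$u$ part \emph{first}: passing via Lemma~\ref{lem_aux_1} (up to $O(\exp(-c/u))$) to the $L_{4,x}^{t,v}$-rescaled kernel with $v=\sqrt u$ and using Lemma~\ref{lem_aux_2} (joint smoothness and parity in $(t,v)$), the quantity $\mathcal{G}_{j}(t,u)-\sum_{i=-n_{j}}^{0}u^{i}\widetilde{b_{j,p,i}}$ is $O(u)$ in $\mathscr{C}^{m}(\mm\times[0,t_{0}])$, where $\widetilde{b_{j,p,i}}$ is the $u^{i}$-coefficient of $\mathcal{G}_{j}$, which is even in $t$ with $t$-expansion $\sum_{h}\widetilde{b_{ji}}^{[h]}p^{-h}$ by (\ref{form_b_j_p_i}), (\ref{form_b_tilde_a}). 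Taylor expanding this corrected quantity in $t$ to order $2(k+n_{j})$, the coefficients are exactly $\int_{\Sigma \mm^{[j]}}c_{j,u,h}(x)\,dv_{\Sigma \mm^{[j]}}(x)-\sum_{i=-n_{j}}^{0}\widetilde{b_{ji}}^{[h]}u^{i}$ (by (\ref{form_c_j_p_i})), and the Taylor remainder is $t^{2(k+n_{j}+1)}$ times an $O(u)$ term bounded uniformly by Lemma~\ref{lem_aux_4}; multiplying by $\tfrac{p^{n_{j}}}{m_{j}}$ (only finitely many values of $m_{j}$ occur) and then by $p^{k}$ gives $\leq Cu$.

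The main obstacle, and the point where this genuinely differs from the manifold case, is precisely the order of operations just used: one must strip off the singular-in-$u$ coefficients $\widetilde{b_{p,i}}$, $\widetilde{b_{j,p,i}}$ \emph{before} Taylor-expanding in $t=1/\sqrt p$. Reversing the order leaves spurious cross-terms of the form (positive power of $p$)$\,\times\,$(negative power of $u$), coming from the mismatch between the truncated $t$-expansion of $\widetilde{b_{j,p,i}}$ and its exact value, which cannot be absorbed into $Cu$. Justifying the subtract-then-expand step requires the joint smoothness of the normal-bundle integrals in $(x,t,v)$ and the uniform Gaussian off-diagonal bounds of Theorems~\ref{thm_L_2_t_off_diag_u_infty}, \ref{thm_L_2_t_off_diag} and Propositions~\ref{prop_l_2_l_4}, \ref{prop_l_4_off_diag}, packaged in Lemmas~\ref{lem_aux_0}--\ref{lem_aux_4}; these are exactly the Section~\ref{paragr_off_diag} ingredients with no counterpart in the smooth case, needed even to make sense of $B(p,u)$ and its expansions.
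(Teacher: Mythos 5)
Your proposal is correct and follows essentially the same route as the paper: the splitting into $A(p,u)$ and $B(p,u)$ via Lemma \ref{lem_M_sing_kernel} (with the exponential error absorbed into $Cu$), the treatment of $A(p,u)$ by the argument of Proposition \ref{prop-streng_zero}, and the treatment of $B(p,u)$ by the ``subtract the $u$-principal part, then Taylor-expand in $t$'' scheme packaged in Lemmas \ref{lem_aux_0}--\ref{lem_aux_4} together with (\ref{form_c_j_u_i}), (\ref{form_b_tilde_a}), (\ref{form_b_j_p_i}). Your discussion of why the order of operations matters is exactly the content that Lemma \ref{lem_aux_4} (via its $\mathscr{C}^m(\mm\times[0,t_0])$ control) is designed to supply, so it is a faithful unpacking rather than a deviation.
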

	\begin{proof}
		We apply (\ref{eqn_A_p_u_B_p_u_defn}) and the same techniques as in Proposition \ref{prop-streng_zero} to get that there exists $C > 0$ such that for any $u \in ]0, u_0], p \in \nat^*,$
		\begin{multline}\label{eqn_aux_zero_orb_1}
			\textstyle
			p^k 
			\Big|
				\big(
					A(p,u) - p^n \sum_{i = -n}^{0} \widetilde{b_{p,i}} u^i
				\big)
				\\		
				\textstyle		 
				 - 
				 \sum_{h = 0}^{n+k}p^{n-h}
				 \big(
					\int_{\mm} \str{N \widetilde{a_{h,u}}(x) } \, dv_{\mm}(x) -  \sum_{i = -n}^{0} \widetilde{b_{i}}^{[h]} u^i
				\big)
			\Big|
			\leq C u.
		\end{multline}
		By Lemma \ref{lem_aux_4}, (\ref{eqn_A_p_u_B_p_u_defn}), (\ref{form_c_j_u_i}), (\ref{form_b_tilde_a}) and  (\ref{form_b_j_p_i}) we have
		\begin{multline}\label{eqn_aux_zero_orb_2}
			\textstyle			
			p^k
			\Big|
				\Big(
					B(p,u)  - \sum_{j \in J} \frac{1}{m_j} p^{n_j} e^{\imun \theta_j p} \sum_{i = -n_j}^{0} \widetilde{b_{j,p,i}} u^{i} 
				\Big) 
				-
				\sum_{h = 0}^{k + n_j}
				\sum_{j \in J}
				\frac{1}{m_j} p^{n_j-h} 
				\\
				\textstyle
				\cdot  e^{\imun \theta_j p} 
				\Big(
					\int_{\Sigma \mm^{[j]}} c_{j, u, h}(x) \, dv_{\Sigma \mm^{[j]}}(x) - \sum_{i = -n_j}^{0} \widetilde{b_{ji}}^{[h]} u^{i} 
				\Big)
				\Big|
			\leq C u.
		\end{multline}
		Now, by Lemma \ref{lem_M_sing_kernel}, (\ref{eqn_aux_zero_orb_1}) and (\ref{eqn_aux_zero_orb_2}), we get (\ref{thm_str_zero_orbi_eqn}).
	\end{proof}
	Now we prove the orbifold's analogue of Proposition \ref{prop-streng_infty}.
	\begin{thm}\label{thm_infty_orbi}
		For any $k \in \nat, u_0 > 0$ there exists $c, C > 0$ such that for $u > u_0, p \in \nat^*$
		\begin{multline}
			 p^k \bigg| \str{N \exp ( - u \laplcomp_p / p ) } -  
			\sum_{i=0}^{n + k} p^{n - i} \int_{\mm} \str{N \widetilde{a_{i,u}}(x)} \, dv_{\mm}(x) 
			\\
			 - \sum_{i=0}^{k + n_j} \sum_{j \in J} \frac{1}{m_j}p^{n_j - i} e^{\imun \theta_j p}  \int_{\Sigma \mm^{[j]}} c_{j,u,i}(x) \, dv_{\Sigma \mm^{[j]}}(x) \bigg| \leq C \exp(-cu).
		\end{multline}
	\end{thm}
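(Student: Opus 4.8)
To prove Theorem \ref{thm_infty_orbi} the plan is to run the two-regime argument from the proof of Proposition \ref{prop-streng_infty}, but now starting from the decomposition of Lemma \ref{lem_M_sing_kernel} and using the long-time off-diagonal estimates of Section \ref{paragr_off_diag}. Fix $u_0 > 0$ and $k \in \nat$. Since $t = 1/\sqrt{p}$ must lie in $[0, t_0]$, I would first dispose of the finitely many $p$ with $p < t_0^{-2}$ (for each such $p$ both sides of the asserted inequality decay exponentially in $u$, the left side by the spectral gap of Theorem \ref{thm_orbif_spec_gap} and the subtracted terms by Theorem \ref{thm-exp_proj_expansion} and Proposition \ref{prop_mellin_orbi}, so a single large constant works), and for the remaining $p$ I would split into the ranges $u > \sqrt{p}$ and $u_0 < u \leq \sqrt{p}$.

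In the range $u > \sqrt{p}$ the idea is to use the spectral gap directly. By Theorem \ref{thm_orbif_spec_gap} the kernel of $\laplcomp_p$ is concentrated in bidegree $(0,0)$, so $N$ annihilates it and $\str{N \exp(-u\laplcomp_p/p)} = \str{N \exp(-u\laplcomp_p/p)P^\perp}$; together with the gap $\spec(\laplcomp_p/p) \cap (\ker \laplcomp_p)^\perp \subset [\mu, +\infty[$ and the standard heat-trace bound $\tr{\exp(-\laplcomp_p/p)} = O(p^n)$ this yields $|\str{N \exp(-u\laplcomp_p/p)}| \leq C p^n e^{-\mu u}$. By the exponential estimates in Proposition \ref{prop_mellin_orbi}, the subtracted integrals $\int_{\mm} \str{N\widetilde{a_{i,u}}}\, dv_{\mm}$ and $\int_{\Sigma\mm^{[j]}} c_{j,u,i}\, dv_{\Sigma\mm^{[j]}}$ are $O(e^{-d_i u})$. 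Since $p < u^2$ here, every power of $p$ that appears (at most $p^{n+k}$, resp.\ $p^{n_j+k}$) is polynomially bounded in $u$, hence absorbed into half of the corresponding exponential, and summing the finitely many contributions closes this case.

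In the range $u_0 < u \leq \sqrt{p}$ I would invoke Lemma \ref{lem_M_sing_kernel}, with $t = 1/\sqrt p$, to write $\str{N\exp(-u\laplcomp_p/p)} = A(p,u) + B(p,u) + O(p^C e^{-cp/u})$, where $A(p,u)$ and $B(p,u)$ are as in (\ref{eqn_A_p_u_B_p_u_defn}); the constraint $u \leq \sqrt p$ gives $p/u \geq \tfrac12 \sqrt p + \tfrac12 u$, so $p^k$ times the remainder is $O(p^{C+k} e^{-c\sqrt p/2}) \cdot e^{-cu/2} = O(e^{-cu/2})$. For $A(p,u)$: Lemma \ref{lem-lapl_L_p-kernels} (valid on orbifolds by Remark \ref{lem_lapl_l_p_off_diag}) and (\ref{eqn-L_p_L^t_idty}) identify both integrands of $A(p,u)$ with $p^n \str{N e^{-u L_{2,x}^{t}/2}(0,0)}$ up to $O(p^{n+l} e^{-cp/u})$, and, since $\str{N \cdot}$ only sees the positive-degree part, Theorem \ref{thm-exp_proj_expansion} furnishes the Taylor expansion $\str{N e^{-u L_{2,x}^{t}/2}(0,0)} = \sum_{i=0}^{n+k} t^{2i}\str{N\widetilde{a_{i,u}}(x)} + O(t^{2(n+k)+2} e^{-cu})$ uniformly for $u > u_0$; integrating over $\mm$ and multiplying by $p^{n+k}$ bounds the $A$-part by $C e^{-cu}$. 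For $B(p,u)$: Lemma \ref{lem_aux_0} shows the inner integral in (\ref{eqn_A_p_u_B_p_u_defn}) is smooth in $(x,t)$ with vanishing odd $t$-derivatives at $t = 0$, and Lemma \ref{lem_aux_3} shows all its $t$-derivatives are $O(e^{-cu})$ for $u > u_0$; hence its Taylor expansion in $t$ has coefficients $c_{j,u,i}(x)$ by (\ref{form_c_j_u_i}) and remainder $O(t^{2(k+n_j)+2} e^{-cu})$, and after multiplying by $p^{n_j} e^{\imun \theta_j p}/m_j$, integrating over $\Sigma\mm^{[j]}$ and summing over $j$ one obtains $p^k \, | B(p,u) - \sum_{j} \tfrac{1}{m_j} e^{\imun \theta_j p} \sum_{i=0}^{k+n_j} p^{n_j - i} \int_{\Sigma\mm^{[j]}} c_{j,u,i}\, dv_{\Sigma\mm^{[j]}} | \leq C e^{-cu}$. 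Adding up the estimates for $A(p,u)$, $B(p,u)$ and the remainder finishes the proof.

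The main obstacle I expect is the analysis of $B(p,u)$: the integration runs over the whole normal fibre $\tilde{\nn}_{j,x}$ and involves the off-diagonal heat kernel $e^{-u L_{2,x}^{t}}(g_j^{-1} Z, Z)$, so the uniform-in-$u$ exponential control ultimately rests on the delicate long-time and off-diagonal estimates of Section \ref{paragr_off_diag} (Theorems \ref{thm_L_2_t_off_diag_u_infty}, \ref{thm_L_2_t_off_diag} and Propositions \ref{prop_l_2_l_4}, \ref{prop_l_4_off_diag}), funnelled through Lemmas \ref{lem_aux_0} and \ref{lem_aux_3}. Once these are in hand, the rest is careful bookkeeping of the various error terms across the two regimes $u \gtrless \sqrt p$, together with the observation that $N$ kills bidegree $(0,0)$, which is what turns the mere boundedness of the heat kernel into genuine exponential decay in $u$.
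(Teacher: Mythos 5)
Your proposal is correct and follows essentially the same route as the paper: the same split into the regimes $u > \sqrt{p}$ and $u_0 < u \leq \sqrt{p}$, with the first handled by the spectral gap of Theorem \ref{thm_orbif_spec_gap} together with the exponential decay of the subtracted terms and the inequality $e^{-cu}\leq e^{-c\sqrt{p}/2}e^{-cu/2}$, and the second by the decomposition of Lemma \ref{lem_M_sing_kernel} into $A(p,u)+B(p,u)$, treating $A$ as in Proposition \ref{prop-streng_infty} and $B$ via Lemmas \ref{lem_aux_0} and \ref{lem_aux_3} with (\ref{form_c_j_u_i}). Your write-up merely supplies more of the bookkeeping (small $p$, absorption of powers of $p$) that the paper leaves implicit.
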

	\begin{proof}
		We have to distinguish two cases:\\
		1. $ u > \sqrt{p}$.
				Similarly to Proposition \ref{prop-streng_infty}, we get by Theorem \ref{thm_orbif_spec_gap} and Proposition \ref{prop_str_general_orbi}. 
				\begin{equation}\label{est_orbi_infty_1}
					\str{N \exp ( - u \laplcomp_p / p ) } \leq \exp(- c u), \qquad
				 \widetilde{a_{i,u}}(x) \leq \exp(-c u). 
				\end{equation}
				We conclude by (\ref{eqn_c_jui_infty}), (\ref{est_orbi_infty_1})  and inequality $\exp(-c u) \leq \exp(-c \sqrt{p}/2 ) \exp(-c u /2)$.\\
		2. $ u \leq \sqrt{p}$.
				Similarly to Proposition \ref{prop-streng_infty}, we get
				\begin{equation}\label{eqn_aux_infty_orb_1}
					\textstyle p^k \Big| A(p,u) -  
					\sum_{i=0}^{n + k} p^{n - i} \int_{\mm} \str{N \widetilde{a_{i,u}}(x)} \, dv_{\mm}(x) 
					 \Big| \leq C \exp(-cu).
				\end{equation}								
				By Lemma \ref{lem_aux_3}, (\ref{eqn_A_p_u_B_p_u_defn}) and (\ref{form_c_j_u_i}), we have
				\begin{equation}\label{eqn_aux_infty_orb_2}
					\textstyle p^k \Big| B(p,u)  
					 -  \sum_{i=0}^{k + n_j} \sum_{j \in J} \frac{1 }{m_j} p^{n_j - i} e^{\imun \theta_j p} \int_{\Sigma \mm^{[j]}} c_{j,u,i}(x) \, dv_{\Sigma \mm^{[j]}}(x) \Big| \leq C \exp(-cu).
				\end{equation}
				We conclude by Lemma \ref{lem_M_sing_kernel}, (\ref{eqn_aux_infty_orb_1}), (\ref{eqn_aux_infty_orb_2}) and inequality 
				$e^{-cp/u} \leq e^{-c\sqrt{p}/2} e^{-cu/2}$.
	\end{proof}
	Now, we can repeat the argument of Theorem \ref{thm-gen_asympt_zet_gen} to get (\ref{eqn_orbi_asympt}). The identities (\ref{coroll}) follow from (\ref{form_B_Vas}) and (\ref{defn_a_i_u_sing}). The identities (\ref{thm_gen_orbi_iden}) are proved in the same way as (\ref{thm-a_i-b_i_formula}). 
	\par \textbf{Now let's explain why the constants $\widetilde{\alpha}_i$, $\gamma_{j,i}$ do not depend on $g^{T \mm}$ ,$h^{\lin^{p}}$, $h^{\ee}$.} In \cite[Theorem 0.1]{MaOrbif2005}, Ma proved an analogue of the anomaly formula for orbifolds. Due to this formula, we have an analogous formula to (\ref{tors_anom}). By \cite[Theorems 1,2]{DLM12} the asymptotic expansion of the Bergman kernel has only terms of the form $p^i$, $p^i e^{\imun \theta_j p}$. Similarly to Proposition \ref{prop_b_p_i_exp_orbi}, we conclude that the orbifolds analogue of the term $M_{0,c}^{p}$ has only terms of the form $p^i$, $p^i e^{\imun \theta_j p}$ in its asymptotic expansion. Thus, under the change of the metric, only the coefficients of the terms $p^i$, $p^i e^{\imun \theta_j p}$ change, so the constants $\widetilde{\alpha}_i$, $\gamma_{j,i}$ do not depend on $g^{T \mm}$ ,$h^{\lin^{p}}$, $h^{\ee}$
	\par \textbf{Now let's prove (\ref{eqn_gamma_form}).} To simplify our calculation, we work under the assumption $\Theta = \omega$. We have the following formula \cite[Appendix E 2.2]{MaHol}
	\begin{equation}
	 	 (g_j, 1) e^{-u L_{2, x}^{0}}(g_j^{-1}Z, Z) = e^{-4 \pi u N} C_{2u}  
		\exp \Big( -\frac{\pi}{\tanh(2 \pi u)} \norm{Z}^2 + \frac{\pi}{\sinh(2 \pi u)} \scal{g_j^{-1} Z}{Z} \Big).
	\end{equation}
	By (\ref{lem_trs_comp}), we get
	\begin{multline}\label{eqn_str_gamma_calc}
		 \str{N (g_j, 1) e^{-u L_{2, x}^{0}}(g_j^{-1}Z, Z)} = -\rk{E} n e^{- 4 \pi u} \frac{1}{1 - e^{-4\pi u}}  \\ 
		\cdot \exp \Big( -\frac{\pi}{\tanh(2 \pi u)} \norm{Z}^2 + \frac{\pi}{\sinh(2 \pi u)} \scal{g_j^{-1} Z}{Z} \Big).
	\end{multline}
	We denote by $\phi_{j, k}, k = 1, \ldots, n - n_j$ the angles of the rotation of the map $g_j|_{\widetilde{{\nn}}_{j,x}}$. From (\ref{form_c_j_u_i}), (\ref{eqn_str_gamma_calc}), we see 
	\begin{equation}\label{form_c_j_0}
		 c_{j,u,0}(x) =  - \frac{\rk{E} n e^{- \pi u} \sinh(\pi u)^{n - n_j-1}}{2 \prod_{k=1}^{n - n_j}  \sqrt{\cosh(\pi u)^2 - 2 \cos(\phi_{j,k}) \cosh(\pi u) + 1}}.
	\end{equation}
	Since $\phi_{j,k} \notin  2 \pi \integ$ for $k \in 1, \ldots, m_j$, the function $c_{j,u,0}(x)$ is continuous at $u =0$. We get  (\ref{eqn_gamma_form}), (\ref{form_c_j_final}) from (\ref{thm_gen_orbi_iden}) and (\ref{form_c_j_0}).
	
\paragraph{Relation between Theorem \ref{thm-gen_asympt_orbi} and the result of Hsiao-Huang in \cite{Hsiao16}.}\label{sect_Hsiao}
	In a recent article \cite{Hsiao16}, Hsiao-Huang  considered a compact connected strongly pseudoconvex CR manifold $X$ with a $S^1$-transversal locally-free CR action. They considered a \textit{rigid} CR vector bundle $E$ over $X$ (being \textit{rigid} is equivalent to being a pull-back of the holomorphic orbifold vector bundle $\ee$ over the quotient $\mm = X / S^1$, see \cite[Definition 2.4]{Hsiao16}). They decomposed the space $\dfor[(0,\bullet)]{X,E}$ into Fourier components 
	\begin{equation}
		\Omega^{(0,\bullet)}_p(X,E) = \{ u \in \dfor[(0, \bullet)]{X, E} | (e^{i \theta})^* u = e^{i p \theta} u, \text{ for all } \theta \in [0, 2 \pi[ \}, \quad p \in \integ.
	\end{equation}
	Now, the restriction $\dbar_{b, p}$ of the tangential Cauchy-Riemann operator $\dbar_{b}$ endows $\Omega^{(0,\bullet)}_p(X,E)$ with a structure of a differential complex. We endow $X$ with $S^1$-invariant Riemannian metric $g^{TX}$, compatible with CR-structure and $S^1$-action, i.e. $g^{TX}$ is the orthogonal sum of a pull-back of $J$-invariant metric over the complex orbifold $(X / S^1, J)$ and the trivial metric induced on the $S^1$-directions, see \cite[p.4, last paragraph]{Hsiao16}. We endow $E$ with $S^1$-invariant Hermitian metric $h^E$ satisfying \textit{rigidity assumption}, i.e. it is a pull-back of a Hermitian metric on $X / S^1$, see \cite[Definition 2.5]{Hsiao16}. Then we can endow $\Omega^{(0,\bullet)}_p(X,E)$ with the Hermitian metric induced by the $L^2$-scalar product. We denote by $\laplcomp_{b, p}$ the \textit{Kohn Laplacian}, defined by $\laplcomp_{b, p} = \dbar_{b, p} \dbar_{b, p}^{\,*} + \dbar_{b, p}^{\,*} \dbar_{b, p}.$ We associate the \textit{analytic torsion} $\widetilde{T}_p(g^{TX},h^{E})$ to $\laplcomp_{b, p}$, as in the case of Kodaira Laplacian (see Definition \ref{defn-RS_anal_torsion}, (\ref{defn_zeta_fun})). The main result in the article \cite[Theorem 1.1]{Hsiao16} is the calculation of the first term of the asymptotic expansion of $\log \widetilde{T}_p(g^{TX},h^{E})$, as $p \to + \infty$.
	\par Let $X$ be a circle bundle associated to the dual of a positive line bundle $L$ over a compact Hermitian complex manifold $M$.
	Let $\ee$ be a vector bundle over a quotient manifold $M = X / S^1$ and $E$ is a vector bundle $\pi^* \ee$ for $\pi : X \to M$.
	The authors constructed a chain isometry \cite[p.2, operator $A_m$]{Hsiao16} between differential complexes $ ( \Omega^{(0,\bullet)}_p(X, E), \dbar_{p,b} )$ and $( \dfor[(0, \bullet)]{M, L^p \otimes \ee}, \dbar{}^{L^p \otimes \ee})$.
	Thus,  $\spec \, \laplcomp_{b, p} = \spec \, \laplcomp^{L^p \otimes \ee}$ and $\widetilde{T}_p(g^{TX},h^{E}) = T(g^{TM}, h^{L^p \otimes \ee})$. By \cite[(1.4), (1.5)]{Hsiao16} in this case their result is equivalent to the original result of Bismut and Vasserot \cite{BVas}.
	\par Now, in general it has been proven by Ornea and Verbitsky \cite[Theorems 1.11, 5.1]{VerbitSasak07} that a compact connected strongly pseudoconvex CR manifold $X$ with a $S^1$-transversal locally-free CR action is a circle bundle associated to the dual of a positive line bundle $\lin$ over a compact Hermitian orbifold $\mm = X / S^1$. Similarly to the case when $\mm$ is a manifold, there is a chain isometry between differential complexes $( \Omega^{(0,\bullet)}_p(X, E), \dbar_{p,b} )$ and $( \dfor[(0, \bullet)]{\mm, \lin^p \otimes \ee}, \dbar{}^{\lin^p \otimes \ee})$.
	Then Theorem \ref{thm-gen_asympt_orbi} gives the asymptotic expansion of $\log T(g^{T \mm}, h^{\lin^p \otimes \ee}), p \to + \infty$. After reformulating this in terms of geometric objects on $X$, as it was done in the case where $\mm$ is a manifold in \cite[(1.4), (1.5)]{Hsiao16}, Theorem \ref{thm-gen_asympt_orbi} implies the main theorem of the article \cite[Theorem 1.1]{Hsiao16}. We also point out that the fractional powers on $p$ in fact do not indeed appear in the asymptotic expansion of $\log \widetilde{T}_p(g^{TX},h^{E})$.

\begin{spacing}{0.7}

		\bibliographystyle{abbrv}

\end{spacing}

\Addresses

\end{document}